\newtheorem{theorem}{Theorem}[section]
\newtheorem{lemma}[theorem]{Lemma}
\newtheorem{proposition}[theorem]{Proposition}
\newtheorem{corollary}[theorem]{Corollary}
\theoremstyle{definition}
\newtheorem{definition}[theorem]{Definition}
\newtheorem{example}[theorem]{Example}
\newtheorem*{ack*}{Acknowledgments}
\newtheorem{conjecture}[theorem]{Conjecture}
\newtheorem{rem}[theorem]{Remark}
\theoremstyle{remark}
\numberwithin{equation}{section}
\numberwithin{table}{section}
\numberwithin{figure}{section}
\newcommand{\be}{\begin{equation}}
\newcommand{\en}{\end{equation}}
\DeclareMathOperator{\Int}{Int}
\DeclareMathOperator{\Asc}{Asc}
\DeclareMathOperator{\Acc}{Acc}
\DeclareMathOperator{\Asp}{Asp}
\DeclareMathOperator{\Alt}{Alt}
\DeclareMathOperator{\Asw}{Asw}
\newcommand{\htop}{h_{\text{\normalfont top}}}
\DeclareMathOperator{\diam}{diam}
\DeclareMathOperator{\card}{card}
\newcommand{\setcomp}[1]{{#1}^{\mathsf{c}}}
\DeclarePairedDelimiter{\ceil}{\lceil}{\rceil}
\DeclarePairedDelimiter{\floor}{\lfloor}{\rfloor}
\begin{document}

\title{Dynamical intricacy and average sample complexity}


\author{Karl Petersen}
\address{Department of Mathematics,
CB 3250 Phillips Hall,
University of North Carolina,
Chapel Hill, NC 27599 USA}
\email{petersen@math.unc.edu}

\author{Benjamin Wilson}
\address{Department of Mathematics,
CB 3250 Phillips Hall,
University of North Carolina,
Chapel Hill, NC 27599 USA}
\curraddr{Department of Applied Mathematics, Stevenson University,
1525 Greenspring Valley Rd,
Stevenson, MD 21117 USA}
\email{bwilson4@stevenson.edu}

\subjclass[2010]{Primary 37B40, 37A35, 54H20, 28D20}

\keywords{}

\date{\today}

\dedicatory{}

\begin{abstract}
We propose a new way to measure the balance between freedom and coherence in a dynamical system and a new measure of its internal variability. Based on the concept of entropy and ideas from neuroscience and information theory, we define \emph{intricacy} and \emph{average sample complexity} for topological and measure-preserving dynamical systems. We establish basic properties of these quantities, show that their suprema over covers or partitions equal the ordinary entropies, compute them for many shifts of finite type, and indicate natural directions for further research.
\end{abstract}

\maketitle

\section{Introduction}

In their study of high-level neural networks~\cite{tononi1994measure}, G. Edelman, O. Sporns, and G. Tononi introduced a quantitative measure that they call \emph{neural complexity} to try to capture the interplay between two fundamental aspects of brain organization: the functional segregation of local areas and their global integration. Neural complexity is high when functional segregation coexists with integration and is low when the components of a system are either completely independent (segregated) or completely dependent (integrated).
J. Buzzi and L. Zambotti~\cite{BZ12} provided a mathematical foundation for neural complexity by placing it in a natural class of functionals: the averages of mutual information satisfying exchangeability and weak additivity. The former property means that the functional is invariant under permutations of the system, the latter that it is additive when independent systems are combined. They gave a unified probabilistic representation of these functionals, which they called \emph{intricacies}.

In this paper we define and then study \emph{intricacy in dynamical systems}, based on the classical definition of topological entropy in dynamical systems and intricacy as defined by Buzzi and Zambotti. We define \emph{topological intricacy} and the closely related \emph{topological average sample complexity} for a general topological dynamical system $(X,T)$ with respect to an open cover $\mathscr{U}$ of $X$. More specifically, denote by $n^*$ the set of integers $\{0,1,\dots, n-1\}$, let $S=\{s_0,s_1,\dots,s_{|S|-1}\}\subset n^*$, let $\setcomp{S}=n^*\setminus S$, let $c_S^n$ be a weighting function that depends on $S$ and $n$, let $\mathscr{U}_S=\bigvee_{i=0}^{|S|-1}T^{-s_i}\mathscr{U}$, and let $N(\mathscr{U})$ be the minimum cardinality of a subcover of $\mathscr{U}$. Then the topological intricacy of $(X,T)$ with respect to the open cover $\mathscr{U}$ is defined to be
\be
\Int(X,\mathscr{U},T)=\lim_{n\rightarrow\infty}\frac{1}{n}\sum_{S\subset n^*}c_S^n\log\left(\frac{N(\mathscr{U}_S)N(\mathscr{U}_{\setcomp{S}})}{N(\mathscr{U}_{n^*})}\right).
\en
Breaking up the logarithm and sum shows that one should study the {\em average sample complexity},
\be
\Asc(X,\mathscr{U},T):=\lim_{n\rightarrow\infty}\frac{1}{n}\sum_{S\subset n^* } c_S^n\log N(\mathscr{U}_S).
\en
We will initially let $c_S^n=2^{-n}$ for all $S$. Since we are averaging the quantity $\log(N(\mathscr{U}_S)N(\mathscr{U}_{\setcomp{S}})/N(\mathscr{U}_{n^*}))$ over all subsets $S\subset n^*$, topological intricacy takes on high values for systems in which for most $S$ the product $N(\mathscr{U}_S)N(\mathscr{U}_{\setcomp{S}})$ is large compared to $N(\mathscr{U}_{n^*})$. We will see that this happens in systems that are far from from both total order and total disorder. Intricacy may be thought of as a measure of something like organized flexibility within a system, and average sample complexity as a measure of possible internal variability. 

We define intricacy and average sample complexity for measure-preserving systems by taking probabilities of configurations into account rather than just counting them. 
Let $(X,\mathscr{B},\mu,T)$ be a measure-preserving system and $\alpha=\{A_1,\dots, A_k\}$ a finite measurable partition of $X$. Given $S\subset n^*$ and $c_S^n$ as above, let $\alpha_S=\bigvee_{i=0}^{n-1}T^{-s_i}\alpha$ and $H_\mu(\alpha)=-\sum_{i=1}^k\mu(A_i)\log\mu(A_i)$. Then the measure-theoretic intricacy of $X$ and $T$ with respect to $\alpha$ is defined to be
\be
\Int_\mu(X,\alpha,T)=\lim_{n\rightarrow\infty}\frac{1}{n}\sum_{S\subset n^*}c_S^n[H_\mu(\alpha_S)+H_\mu(\alpha_{\setcomp{S}})-H_\mu(\alpha_{{n^*}})].
\en
For similar reasons as in the topological case, measure-theoretic intricacy takes high values for systems that are far from both order and disorder.
As in the topological case, measure-theoretic intricacy also involves a component interesting in its own right, the {\em measure-theoretic average sample complexity}:
\be
\Asc_\mu(X,\alpha,T)=\lim_{n\rightarrow\infty}\frac{1}{n}\sum_{S\subset n^*}c_S^nH_\mu(\alpha_S).
\en
Existing concepts such as sequence entropy \cite{kushnirenko1967metric, newton1970sequenceI, newton1970sequenceII, krug1972sequence, saleski1977sequence} and {maximal pattern complexity} \cite{KTZseq02, KTZmax02} also involve sampling a system at a selected set of times, but $\Int$ and $\Asc$ include all possibilities over all finite sets of sampling times. 

Two of the main results in this paper (Theorems \ref{asctoentthm} and  \ref{mtheoreticasctoent}) establish a relationship between topological intricacy and topological entropy as well as between measure-theoretic intricacy and measure-theoretic entropy. 
We show that intricacy is bounded above by entropy in both the topological and measure-theoretic settings. One result of this is that systems of zero entropy also have zero intricacy, so intricacy takes on low values for integrated systems. It is also easy to see that independent systems have zero intricacy.
Entropy in dynamics classically is first defined with respect to either a specific cover of a topological space or a specific partition of a measure space. To define the entropy of a transformation as an invariant under topological conjugacy or measure-theoretic isomorphism, one then takes the supremum over all open covers or over all partitions. 
We define intricacy with respect to a cover and with respect to a partition, but in a corollary of Theorem~\ref{asctoentthm} we show, for $c_S^n=2^{-n}$, that 
$\sup_\mathscr{U}\Int(X,\mathscr{U},T)=\sup_{\mathscr U} \Asc(X, \mathscr U,T)=
\htop(X,T)$, the usual topological entropy of the system. Similarly in the measure-theoretic setting, in Theorem~\ref{mtheoreticasctoent} and Corollary \ref{cor:supintent} we show for $c_S^n=2^{-n}$ that 
$\sup_\alpha \Int_\mu(X,\alpha,T)=\sup_\alpha \Asc_\mu (X, \alpha,T)=h_\mu(X,T)$, 
the usual measure-theoretic entropy. 
Thus attempts to define conjugacy invariants from these quantities lead to nothing new. However, looking at these measurements for specific partitions and open covers provides finer information of a new kind about interactions within dynamical systems (see, for example, Examples \ref{1317example}, \ref{ex:2}, and \ref{ex:3}).

The main topological examples we examine are subshifts, which are closed shift-invariant collections of infinite sequences of elements from a finite alphabet. The topological entropy of a subshift is the exponential growth rate of the number of words of each length found in sequences in the subshift. To find the intricacy 
or average sample complexity of a subshift, rather than counting all words of length $n$, we find an average of the number of words seen at the places in a subset $S\subset n^*$. Averaging in this manner creates a measurement that is more sensitive to the structure of the sequences in a subshift than is the entropy.

While we can approximate intricacy and average sample complexity  for subshifts, computing the actual quantities is difficult in general, since in principle for each $n$ we have to make computations on all $2^n$ subsets of $n^*$. Theorem~\ref{squarethm} provides a formula for the average sample complexity for particular covers of certain shifts of finite type.
In the measure-theoretic setting Theorems~\ref{Asccompenteqthm} and \ref{thm:FirstReturns} and Proposition \ref{infocor} give a relationship between measure-theoretic average sample complexity with respect to a finite partition $\alpha$, the fiber entropy of 
the first-return (or skew product) map on a cross product, and a series involving the conditional entropies $H_\mu(\alpha\mid\alpha_i)$. More specifically, we show that
for 1-step Markov shifts
\be
\Asc_\mu(X,\alpha,T)  = \sum_{i=1}^\infty2^{-i-1}H_\mu(\alpha\mid\alpha_i).
\en
   We use this equation to compute the measure-theoretic average sample complexity and measure-theoretic intricacy for $1$-step Markov measures on the full $2$-shift and $1$-step and $2$-step Markov measures on the golden mean shift. 
  Analysis of these data leads to conjectures about measures that maximize average sample complexity and measures that maximize intricacy. 
  Appendix A presents the generalizations to average sample pressure, and Appendix B extends the results to general weights.
 We have defined some new quantities and found out only the first few new things about them; 
 we conclude by mentioning some questions raised by this work that we think deserve further study.
 
\subsection{Some terminology and notation}  
We assume the basic terminology and notation of topological dynamics, symbolic dynamics, and ergodic theory, as found for example in \cite{lind1995an}, \cite{petersen1989ergodic}, and \cite{waltersergodic}.
For us a
 \emph{topological dynamical system} $(X,T)$ is a compact Hausdorff (often metric) space $X$ with a continuous transformation $T:X\rightarrow X$, and 
  a  {\em measure-preserving system} $(X,\mathscr{B},\mu, T)$ consists of a complete probability space $(X, \mathscr B, \mu)$ and a one-to-one onto map $T:X\rightarrow X$ such that $T$ and $T^{-1}$ are both measurable. 
 We denote by $n^*$ the set of integers from $0$ to $n-1$. i.e.
$n^*=\{0,1,\dots, n-1\}$.
Given a subset $S\subset n^*$, we denote its complement by $\setcomp{S}=n^*\setminus S$. We denote the number of elements in a set $A$ by either card$(A)$ or $|A|$. Unless otherwise specified, logarithms will be taken base $e$. We take the convention that $0\log 0=0$.

 The (two-sided) \emph{full shift space} $\Sigma(\mathcal{A})$ over an alphabet $\mathcal A$ is defined to be $\Sigma(\mathcal{A})=\prod_{-\infty}^\infty \mathcal{A}=\{x=(x_i)_{-\infty}^\infty:x_i\in \mathcal{A}\text{ for each }i\}$
 and is given the product topology. For us $\mathcal A$ is finite and has the discrete topology. The one-sided full shift space is $\Sigma(\mathcal{A})^+=\{x=(x_i)_0^\infty:x_i\in \mathcal{A}\text{ for each }i\}$. The shift transformation $\sigma:\Sigma(\mathcal{A})\rightarrow\Sigma(\mathcal{A})$ is defined by $(\sigma x)_i=x_{i+1}\quad\text{for }-\infty<i<\infty$, and $\sigma:\Sigma(\mathcal{A})^+\rightarrow\Sigma(\mathcal{A})^+$ is defined by $(\sigma x)_i=x_{i+1}$ for $0\le i<\infty$.
  If $\mathcal{A}=\{0,1,\dots, r-1\}$ then we denote $\Sigma(\mathcal{A})$ or $\Sigma(\mathcal{A})^+$ by $\Sigma_r$ or $\Sigma_r^+$ and call it the \emph{full $r$-shift}. We will deal only with two-sided shift spaces over a finite alphabet $\mathcal{A}=\{0,1,\dots, r-1\}$ unless otherwise stated. 
  	A \emph{subshift} is a pair $(X,\sigma)$, where $X\subset \Sigma_r$ is a nonempty, closed, shift-invariant ($\sigma X=X$) set. A \emph{block} or \emph{word} is an element of $\mathcal{A}^r$ for some $r=0,1,2\dots$, i.e. a finite string on the alphabet $\mathcal{A}$. If $x$ is a sequence in a subshift $X$, we will sometimes denote the block in $x$ from position $i$ to position $j$ by $x_{[i,j]}=x_{i}x_{i+1}\cdots x_{j}$. We denote the empty block by $\epsilon$. 
 Denote the set of words of length $n$ in a subshift $X$ by $\mathscr{L}_n(X)$, i.e, $\mathscr{L}_n(X)=\left\{x_{[i,i+n-1]}:x\in X, i\in\mathbb{Z}\right\}.$ The \emph{language} of a subshift $X$ is $\mathscr{L}(X)=\bigcup_{n=0}^\infty\mathscr{L}_n(X)$.
 
 Let $S\subset n^*$, $S=\{s_0,s_1,\dots,s_{|S|-1}\}$, and suppose $w\in\mathscr{L}_n(X)$ such that $w_{s_i}= a_{s_i}$ for $i=0,\dots,|S|-1$ and $a_{s_i}\in\mathcal{A}$. Then we call $a_{s_0}a_{s_1}\cdots a_{s_{|S|-1}}$ a \emph{word at the places in $S$}. Denote the set of words we can see at the places in $S$ for all words in $\mathscr{L}_n(X)$ by $\mathscr{L}_S(X)$. More formally, if $S=\{s_0,s_1,\dots,s_{|S|-1}\}$, then
 \begin{equation}\label{bseq}
 \mathscr{L}_S(X)=\{x_{s_0}x_{s_1}\dots x_{s_{|S|-1}}:x\in X\}.
 \end{equation}
  Notice that $\mathscr{L}_{n^*}(X)=\mathscr{L}_n(X)$. Given a subshift $X\subset\Sigma(\mathcal{A})$, we will often consider the cover $\mathscr{U}_n$ consisting of \emph{rank $n$ cylinder sets}
 \begin{equation}
   C_{-n}[i_{-n}, \dots, i_n] =  \{x\in X:x_{-n}=i_{-n},x_{-n+1}=i_{-n+1},\dots,x_0=i_0,\dots,x_n=i_n\}
 \end{equation}
 for some choices of $i_{-n},i_{-n+1},\dots,i_n\in\mathcal{A}$, and similarly for covers $\mathscr{U}_n$ of one-sided subshifts.
 	A \emph{shift of finite type} (SFT) is defined by specifying a finite collection, $\mathcal{F}$, of forbidden words on a given alphabet, $\mathcal{A}=\{0,1,\dots,r\}$. Given such a collection $\mathcal{F}$, define $X_\mathcal{F}\subset \Sigma_r$ to be the set of all sequences none of whose subblocks are in $\mathcal{F}$. i.e.
 	\begin{equation}
 	X_\mathcal{F}=\{x\in\Sigma(\mathcal{A}):
 	\text{ for all }i,j\in\mathbb{Z},
    x_{[i,j]}	\not\in\mathcal{F}\}.
 	\end{equation}

\section{Topological intricacy and average sample complexity}\label{Sec:IntAsc}
Our definitions of intricacy and average sample complexity are based on the idea of neurological complexity proposed by Edelman, Sporns, and Tononi \cite{Tononi94} and its probabilistic generalizations by Buzzi and Zambotti \cite{BZ12}. 
An important initial consideration is the identification of the families of weights that are appropriate to use for the averaging over subsets involved in the basic definitions.
A \emph{system of coefficients} is defined (in \cite{BZ12}) to be a family of numbers
$\{c_S^n:n\in \mathbb{N}, S\subset n^*\}$
satisfying, for all $n\in \mathbb{N}$ and $S\subset n^*$,
 $c_S^n\ge 0$,
 $\sum_{S\subset n^* }c_S^n=1$, and
 $c_{\setcomp{S}}^n=c_S^n$.
Some examples of systems of coefficients are
$ c_S^n={1}/{2^n}$ (uniform),
 $c_S^n= {1}/[n+1)C({n},{|S|})]$ (neural complexity, $C(n,k)$ are the binomial coefficients), and
$ c_S^n={1}/[{2}\left(p^{|S|}(1-p)^{|\setcomp{S}|}+(1-p)^{|S|}p^{|\setcomp{S}|}\right)]$ for  fixed $0<p<1$ ($p$-symmetric).

Given a system of coefficients $c_S^n$ and a finite set of random variables $\{\mathbf{x}_i: i\in n^* \}$, for each $S\subset n^*$ let $\mathbf{x}_S:=\{\mathbf{x}_i:i\in S\}$. The corresponding {\em mutual information functional} $\mathcal{I}^c$ is defined by
\begin{equation}
\mathcal{I}^c(\mathbf{x}):=\sum_{S\subset n^* }c_S^n MI(\mathbf{x}_S,\mathbf{x}_{\setcomp{S}})=\sum_{S\subset n^*}c_S^n\left[H(\mathbf{x}_S)+H\left(\mathbf{x}_{\setcomp{S}}\right)-H\left(\mathbf{x}_S,\mathbf{x}_{\setcomp{S}}\right)\right].
\end{equation}
An \emph{intricacy} is a mutual information functional satisfying
\begin{enumerate}[(1)]
\item \emph{exchangeability}: if $n,m\in{\mathbb{N}}$ and $\phi:n^* \rightarrow m^*$ is a bijection, then $\mathcal{I}^c(\mathbf{x})=\mathcal{I}^c(\mathbf{y})$ for any $\mathbf{x}:=\{\mathbf{x}_i: {i\in n^* }\}$, $\mathbf{y}:=\{\mathbf{x}_{\phi^{-1}(j)}:{j\in m^*}\}$;
\item \emph{weak additivity}: $\mathcal{I}^c(\mathbf{x},\mathbf{y})=\mathcal{I}^c(\mathbf{x})+\mathcal{I}^c(\mathbf{y})$ for any two independent systems $\{\mathbf{x}_i:{i\in n^* }\}$, $\{\mathbf{y}_j:{j\in m^*}\}$.
\end{enumerate}
The following result from~\cite{BZ12} characterizes systems of coefficients that generate intricacies.  A probability measure $\lambda$ on $[0,1]$ is {\em symmetric} if $\int_{[0,1]}f(x)\lambda(dx)=\int_{[0,1]}f(1-x)\lambda(dx)$ for all bounded measurable functions $f$ on $[0,1]$.
\begin{theorem}\label{coeffprop}
Let $c_S^n$ be a system of coefficients and $\mathcal{I}^c$ the associated mutual information functional.
\begin{enumerate}[{\normalfont}]
\item $\mathcal{I}^c$ is an intricacy if and only if there exists a symmetric probability measure $\lambda_c$ on $[0,1]$ such that for all $S\subset n^*$,
\be\label{eq:intricacyweights}
c_S^n=\int_{[0,1]}x^{|S|}(1-x)^{n-|S|}\lambda_c(dx).
\en
\item The measure $\lambda_c$ is uniquely determined by $\mathcal{I}^c$. Moreover $\mathcal{I}^c$ is non-null, i.e. there exists some nonzero $c_S^n$ for $S\not\in\{\emptyset, n^*\}$ if and only if $\lambda_c\{(0,1)\}>0$. In this case $c_S^n>0$ for all $S\subset n^*$, $S\not\in\{\emptyset, n^*\}$.
\item For the neural complexity weights we have
\be
c_S^n=\frac{1}{n+1}\frac{1}{\binom{n}{|S|}}=\int_{[0,1]}x^{|S|}(1-x)^{n-|S|}dx \text{ for all } S\subset n^*,
\en
i.e., $\lambda_c$ is Lebesgue measure on $[0,1]$ and neural complexity is an intricacy.
\end{enumerate}
\end{theorem}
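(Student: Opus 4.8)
The plan is to reduce the two structural axioms—exchangeability and weak additivity—to concrete arithmetic conditions on the array $(c_S^n)$, and then to recognize those conditions as the classical Hausdorff moment problem. First I would observe that exchangeability forces $c_S^n$ to depend only on the cardinality $|S|$: applying the identity $\mathcal{I}^c(\mathbf{x})=\mathcal{I}^c(\mathbf{y})$ to permutations $\phi$ of $n^*$, and to families of random variables chosen so that the mutual-information terms $MI(\mathbf{x}_S,\mathbf{x}_{\setcomp{S}})$ can be isolated one at a time, yields $c_{\phi(S)}^n=c_S^n$ for every permutation $\phi$, so that we may write $c_S^n=a_{n,k}$ with $k=|S|$.

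Next I would extract the content of weak additivity. For independent systems $\{\mathbf{x}_i\}_{i\in n^*}$ and $\{\mathbf{y}_j\}_{j\in m^*}$, a subset $R\subset(n+m)^*$ splits as $R=S\sqcup T$ with $S\subset n^*$ and $T\subset m^*$, and independence gives $MI((\mathbf{x},\mathbf{y})_R,(\mathbf{x},\mathbf{y})_{\setcomp{R}})=MI(\mathbf{x}_S,\mathbf{x}_{\setcomp{S}})+MI(\mathbf{y}_T,\mathbf{y}_{\setcomp{T}})$. Grouping terms and again isolating the mutual informations, the relation $\mathcal{I}^c(\mathbf{x},\mathbf{y})=\mathcal{I}^c(\mathbf{x})+\mathcal{I}^c(\mathbf{y})$ holds for all such systems if and only if the marginalization identity $\sum_{T\subset m^*}c^{n+m}_{S\sqcup T}=c_S^n$ holds, i.e.
\be
\sum_{j=0}^m \binom{m}{j} a_{n+m,\,k+j} = a_{n,k}.
\en
Taking $m=1$ yields the Pascal-type recursion $a_{n+1,k}+a_{n+1,k+1}=a_{n,k}$, and iterating this recursion recovers the identity for all $m$. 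An induction on $n-k$ then shows that the whole array is determined by its diagonal $m_k:=a_{k,k}$ through $a_{n,k}=(-1)^{n-k}(\Delta^{n-k}m)_k$, where $\Delta$ is the forward-difference operator; hence the nonnegativity requirement $a_{n,k}\ge 0$ is exactly the statement that $(m_k)$ is completely monotone, while the normalization forces $m_0=1$.

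At this point the Hausdorff moment theorem applies: a completely monotone sequence with $m_0=1$ is the moment sequence $m_k=\int_{[0,1]}x^k\,\lambda(dx)$ of a unique Borel probability measure $\lambda$ on $[0,1]$, and reversing the difference computation gives $a_{n,k}=\int_{[0,1]}x^k(1-x)^{n-k}\,\lambda(dx)$, which is \eqref{eq:intricacyweights}. The symmetry axiom $c_{\setcomp{S}}^n=c_S^n$ reads $a_{n,n-k}=a_{n,k}$; pushing forward under $x\mapsto 1-x$ and using that moments determine a measure on $[0,1]$, this is equivalent to $\lambda$ being symmetric. The converse direction is a direct verification: for symmetric $\lambda$ the numbers in \eqref{eq:intricacyweights} are nonnegative, sum to $1$ by the binomial theorem, and are symmetric, and the multiplicativity computation above shows the associated $\mathcal{I}^c$ is exchangeable and weakly additive. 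For part (2), uniqueness of $\lambda$ is the uniqueness clause of the Hausdorff problem applied to the moments $m_k=a_{k,k}$, which are determined by $\mathcal{I}^c$; and since for $0<|S|<n$ the integrand $x^{|S|}(1-x)^{n-|S|}$ is strictly positive on $(0,1)$ and vanishes at the endpoints, $c_S^n>0$ for some (equivalently, every) such $S$ precisely when $\lambda\{(0,1)\}>0$. Part (3) is the Beta-integral evaluation $\int_0^1 x^k(1-x)^{n-k}\,dx=k!(n-k)!/(n+1)!=1/[(n+1)\binom{n}{k}]$, exhibiting Lebesgue measure as the (symmetric) representing measure.

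The main obstacle I anticipate is the \emph{separation of terms} step used twice above: passing from equality of the functionals $\mathcal{I}^c$ for \emph{all} admissible families of random variables to equality of the individual coefficients. This requires constructing, for each subset $S$, a family whose mutual-information profile $\{MI(\mathbf{x}_S,\mathbf{x}_{\setcomp{S}})\}_S$ is rich enough to disentangle the coefficients—for instance by taking independent or suitably correlated indicator-type variables and varying their joint law. Once that linear-algebraic separation is in place, the remainder is the translation into the Hausdorff moment problem, whose hard analytic content—the equivalence of complete monotonicity with representability, and the uniqueness of the representing measure—I would invoke as a classical theorem rather than reprove.
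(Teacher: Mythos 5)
This theorem is stated in the paper as a quoted result from \cite{BZ12} with no proof given, so there is no in-paper argument to compare against; your proposal essentially reconstructs the Buzzi--Zambotti proof itself: exchangeability reduces $c_S^n$ to a function $a_{n,|S|}$ of cardinality, weak additivity (tested against independent systems, one of them deterministic) yields the marginal consistency $a_{n,k}=a_{n+1,k}+a_{n+1,k+1}$, and the Hausdorff moment theorem applied to the completely monotone diagonal $m_k=a_{k,k}$, together with the pushforward under $x\mapsto 1-x$ for symmetry and the Beta integral for part (3), gives the representation, its uniqueness, and the positivity dichotomy. The one step you rightly flag as the real technical content---passing from equality of the functionals $\mathcal{I}^c$ on all admissible families to coefficientwise identities, where one must note that $c^n_{\emptyset}$ and $c^n_{n^*}$ never appear in $\mathcal{I}^c$ (their $MI$ terms vanish) and are instead pinned down by normalization and complement-symmetry---is exactly the separation lemma established in \cite{BZ12}, so your outline is correct and follows the same route as the cited source.
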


We now formulate definitions of topological intricacy and topological average sample complexity, based on the  definition of topological entropy given by Adler, Konheim, and McAndrew in terms of open covers \cite{AdlerKonheimMcAndrew1965}. We could just as well use the definition of Bowen \cite{Bowen1971}, 
and do so below in (\ref{sec:bowendef}) and 
for the generalization to average sample pressure in Section  \ref{asp}.  
\begin{definition}\label{def:TopInt}
Let $T:X\rightarrow X$ be a continuous map on a compact Hausdorff space $X$, let  $\mathscr{U}$ be an open cover of $X$, and let $c_S^n$ be a system of coefficients as defined above. Define the \emph{topological intricacy of $T$ with respect to the open cover} $\mathscr{U}$ to be
\begin{equation}\label{topintdef}
\Int(X,\mathscr{U},T):=\lim_{n\rightarrow\infty}\frac{1}{n}\sum_{S\subset n^* }c_S^n\log\left( \frac{N(\mathscr{U}_S)N(\mathscr{U}_{\setcomp{S}})}{N(\mathscr{U}_{n^*})}\right).
\end{equation}
\end{definition}
\noindent We will see later that this limit exists.

Next we define the topological average sample complexity. Note that
\be
\begin{aligned}
\frac{1}{n}\sum_{S\subset n^* }c_S^n\log\left( \frac{N(\mathscr{U}_S)N(\mathscr{U}_{\setcomp{S}})}{N(\mathscr{U}_{n^*})}\right)&=\frac{1}{n}\sum_{S\subset n^* }\left(c_S^n\log N(\mathscr{U}_S)+c_S^n\log N(\mathscr{U}_{\setcomp{S}})-c_S^n\log N(\mathscr{U}_{n^*})\right)\\
&=2\left(\frac{1}{n}\sum_{S\subset n^* } c_S^n\log N(\mathscr{U}_S)\right)-\frac{1}{n}\log N(\mathscr{U}_{n^*})
\end{aligned}
\en
and
\be
\lim_{n\rightarrow\infty}\frac{1}{n}\log N(\mathscr{U}_{n^*})=\inf_n\frac{1}{n}\log N(\mathscr{U}_{n^*})=\htop(X,\mathscr{U},T),
\en
the ordinary topological entropy of $T$ with respect to the open cover $\mathscr{U}$. Thus, in order to calculate intricacy we must find
\be
\lim_{n\rightarrow\infty}\frac{1}{n}\sum_{S\subset n^* } c_S^n\log N(\mathscr{U}_S).
\en
Since this quantity is interesting on its own, we make the following definition.
\begin{definition}\label{def:TopAsc}
Let $T:X\rightarrow X$ be a continuous map on a compact Hausdorff space $X$, let $\mathscr{U}$ be an open cover of $X$ and let $c_S^n$ be a system of coefficients. 
The \emph{topological average sample complexity of $T$ with respect to the open cover $\mathscr{U}$} is defined to be
\begin{equation}\label{topascdef}
\Asc(X,\mathscr{U},T):=\lim_{n\rightarrow\infty}\frac{1}{n}\sum_{S\subset n^* } c_S^n\log N(\mathscr{U}_S).
\end{equation}
\end{definition}
Thus 
\be\label{Eq:AscInt}
\Int(X\mathscr U,T) = 2 \Asc(X, \mathscr U,T) -\htop(X,\mathscr U,T).
\en

Suppose $S=\{s_0,\dots, s_{|S|-1}\}$ with $s_0<s_1<\cdots<s_{|S|-1}$. If we let $S'=\{0,s_1-s_0,\dots, s_{|S|-1}-s_0\}$ then 
\be
N(\mathscr{U}_{S'})=N(T^{s_0}\mathscr{U}_{S})=N(\mathscr{U}_{S}).
\en
Thus, when averaging $\log N(\mathscr{U}_S)$ over all subsets $S\subset n^*$ we end up counting the contribution from some subsets many times. If we restrict to subsets $S\subset n^*$ such that $0\in S$, then we count each configuration only once. This leads to the next definition, where we are concerned only with the configuration that a subset $S\subset n^*$ exhibits. 
\begin{definition}
	Let $T:X\rightarrow X$ be a continuous map on a compact Hausdorff space $X$, let $\mathscr{U}$ be an open cover of $X$, and let $c_S^n$ be a system of coefficients. 
	The \emph{average configuration complexity} of $T$ with respect to the open cover $\mathscr{U}$ is
	\begin{equation}\label{acc1}
	\Acc(X,\mathscr{U},T):=\lim_{n\rightarrow\infty}\frac{1}{n}\sum_{\substack{S\subset n^*\\ 0\in S }} c_S^n\log N(\mathscr{U}_S).
	\end{equation}
\end{definition}
\begin{proposition}\label{accascprop1}
	Let $(X,T)$ be a topological dynamical system and fix the system of coefficients $c_S^n=2^{-n}$. Then for any open cover $\mathscr{U}$ of $X$, 
	\be
	\Acc(X,\mathscr{U},T)=\frac{1}{2}\Asc(X,\mathscr{U},T).
	\en
\end{proposition}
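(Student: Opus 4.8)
The plan is to exploit the fact that, for the uniform coefficients $c_S^n=2^{-n}$, both defining sums are averages over a subset $S\subset n^*$ chosen uniformly at random (equivalently, each $i\in n^*$ is placed in $S$ independently with probability $1/2$). Writing $a_S:=\log N(\mathscr{U}_S)$, we have $\Asc(X,\mathscr{U},T)=\lim_n \tfrac1n\,\mathbb{E}[a_S]$, where $\mathbb{E}$ denotes the average $2^{-n}\sum_{S\subset n^*}$. The defining sum for $\Acc$ restricts to subsets containing $0$; since $\mathbb{P}(0\in S)=1/2$, that restricted average equals $\tfrac12\,\mathbb{E}[a_S\mid 0\in S]$, so $\Acc(X,\mathscr{U},T)=\tfrac12\lim_n\tfrac1n\,\mathbb{E}[a_S\mid 0\in S]$. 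It therefore suffices to show that conditioning on $0\in S$ does not change the limit, i.e. that $\tfrac1n\bigl(\mathbb{E}[a_S\mid 0\in S]-\mathbb{E}[a_S]\bigr)\to 0$.

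First I would reduce this to a single quantity. From $\mathbb{E}[a_S]=\tfrac12\mathbb{E}[a_S\mid 0\in S]+\tfrac12\mathbb{E}[a_S\mid 0\notin S]$ one gets $\mathbb{E}[a_S\mid 0\in S]-\mathbb{E}[a_S]=\tfrac12\bigl(\mathbb{E}[a_S\mid 0\in S]-\mathbb{E}[a_S\mid 0\notin S]\bigr)$, so everything hinges on comparing the two halves of the sample space. I would couple them through a uniformly random $S'\subset\{1,\dots,n-1\}$: then $S'$ realizes the uniform law on subsets avoiding $0$, while $S'\cup\{0\}$ realizes the uniform law on subsets containing $0$. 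Consequently $\mathbb{E}[a_S\mid 0\in S]-\mathbb{E}[a_S\mid 0\notin S]=\mathbb{E}_{S'}\bigl[a_{S'\cup\{0\}}-a_{S'}\bigr]$.

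The heart of the matter, and the step I expect to be the main (if mild) obstacle, is a bound on the single-term increment $a_{S'\cup\{0\}}-a_{S'}$ that is uniform in both $S'$ and $n$. Here I would use two standard features of the covering number $N$. Monotonicity under refinement gives $N(\mathscr{U}_{S'}\vee\mathscr{U})\ge N(\mathscr{U}_{S'})$, so the increment is nonnegative; submultiplicativity $N(\mathscr{A}\vee\mathscr{B})\le N(\mathscr{A})N(\mathscr{B})$ gives $N(\mathscr{U}_{S'}\vee\mathscr{U})\le N(\mathscr{U}_{S'})\,N(\mathscr{U})$. Since $0\notin S'$ we have $\mathscr{U}_{S'\cup\{0\}}=\mathscr{U}_{S'}\vee\mathscr{U}$, and these combine to the uniform estimate $0\le a_{S'\cup\{0\}}-a_{S'}\le \log N(\mathscr{U})$.

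Putting the pieces together, $\lvert\mathbb{E}[a_S\mid 0\in S]-\mathbb{E}[a_S]\rvert\le\tfrac12\log N(\mathscr{U})=O(1)$, so dividing by $n$ and letting $n\to\infty$ yields $\lim_n\tfrac1n\mathbb{E}[a_S\mid 0\in S]=\Asc(X,\mathscr{U},T)$ whenever the latter exists; combined with $\Acc=\tfrac12\lim_n\tfrac1n\mathbb{E}[a_S\mid 0\in S]$ this gives $\Acc(X,\mathscr{U},T)=\tfrac12\Asc(X,\mathscr{U},T)$, and the same chain of equalities shows the $\Acc$ limit exists as soon as the $\Asc$ limit does. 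The factor $\tfrac12$ is thus transparently the probability $\mathbb{P}(0\in S)$, and no growth estimate on $N(\mathscr{U}_{n^*})$ is needed beyond the trivial per-step bound $\log N(\mathscr{U})$.
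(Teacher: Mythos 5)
Your proof is correct, but it runs along a genuinely different track from the paper's. The paper splits the sum defining $\Asc(X,\mathscr{U},T)$ into the subsets with $0\in S$ and those with $0\notin S$, identifies the latter (via the translation invariance $N(\mathscr{U}_S)=N(\mathscr{U}_{S'})$ for shifted sample sets, noted just before the definition of $\Acc$) with the full sum over subsets of $(n-1)^*$, and obtains the self-referential identity $\Asc=\Acc+\tfrac12\Asc$, from which the claim is immediate. You instead compare the conditioned average $\mathbb{E}[a_S\mid 0\in S]$ directly with the unconditioned one via the coupling $S'\mapsto S'\cup\{0\}$, and control the increment using monotonicity and submultiplicativity of $N$, getting the uniform bound $0\le a_{S'\cup\{0\}}-a_{S'}\le\log N(\mathscr{U})$ (finite since $X$ is compact, and valid with the convention $N(\mathscr{U}_\emptyset)=1$). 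Each route buys something: the paper's is shorter and reuses an observation already on the page, but its translation-invariance step tacitly relies on $N(T^{-1}\mathscr{V})=N(\mathscr{V})$, which holds when $T$ is surjective; your argument bypasses shift invariance entirely, so it works verbatim for non-surjective $T$, and it yields the explicit rate $\bigl\lvert \Acc(n)-\tfrac12\Asc(n)\bigr\rvert\le \log N(\mathscr{U})/(2n)$ rather than only the equality of limits. Both proofs need the existence of the $\Asc$ limit as input (established in the paper by subadditivity and Fekete's lemma), and both then deliver the existence of the $\Acc$ limit; you are explicit about this dependence, which is a point in your write-up's favor.
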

\begin{proof}
	\begin{align*}
	\Asc(X,\mathscr{U},T)
	&=\lim_{n\rightarrow\infty}\frac{1}{n}\frac{1}{2^n}\sum_{\substack{S\subset n^*\\ 0\in S }}\log N(\mathscr{U}_S)+\lim_{n\rightarrow\infty}\frac{1}{n}\frac{1}{2^n}\sum_{\substack{S\subset n^*\\ 0\not\in S }}\log N(\mathscr{U}_S)\\
	&=\lim_{n\rightarrow\infty}\frac{1}{n}\frac{1}{2^n}\sum_{\substack{S\subset n^*\\ 0\in S }}\log N(\mathscr{U}_S)+\lim_{n\rightarrow\infty}\frac{1}{n}\frac{1}{2^n}\sum_{S\subset (n-1)^*}\log N(\mathscr{U}_S)\\
	&=\Acc(X,\mathscr{U},T)+\lim_{n\rightarrow\infty}\frac{1}{2}\left(\frac{n-1}{n}\right)\left[\frac{1}{n-1}\frac{1}{2^{n-1}}\sum_{S\subset (n-1)^*}\log N(\mathscr{U}_S)\right]\\
	&=\Acc(X,\mathscr{U},T)+\frac{1}{2}\Asc(X,\mathscr{U},T).
	\end{align*}
\end{proof}

We also consider the average sample complexity and intricacy as functions of $n$. 
\begin{definition}\label{def:ascintfunctions}
Let $(X,T)$ be a topological dynamical system, $\mathscr{U}$ an open cover of $X$, and $c_S^n$ a system of coefficients. The \emph{topological average sample complexity function of $T$ with respect to the open cover $\mathscr{U}$} is defined by
\be
\Asc(X,\mathscr{U},T,n)=\frac{1}{n}\sum_{S\subset n^*}c_S^n\log N(\mathscr{U}_S).
\en
The \emph{topological intricacy function of $T$ with respect to the open cover $\mathscr{U}$} is defined by
\be
\Int(X,\mathscr{U},T,n)=\frac{1}{n}\sum_{S\subset n^*}c_S^n\log\left(\frac{N(\mathscr{U}_S)N(\mathscr{U}_{\setcomp{S}})}{N(\mathscr{U}_{n^*})}\right).
\en
When the context is clear we will sometimes write these as $\Asc(n)$ and $\Int(n)$.
\end{definition}

\begin{rem}
	Suppose that $(X,\sigma)$ is a subshift, $\mathscr U=\mathscr U_0$ is the standard time-$0$ cover (and partition) by cylinder sets determined by the initial symbol, and $S \subset n^*$. Then $N(\mathscr U_S)$ is the number of different words of length $|S|$ seen at the places in $S$ among all sequences in $X$.
\end{rem}

In order to show that the limits in Equations~\ref{topintdef} and \ref{topascdef} exist, we show that $ b_n:=\sum_{S\subset n^* } c_S^n\log N(\mathscr{U}_S)$ is subadditive for the class of systems of coefficients that define an intricacy functional as in Theorem~\ref{coeffprop}.

\begin{theorem}\label{syscoeffsubadd}
 Let $c_S^n$ be a system of coefficients and define
$b_n:=\sum_{S\subset n^* }c_S^n\log N(\mathscr{U}_S)$.
Then $b_{n+m}\le b_n+b_m$ for all $n,m\in\mathbb{N}$.
\end{theorem}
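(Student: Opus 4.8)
The plan is to reduce everything to two facts: the submultiplicativity of the minimal-subcover count under refinement together with shift invariance, and the integral representation of the weights furnished by Theorem~\ref{coeffprop}. First I would fix $n,m$ and, for each $R\subset(n+m)^*$, split it into its left and right parts $R_1=R\cap n^*$ and $R_2=R\cap\{n,\dots,n+m-1\}$, setting $R_2'=R_2-n\subset m^*$. Since $\mathscr{U}_R=\mathscr{U}_{R_1}\vee\mathscr{U}_{R_2}$ and $N(\mathscr{U}\vee\mathscr{V})\le N(\mathscr{U})N(\mathscr{V})$ (intersect minimal subcovers), while $\mathscr{U}_{R_2}=T^{-n}\mathscr{U}_{R_2'}$ gives $N(\mathscr{U}_{R_2})\le N(\mathscr{U}_{R_2'})$ — any subcover of $\mathscr{U}_{R_2'}$ pulls back under $T^{-n}$ to a subcover of the same or smaller size, using $T^{-n}X=X$ — I obtain the pointwise bound
\be
\log N(\mathscr{U}_R)\le \log N(\mathscr{U}_{R_1})+\log N(\mathscr{U}_{R_2'}).
\en

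Next I would insert this bound into $b_{n+m}=\sum_{R\subset(n+m)^*}c_R^{n+m}\log N(\mathscr{U}_R)$ and replace each weight by its representing integral from (\ref{eq:intricacyweights}). Writing $|R|=|R_1|+|R_2'|$ and $(n+m)-|R|=(n-|R_1|)+(m-|R_2'|)$, the integrand factors as $x^{|R_1|}(1-x)^{n-|R_1|}\cdot x^{|R_2'|}(1-x)^{m-|R_2'|}$, so under the bijection $R\leftrightarrow(R_1,R_2')$ the resulting double sum separates. In the term carrying $\log N(\mathscr{U}_{R_1})$ I would sum out $R_2'$ first; the binomial theorem gives $\sum_{R_2'\subset m^*}x^{|R_2'|}(1-x)^{m-|R_2'|}=1$, collapsing that factor and leaving exactly $\int_{[0,1]}\sum_{R_1\subset n^*}x^{|R_1|}(1-x)^{n-|R_1|}\log N(\mathscr{U}_{R_1})\,\lambda_c(dx)=b_n$. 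Symmetrically the other term equals $b_m$, and $b_{n+m}\le b_n+b_m$ follows. Interchanging the finite sums with the integral is harmless, since all sums are finite and the integrand is bounded.

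The main obstacle — indeed the only place any real structure is used — is the factorization of the weight $c_R^{n+m}$ across the split of the index set. This is precisely what the integral representation of an intricacy supplies: a generic system of coefficients need not have marginals matching $c^n$ and $c^m$, so the clean separation above would fail. Thus the appeal to Theorem~\ref{coeffprop} is essential, and I would note that the uniform weights $c_S^n=2^{-n}$ (corresponding to $\lambda_c=\delta_{1/2}$) lie within its scope. The remaining ingredients — submultiplicativity of $N$, the inequality $N(T^{-n}\mathscr{U}_{R_2'})\le N(\mathscr{U}_{R_2'})$, and the binomial summation — are routine. Once subadditivity is established, Fekete's lemma delivers existence of the limits defining $\Asc$ and $\Int$, which is exactly the point flagged just before the statement.
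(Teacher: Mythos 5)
Your proof is correct and follows essentially the same route as the paper's: the identical left/right splitting of $R\subset(n+m)^*$ into $R\cap n^*$ and a translated subset of $m^*$, the submultiplicativity $N(\mathscr{U}\vee\mathscr{V})\le N(\mathscr{U})N(\mathscr{V})$, and then the integral representation from Theorem~\ref{coeffprop} to factor the weight $x^{|R|}(1-x)^{n+m-|R|}$ across the split and collapse one sum via the binomial identity. Your explicit remarks that the representation (not the bare axioms of a system of coefficients) is what makes the weights separate, and your use of the inequality $N(T^{-n}\mathscr{U}_{R_2'})\le N(\mathscr{U}_{R_2'})$ in place of the paper's asserted equality, are faithful to (and in the latter case slightly more careful than) the paper's argument.
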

\begin{proof}
Let $S\subset (n+m)^*$ and define $U(S)=S\cap n^*$ and $V(S)=S\cap [(n+m)^*\setminus n^*]$. We see that
\be
N(\mathscr{U}_S)\le N\left(\mathscr{U}_{U(S)}\right)N\left(\mathscr{U}_{V(S)}\right),
\en
so
\be
\sum_{S\subset (n+m)^*}c_S^{n+m}\log N(\mathscr{U}_S)\le \sum_{S\subset (n+m)^*}c_S^{n+m}\log N(\mathscr{U}_{U(S)})+\sum_{S\subset (n+m)^*}c_S^{n+m}\log N(\mathscr{U}_{V(S)}).
\en
Abbreviate $U(S)=U$ and $V(S)=V$. For $W\subset m^*$ let $W+n=\{w+n:w\in W\}$. Note that each $W\subset m^*$ corresponds uniquely to $W+n=V$, and for the corresponding sets $W$ and $V$ $N(\mathscr{U}_W)=N(\mathscr{U}_{W+n})=N(\mathscr{U}_V)$. Then for all $n,m\in\mathbb{N}$
\begin{align*}
b_{n+m}&=\sum_{S\subset (n+m)^*}\int_{[0,1]}x^{|S|}(1-x)^{n+m-|S|}\lambda_c(dx)\log N(\mathscr{U}_S)\\
&=\int_{[0,1]}\sum_{S\subset (n+m)^*}x^{|S|}(1-x)^{n+m-|S|}\log N(\mathscr{U}_S)\lambda_c(dx)\\
&\le\int_{[0,1]}\left(\sum_{U\subset n^* }x^{|U|}(1-x)^{n-|U|}\log N(\mathscr{U}_U)\right. \left.+\sum_{W\subset m^*}x^{|W|}(1-x)^{m-|W|}\log N(\mathscr{U}_W)\right)\lambda_c(dx)\\
&=\sum_{U\subset n^* }\int_{[0,1]}x^{|U|}(1-x)^{n-|U|}\lambda_c(dx)\log N(\mathscr{U}_U)+\sum_{W\subset m^*}\int_{[0,1]}x^{|W|}(1-x)^{m-|W|}\lambda_c(dx)\log N(\mathscr{U}_W)\\
&=b_n+b_m.
\end{align*}
\end{proof}
\begin{corollary}
If $c_S^n$ is a system of coefficients, then the limits in the definitions of $\Asc(X,\mathscr{U},T)$ and $\Int(X,\mathscr{U},T)$ (Definitions \ref{topintdef} and \ref{topascdef}) exist and 
\be
\Asc(X,\mathscr{U},T)=\inf_n\frac{1}{n}\sum_{S\subset n^*}c_S^n\log N(\mathscr{U}_S).
\en
\end{corollary}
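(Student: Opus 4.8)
The plan is to reduce the existence of both limits to the subadditivity already established in Theorem~\ref{syscoeffsubadd} together with Fekete's subadditive lemma. Writing $b_n:=\sum_{S\subset n^*}c_S^n\log N(\mathscr{U}_S)$, Definition~\ref{def:TopAsc} asks precisely for $\lim_n b_n/n$. First I would record that $b_n\ge 0$: since every subcover contains at least one set, $N(\mathscr{U}_S)\ge 1$ and hence $\log N(\mathscr{U}_S)\ge 0$ for every $S\subset n^*$, so each summand is nonnegative (recall also $c_S^n\ge 0$). Thus $(b_n)$ is a nonnegative, subadditive sequence.

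Next I would invoke the standard subadditivity lemma: for any subadditive sequence $(b_n)$ the limit $\lim_{n\to\infty}b_n/n$ exists in $[-\infty,\infty)$ and equals $\inf_n b_n/n$. The lower bound $b_n\ge 0$ forces this infimum to be a finite nonnegative real number, so
\be
\Asc(X,\mathscr{U},T)=\lim_{n\to\infty}\frac{1}{n}\sum_{S\subset n^*}c_S^n\log N(\mathscr{U}_S)=\inf_n\frac{1}{n}\sum_{S\subset n^*}c_S^n\log N(\mathscr{U}_S)
\en
exists and is finite, which is the displayed formula of the corollary.

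It remains to deduce existence of the intricacy limit, and here I would work at the level of the defining partial sums rather than passing through the limiting identity. The algebraic computation preceding Definition~\ref{def:TopAsc} (which uses only $\sum_S c_S^n=1$ and the symmetry $c_{\setcomp{S}}^n=c_S^n$) shows that, term by term in $n$,
\be
\Int(X,\mathscr{U},T,n)=2\,\Asc(X,\mathscr{U},T,n)-\frac{1}{n}\log N(\mathscr{U}_{n^*}).
\en
The first term on the right converges to $2\,\Asc(X,\mathscr{U},T)$ by the previous paragraph, while the second converges to $\htop(X,\mathscr{U},T)$ because $n\mapsto\log N(\mathscr{U}_{n^*})$ is itself subadditive, a classical fact already noted in the excerpt that yields the infimum formula for topological entropy with respect to $\mathscr{U}$. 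Since both limits exist, so does their difference; hence $\Int(X,\mathscr{U},T)$ exists and equals $2\,\Asc(X,\mathscr{U},T)-\htop(X,\mathscr{U},T)$, recovering~(\ref{Eq:AscInt}).

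I do not anticipate a genuine obstacle, since the essential work is carried by Theorem~\ref{syscoeffsubadd}. The only points requiring care are verifying the lower bound $b_n\ge 0$ so that the Fekete infimum is finite rather than $-\infty$, and treating the intricacy limit through its $n$-th terms, so that its existence follows from the existence of the two separate limits (for $\Asc$ and for $\htop$) rather than being presupposed.
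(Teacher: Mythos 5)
Your proposal is correct and follows essentially the same route as the paper, which proves the corollary in one line by citing Fekete's Lemma together with Theorem~\ref{syscoeffsubadd} and relies on the identity~(\ref{Eq:AscInt}) for the intricacy limit just as you do. Your additional checks---that $b_n\ge 0$ (so the infimum is finite) and that the intricacy limit should be handled via the $n$-th-term identity $\Int(X,\mathscr{U},T,n)=2\Asc(X,\mathscr{U},T,n)-\frac{1}{n}\log N(\mathscr{U}_{n^*})$ rather than the limiting equation---are exactly the details the paper leaves implicit.
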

\begin{proof}
This follows from Fekete's Lemma \cite{fekete1923verteilung} and Theorem~\ref{syscoeffsubadd}.
\end{proof}
\begin{proposition}\label{htopboundprop}
 For each open cover $\mathscr{U}$, $\Asc(X,\mathscr{U},T)\le \htop(X,T)$, and hence 
 \be
 \Int(X,\mathscr{U},T)\le \htop(X,\mathscr{U},T)\le \htop(X,T).
 \en
\end{proposition}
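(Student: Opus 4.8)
The plan is to establish the sharper inequality $\Asc(X,\mathscr{U},T) \le \htop(X,\mathscr{U},T)$ and then read off both displayed bounds from it. The single structural input I need is monotonicity of $N$ under refinement: if a cover $\mathscr{V}$ refines a cover $\mathscr{W}$, then $N(\mathscr{W}) \le N(\mathscr{V})$, because from a minimal subcover of $\mathscr{V}$ one produces a subcover of $\mathscr{W}$ of no larger cardinality by replacing each chosen element of $\mathscr{V}$ with an element of $\mathscr{W}$ that contains it.

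First I would note that for every $S \subset n^*$ the full join satisfies $\mathscr{U}_{n^*} = \mathscr{U}_S \vee \mathscr{U}_{\setcomp{S}}$, so $\mathscr{U}_{n^*}$ refines $\mathscr{U}_S$ and hence $N(\mathscr{U}_S) \le N(\mathscr{U}_{n^*})$. Taking logarithms, multiplying by the nonnegative weights $c_S^n$, summing over $S \subset n^*$, and using $\sum_{S\subset n^*} c_S^n = 1$ gives, for every $n$,
\[
\frac{1}{n}\sum_{S\subset n^*}c_S^n\log N(\mathscr{U}_S) \le \frac{1}{n}\log N(\mathscr{U}_{n^*}).
\]
Letting $n\to\infty$ (the left-hand limit exists by the corollary above, and the right-hand side tends to $\htop(X,\mathscr{U},T)$) yields $\Asc(X,\mathscr{U},T) \le \htop(X,\mathscr{U},T)$. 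Since $\htop(X,\mathscr{U},T) = \inf_n \tfrac1n\log N(\mathscr{U}_{n^*}) \le \sup_{\mathscr{V}}\htop(X,\mathscr{V},T) = \htop(X,T)$, the first claim follows.

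For the intricacy bound I would substitute into the identity \eqref{Eq:AscInt}, namely $\Int(X,\mathscr{U},T) = 2\Asc(X,\mathscr{U},T) - \htop(X,\mathscr{U},T)$, to obtain
\[
\Int(X,\mathscr{U},T) \le 2\htop(X,\mathscr{U},T) - \htop(X,\mathscr{U},T) = \htop(X,\mathscr{U},T) \le \htop(X,T).
\]
This argument is short and presents no real obstacle; the only point meriting attention is that the per-$n$ bound must be kept at the cover-specific level $\htop(X,\mathscr{U},T)$ rather than being loosened immediately to $\htop(X,T)$, since the claimed intricacy inequality is the tighter $\Int(X,\mathscr{U},T) \le \htop(X,\mathscr{U},T)$, which the identity \eqref{Eq:AscInt} delivers only from the cover-specific form of the average-sample-complexity bound.
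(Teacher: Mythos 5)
Your proof is correct and follows the same route as the paper, whose entire proof is the observation that $N(\mathscr{U}_S)\le N(\mathscr{U}_{n^*})$ for every $S\subset n^*$; you have simply spelled out the refinement argument, the averaging using $\sum_{S\subset n^*}c_S^n=1$, and the substitution into the identity $\Int(X,\mathscr{U},T)=2\Asc(X,\mathscr{U},T)-\htop(X,\mathscr{U},T)$ that the paper leaves implicit. Your closing remark is also well taken: the cover-specific bound $\Asc(X,\mathscr{U},T)\le\htop(X,\mathscr{U},T)$ is exactly what is needed to get $\Int(X,\mathscr{U},T)\le\htop(X,\mathscr{U},T)$ rather than only the weaker bound by $\htop(X,T)$.
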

\begin{proof}
For every finite open cover $\mathscr U$ and every subset $S\subset n^*$, $N(\mathscr{U}_S)\le N(\mathscr{U}_{n^*})$.
\end{proof}

\subsection[Definitions based on Bowen's definition of entropy]{Definitions of intricacy and average sample complexity based on Bowen's definition of entropy}\label{sec:bowendef}

\begin{definition}
	Given a dynamical system $(X,T)$, where $d$ is a metric on $X$, and a subset $S\subset n^*$, a set $E\subset X$ is \emph{$(S,\varepsilon)$ spanning} if for each $x\in X$ there is $y\in E$ with $d(T^{s_i}x,T^{s_i}y)\le\varepsilon$ for all $i=0,\dots, |S|-1$. Let $r(S,\varepsilon)$ be the minimum cardinality of an $(S,\varepsilon)$ spanning set of $X$.
	\begin{definition}
		Fix a system of coefficients $c_S^n$. For each $\varepsilon>0$ define the \emph{$\varepsilon$-topological intricacy of $(X,T)$} by
		\begin{equation}
		\Int_\varepsilon(X,T)=\limsup_{n\rightarrow\infty}\frac{1}{n}\sum_{S\subset n^*}c_S^n\log\left(\frac{r(S,\varepsilon)r(\setcomp{S},\varepsilon)}{r(n^*,\varepsilon)}\right),
		\end{equation}
		the \emph{$\varepsilon$-topological average sample complexity of $(X,T)$} by
		\begin{equation}
		\Asc_\varepsilon(X,T)=\limsup_{n\rightarrow\infty}\frac{1}{n}\sum_{S\subset n^*}c_S^n\log r(S,\varepsilon),
		\end{equation}
		and the \emph{$\varepsilon$-topological average configuration complexity of $(X,T)$} by
		\begin{equation}
		\Acc_\varepsilon(X,T)=\limsup_{n\rightarrow\infty}\frac{1}{n}\sum_{\substack{S\subset n^*\\0\in S}}c_S^n\log r(S,\varepsilon).
		\end{equation}
	\end{definition}
	We also give the definitions of topological intricacy, topological average sample complexity, and topological average configuration complexity in terms of $(S,\varepsilon)$ separated sets. A set $E\subset X$ is {\em$(S,\varepsilon)$ separated} if for each pair of distinct points $x,y\in E$, $d(T^{s_i}x,T^{s_i}y)>\varepsilon$ for some $i=0,\dots, |S|-1$. Let $s(S,\varepsilon)$ be the maximum cardinality of a set $E\subset X$ such that $E$ is $(S,\varepsilon)$ separated. Fix a system of coefficients $c_S^n$. For each $\varepsilon>0$ define the \emph{$(\varepsilon$-topological intricacy$)'$ of $(X,T)$} by
	\begin{equation}
	\Int'_\varepsilon(X,T)=\limsup_{n\rightarrow\infty}\frac{1}{n}\sum_{S\subset n^*}c_S^n\log\left(\frac{s(S,\varepsilon)s(\setcomp{S},\varepsilon)}{s(n^*,\varepsilon)}\right),
	\end{equation}
	the \emph{$(\varepsilon$-topological average sample complexity$)'$ of $(X,T)$} by
	\begin{equation}
	\Asc'_\varepsilon(X,T)=\limsup_{n\rightarrow\infty}\frac{1}{n}\sum_{S\subset n^*}c_S^n\log s(S,\varepsilon),
	\end{equation}
	and the \emph{$(\varepsilon$-topological average configuration complexity$)'$ of $(X,T)$} by
	\begin{equation}
	\Acc'_\varepsilon(X,T)=\limsup_{n\rightarrow\infty}\frac{1}{n}\sum_{\substack{S\subset n^*\\0\in S}}c_S^n\log s(S,\varepsilon).
	\end{equation}
\end{definition}
We use different notations for the definitions based on $(S,\varepsilon)$ separating sets and those based on $(S,\varepsilon)$ spanning sets because, in general, for a given $\varepsilon$ the two definitions may not be equivalent.
But the limits as $\varepsilon \to 0$ and the suprema over open covers $\mathscr U$ are the same, and similarly for the pressure versions: see 
Theorem \ref{asppresssupthm}, Corollary \ref{asppresssupcor}, Theorem \ref{aspepslimthm}, and Corollary \ref{ascepslimcor}.


\section{The supremum over open covers equals topological entropy}\label{sec:supequalsent}
To calculate the topological entropy of a system using the Adler, Konheim, and McAndrew definition with open covers, one finds the supremum over all open covers, $\mathscr{U}$, of $\htop(X,\mathscr{U},T)$, and this defines an invariant for topological conjugacy. The following theorem shows that, with $c_S^n=2^{-n}$ for all $S$, if suprema over all open covers are taken in calculating topological average sample complexity then we get just the usual topological entropy.
 See (\ref{finobs}) below for further comments about this and Theorem \ref{mtheoreticasctoent}. 
 Therefore we are motivated to compute and study intricacy and average sample complexity for specific open covers, and also as functions of $n$ (see Definition \ref{def:ascintfunctions}) before taking the limits in Definitions \ref{def:TopInt} and \ref{def:TopAsc}.

\begin{theorem}\label{asctoentthm}
Let $(X,T)$ be a topological dynamical system and fix a system of coefficients $c_S^n=2^{-n}$. Then
\be
\sup_{\mathscr{U}}\Asc(X,\mathscr{U},T)=\htop(X,T) \quad\text{ and }\quad {\sup_{\mathscr{U}}\Int(X,\mathscr{U},T)=\htop(X,T)} .
\en

\end{theorem}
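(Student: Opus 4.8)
The plan is to prove the two suprema simultaneously, since the upper bounds are already in hand and the lower bounds will come from a single family of covers. For the upper bounds, Proposition~\ref{htopboundprop} gives $\Asc(X,\mathscr{U},T)\le\htop(X,T)$ and $\Int(X,\mathscr{U},T)\le\htop(X,T)$ for every open cover $\mathscr{U}$, so $\sup_{\mathscr{U}}\Asc\le\htop(X,T)$ and $\sup_{\mathscr{U}}\Int\le\htop(X,T)$ immediately. Since $\htop(X,T)=\sup_{\mathscr{W}}\htop(X,\mathscr{W},T)$, it suffices for the lower bounds to show that for each fixed open cover $\mathscr{W}$ there are covers $\mathscr{U}$ with $\Asc(X,\mathscr{U},T)$ and $\Int(X,\mathscr{U},T)$ as close as we like to $\htop(X,\mathscr{W},T)$, and then take the supremum over $\mathscr{W}$. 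The natural candidate is the higher-block refinement $\mathscr{V}=\mathscr{W}_{m^*}=\bigvee_{j=0}^{m-1}T^{-j}\mathscr{W}$ for large $m$; the motivating computation is the full $r$-shift with the time-$0$ cover, where $N(\mathscr{V}_S)=r^{|B(S)|}$ for $B(S):=S+m^*=\bigcup_{i\in S}\{i,\dots,i+m-1\}$, and a direct average over $S$ gives $\Asc(X,\mathscr{V},T)=(1-2^{-m})\log r\to\log r=\htop$.

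For general $\mathscr{W}$ I would first record that $N(\mathscr{V}_S)=N(\mathscr{W}_{B(S)})$ and that $\htop(X,\mathscr{V},T)=\htop(X,\mathscr{W},T)$ (a block cover does not change entropy). The crucial estimate is a lower bound on $N(\mathscr{W}_{B(S)})$ obtained by using subadditivity in reverse. Writing $[\min B(S),\max B(S)]=B(S)\sqcup G$, where $G$ is the set of gap positions, the relation $\mathscr{W}_{[\min B(S),\max B(S)]}=\mathscr{W}_{B(S)}\vee\mathscr{W}_{G}$ together with $N(\mathscr{A}\vee\mathscr{B})\le N(\mathscr{A})N(\mathscr{B})$ yields
\be
\log N(\mathscr{W}_{B(S)})\ge \log N(\mathscr{W}_{[\min B(S),\max B(S)]})-\log N(\mathscr{W}_G).
\en
Now the infimum characterization $\htop(X,\mathscr{W},T)=\inf_k\frac1k\log N(\mathscr{W}_{k^*})$ gives $\log N(\mathscr{W}_{[\min B(S),\max B(S)]})\ge \operatorname{span}(B(S))\cdot\htop(X,\mathscr{W},T)$, while the crude bound $N(\mathscr{W}_G)\le N(\mathscr{W})^{|G|}$ gives $\log N(\mathscr{W}_G)\le|G|\log N(\mathscr{W})$, where $N(\mathscr{W})$ is a constant independent of $n$.

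It then remains to average these bounds over $S\subset n^*$ with weights $2^{-n}$, i.e.\ treating each coordinate as an independent fair coin. Here $\operatorname{span}(B(S))=\max S-\min S+m$ has average $n-O(1)$, since $\min S$ and $(n-1)-\max S$ have bounded expectation, and the gap count satisfies $|G|=\operatorname{span}(B(S))-|B(S)|$ with average at most $(n+m)2^{-m}$, because a position lies in $G$ only when an interval of $m$ consecutive coordinates avoids $S$. Dividing by $n$ and letting $n\to\infty$ (the limit existing by the Corollary to Theorem~\ref{syscoeffsubadd}), and noting that atypically small $S$ contribute nonnegatively and so cannot lower the bound, I would obtain
\be
\Asc(X,\mathscr{V},T)\ge \htop(X,\mathscr{W},T)-2^{-m}\log N(\mathscr{W}).
\en
Letting $m\to\infty$ makes the error term vanish, so $\sup_{\mathscr{U}}\Asc(X,\mathscr{U},T)\ge\htop(X,\mathscr{W},T)$ for every $\mathscr{W}$, hence $\ge\htop(X,T)$; the intricacy statement follows from $\Int(X,\mathscr{V},T)=2\Asc(X,\mathscr{V},T)-\htop(X,\mathscr{V},T)$ together with $\htop(X,\mathscr{V},T)=\htop(X,\mathscr{W},T)$, giving $\Int(X,\mathscr{W}_{m^*},T)\to\htop(X,\mathscr{W},T)$ as $m\to\infty$.

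The main obstacle I anticipate is precisely the passage from $N(\mathscr{V}_S)=N(\mathscr{W}_{B(S)})$ back to entropy: a naive attempt to write $N(\mathscr{W}_{B(S)})$ as a product over the maximal intervals composing $B(S)$ fails, since correlations across gaps can only make $N(\mathscr{W}_{B(S)})$ smaller, and a typical $B(S)$ breaks into roughly $n2^{-m}$ intervals whose individual lengths are $O(2^{m})$, so any single-interval bound is far too weak. The reverse-subadditivity inequality above is what overcomes this, because it charges only a bounded factor $N(\mathscr{W})$ per gap position, and gap positions are exponentially rare in $m$. The remaining points to verify carefully are the elementary expectation estimates for $\operatorname{span}(B(S))$ and $|G|$, the bound $N(T^{-i}\mathscr{W})\le N(\mathscr{W})$ used for $N(\mathscr{W}_G)\le N(\mathscr{W})^{|G|}$, and the trivial modification handling the case $\htop(X,T)=\infty$, where the same construction produces covers with arbitrarily large $\Asc$ and $\Int$.
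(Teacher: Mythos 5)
Your proposal is correct, and its skeleton coincides with the paper's: both pass to the refined covers $\mathscr{V}=\mathscr{W}_{m^*}$ (the paper's $\mathscr{U}_{k^*}$), use the identity $N(\mathscr{V}_S)=N(\mathscr{W}_{S+m^*})$ and the reverse-subadditivity trick $\log N(\mathscr{W}_{B(S)})\ge \log N(\mathscr{W}_{\text{interval}})-\log N(\mathscr{W}_{\text{gaps}})$ (Lemma~\ref{Nsetstoplemma}), charge a bounded factor $\log N(\mathscr{W})$ per gap position, and then recover $\Int$ from $\Int=2\Asc-\htop$. Where you genuinely diverge is the probabilistic heart of the argument: the paper establishes rarity of gaps via Lemma~\ref{goodsetlemma}, a Stirling-type count showing that the ``bad'' subsets $S$ meeting too few of the blocks $K_i$ form a $o(2^n)$ fraction, after which \emph{all} good $S$ satisfy a uniform bound; you instead apply linearity of expectation directly, noting that a position is a gap only when $m$ consecutive coins all miss $S$, so $\mathbb{E}\lvert G\rvert\le (n+m)2^{-m}$. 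Your route is more elementary (no binomial asymptotics, no good/bad dichotomy) and yields the clean explicit error $\Asc(X,\mathscr{W}_{m^*},T)\ge \htop(X,\mathscr{W},T)-2^{-m}\log N(\mathscr{W})$; the paper's counting lemma, by contrast, is what generalizes in Appendix B to arbitrary intricacy weights $c_S^n$, where the first-moment computation would have to be redone through the Bernoulli-mixture representation $c_S^n=\int_0^1 x^{|S|}(1-x)^{n-|S|}\lambda_c(dx)$. Your handling of $\Int$ is also a slight neatening: since $\htop(X,\mathscr{W}_{m^*},T)=\htop(X,\mathscr{W},T)$ exactly, you get $\Int(X,\mathscr{W}_{m^*},T)\ge \htop(X,\mathscr{W},T)-2^{1-m}\log N(\mathscr{W})$ in one line, whereas the paper argues through an increasing sequence of covers with $\lim \htop(X,\mathscr{U}^{(n)},T)\le \htop(X,T)$.

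One small point to patch: your bound $\log N\bigl(\mathscr{W}_{[\min B(S),\,\max B(S)]}\bigr)\ge \operatorname{span}(B(S))\cdot\htop(X,\mathscr{W},T)$ tacitly uses $N(\mathscr{W}_{[a,b]})=N(\mathscr{W}_{(b-a+1)^*})$, i.e.\ translation invariance of $N$ under shifting the index set; in general only $N(T^{-a}\mathscr{A})\le N(\mathscr{A})$ holds, which is the wrong direction unless $T$ is surjective. The paper itself asserts this invariance (just after Definition~\ref{def:TopAsc}), so you are consistent with its conventions, but you can avoid the issue entirely by taking the gap set to be $G'=[0,\max B(S)]\setminus B(S)$: then $\log N(\mathscr{W}_{B(S)})\ge(\max B(S)+1)\,\htop(X,\mathscr{W},T)-\lvert G'\rvert\log N(\mathscr{W})$ needs no translation, and since $\mathbb{E}[\min S]=O(1)$ the extra initial gap positions change nothing in the limit.
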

The idea of the proof is 
{that for most subsets $S \subset n^*$, $S+k^*$ is large, so that for any open cover $\mathscr{U}$ we have $\log N(\mathscr{U}_{S+k^*})/n=\log N({(\mathscr{U}_{k^*})}_S)/n$ close to $\log N(\mathscr{U}_{n^*})/n\approx \htop(X,\mathscr{U},T)$.}
Averaging over $S$, it then follows that $\Asc(X,\mathscr{U}_{k^*},T)$ is close to $\htop(X,\mathscr{U},T)$, and taking the suprema over $\mathscr{U}$ concludes the argument. 

In the following Lemma, for a given $n,k\in\mathbb{N}$ with $k<n$, we break the interval $n^*$ into intervals $K_i$ of length $k/2$; then for $S\subset n^*$ and $s\in S$, we know that if $s\in K_i$ then the interval $s+k^*:=\{s,s+1,\dots, s+k-1\}$ will be long enough to contain $K_{i+1}$. This helps us count the subsets $S$ for which $S+k^*:=\{s_0+k^*,\dots,s_{|S|-1}+k^*\}$ is large enough so that $\log N(S+k^*)/n$ is a good approximation to the topological entropy of $T$ with respect to $\mathscr{U}$.  

\begin{lemma}\label{goodsetlemma}
Given $n,k\in\mathbb{N}$ such that $k$ is even and less than $n$, break $n^*$ into $\ceil{2n/k}-1$ sets of $k/2$ consecutive integers and one set of at most $k/2$ consecutive integers, by defining
\begin{equation}\label{keq}
K_i=\left\{\frac{i-1}{2}k,\dots,\frac{i}{2}k-1\right\} \text{ for } i=1,2,\dots, \ceil{2n/k}-1, K_{\ceil{2n/k}}=\left\{\frac{\ceil{2n/k}-1}{2}k,\dots,n-1\right\}.
\end{equation}
%
%
 For each subset $S\subset n^*$, let 
 \be
B(S)=\card\{i:S\cap K_i\ne\emptyset\}
\en
denote the number of intervals $K_i, i=1,\dots,\ceil{2n/k}$, that contain at least one element of $S$. Given $0<\varepsilon<1$, define $\mathcal{B}$, the set of ``bad" subsets $S\subset n^*$, by
\be
\mathcal{B}=\mathcal{B}(n,k,\varepsilon)=\{S\subset n^*:B(S)\le (2n/k)(1-\varepsilon)\}.
\en
 Then there exists an even $k\in\mathbb{N}$ such that
\begin{equation}\label{goodsetlimn}
\lim_{n\rightarrow\infty}\frac{\textnormal{card}\left(\mathcal{B}(n,k,\varepsilon)\right)}{2^n}=0.
\end{equation}

\end{lemma}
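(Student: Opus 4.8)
The plan is to read the ratio $\card(\mathcal{B})/2^n$ as a probability. If we pick $S\subset n^*$ uniformly at random --- equivalently, decide independently for each $j\in n^*$ whether $j\in S$, each with probability $1/2$ --- then $\card(\mathcal{B})/2^n$ is exactly $P\bigl(B(S)\le (2n/k)(1-\varepsilon)\bigr)$. So the goal becomes: choose an even $k$, depending only on $\varepsilon$, for which this probability tends to $0$ as $n\to\infty$. Since $M:=\ceil{2n/k}\to\infty$ while $k$ stays fixed, this is a concentration-of-measure statement for a sum of independent bounded random variables.

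First I would exploit that the blocks $K_i$ are pairwise disjoint, so the indicators $X_i=\mathbf{1}\{S\cap K_i\neq\emptyset\}$ are independent and $B(S)=\sum_{i=1}^{M}X_i$. For a full-length block ($|K_i|=k/2$) we have $P(X_i=0)=2^{-k/2}$, hence $P(X_i=1)=1-2^{-k/2}$; the single short block $K_M$ has a smaller hit probability but contributes at most $1$ to $B(S)$, so it is asymptotically negligible. This gives the mean estimate $\mathbb{E}[B(S)]\ge (M-1)\bigl(1-2^{-k/2}\bigr)$.

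Next I would fix $k$: choose it even and large enough that $2^{-k/2}<\varepsilon/2$. With this choice $\mathbb{E}[B(S)]\ge (M-1)(1-\varepsilon/2)$, which for large $n$ exceeds the bad threshold $(2n/k)(1-\varepsilon)\le M(1-\varepsilon)$ by an amount growing linearly in $M$ (of order $M\varepsilon/2$). Because $B(S)$ is a sum of $M$ independent $\{0,1\}$-valued variables, Hoeffding's inequality $P\bigl(B(S)\le \mathbb{E}[B(S)]-t\bigr)\le \exp(-2t^2/M)$, taken with $t$ of order $M\varepsilon/2$, yields a tail bound of the form $\exp(-c(\varepsilon)M)$ with $c(\varepsilon)>0$. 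Since $M=\ceil{2n/k}\to\infty$ as $n\to\infty$, this bound goes to $0$, and the inclusion $\{S:B(S)\le(2n/k)(1-\varepsilon)\}\subseteq\{S:B(S)\le M(1-\varepsilon)\}$ transfers the estimate to $\mathcal{B}$, proving \eqref{goodsetlimn}.

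The step to get right is the calibration of $k$ against $\varepsilon$: one needs the per-block hit probability $1-2^{-k/2}$ to beat $1-\varepsilon$ by a definite margin, so that the typical value of $B(S)$ sits strictly above the bad threshold and leaves room for the deviation estimate; once $k$ is pinned down the number of blocks $M$ grows with $n$ and the tail decays exponentially in $M$. The lone short block $K_{\ceil{2n/k}}$ is a minor bookkeeping point that affects neither the mean bound nor the concentration estimate in the limit. If one prefers to avoid citing Hoeffding, the same conclusion follows by bounding $\card\{S:B(S)=j\}$ directly through terms of the form $\binom{M}{j}(2^{k/2}-1)^{j}$ and summing the tail $j\le M(1-\varepsilon)$, but the probabilistic formulation is cleaner.
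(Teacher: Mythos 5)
Your proof is correct, and it takes a genuinely different route from the paper's. You recast $\card(\mathcal{B})/2^n$ as the probability that a uniformly random $S\subset n^*$ is bad, observe that disjointness of the blocks makes the indicators $X_i=\mathbf{1}\{S\cap K_i\ne\emptyset\}$ independent with success probability $1-2^{-k/2}$ on full blocks, calibrate $k$ so that $2^{-k/2}<\varepsilon/2$ --- which puts $\mathbb{E}[B(S)]\ge(M-1)(1-\varepsilon/2)$ above the threshold $M(1-\varepsilon)$ by a margin of order $M\varepsilon/2$, where $M=\ceil{2n/k}$ --- and close with Hoeffding to get a tail of size $e^{-cM}$ with $c$ of order $\varepsilon^2$. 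All the steps check: the neglect of the short last block only costs an additive $1$ in the mean bound, and the inclusion of $\mathcal{B}$ in $\{S:B(S)\le M(1-\varepsilon)\}$ goes the right way since $2n/k\le M$. The paper instead argues by direct counting: it bounds $\card(\mathcal{B})$ by choosing $\ceil{2n\varepsilon/k}$ blocks for $S$ to miss and an arbitrary subset of the remaining ones, giving $\card(\mathcal{B})\le\binom{\ceil{2n/k}}{\ceil{2n\varepsilon/k}}2^{\floor{n(1-\varepsilon)}}$, then controls the binomial coefficient with Stirling's approximation and reduces the claim to the inequality $\varepsilon\log 2+(2/k)\varepsilon\log\varepsilon+(2/k)(1-\varepsilon)\log(1-\varepsilon)>0$, which holds once $k>2H(\varepsilon)/(\varepsilon\log 2)$ for the binary entropy $H$, and in particular for even $k>2/\varepsilon$. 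Your route buys a cleaner argument with an explicit exponential rate in $M$ and a milder requirement on $k$ (about $2\log_2(2/\varepsilon)$, versus the paper's $2/\varepsilon$); the paper's route is elementary and self-contained and yields the explicit condition $k>2/\varepsilon$ that is quoted verbatim in the proof of Theorem~\ref{asctoentthm}, where $k>\max\{2k_0,2/\varepsilon\}$ is assumed. Since your argument establishes the conclusion for every sufficiently large even $k$, not just a single one, it supports that downstream application equally well once the condition $k>2/\varepsilon$ is replaced by $2^{-k/2}<\varepsilon/2$. Your closing fallback --- summing $\binom{M}{j}(2^{k/2}-1)^j$ over $j\le M(1-\varepsilon)$ --- is essentially a sharpened version of the paper's count (it even avoids the multiple counting of subsets that the paper acknowledges), so at bottom the two methods are Chernoff-type estimates of the same binomial tail.
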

\begin{proof}
	Note that $S\in\mathcal{B}$ if it intersects at most $\floor{(1-\varepsilon)(2n/k)}$ of the $\ceil{2n/k}$ sets $K_i$.
To create any subset $S\in \mathcal{B}$ we choose $\ceil{(2n/k)\varepsilon}$ intervals $K_i$ for $S$ to not intersect and then pick a subset (could be empty) from the rest of the $\floor{2n/k(1-\varepsilon)}$ intervals $K_i$ to intersect $S$. 
The same subset can be produced this way many times. 
Thus
\begin{align*}
\card(\mathcal{B})&\leq\binom{\ceil{2n/k}}{\ceil{2n\varepsilon/k}}\left(2^{k/2}\right)^{\floor{(2n/k)(1-\varepsilon)}}
=\binom{\ceil{2n/k}}{\ceil{2n\varepsilon/k}}2^{\floor{n(1-\varepsilon)}}.
\end{align*}
According to Stirling's approximation, there is a constant $c$ such that
\be
{\binom{m}{m\varepsilon}} \leq \frac{c} {\sqrt{m}}{\varepsilon^{-m\varepsilon}(1-\varepsilon)^{-m(1-\varepsilon)}}
\en
for all $m$.
This implies
\be
\begin{aligned}
\lim_{n\rightarrow\infty}\frac{\card(\mathcal{B})}{2^n}&\leq \lim_{n\rightarrow\infty}
\frac{c}{ \sqrt{\ceil{2n/k}}}
2^{-n\varepsilon}\varepsilon^{-(2n/k)\varepsilon}(1-\varepsilon)^{-(2n/k)(1-\varepsilon)}\\
&=\lim_{n\rightarrow\infty}
\frac{c}{ \sqrt{\ceil{2n/k}}}
\left(\frac{1}{2^\varepsilon\varepsilon^{(2/k)\varepsilon}(1-\varepsilon)^{(2/k)(1-\varepsilon)}}\right)^n.
\end{aligned}
\en
We will show $\lim_{n\rightarrow\infty}(\card(\mathcal{B})/2^n)=0$ by showing that for each $\varepsilon>0$ we can find a $k$ such that $2^\varepsilon\varepsilon^{(2/k)\varepsilon}(1-\varepsilon)^{(2/k)(1-\varepsilon)}>1$. Denote the binary entropy function by
\be H(x)=-x\log x-(1-x)\log(1-x).\en To show $2^\varepsilon\varepsilon^{(2/k)\varepsilon}(1-\varepsilon)^{(2/k)(1-\varepsilon)}>1$, we take the logarithm of both sides of the inequality and show 
\begin{equation}\label{kepseq1}
\varepsilon\log 2+\frac{2}{k}\varepsilon\log \varepsilon+\frac{2}{k}(1-\varepsilon)\log (1-\varepsilon)>0.
\end{equation}
This would follow from 
\begin{equation}\label{kepseq2}
k>\frac{2}{\varepsilon\log 2}H(\varepsilon).
\end{equation}
By basic calculus $H(\varepsilon)\le\log (2)$; thus if $k>2/\varepsilon$ Equation~\ref{kepseq2} is satisfied and therefore Equation~\ref{kepseq1} is satisfied.
\end{proof}

Next we note some properties of $N(\mathscr{U}_S)$ that are needed for the proof of Theorem~\ref{asctoentthm}. To simplify notation we sometimes replace $N(\mathscr{U}_S)$ by $N(S)$ when the context is clear.

 \begin{lemma}\label{Nsetstoplemma}
Let $(X,T)$ be a topological dynamical system and $\mathscr{U}$ an open cover of $X$. Given  $n\in\mathbb{N}$ and $S\subset n^*$ the following properties hold:
\begin{enumerate}[\normalfont 1.]
\item $N((\mathscr{U}_{k^*})_S)=N(\mathscr{U}_{S+k^*})$.
\item Given $S_1,S_2,\dots, S_m\subset n^*$,  $\log N(\bigcup_iS_i)\le\sum_{i}\log N({S_i})$.
\end{enumerate}
\end{lemma}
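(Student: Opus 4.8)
The plan is to verify the two numbered properties directly from the definition of $N(\mathscr{U}_S)$ as the minimum cardinality of a subcover of the joined cover $\mathscr{U}_S = \bigvee_{i} T^{-s_i}\mathscr{U}$, together with two standard facts about the operation $N(\,\cdot\,)$ on open covers: that $N$ is invariant under homeomorphisms of $X$ applied to the cover, and that $N$ is submultiplicative under refinement by joins, i.e.\ $N(\mathscr{V}\vee\mathscr{W})\le N(\mathscr{V})N(\mathscr{W})$.

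For property~1, I would unwind both sides of the claimed equality $N((\mathscr{U}_{k^*})_S)=N(\mathscr{U}_{S+k^*})$ as joins of translates of $\mathscr{U}$ and check that they produce the \emph{same} open cover. On the left, $\mathscr{U}_{k^*}=\bigvee_{j=0}^{k-1}T^{-j}\mathscr{U}$, and applying the $S$-sampling operation means taking $\bigvee_{i} T^{-s_i}(\mathscr{U}_{k^*}) = \bigvee_{i}\bigvee_{j=0}^{k-1} T^{-(s_i+j)}\mathscr{U}$. On the right, $S+k^* = \{\,s_i + j : 0\le i\le |S|-1,\ 0\le j\le k-1\,\}$, so $\mathscr{U}_{S+k^*}=\bigvee_{s_i+j}T^{-(s_i+j)}\mathscr{U}$, where the join runs over the \emph{distinct} elements of $S+k^*$. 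Since a join of open covers is unchanged by repeating a member, both expressions equal the join of $T^{-\ell}\mathscr{U}$ over $\ell$ ranging through the set $S+k^*$. Hence the two covers coincide as covers (not merely as having equal $N$-values), and so their minimal subcover cardinalities agree. I would state this carefully enough to note that the apparent multiplicity of indices on the left collapses to the set $S+k^*$ on the right.

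For property~2, I would write $\mathscr{U}_{\bigcup_i S_i}=\bigvee_i \mathscr{U}_{S_i}$, which again follows by regrouping the join of translates of $\mathscr{U}$ indexed by $\bigcup_i S_i$ according to the sets $S_i$ (overlaps only cause harmless repetition in the join and do not change the resulting cover). Then submultiplicativity of $N$ under joins gives
\be
N\left(\mathscr{U}_{\bigcup_i S_i}\right)=N\left(\bigvee_i \mathscr{U}_{S_i}\right)\le \prod_i N(\mathscr{U}_{S_i}),
\en
and taking logarithms yields $\log N(\bigcup_i S_i)\le \sum_i \log N(S_i)$, abbreviating $N(\mathscr{U}_{S_i})$ as $N(S_i)$ per the paper's convention. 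The submultiplicativity estimate itself is immediate: if $\{U_a\}$ is a minimal subcover of $\mathscr{U}_{S_1}$ and $\{V_b\}$ one of the remaining factors, the pairwise intersections form a subcover of the join, so its size is at most the product.

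The routine part is the bookkeeping of indices; the only point deserving care—the main obstacle, if any—is making explicit that $S+k^*$ is a \emph{set} whose elements may arise from several pairs $(s_i,j)$, and confirming that such collisions are irrelevant because joining an open cover with a translate already present does not refine it. Once that is observed, both properties reduce to the standard submultiplicativity and homeomorphism-invariance of $N$, so I would keep the writeup short and cite these cover facts as the classical properties underlying the Adler--Konheim--McAndrew definition.
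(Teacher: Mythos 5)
Your proposal is correct and takes essentially the same approach as the paper: property 1 is the cover identity $(\mathscr{U}_{k^*})_S=\bigvee_{i\in S}T^{-i}\mathscr{U}_{k^*}=\bigvee_{i\in S+k^*}T^{-i}\mathscr{U}=\mathscr{U}_{S+k^*}$, and property 2 follows from $\mathscr{U}_{\bigcup_i S_i}=\bigvee_i\mathscr{U}_{S_i}$ together with $N(\mathscr{V}\vee\mathscr{W})\le N(\mathscr{V})N(\mathscr{W})$, the paper proving the two-set case and inducting where you apply submultiplicativity to the $m$-fold join directly. Your added care about repeated indices in $S+k^*$ collapsing in the join is a harmless elaboration of the paper's one-line identity (and the homeomorphism-invariance of $N$ you mention in passing is never actually needed).
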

\begin{proof}
\begin{enumerate}
\item This follows from the fact that
\be
(\mathscr{U}_{k^*})_S=\bigvee_{i\in S}T^{-i}\mathscr{U}_{k^*}=\bigvee_{i\in S+k^*}T^{-i}\mathscr{U}=\mathscr{U}_{S+k^*}.
\en
\item We show this for two sets $S_1$ and $S_2$ and use induction. Because $N(\mathscr{U}\vee\mathscr{V})\le N(\mathscr{U})N(\mathscr{V})$,
\be
N({S_1\cup S_2})=N(\mathscr{U}_{S_1}\vee\mathscr{U}_{S_2})\le N({S_1})N({S_2}).
\en
\end{enumerate}
\end{proof}
\begin{proof}[Proof of Theorem~\ref{asctoentthm}]
Recall that $\htop(X,\mathscr{U},T)=\lim_{n\rightarrow\infty}\log N(\mathscr{U}_{n^*})/n$ and $\htop(X,T)=\sup_{\mathscr{U}}h(X,\mathscr{U},T)$. We prove the statement by showing for each open cover $\mathscr{U}$ of $X$,
\begin{equation}\label{presupasch}
\lim_{k\rightarrow\infty}\Asc(X,\mathscr{U}_{k^*},T)=\htop(X,\mathscr{U},T).
\end{equation}

Recall that by Proposition~\ref{htopboundprop} for every cover $\mathscr{U}$ of $X$, $\Asc(X,\mathscr{U},T)\le\htop(X,\mathscr{U},T)$. We would like to show that 
\be
\lim_{k\rightarrow\infty}\lim_{n\rightarrow\infty}\frac{1}{n}\frac{1}{2^n}\sum_{S\subset n^*}\log N(S+k^*)=\lim_{n\rightarrow\infty}\frac{1}{n}\log N(n^*).
\en
Let $0<\varepsilon<1$ be given. By Fekete's Lemma, $\htop(X,\mathscr{U},T)=\inf_k\left(\log N(k^*)/k\right)$. Thus there is a $k_0$ such that for every $k>k_0$,
\begin{equation}\label{htoplimeq}
0\le \frac{\log N(k^*)}{k}-\htop(X,\mathscr{U},T)<\varepsilon.
\end{equation}
Let $k>\max\{2k_0,2/\varepsilon\}$ and let $n>k$. Form the family of bad sets $\mathcal{B}(n,k,\varepsilon)$ as in the statement of Lemma~\ref{goodsetlemma} and let the sets $K_i$ be as in Equation~\ref{keq}.  The main idea behind the construction of the intervals $K_i$ is that for $S\subset n^*$ and $s\in S$, if $s\in K_i$ then $K_{i+1}\subset S+k^*$. Suppose $S\not\in\mathcal{B}$. Then $S$ intersects at least $(2n/k)(1-\varepsilon)$ of the sets $K_i$ so we have $\card(S+k^*)\ge (k/2)(2n/k)(1-\varepsilon)=n(1-\varepsilon)$. 

Let $E=n^* \setminus (S+k^*)$, and notice that 
\be
N(E) \leq |\mathscr{U}|^{|E|} \leq |\mathscr{U}|^{n \varepsilon}.
	\en
	By Lemma \ref{Nsetstoplemma}, 
	\be
	\log N(S+k^*) \geq \log N(n^*) - \log N(E) \geq \log N(n^*) - n \varepsilon \log |\mathscr{U}|,
\en
so that if $S \notin \mathcal B$
\be
\frac{\log N(S+k^*)}{n} \geq \htop(X,\mathscr{U},T) - \varepsilon \log |\mathscr{U}|.
\en

We then conclude that for any $0<\varepsilon<1$ we can find $k$ such that for all $n$ large enough that 
 $|\mathcal B^c|/2^n \geq 1 - \varepsilon$,
\be\label{ascboundeq}
\begin{aligned}
\frac{1}{n}\frac{1}{2^n}\sum_{S\subset n^*}\log N(S+k^*)& \geq \frac{|\setcomp{\mathcal{B}}|}{2^n}\frac{1}{|\setcomp{\mathcal{B}}|}\sum_{S\in\setcomp{\mathcal{B}}}\frac{\log N(S+k^*)}{n}\\
&\ge (1-\varepsilon)\frac{1}{|\setcomp{\mathcal{B}}|}\sum_{S\in\setcomp{\mathcal{B}}}(\htop(X,\mathscr{U},T)-\varepsilon \log  |\mathscr{U}|)\\
&= (1-\varepsilon)\frac{1}{|\setcomp{\mathcal{B}}|}|\setcomp{\mathcal{B}}|(\htop(X,\mathscr{U},T)-\varepsilon \log |\mathscr{U}|)\\
&=\htop(X,\mathscr{U},T) - \varepsilon \htop (X,\mathscr{U},T)-(1-\varepsilon)\varepsilon
\log |\mathscr{U}|.
\end{aligned}
\en

Letting $n\rightarrow\infty$ and then $k\rightarrow\infty$ in Equation~\ref{ascboundeq} gives
\begin{equation}\label{presupaschm}
\begin{aligned}
\lim_{k\rightarrow\infty}\Asc(X,\mathscr{U}_k,T)&=\lim_{k\rightarrow\infty}\lim_{n\rightarrow\infty}\frac{1}{n}\frac{1}{2^n}\sum_{S\subset n^*}\log N(S+k^*)\\
&\ge
\htop(X,\mathscr{U},T) - \varepsilon \htop (X,\mathscr{U},T)-(1-\varepsilon)\varepsilon
\log |\mathscr{U}|,
\end{aligned}
\end{equation}
and hence, by Proposition~\ref{htopboundprop}, $\lim_{k\rightarrow\infty}\Asc(X,\mathscr{U}_k,T)=\htop(X,\mathscr{U},T)$. {Then take the supremum over all covers $\mathscr{U}$ of $X$ on both sides of Equation~\ref{presupasch}.}

{To prove the statement about $\Int$, take an increasing sequence of covers $\mathscr U^{(n)}$ with $\Asc(X,\mathscr U^{(n)}n,T) \nearrow \htop(X,T)$ and apply (\ref{Eq:AscInt}), noting that $\lim \htop(X,\mathscr U^{(n)},T) \leq \htop(X,T)$.}
\end{proof}


\section{Complexity calculations for shifts of finite type}

In this section we calculate the intricacy, average sample complexity, and average sample pressure for some shifts of finite type $X\subset \Sigma_r$.
 Unless otherwise noted, we will use the uniform system of coefficients $c_S^n=2^{-n}$ and open covers by rank $0$ cylinder sets. Recall that for a subset $S\subset n^*$, $N(S)$ counts the number of words seen at the places in $S$ over all sequences $x \in X$.

\begin{proposition}\label{nsprop1}
Let $X$ be a shift of finite type over the alphabet $\mathcal{A}$ with adjacency matrix $M$ such that $M^2>0$. Given $S\subset n^*$ denote the disjoint maximal subsets of consecutive integers that compose $S$ by $I_1,\dots, I_k$ with $|I_j|=t_j$ for  $t_j\in\mathbb{N}$. Then
\begin{equation}\label{prodeq}
N(S)=|\mathscr{L}_{t_1^*}(X)||\mathscr{L}_{t_2^*}(X)|\cdots|\mathscr{L}_{t_k^*}(X)|=N(t_1^*)N(t_2^*)\cdots N(t_k^*).
\end{equation}
In particular, for $\ell=1,2, \dots, n$,
\begin{equation}\label{sumeq}
\sum_{\substack{S\subset n^*\\\{n-\ell,n-\ell+1,\dots,n-1\}\in S\\n-\ell-1\not\in S}}\log N(S)=\sum_{S\subset (n-\ell-1)^*}\log\left( N(S) N(\ell^*)\right) 
\end{equation}
and
\begin{equation}\label{sumeq2}
\sum_{\substack{S\subset n^*\\n-1\not\in S}}\log N(S)=\sum_{S\subset (n-1)^*}\log N(S).
\end{equation}
\end{proposition}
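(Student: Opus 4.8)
The plan is to establish the product formula \eqref{prodeq} first, since \eqref{sumeq} and \eqref{sumeq2} will then follow by bookkeeping. Write the decomposition $S = I_1 \sqcup \cdots \sqcup I_k$ into maximal runs of consecutive integers, with $I_j = \{a_j, a_j+1, \dots, a_j + t_j - 1\}$. A word $w \in \mathscr{L}_S(X)$ assigns a symbol to each position of $S$, and restricting $w$ to each run $I_j$ defines a map
\[
\Phi : \mathscr{L}_S(X) \longrightarrow \prod_{j=1}^k \mathscr{L}_{I_j}(X).
\]
By shift-invariance of $X$, a run of $t_j$ consecutive positions sees exactly the words of $\mathscr{L}_{t_j}(X)$, so $|\mathscr{L}_{I_j}(X)| = N(t_j^*)$, and it suffices to show that $\Phi$ is a bijection. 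Injectivity is immediate, since two words agreeing on every $I_j$ agree on $S = \bigcup_j I_j$. The whole content is surjectivity.

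For surjectivity, fix $(w_1, \dots, w_k) \in \prod_j \mathscr{L}_{I_j}(X)$; I must produce a point $x \in X$ with $x$ agreeing with $w_j$ on $I_j$ for each $j$. The key point is that maximality of the runs forces a gap of at least one omitted index between consecutive runs: if $I_j$ ends at position $b$ and $I_{j+1}$ begins at $c$, then $b+1 \notin S$, so $c - b \ge 2$. I will fill each such gap using the hypothesis $M^2 > 0$. First note that $M^2 > 0$ implies $M$ has no zero row or column, whence $M^d > 0$ for every $d \ge 2$ (by induction, since $M^{d+1} = M \cdot M^d$ and each symbol has a successor). Now if $\alpha$ is the last symbol of $w_j$ (at position $b$) and $\beta$ the first symbol of $w_{j+1}$ (at position $c$), then $c - b = d \ge 2$ gives $(M^d)_{\alpha\beta} > 0$, so there is an admissible filling of the intermediate positions $b+1, \dots, c-1$ joining $\alpha$ to $\beta$. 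Carrying this out in every gap produces an admissible block on the full interval from $a_1$ to $a_k + t_k - 1$ agreeing with each $w_j$ on $I_j$; since $M^2 > 0$ also guarantees every symbol has a predecessor and successor, this block extends to a bi-infinite point $x \in X$. Then $\Phi$ sends $x$ restricted to $S$ to $(w_1, \dots, w_k)$, proving surjectivity and hence \eqref{prodeq}.

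Formulas \eqref{sumeq} and \eqref{sumeq2} are then consequences of \eqref{prodeq}. For \eqref{sumeq}, the index set consists exactly of those $S$ whose rightmost run is $\{n-\ell, \dots, n-1\}$ (these $\ell$ positions lie in $S$ while $n-\ell-1$ does not); writing $S = S' \sqcup \{n-\ell, \dots, n-1\}$ with $S' \subseteq (n-\ell-1)^*$, formula \eqref{prodeq} gives $N(S) = N(S')\,N(\ell^*)$, and summing $\log N(S) = \log\bigl(N(S')\,N(\ell^*)\bigr)$ over the bijectively corresponding $S' \subset (n-\ell-1)^*$ yields the claim. For \eqref{sumeq2}, the condition $n-1 \notin S$ says precisely that $S \subseteq (n-1)^*$; since $N(S)$ depends only on the configuration of $S$ up to translation, its value is unchanged when $S$ is regarded as a subset of $(n-1)^*$, so the two sums are literally equal.

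The only genuine obstacle is the surjectivity step, and the crux there is verifying that the hypothesis $M^2 > 0$ is exactly strong enough: it must fill gaps of every possible size $\ge 1$ (hence the passage to $M^d > 0$ for all $d \ge 2$) and simultaneously guarantee that the assembled finite block extends to a bi-infinite point of $X$. Everything after \eqref{prodeq} is reindexing.
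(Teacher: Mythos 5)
Your proof is correct, and in fact the paper states Proposition \ref{nsprop1} without any proof, so your argument supplies exactly the intended reasoning: the restriction map to the maximal runs is injective trivially, and surjectivity is the real content, handled precisely as one should — observing that maximality forces gaps of length $d \ge 2$ between runs, deriving $M^d > 0$ for all $d \ge 2$ from $M^2 > 0$ (via the no-zero-row/column observation), filling each gap with an admissible path, and using the $1$-step structure of the SFT to conclude the glued block is globally admissible and extends to a bi-infinite point. The deductions of \eqref{sumeq} and \eqref{sumeq2} from \eqref{prodeq} are also correct (for \eqref{sumeq2} one does not even need translation invariance, since $N(S)$ is intrinsic to $S$ and the two index sets coincide literally).
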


\begin{theorem}\label{squarethm}
 Let $X$ be a shift of finite type with adjacency matrix $M$ such that $M^2>0$. Let $c_S^n=2^{-n}$ for all $S$. Then
 \begin{equation}\label{zrasca}
 \Asc(X,\mathscr{U}_0,\sigma)=\frac{1}{4}\sum_{k=1}^\infty\frac{\log |\mathscr{L}_{k^*}(X)|}{2^k}.
 \end{equation}
 \end{theorem}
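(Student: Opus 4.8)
```latex
The plan is to compute $\Asc(X,\mathscr{U}_0,\sigma) = \lim_{n\to\infty} \frac{1}{n} 2^{-n} \sum_{S\subset n^*} \log N(S)$ by organizing the sum over $S$ according to the structure of the rightmost block of consecutive integers in $S$, using the product formula from Proposition~\ref{nsprop1}. The key structural input is that when $M^2 > 0$, the value $N(S)$ factors over the maximal runs of consecutive integers composing $S$, so $\log N(S)$ becomes a sum of contributions $\log N(t_j^*)$ over the run lengths $t_j$. This additive structure is exactly what makes the average over all $2^n$ subsets tractable.

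First I would set $a_n := 2^{-n}\sum_{S\subset n^*}\log N(S)$ and seek a recursion relating $a_n$ to smaller indices. The natural way to split the sum is by the position of the last element: classify each $S\subset n^*$ by whether $n-1 \in S$, and if so, by the length $\ell$ of the maximal run $\{n-\ell,\dots,n-1\}$ ending at $n-1$ together with the requirement $n-\ell-1\notin S$. Equations~\ref{sumeq} and \ref{sumeq2} from Proposition~\ref{nsprop1} handle precisely these two cases: the subsets with $n-1\notin S$ contribute $\sum_{S\subset(n-1)^*}\log N(S)$, and those whose trailing run has length $\ell$ contribute $\sum_{S\subset(n-\ell-1)^*}\log(N(S)N(\ell^*))$. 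Summing over $\ell=1,\dots,n$ (with the appropriate boundary convention for $\ell=n$, where there is no constraint $n-\ell-1\notin S$) and dividing by $2^n$ should yield a recursion expressing $a_n$ in terms of $a_{n-1}, a_{n-2}, \dots$ plus a forcing term built from $\log N(\ell^*) = \log|\mathscr{L}_{\ell^*}(X)|$ weighted by $2^{-\ell}$.

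The hard part will be extracting the limit $\lim_n a_n/n$ cleanly from this recursion. I expect the recursion to have the schematic form $a_n = \tfrac12 a_{n-1} + (\text{contributions from trailing runs})$, where the run contributions, after accounting for the $2^{-(\ell+1)}$ weight attached to a run of length $\ell$ and the freedom in the preceding coordinates, assemble into the series $\sum_{\ell\ge 1} 2^{-\ell}\log N(\ell^*)$. Since the limit being claimed is $\frac14 \sum_{k=1}^\infty 2^{-k}\log|\mathscr{L}_{k^*}(X)|$, the factor $\tfrac14$ should emerge from combining the overall $1/n$ normalization with the geometric decay: each place in $n^*$ contributes, on average, a fixed increment to the sum, and that per-place increment must be shown to equal $\frac14\sum_k 2^{-k}\log N(k^*)$. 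Concretely, I would show $a_n - a_{n-1}$ converges to this constant, whence $a_n/n$ converges to the same limit by Ces\`aro summation, consistent with the subadditivity already established in Theorem~\ref{syscoeffsubadd}.

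To carry this out rigorously, I would verify convergence of the series $\sum_k 2^{-k}\log N(k^*)$ first, which is immediate since $\log N(k^*) \le k\log|\mathcal{A}|$ grows at most linearly while $2^{-k}$ decays geometrically. Then I would justify interchanging the limit in $n$ with the infinite summation over run lengths $\ell$, controlling the tail where $\ell$ is close to $n$ by the same geometric bound. The main obstacle I anticipate is bookkeeping: correctly tracking the weights so that the factor of $\tfrac14$ (rather than $\tfrac12$) appears, which I attribute to the interplay between the weight $2^{-n}$ over subsets and the second averaging implicit in how trailing runs of each length are counted. Matching the stated constant precisely is where the computation must be done with care rather than by analogy.
```
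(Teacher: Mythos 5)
Your proposal is correct and is essentially the paper's own argument: the same classification of $S\subset n^*$ by the maximal trailing run of consecutive integers ending at $n-1$, handled via Equations \ref{sumeq} and \ref{sumeq2} of Proposition \ref{nsprop1}, yielding the same renewal-type recursion for $a_n$ (in your normalized form, $\tilde a_n=\tfrac12\tilde a_{n-1}+\sum_{\ell=1}^{n-1}2^{-\ell-1}\bigl(\tilde a_{n-\ell-1}+\log N(\ell^*)\bigr)+2^{-n}\log N(n^*)$). The only difference is the endgame: you propose to show the successive differences converge to $\tfrac14\sum_{k\ge1}2^{-k}\log N(k^*)$ and invoke Stolz--Ces\`aro, whereas the paper telescopes to the same difference identity (Equation \ref{intrecform2}) but then solves the recursion by induction to the closed form (\ref{eq:intrecform}) before passing to the limit---both routes are valid, and yours is marginally leaner since the difference formula already exhibits the constant.
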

 \begin{proof} 
We break the sum over all subsets $S\subset n^*$ in the definition of average sample complexity into the sum over those $S\subset n^*$ that contain $n-1$ and those that do not. Since $c_S^n$ has no dependence on $S$, the sum over $S\subset n^*$ that do not contain $n-1$ is equivalent to the sum over $S\subset (n-1)^*$. The sum over the sets containing $n-1$  is then broken into a sum over sets that contain $n-2$ and those that do not contain $n-2$. We simplify the sum over sets that do not contain $n-2$ using Proposition~\ref{nsprop1} and continue the process inductively.

We prove first that
\begin{equation}\label{eq:intrecform}
\frac{1}{n}\frac{1}{2^n}\sum_{S\subset n^* }\log N(S)=\frac{1}{n}\frac{1}{2^n}\log N(n^*) +\frac{1}{4n}\sum_{k=1}^{n-1}\frac{n-k+3}{2^k}\log N(k^*).
\end{equation}
Let
\be
a_n=\sum_{S\subset n^* }\log N(S)\quad\text{and}\quad \lambda_k=\log N(k^*).
\en
Using the process described above and Equations \ref{sumeq} and \ref{sumeq2}, it can be shown that for $n>1$
\be
a_n=\lambda_n+\sum_{k=1}^{n-1}\left(2^{n-k-1}\lambda_k+a_k\right).
\en
Therefore, for $n>1$
\begin{equation}
a_n-a_{n-1}=\lambda_n+\lambda_{n-2}+2\lambda_{n-3}+4\lambda_{n-4}+\cdots+2^{n-3}\lambda_1+a_{n-1},
\end{equation}
which gives
\be
\begin{aligned}\label{intrecform2}
a_1&=\lambda_1\text{ and}\\
a_n&=\lambda_n+2a_{n-1}+2^{n-2}\sum_{k=1}^{n-2}\frac{\lambda_k}{2^k}\text{ for }n>1.
\end{aligned}
\en

We use induction to prove Equation~\ref{eq:intrecform} by showing
\begin{equation}\label{intrecform3}
a_n=\lambda_n+2^{n-2}\sum_{k=1}^{n-1}\frac{n-k+3}{2^k}\lambda_k.
\end{equation}
Now we show 
\begin{equation}\label{unifint}
\Asc(X,\mathscr{U}_0,\sigma)=\frac{1}{n}\frac{1}{2^n}\sum_{S\subset n^* }\log N(S)=\frac{1}{4}\sum_{j=1}^{\infty}\frac{\log N(k^*)}{2^k},
\end{equation}
which would follow from
\be
\lim_{n\rightarrow\infty}\left(\frac{1}{n}\frac{1}{2^n}\log N(n^*) +\frac{1}{4n}\sum_{k=1}^{n-1}\frac{n-k+3}{2^k}\log N(k^*)\right)=\frac{1}{4}\sum_{k=1}^{\infty}\frac{\log N(k^*)}{2^k}.
\en
We know that $H(n)=\log N(n^*)/n$ converges to the topological entropy of $(X,\sigma)$, so 
\be
\lim_{n\rightarrow\infty}\frac{1}{n}\frac{1}{2^n}\log N(n^*) =0.
\en
Now 
\be
\sum_{k=1}^\infty\frac{(3-k)k\log|\mathcal{A}|}{2^k}
\en
converges and $N(k^*)\le |\mathcal{A}|^k$, so $\log N(k^*)\le k\log |\mathcal{A}|$. Thus,
\be
\lim_{n\rightarrow\infty}\frac{1}{4n}\sum_{k=1}^\infty\frac{3-k}{2^j}\log N(k^*)=0.
\en
\end{proof}

\begin{corollary}\label{intcor}
 Let $X$ be a shift of finite type with adjacency matrix $M$ such that $M^2>0$. Let $c_S^n=2^{-n}$ for all $S$. Then
 \begin{equation}\label{zrasci}
 \Int(X,\mathscr{U}_0,\sigma)=\frac{1}{2}\sum_{k=1}^\infty\frac{\log |\mathscr{L}_{k^*}(X)|}{2^k}-\htop(X,T).
 \end{equation}

\end{corollary}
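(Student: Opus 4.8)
The plan is to read off (\ref{zrasci}) directly from the identity (\ref{Eq:AscInt}) together with the formula already established in Theorem~\ref{squarethm}, so that essentially no new computation is needed. First I would specialize (\ref{Eq:AscInt}) to the cover $\mathscr{U}=\mathscr{U}_0$, giving
\be
\Int(X,\mathscr{U}_0,\sigma)=2\,\Asc(X,\mathscr{U}_0,\sigma)-\htop(X,\mathscr{U}_0,\sigma).
\en
Then I would substitute the value of the average sample complexity supplied by Theorem~\ref{squarethm}, namely $\Asc(X,\mathscr{U}_0,\sigma)=\tfrac14\sum_{k=1}^\infty 2^{-k}\log|\mathscr{L}_{k^*}(X)|$, so that the term $2\,\Asc(X,\mathscr{U}_0,\sigma)$ becomes $\tfrac12\sum_{k=1}^\infty 2^{-k}\log|\mathscr{L}_{k^*}(X)|$, which is exactly the first term on the right-hand side of (\ref{zrasci}).

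The one substantive step is to identify $\htop(X,\mathscr{U}_0,\sigma)$ with the topological entropy $\htop(X,T)$ of the whole system (the corollary is stated with $\htop(X,T)$, whereas (\ref{Eq:AscInt}) produces the cover-dependent quantity $\htop(X,\mathscr{U}_0,\sigma)$). Here I would use that $\mathscr{U}_0$ is the time-$0$ cover by cylinder sets determined by the initial symbol: since $(\mathscr{U}_0)_{n^*}=\bigvee_{i=0}^{n-1}\sigma^{-i}\mathscr{U}_0$ and the elements of a minimal subcover are in bijection with the admissible words of length $n$, we have $N((\mathscr{U}_0)_{n^*})=|\mathscr{L}_n(X)|$. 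Taking the exponential growth rate yields
\be
\htop(X,\mathscr{U}_0,\sigma)=\lim_{n\to\infty}\frac{1}{n}\log|\mathscr{L}_n(X)|=\htop(X,\sigma)=\htop(X,T),
\en
the middle equality being the standard word-counting formula for the topological entropy of a subshift.

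Combining these observations gives (\ref{zrasci}). The only place any care is needed is the identification $\htop(X,\mathscr{U}_0,\sigma)=\htop(X,T)$; this is the mild obstacle, and it is resolved by the standard fact that the time-$0$ cylinder cover is generating for a subshift, so that its word-growth rate recovers the full topological entropy. Everything else is direct substitution into (\ref{Eq:AscInt}), and in particular the convergence questions have already been handled in the proof of Theorem~\ref{squarethm}.
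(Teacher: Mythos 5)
Your proposal is correct and is exactly the argument the paper intends: Corollary~\ref{intcor} is stated without proof as an immediate consequence of the identity (\ref{Eq:AscInt}) and Theorem~\ref{squarethm}, with the implicit identification $\htop(X,\mathscr{U}_0,\sigma)=\htop(X,T)$ that you spell out. Your justification of that step---that $N((\mathscr{U}_0)_{n^*})=|\mathscr{L}_n(X)|$ since the rank-$0$ cylinder cover is a (clopen) partition whose $n$-fold join has cells in bijection with admissible $n$-words, so its growth rate is the subshift's entropy---is the right and standard one.
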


\begin{corollary}
Two shifts of finite type, $X_1$ and $X_2$, that have positive square adjacency matrices 
and 
have the same complexity functions ($|\mathscr{L}_{n^*}(X_1)|=|\mathscr{L}_{n^*}(X_2)|$ for all $n\in\mathbb{N}$) 
have the same average sample complexity and intricacy of rank $0$ open covers using the uniform system of coefficients $c_S^n=2^{-n}$.   
\end{corollary}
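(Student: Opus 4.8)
The plan is to recognize that, by the results just proved, both $\Asc$ and $\Int$ for these covers are explicit functionals of the single sequence $k\mapsto|\mathscr{L}_{k^*}(X)|$, so that equality of the complexity functions forces equality of the two quantities. The statement is therefore an immediate bookkeeping consequence of Theorem~\ref{squarethm} and Corollary~\ref{intcor}; the content has all been front-loaded into those results.

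First I would apply Theorem~\ref{squarethm} to each of $X_1$ and $X_2$. Since both have positive square adjacency matrices, the hypotheses of that theorem are met and
\[
\Asc(X_i,\mathscr{U}_0,\sigma)=\frac{1}{4}\sum_{k=1}^\infty\frac{\log|\mathscr{L}_{k^*}(X_i)|}{2^k},\qquad i=1,2.
\]
By hypothesis $|\mathscr{L}_{k^*}(X_1)|=|\mathscr{L}_{k^*}(X_2)|$ for every $k\in\mathbb{N}$, so the two series coincide term by term and hence $\Asc(X_1,\mathscr{U}_0,\sigma)=\Asc(X_2,\mathscr{U}_0,\sigma)$.

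For the intricacy I would invoke Corollary~\ref{intcor}, which gives
\[
\Int(X_i,\mathscr{U}_0,\sigma)=\frac{1}{2}\sum_{k=1}^\infty\frac{\log|\mathscr{L}_{k^*}(X_i)|}{2^k}-\htop(X_i,\sigma).
\]
The series is once more determined by the common complexity function, so it remains only to check that the topological entropy is too. For a subshift the entropy is the exponential growth rate of the word counts, $\htop(X_i,\sigma)=\lim_{n\to\infty}\frac{1}{n}\log|\mathscr{L}_n(X_i)|=\lim_{n\to\infty}\frac{1}{n}\log|\mathscr{L}_{n^*}(X_i)|$, which depends only on the sequence $|\mathscr{L}_{n^*}(X_i)|$. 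Since these sequences agree for $X_1$ and $X_2$, the entropies agree, and therefore so do the intricacies.

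There is no genuine obstacle here: the only thing beyond pure substitution is the mild observation that entropy, like the explicit series, is a functional of the word-counting sequence, and this is immediate from the definition of the entropy of a subshift. If anything, the point worth stating carefully is that the corollary's two hypotheses are exactly what license the use of the closed-form expressions—the positivity of $M^2$ to invoke Theorem~\ref{squarethm} and Corollary~\ref{intcor}, and the equality of complexity functions to match the resulting data.
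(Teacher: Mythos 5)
Your proposal is correct and matches the paper's intended argument exactly: the paper states this corollary without proof precisely because it is the immediate substitution into Theorem~\ref{squarethm} and Corollary~\ref{intcor} that you carry out, together with the standard fact that $\htop(X_i,\sigma)=\lim_{n\to\infty}\frac{1}{n}\log|\mathscr{L}_{n^*}(X_i)|$ is itself determined by the complexity function. Your explicit note that the entropy term in Corollary~\ref{intcor} also depends only on the word-counting sequence is the one detail the paper leaves implicit, and it is handled correctly.
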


\begin{example}
The full $r$-shift has a positive square adjacency matrix and $N(k^*)=r^k$, so 
\begin{equation}
\Asc(\Sigma_r,\mathscr{U}_0,\sigma)=\frac{\log r}{2}.
\end{equation}
We also have $\Int(\Sigma_r,\mathscr{U}_0,\sigma)=2\Asc(\Sigma_r,\mathscr{U}_0,\sigma)-\htop(\Sigma_r,\sigma)$ and $\htop(\Sigma_r,\sigma)=\log r$, so we find
\begin{equation}
\Int(\Sigma_r,\mathscr{U}_0,\sigma)=0.
\end{equation}
This example shows that a completely independent (segregated) shift system has zero intricacy when it is taken over rank $0$ cylinder sets with the uniform system of coefficients.
\end{example}

%
%

 Next we present intricacy and average sample complexity calculations for a few interesting shifts of finite type. $M$ is the adjacency matrix for each SFT and $\rho(M)$ is the smallest power for which $M$ is positive. The calculations are done using the uniform system of coefficients $c_S^n=2^{-n}$, the open covers are by rank $0$ cylinder sets, 
the computations were made using {\em Mathematica}, tables show values rounded to $3$ decimal places, and when applicable the sums in Equations~\ref{zrasca} and  \ref{zrasci} are computed using the first $20$ terms of the series.
 
 \begin{example}\label{1317example}
\begin{table}[h]
\begin{tabular}{@{}m{.05\textwidth}m{.18\textwidth}m{.05\textwidth}m{.25\textwidth}m{.08\textwidth}m{.08\textwidth}m{.08\textwidth}m{.08\textwidth}@{}}
\toprule
Label&\hspace{.08\textwidth}$M$ & $\rho(M)$
   &\ \hspace{.25in} \text{Graph} & \text{Entropy} & $H(10)$ & $\Asc(10)$ & $\Int(10)$
   \\
   \midrule
I&$\left(
\begin{array}{ccc}
 1 & 1 & 0 \\
 0 & 0 & 1 \\
 1 & 1 & 0 \\
\end{array}
\right) $& 3 &   \includegraphics[width=1in,keepaspectratio]{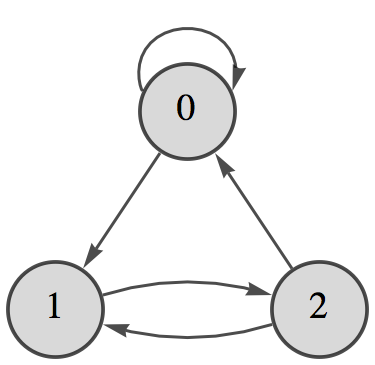}& 0.481 & 0.545 & 0.399 & 0.254 \\
II&$ \left(
\begin{array}{ccc}
 0 & 1 & 1 \\
 1 & 0 & 1 \\
 1 & 0 & 0 \\
\end{array}
\right)$ & 4 &  \includegraphics[width=1in,keepaspectratio]{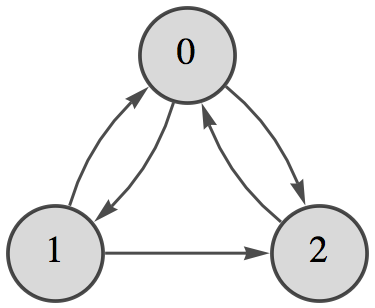} & 0.481 & 0.545 & 0.377 & 0.208 \\
\bottomrule

\end{tabular}
\caption[Two shifts of finite type with the same entropy]{Two shifts of finite type with the same entropy and complexity functions, but different average sample complexity and intricacy functions\label{1317table}}
\end{table}
In this example we compare two shifts of finite type that have the same entropy and complexity functions but different average sample complexity and intricacy functions: see Table \ref{1317table}. When looking at comparisons of $N(S)$ for each SFT for all $S\subset 4^*$ we can see where the differences occur in the average sample complexity and intricacy functions. For instance, the first SFT has $13$ words that appear at $\{0,1,3\}$, whereas the second SFT has $11$ words on those indices. 
Figure \ref{fig:AscPlot} shows part of the graphs of $\Asc (n)$ for these two systems. 
\begin{figure}
	\begin{center}
	\includegraphics[width=3.5in]{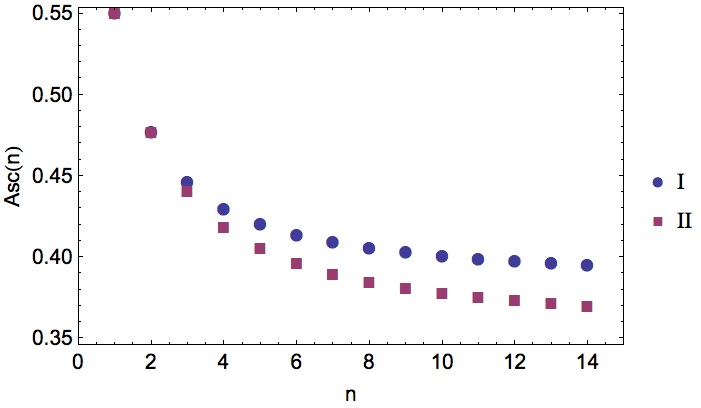}
	\caption{$\Asc (n)$ versus $n$ for the two SFTs in Table \ref{1317table}}\label{fig:AscPlot}
\end{center}
\end{figure}
Notice that the smallest power for which the adjacency matrix for the first SFT is positive is $3$, while it is $4$ for the second SFT. This gives us a clue as to what $\Asc(n)$ and $\Int(n)$ measure. 
Even though both SFTs have the same number of words of each length, the structure of these words is different. The words that appear in sequences for the first SFT are more complex in some sense because there is more freedom to build them.
%
 \end{example}
 
 \begin{example}\label{ex:2}
  \begin{table}[h]
\begin{tabular}{@{}m{.18\textwidth}m{.05\textwidth}m{.25\textwidth}m{.08\textwidth}m{.08\textwidth}m{.08\textwidth}m{.08\textwidth}@{}}
\toprule
\hspace{.08\textwidth}$M$ & $\rho(M)$
   &\ \hspace{.25in} \text{Graph} & \text{Entropy} & $H(10)$ & $\Asc(10)$ & $\Int(10)$
   \\
   \midrule
$\left(
\begin{array}{ccc}
 0 & 1 & 1 \\
 1 & 1 & 1 \\
 1 & 0 & 1 \\
\end{array}
\right)$ & 2 &\includegraphics[width=1in,keepaspectratio]{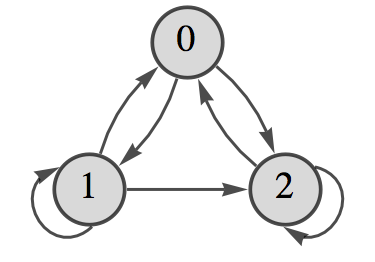}  & 0.810 &0.844& 0.490 & 0.136 \\
$\left(
\begin{array}{ccc}
 1 & 1 & 1 \\
 1 & 1 & 0 \\
 1 & 0 & 0 \\
\end{array}
\right)$ & 2 &\includegraphics[width=1in,keepaspectratio]{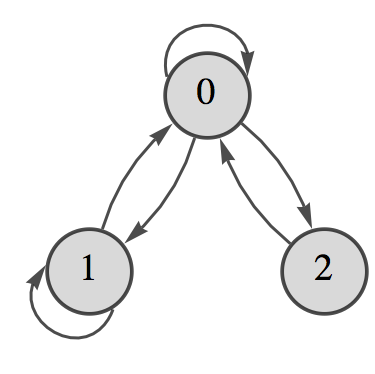}  & 0.810 & 0.830 & 0.472&0.114 \\

\bottomrule

\end{tabular}
\caption[Two shifts of finite type with the same entropy]{Two shifts of finite type with the same entropy but different average sample complexity and intricacy\label{2028table}}
\end{table}
 In the next example (see Table \ref{2028table}) we use Theorem~\ref{squarethm} to compare average sample complexity and intricacy of rank $0$ cylinder sets for two shifts of finite type with positive square adjacency matrices, which we denote by $X_{1}$ and $X_{2}$, respectively. These shifts both have the same entropy, but they have different average sample complexity and intricacy. Their complexity functions are different but have the same exponential growth rate. In this case, intricacy and average sample complexity tell us more than the entropy. The reason these quantities are smaller for $X_{2}$ than $X_{1}$ is that $|\mathscr{L}_{n^*}(X_{2})|<|\mathscr{L}_{n^*}(X_{1})|$ for all $n$. 
 \end{example}
 
   \begin{center}
\begin{longtable}{@{}m{.18\textwidth}m{.05\textwidth}m{.25\textwidth}m{.08\textwidth}m{.08\textwidth}m{.08\textwidth}m{.08\textwidth}@{}}
\toprule
\hspace{.08\textwidth}$M$ & $\rho(M)$
   &\ \hspace{.25in} \text{Graph} & \text{Entropy} & $H(10)$ & $\Asc(10)$ & $\Int(10)$
   \\
   \midrule

   \endfirsthead
   \multicolumn{7}{l}{{Continued from previous page}}\\
   \toprule
\hspace{.08\textwidth}$M$ & $\rho(M)$
   &\ \hspace{.25in} \text{Graph} & \text{Entropy} & $H(10)$ & $\Asc(10)$ & $\Int(10)$
   \\
   \midrule
   
   \endhead
   
   \bottomrule
  \multicolumn{7}{r}{{Continued on next page}}
   \endfoot
   \bottomrule
   \caption{Table with calculations for SFTs with the same entropy.\label{3lettersameentropytable}}
   \endlastfoot
   
$\left(
\begin{array}{ccc}
 1 & 1 & 0 \\
 0 & 1 & 1 \\
 1 & 0 & 1 \\
\end{array}
\right)$ & 2 &   \includegraphics[width=1in,keepaspectratio]{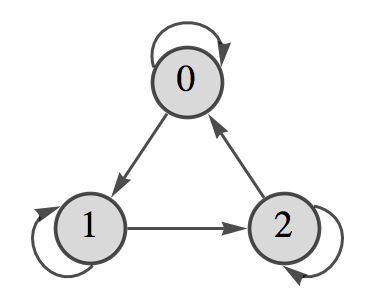}& 0.693 & 0.734 & 0.458 & 0.182 \\
$\left(
\begin{array}{ccc}
 0 & 1 & 1 \\
 1 & 0 & 1 \\
 1 & 1 & 0 \\
\end{array}
\right)$ & 2 & \includegraphics[width=1in,keepaspectratio]{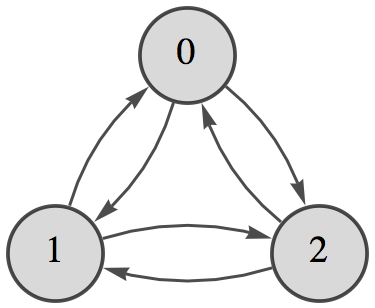}  & 0.693 & 0.734 & 0.458 & 0.182 \\
$\left(
\begin{array}{ccc}
 1 & 1 & 0 \\
 0 & 0 & 1 \\
 1 & 1 & 1 \\
\end{array}
\right)$ & 2 &  \includegraphics[width=1in,keepaspectratio]{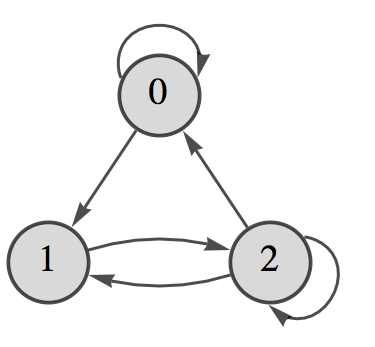} & 0.693 & 0.734 & 0.458 & 0.182 \\
$\left(
\begin{array}{ccc}
 1 & 1 & 0 \\
 0 & 1 & 1 \\
 1 & 1 & 0 \\
\end{array}
\right)$ & 2 &   \includegraphics[width=1in,keepaspectratio]{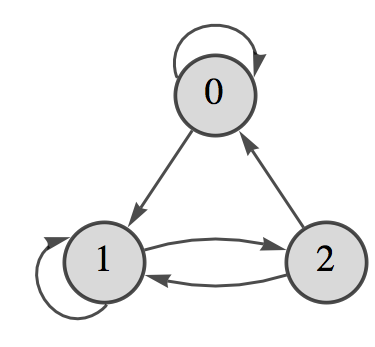}& 0.693 & 0.734 & 0.458 & 0.182 \\
 $\left(
\begin{array}{ccc}
 0 & 1 & 1 \\
 1 & 0 & 1 \\
 1 & 0 & 1 \\
\end{array}
\right)$ & 3 &  \includegraphics[width=1in,keepaspectratio]{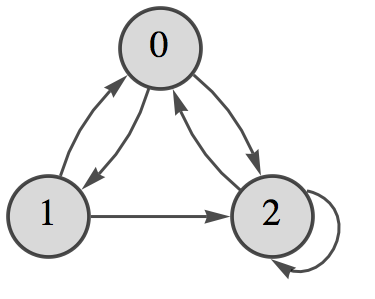} & 0.693 & 0.734 & 0.446 & 0.158 \\
 $\left(
\begin{array}{ccc}
 0 & 1 & 1 \\
 1 & 1 & 1 \\
 1 & 0 & 0 \\
\end{array}
\right)$ & 3 &   \includegraphics[width=1in,keepaspectratio]{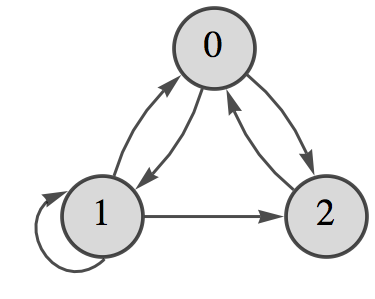}& 0.693 & 0.734 & 0.446 & 0.158 \\
$\left(
\begin{array}{ccc}
 1 & 1 & 1 \\
 1 & 0 & 0 \\
 1 & 0 & 0 \\
\end{array}
\right)$ & 2 &  \includegraphics[width=1in,keepaspectratio]{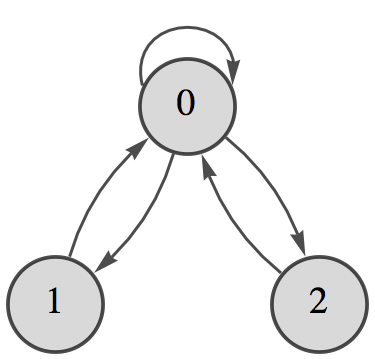} & 0.693 & 0.722 & 0.440 & 0.158 \\
\end{longtable}
\end{center}
   
\begin{example}[Comparison of SFTs with the same entropy]\label{ex:3}  
Table~\ref{3lettersameentropytable} shows seven SFTs with the same entropy but not all the same intricacy or average sample complexity functions. The smallest power for which the first four adjacency matrices and the last adjacency matrix are positive is $2$, while it is $3$ for the other two adjacency matrices. These two groups have the same $\Asc$ and $\Int$. The last SFT is unique among these seven in that it has the same entropy as the other six SFTs, the square of its adjacency matrix is positive, but it has lower intricacy and average sample complexity than the others (the rounding makes it appear to have the same intricacy in the table).
 \end{example}

\section{Measure-theoretic intricacy and average sample complexity}

We formulate definitions of measure-theoretic intricacy and measure-theoretic average sample complexity in analogy with measure-theoretic entropy. 
\begin{definition}\label{mtdefinitions}
Let $(X,\mathscr{B},\mu,T)$ be a measure-preserving system, $\alpha=\{A_1,\dots,A_n\}$ a finite measurable partition of $X$ and $c_S^n$ a system of coefficients. Recall $H_\mu(\alpha)=-\sum_{i=1}^n\mu(A_i)\log \mu(A_i)$ and for $S\subset n^*$
\be
\alpha_S=\bigvee_{i\in S}T^{-i}\alpha.
\en
The \emph{measure-theoretic intricacy of $T$ with respect to the partition $\alpha$} is
\begin{equation}
\Int_\mu(X,\alpha,T)=\lim_{n\rightarrow \infty}\frac{1}{n}\sum_{S\subset n^*}c_S^n\left[H_\mu(\alpha_S)+H_\mu(\alpha_{\setcomp{S}})-H_\mu(\alpha_{n^*})\right].
\end{equation}
The \emph{measure-theoretic average sample complexity of $T$ with respect to the partition $\alpha$} is 
\begin{equation}
\Asc_\mu(X,\alpha,T)=\lim_{n\rightarrow\infty}\frac{1}{n}\sum_{S\subset n^*}c_S^nH_\mu(\alpha_S).
\end{equation}
\end{definition}
If $(X,T)$ is a subshift, $\alpha$ the partition by rank $0$ cylinder sets and $\mathscr{U}(\alpha)$ the corresponding open cover of $X$, then $\Asc_\mu(X,\alpha,T)\le\Asc(X,\mathscr{U},T)$, since, for each $n$ and $S\subset n^*$, $H_\mu(\alpha_S)\le\log N(\mathscr{U}(\alpha)_S)$.
 We also define the \emph{measure-theoretic intricacy function} and the \emph{measure-theoretic average sample complexity function} as we did in the topological case.
\begin{definition}\label{mtnewcompdefs}
Let $(X,\mathscr{B},\mu,T)$ be a measure-preserving system, $\alpha$ a finite measurable partition of $X$ and a system of coefficients $c_S^n$. The \emph{measure-theoretic average sample complexity function of $T$ with respect to the partition $\alpha$} is given by
\be
\Asc_\mu(X,\alpha,T,n)=\frac{1}{n}\sum_{S\subset n^*}c_S^nH_\mu(\alpha_S).
\en
The \emph{measure-theoretic intricacy function of $T$ with respect to the partition $\alpha$} is given by
\be
\Int_\mu(X,\mathscr{U},T,n)=\frac{1}{n}\sum_{S\subset n^*}c_S^n\left[H_\mu(\alpha_S)+H_\mu(\alpha_{\setcomp{S}})-H_\mu(\alpha_{n^*})\right] .
\en
When the context is clear we may also write these as $\Asc_\mu(n)$ and $\Int_\mu(n)$.
As in Proposition \ref{accascprop1}, restricting $S$ to include $0$ gives us half of $\Asc_\mu(X,\alpha,T)$:
\be
\begin{aligned}
	\Acc_\mu(X,T,\alpha) &= \lim_{n \to \infty} \frac{1}{n} \sum_{\substack{S\subset n^*\\ 0\in S }} \frac{1}{2^n} H_\mu(\alpha_S) 
	=\lim_{n \to \infty} \frac{1}{n} \sum_{S' \subset (n-1)^*} \frac{1}{2^n} H_\mu (\alpha \vee \alpha_{S'}) \\
	&=\lim_{n \to \infty} \frac{1}{n} \sum_{S' \subset (n-1)^*} \frac{1}{2^n}[H_\mu(\alpha_{S'})+H_\mu(\alpha|\alpha_{S'})] = \frac{1}{2} \Asc_\mu(X,\alpha,T),
\end{aligned}
\en
since the limit of the second term is $0$.
\end{definition}

\begin{theorem}\label{ascmuinf}
Let $(X,\mathscr{B},\mu,T)$ be a measure-preserving system and $\alpha$ a finite measurable partition. For a system of coefficients $c_S^n$, $\Asc_\mu(X,\alpha,T)$ exists and equals $\inf_n(1/n)\sum_{S\subset n^*}c_S^nH_\mu(\alpha_S)$.
\end{theorem}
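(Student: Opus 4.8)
The plan is to establish this exactly as in the topological case: show that the sequence $b_n := \sum_{S\subset n^*}c_S^n H_\mu(\alpha_S)$ is subadditive, $b_{n+m}\le b_n+b_m$, and then invoke Fekete's Lemma \cite{fekete1923verteilung} to conclude that $(1/n)b_n$ converges to $\inf_n (1/n)b_n$, which is precisely the asserted value of $\Asc_\mu(X,\alpha,T)$. This is the measure-theoretic analogue of Theorem~\ref{syscoeffsubadd} and its corollary, and I expect the argument to be a near-verbatim adaptation with two ingredients substituted for their topological counterparts.

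For the subadditivity, I would fix $n,m$ and for $S\subset (n+m)^*$ set $U(S)=S\cap n^*$ and $V(S)=S\cap[(n+m)^*\setminus n^*]$, so that $S$ is the disjoint union of $U(S)$ and $V(S)$ and hence $\alpha_S=\alpha_{U(S)}\vee\alpha_{V(S)}$. Here the topological estimate $N(\mathscr{U}_S)\le N(\mathscr{U}_{U(S)})N(\mathscr{U}_{V(S)})$ is replaced by the standard subadditivity of static entropy under joins,
\be
H_\mu(\alpha_S)=H_\mu(\alpha_{U(S)}\vee\alpha_{V(S)})\le H_\mu(\alpha_{U(S)})+H_\mu(\alpha_{V(S)}).
\en
Writing $c_S^{n+m}=\int_{[0,1]}x^{|S|}(1-x)^{n+m-|S|}\lambda_c(dx)$ via the representation (\ref{eq:intricacyweights}) from Theorem~\ref{coeffprop}, I would pull the (finite) sum over $S$ inside the integral, apply the inequality above, and then factor the resulting product of sums over the two blocks, exactly as in the proof of Theorem~\ref{syscoeffsubadd}.

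The only place where the measure-preserving structure enters is the re-indexing of the second block. For $W\subset m^*$ with $W+n=V$ one has $\alpha_{W+n}=T^{-n}\alpha_W$, and since $T$ is measure preserving $H_\mu(T^{-n}\alpha_W)=H_\mu(\alpha_W)$; this replaces the identity $N(T^{-i}\mathscr{U}_S)=N(\mathscr{U}_S)$ used in the topological argument. After this substitution the sum over $V\subset[(n+m)^*\setminus n^*]$ becomes a sum over $W\subset m^*$, the integrals recombine into $b_n$ and $b_m$, and one obtains $b_{n+m}\le b_n+b_m$.

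I do not anticipate a serious obstacle: the swap of the finite sum with the integral needs no justification beyond finiteness, and the two substituted facts—subadditivity of $H_\mu$ under joins and $T$-invariance of $H_\mu$—are elementary. The one point worth checking is that the coefficients at hand admit the integral representation (\ref{eq:intricacyweights}); for the uniform weights $c_S^n=2^{-n}$ this holds with $\lambda_c=\delta_{1/2}$, and more generally it holds exactly for the intricacy-generating systems of coefficients characterized in Theorem~\ref{coeffprop}. Granting subadditivity, Fekete's Lemma then yields both the existence of the limit and its equality with the infimum.
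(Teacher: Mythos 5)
Your proposal is correct and takes essentially the same route as the paper: the paper's proof likewise sets $b_n=\sum_{S\subset n^*}c_S^n H_\mu(\alpha_S)$, uses the same decomposition $U(S)=S\cap n^*$, $V(S)=S\cap[(n+m)^*\setminus n^*]$ with $H_\mu(\alpha_S)\le H_\mu(\alpha_{U(S)})+H_\mu(\alpha_{V(S)})$ in place of the topological count inequality, and then repeats the argument of Theorem~\ref{syscoeffsubadd} (integral representation of the weights, re-indexing the second block by shift invariance) before concluding with Fekete's Lemma. Your closing caveat about needing the representation (\ref{eq:intricacyweights}) is well observed but is exactly the same implicit assumption the paper makes, since the proof of Theorem~\ref{syscoeffsubadd} itself invokes $\lambda_c$.
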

\begin{proof}
Let $b_n=\sum_{S\subset n^*}c_S^nH_\mu(\alpha_S)$.  For each $S\subset (n+m)^*$ define $U=U(S)$ and $V=V(S)$ as in the proof of Theorem~\ref{syscoeffsubadd}. We have
\begin{equation}\label{Hsubadd}
H_\mu(\alpha_S)\le H_\mu(\alpha_{U})+H_\mu(\alpha_{V}).
\end{equation}
The proof of subadditivity of $b_n$ follows in the same manner as in the proof of Theorem~\ref{syscoeffsubadd}. 
\end{proof}

\begin{corollary}
Let $(X,\mathscr{B},\mu,T)$ be a measure-preserving system and $\alpha$ a finite measurable partition. If $c_S^n$ is a system of coefficients, then the limit in the definition $\Int_\mu(X,\alpha,T)$ (Definition~\ref{mtdefinitions}) exists.
\end{corollary}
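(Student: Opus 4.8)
The plan is to decompose the intricacy function into three separate averages, each of which is controlled by an already-established fact, mirroring the decomposition carried out in the topological setting that produced (\ref{Eq:AscInt}). Writing out the bracket in the definition, for each $n$ we have
\begin{align*}
\frac{1}{n}\sum_{S\subset n^*}c_S^n\left[H_\mu(\alpha_S)+H_\mu(\alpha_{\setcomp{S}})-H_\mu(\alpha_{n^*})\right]
&=\frac{1}{n}\sum_{S\subset n^*}c_S^n H_\mu(\alpha_S)+\frac{1}{n}\sum_{S\subset n^*}c_S^n H_\mu(\alpha_{\setcomp{S}})\\
&\quad-\frac{1}{n}\sum_{S\subset n^*}c_S^n H_\mu(\alpha_{n^*}).
\end{align*}
First I would treat the two positive terms. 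The first, $\tfrac{1}{n}\sum_{S\subset n^*}c_S^n H_\mu(\alpha_S)$, converges to $\Asc_\mu(X,\alpha,T)$ by Theorem~\ref{ascmuinf}. For the second term I would invoke the symmetry $c_{\setcomp{S}}^n=c_S^n$ built into the definition of a system of coefficients: replacing $c_S^n$ by $c_{\setcomp{S}}^n$ and then reindexing the sum over $S$ by its complement shows that $\tfrac{1}{n}\sum_{S\subset n^*}c_S^n H_\mu(\alpha_{\setcomp{S}})$ equals the first term for every $n$, and hence converges to the same limit $\Asc_\mu(X,\alpha,T)$.

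Next I would handle the third term using the normalization $\sum_{S\subset n^*}c_S^n=1$, which collapses that average to $-\tfrac{1}{n}H_\mu(\alpha_{n^*})$. Since $\alpha_{n^*}=\bigvee_{i=0}^{n-1}T^{-i}\alpha$, the quantity $\tfrac{1}{n}H_\mu(\alpha_{n^*})$ is precisely the sequence defining the Kolmogorov--Sinai entropy $h_\mu(X,\alpha,T)$, and its limit exists by the classical subadditivity of $n\mapsto H_\mu(\alpha_{n^*})$. Combining the three pieces yields both the existence of the limit and the clean identity
\[
\Int_\mu(X,\alpha,T)=2\,\Asc_\mu(X,\alpha,T)-h_\mu(X,\alpha,T),
\]
the measure-theoretic counterpart of (\ref{Eq:AscInt}).

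There is no substantive obstacle here: the whole argument rests on the two defining properties of a system of coefficients (symmetry and normalization), together with the existence already proved in Theorem~\ref{ascmuinf} and the standard existence of $\lim_n H_\mu(\alpha_{n^*})/n$. The only point requiring a moment's care is the reindexing in the second term, where one must verify that $S\mapsto\setcomp{S}$ is a bijection of the power set of $n^*$ carrying $c_S^n$ to $c_{\setcomp{S}}^n$ and $H_\mu(\alpha_{\setcomp{S}})$ to $H_\mu(\alpha_S)$, so that the two sums agree term by term after relabeling.
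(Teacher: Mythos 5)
Your argument is correct and is essentially the paper's own proof: the paper's Corollary cites exactly the identity $\Int_\mu(X,\alpha,T)=2\Asc_\mu(X,\alpha,T)-h_\mu(X,\alpha,T)$ (its Equation~\ref{intmueq}), whose derivation is the same three-term decomposition you carry out, using the symmetry $c_{\setcomp{S}}^n=c_S^n$, the normalization $\sum_{S\subset n^*}c_S^n=1$, Theorem~\ref{ascmuinf}, and the classical existence of $\lim_n H_\mu(\alpha_{n^*})/n$. You have merely written out explicitly the computation the paper performs in the topological setting to obtain (\ref{Eq:AscInt}) and leaves implicit here.
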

\begin{proof}
This follows from the fact that 
\begin{equation}\label{intmueq}
\Int_\mu(X,\alpha,T)=2\Asc_\mu(X,\alpha,T)-h_\mu(X,\alpha,T).
\end{equation}
\end{proof}

\begin{proposition}\label{ascmaxmeascor}
	Let $(X,T)$ be a topological dynamical system, $\alpha$ a fixed Borel measurable partition of $X$, and $c_S^n=2^{-n}$ for all $S\subset n^*$. There exist ergodic probability measures on $X$ that maximize $\Asc_\mu(X,\alpha,T)$.
\end{proposition}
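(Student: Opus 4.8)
The plan is to realize $\Asc_\mu(X,\alpha,T)$ as an affine, upper semicontinuous functional on the weak$^*$-compact convex set $M(X,T)$ of $T$-invariant Borel probability measures, and then to extract an ergodic maximizer from the maximizing face. Recall that $M(X,T)$ is nonempty, convex, and weak$^*$-compact, and that its extreme points are exactly the ergodic measures. So it suffices to prove two things: that $\mu\mapsto\Asc_\mu(X,\alpha,T)$ is affine and that it is upper semicontinuous. Granting these, the set of maximizers is a nonempty compact convex \emph{face} of $M(X,T)$, and any extreme point of that face (which exists by the Krein--Milman theorem) is automatically an extreme point of $M(X,T)$, hence ergodic.

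First I would establish affinity. Write $A_n(\mu)=\frac1n\sum_{S\subset n^*}2^{-n}H_\mu(\alpha_S)$, so that $\Asc_\mu(X,\alpha,T)=\lim_n A_n(\mu)=\inf_n A_n(\mu)$ by Theorem~\ref{ascmuinf}. For a finite convex combination $\mu=\sum_j p_j m_j$ and any finite partition $\beta$, the standard entropy estimate gives
\[
\sum_j p_j H_{m_j}(\beta)\le H_\mu(\beta)\le \sum_j p_j H_{m_j}(\beta)+H(\mathbf p),
\]
where $H(\mathbf p)=-\sum_j p_j\log p_j$. Applying this with $\beta=\alpha_S$, multiplying by $2^{-n}$, summing over $S\subset n^*$ (so the weights sum to $1$), and dividing by $n$ yields $\sum_j p_j A_n(m_j)\le A_n(\mu)\le \sum_j p_j A_n(m_j)+H(\mathbf p)/n$. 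Letting $n\to\infty$ kills the error term, so $\Asc_\mu=\sum_j p_j\Asc_{m_j}$; thus $\Asc_\mu$ is affine on $M(X,T)$.

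Next, for upper semicontinuity I would use that the infimum of upper semicontinuous functions is upper semicontinuous, reducing the claim to the upper semicontinuity of each $A_n$, and hence of each $\mu\mapsto H_\mu(\alpha_S)$. In the setting of principal interest, where $(X,\sigma)$ is a subshift and $\alpha$ is the partition into rank-$0$ cylinder sets, every $\alpha_S$ is a finite partition into clopen sets; then $\mu\mapsto\mu(C)$ is weak$^*$-continuous for each block $C$, the function $t\mapsto-t\log t$ is continuous on $[0,1]$, and therefore $\mu\mapsto H_\mu(\alpha_S)=\sum_C -\mu(C)\log\mu(C)$ is continuous. Consequently each $A_n$ is continuous and $\Asc_\mu=\inf_n A_n$ is upper semicontinuous. \textbf{This is the step I expect to be the main obstacle}: for a genuinely arbitrary Borel partition, $\mu\mapsto H_\mu(\alpha)$ need not be upper semicontinuous, since invariant measures can charge the boundary $\bigcup_{i\in S}T^{-i}\partial\alpha$. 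One must then either restrict to partitions whose atoms have boundaries that are null for the relevant measures, or argue that the averaging over $S$ together with the $1/n$ normalization damps the boundary defect in the limit; I would treat the clopen/null-boundary case as above and isolate the general case as the delicate point.

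Finally, with affinity and upper semicontinuity in hand, let $s=\sup_{\mu\in M(X,T)}\Asc_\mu$. Upper semicontinuity together with weak$^*$-compactness makes $s$ finite and attained, and the maximizing set $F=\{\mu\in M(X,T):\Asc_\mu=s\}$ is closed (a super-level set of an upper semicontinuous function), hence compact, and convex (by affinity). Moreover $F$ is a face: if $\mu=p\mu_1+(1-p)\mu_2\in F$ with $0<p<1$ and $\mu_1,\mu_2\in M(X,T)$, then affinity forces $\Asc_{\mu_1}=\Asc_{\mu_2}=s$, so $\mu_1,\mu_2\in F$. By the Krein--Milman theorem $F$ has an extreme point $\mu^*$; since $F$ is a face, $\mu^*$ is extreme in $M(X,T)$ and therefore ergodic. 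This $\mu^*$ is the desired ergodic measure maximizing $\Asc_\mu(X,\alpha,T)$.
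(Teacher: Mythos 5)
Your proof follows essentially the same route as the paper's: affinity of $\mu\mapsto\Asc_\mu(X,\alpha,T)$ via the standard convex-combination entropy estimate (the paper invokes an adaptation of \cite[Prop.\ 10.13]{DGS}), upper semicontinuity as the infimum of the functions $A_n$ (Theorem~\ref{ascmuinf}), attainment of the supremum on the nonempty weak$^*$-compact convex set of invariant measures, and extraction of an ergodic maximizer from the extreme points of the maximizing face. The subtlety you flag --- that $\mu\mapsto H_\mu(\alpha_S)$ is weak$^*$-continuous only when the atoms of $\alpha$ are, say, clopen or have boundaries null for the relevant measures, so that the claimed continuity of each $A_n$ is not automatic for an arbitrary Borel partition --- is a genuine point that the paper's proof passes over silently, so your treatment is, if anything, more careful than the original.
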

\begin{proof}
	An adaptation of the argument in \cite[Prop.10.13, p. 61]{DGS} shows that $\Asc_\mu (X, \alpha, T)$ is an affine function of $\mu$. Since $\Asc_\mu(X,\alpha,T)$ is an infimum of continuous functions of $\mu$ (see Theorem~\ref{ascmuinf}), it is an upper semi-continuous function of $\mu$. The space of invariant  probability measures on $X$ is nonempty and compact in the weak$^*$-topology, and an upper semi-continuous function on a compact space attains its supremum. Therefore, the set of measures $\mu$ that maximize $\Asc_\mu(X,\alpha,T)$ is nonempty. It is convex because $\Asc_\mu(X,\alpha,T)$ is affine in $\mu$. The extreme points of this set coincide with the ergodic measures that maximize $\Asc_\mu(X,\alpha,T)$. (See Chapter 8 of \cite{waltersergodic} for more details and proofs of the properties of $h_{\mu}(X,\alpha,T)$.)
\end{proof}

%
%

Now we establish a measure-theoretic analogue to Theorem~\ref{asctoentthm}: 
\be\sup_\alpha \Asc_\mu(X,\alpha,T)=\sup_\alpha\Int_\mu(X,\alpha,T)=h_\mu(X,T).
\en
 The proof follows the structure of the proof of Theorem~\ref{asctoentthm}, and, as in the topological case, the theorem motivates us to focus the study of measure-theoretic intricacy and measure-theoretic average sample complexity on particular partitions, for example the partition by time-zero cylinder sets in subshifts. 
\begin{theorem}\label{mtheoreticasctoent}
Let $(X,\mathscr{B},\mu,T)$ be a measure-preserving system and fix the system of coefficients $c_S^n=2^{-n}$. Then
\be
\sup_{\alpha}\Asc_{\mu}(X,\alpha,T)=h_\mu(X,T).
\en
\end{theorem}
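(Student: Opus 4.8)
The plan is to mirror the proof of Theorem~\ref{asctoentthm}, translating each counting statement about $N(\mathscr{U}_S)$ into the corresponding entropy statement about $H_\mu(\alpha_S)$. The inequality $\sup_\alpha \Asc_\mu(X,\alpha,T) \le h_\mu(X,T)$ comes essentially for free: for any $S\subset n^*$ the partition $\alpha_S$ is coarser than $\alpha_{n^*}$, so $H_\mu(\alpha_S) \le H_\mu(\alpha_{n^*})$, and since $\sum_{S\subset n^*}c_S^n = 1$ we get $\frac{1}{n}\sum_{S\subset n^*}c_S^n H_\mu(\alpha_S) \le \frac{1}{n}H_\mu(\alpha_{n^*})$. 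Letting $n\to\infty$ yields $\Asc_\mu(X,\alpha,T) \le h_\mu(X,\alpha,T) \le h_\mu(X,T)$ for every $\alpha$, so it remains only to establish the reverse inequality.

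For the lower bound I would fix a partition $\alpha$ and, for each $k$, pass to the refined partition $\alpha_{k^*} = \bigvee_{j=0}^{k-1}T^{-j}\alpha$. The measure-theoretic analogue of property~1 of Lemma~\ref{Nsetstoplemma} is the identity $(\alpha_{k^*})_S = \alpha_{S+k^*}$, hence $H_\mu((\alpha_{k^*})_S) = H_\mu(\alpha_{S+k^*})$, so that by Theorem~\ref{ascmuinf}
\begin{equation*}
\Asc_\mu(X,\alpha_{k^*},T) = \lim_{n\to\infty}\frac{1}{n}\frac{1}{2^n}\sum_{S\subset n^*}H_\mu(\alpha_{S+k^*}).
\end{equation*}
I would then invoke Lemma~\ref{goodsetlemma} exactly as in the topological case: given $0<\varepsilon<1$, choose an even $k>2/\varepsilon$ so that $\card(\mathcal{B}(n,k,\varepsilon))/2^n \to 0$, and recall that for $S\notin\mathcal{B}$ one has $\card(S+k^*)\ge n(1-\varepsilon)$, i.e. the complement $E = n^*\setminus(S+k^*)$ satisfies $\card(E)\le n\varepsilon$.

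The one place where the argument genuinely changes from counting to entropy is the lower bound on $H_\mu(\alpha_{S+k^*})$. Writing $\alpha_{n^*} = \alpha_{S+k^*}\vee\alpha_E$ and using subadditivity of $H_\mu$ together with $H_\mu(\alpha_E)\le \card(E)\,H_\mu(\alpha)\le n\varepsilon H_\mu(\alpha)$ (the latter from $T$-invariance of $\mu$ and subadditivity), I obtain, for $S\notin\mathcal{B}$,
\begin{equation*}
\frac{H_\mu(\alpha_{S+k^*})}{n} \ge \frac{H_\mu(\alpha_{n^*})}{n} - \varepsilon H_\mu(\alpha) \ge h_\mu(X,\alpha,T) - \varepsilon H_\mu(\alpha),
\end{equation*}
the last step using $H_\mu(\alpha_{n^*})/n \ge h_\mu(X,\alpha,T)$. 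Dropping the nonnegative contributions from $S\in\mathcal{B}$ and averaging gives, for this fixed $k$ and $n$ large,
\begin{equation*}
\frac{1}{n}\frac{1}{2^n}\sum_{S\subset n^*}H_\mu(\alpha_{S+k^*}) \ge \frac{|\setcomp{\mathcal{B}}|}{2^n}\bigl(h_\mu(X,\alpha,T) - \varepsilon H_\mu(\alpha)\bigr).
\end{equation*}
Letting $n\to\infty$ (so $|\setcomp{\mathcal{B}}|/2^n\to1$) shows $\Asc_\mu(X,\alpha_{k^*},T) \ge h_\mu(X,\alpha,T) - \varepsilon H_\mu(\alpha)$; since each $\alpha_{k^*}$ is a finite measurable partition, letting $\varepsilon\to0$ (forcing $k\to\infty$) yields $\sup_\beta \Asc_\mu(X,\beta,T) \ge h_\mu(X,\alpha,T)$, and finally taking the supremum over $\alpha$ gives $\sup_\beta \Asc_\mu(X,\beta,T) \ge h_\mu(X,T)$. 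Together with the easy direction this proves the claimed equality.

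I do not anticipate a serious obstacle, since the combinatorial heart of the matter---that almost every $S\subset n^*$ has $S+k^*$ filling nearly all of $n^*$---is already isolated in Lemma~\ref{goodsetlemma} and is independent of whether we count covers or measure entropy. The only genuinely new ingredient is replacing the submultiplicativity $N(\mathscr{U}\vee\mathscr{V})\le N(\mathscr{U})N(\mathscr{V})$ by the subadditivity $H_\mu(\alpha\vee\beta)\le H_\mu(\alpha)+H_\mu(\beta)$, and the single-coordinate bound $\log|\mathscr{U}|$ by $H_\mu(\alpha)$; the care required is merely to keep the two limits ($n\to\infty$, then $\varepsilon\to0$ with $k\to\infty$) in the correct order so that the error term $\varepsilon H_\mu(\alpha)$ vanishes.
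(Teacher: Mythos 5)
Your proposal is correct and is essentially the paper's own argument: the paper proves this theorem by mirroring the proof of Theorem~\ref{asctoentthm}, using exactly the identity $H_\mu((\alpha_{k^*})_S)=H_\mu(\alpha_{S+k^*})$ and subadditivity of $H_\mu$ (its Lemma~\ref{Nsetsmeasurelemma}), the bad-set estimate of Lemma~\ref{goodsetlemma}, and the substitutions $H_\mu(\alpha_E)\le|E|H_\mu(\alpha)$ for $N(E)\le|\mathscr{U}|^{|E|}$ and $H_\mu(\alpha_{n^*})/n\ge h_\mu(X,\alpha,T)$ for the corresponding counting bound. Your handling of the order of limits ($n\to\infty$, then $\varepsilon\to0$ with $k\to\infty$) and of the easy upper bound matches the paper's treatment, so there is nothing to correct.
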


\begin{lemma}\label{Nsetsmeasurelemma}
Let $(X,\mathscr{B},\mu,T)$ be a measure-preserving system and $\alpha$ a finite measurable partition of $X$. Given  $n\in\mathbb{N}$ and $S\subset n^*$, the following properties hold:
\begin{enumerate}[\normalfont 1.]
\item $H_{\mu}((\alpha_{k^*})_S)=H_{\mu}(\alpha_{S+k^*})$.
\item Given $S_1,S_2,\dots, S_m\subset n^*$,  $H_{\mu}(\alpha_{\cup_iS_i})\le\sum_{i}H_{\mu}(\alpha_{S_i})$.
\end{enumerate}
\end{lemma}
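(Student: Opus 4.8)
The plan is to prove both statements exactly as in the proof of Lemma~\ref{Nsetstoplemma}, replacing the open cover $\mathscr{U}$ by the partition $\alpha$ and the quantity $\log N(\cdot)$ by the entropy $H_\mu(\cdot)$, and invoking the two elementary facts that the refinement $\bigvee$ of partitions depends only on the underlying set of time indices and that $H_\mu$ is subadditive with respect to $\bigvee$.

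For the first identity I would simply unwind the definitions. Since $\alpha_{k^*}=\bigvee_{i\in k^*}T^{-i}\alpha$, applying the operation $(\cdot)_S$ gives
\[
(\alpha_{k^*})_S=\bigvee_{j\in S}T^{-j}\Bigl(\bigvee_{i\in k^*}T^{-i}\alpha\Bigr)=\bigvee_{j\in S}\bigvee_{i\in k^*}T^{-(i+j)}\alpha=\bigvee_{m\in S+k^*}T^{-m}\alpha=\alpha_{S+k^*}.
\]
The only point requiring a word of care is that $S+k^*$ is a \emph{set} of indices in which a single integer $m$ may arise from several pairs $(j,i)$; but a join $\bigvee$ is insensitive to such repetitions, so the two partitions $(\alpha_{k^*})_S$ and $\alpha_{S+k^*}$ are literally equal, and hence so are their entropies. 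This mirrors exactly the computation used for covers in Lemma~\ref{Nsetstoplemma}.

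For the second inequality I would first observe that $\alpha_{\cup_i S_i}=\bigvee_{j\in\cup_i S_i}T^{-j}\alpha=\bigvee_{i}\alpha_{S_i}$, again using that the join sees only the union of the index sets. The result then reduces to the standard subadditivity of entropy, $H_\mu(\beta\vee\gamma)\le H_\mu(\beta)+H_\mu(\gamma)$ (see, e.g., \cite{waltersergodic}), applied inductively to $\beta=\alpha_{S_1},\dots,\alpha_{S_m}$, exactly paralleling the use of $N(\mathscr{U}\vee\mathscr{V})\le N(\mathscr{U})N(\mathscr{V})$ in Lemma~\ref{Nsetstoplemma}.

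I do not expect any genuine obstacle here: both parts are formal consequences of the definition of $\alpha_S$ together with well-known properties of $H_\mu$, and the lemma is purely the bookkeeping step needed to run the measure-theoretic version of the argument of Theorem~\ref{asctoentthm}. The only thing to watch is the harmless index-multiplicity issue noted above.
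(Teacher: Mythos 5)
Your proof is correct and matches the paper's approach: the paper in fact states Lemma~\ref{Nsetsmeasurelemma} without proof, implicitly treating it as the measure-theoretic analogue of Lemma~\ref{Nsetstoplemma}, and your argument—unwinding $(\alpha_{k^*})_S=\alpha_{S+k^*}$ via the join's insensitivity to index multiplicity, then applying subadditivity $H_\mu(\beta\vee\gamma)\le H_\mu(\beta)+H_\mu(\gamma)$ inductively—is exactly that analogue. Your remark on the index-multiplicity point is a welcome extra word of care, not a deviation.
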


\begin{corollary}\label{cor:supintent}
Let $(X,\mathscr{B},\mu,T)$ be a measure-preserving system, $\alpha$ a finite measurable partition of $X$, and fix the system of coefficients $c_S^n=2^{-n}$. Then
\be
\sup_{\alpha}\Int_{\mu}(X,\alpha,T)=h_\mu(X,T).
\en
\end{corollary}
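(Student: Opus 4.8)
The plan is to deduce the corollary from Theorem~\ref{mtheoreticasctoent} together with the identity~(\ref{intmueq}), namely $\Int_\mu(X,\alpha,T)=2\Asc_\mu(X,\alpha,T)-h_\mu(X,\alpha,T)$, in exact analogy with the way the intricacy half of Theorem~\ref{asctoentthm} was obtained from its average-sample-complexity half via~(\ref{Eq:AscInt}). I would establish the two inequalities $\sup_\alpha\Int_\mu(X,\alpha,T)\le h_\mu(X,T)$ and $\sup_\alpha\Int_\mu(X,\alpha,T)\ge h_\mu(X,T)$ separately.

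For the upper bound I would first record the measure-theoretic analogue of Proposition~\ref{htopboundprop}: since $H_\mu(\alpha_S)\le H_\mu(\alpha_{n^*})$ for every $S\subset n^*$ and $\sum_{S\subset n^*}c_S^n=1$, averaging and passing to the limit gives $\Asc_\mu(X,\alpha,T)\le h_\mu(X,\alpha,T)$. Substituting into~(\ref{intmueq}) then yields
\be
\Int_\mu(X,\alpha,T)=2\Asc_\mu(X,\alpha,T)-h_\mu(X,\alpha,T)\le h_\mu(X,\alpha,T)\le h_\mu(X,T)
\en
for every finite measurable partition $\alpha$, and taking the supremum over $\alpha$ settles one inequality.

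For the lower bound I would invoke Theorem~\ref{mtheoreticasctoent} to pick a sequence of partitions $\alpha^{(j)}$ with $\Asc_\mu(X,\alpha^{(j)},T)\to h_\mu(X,T)$. Combined with the bounds $\Asc_\mu(X,\alpha^{(j)},T)\le h_\mu(X,\alpha^{(j)},T)\le h_\mu(X,T)$ from the previous step, this forces $h_\mu(X,\alpha^{(j)},T)\to h_\mu(X,T)$ as well. Feeding both limits into~(\ref{intmueq}) gives
\be
\Int_\mu(X,\alpha^{(j)},T)=2\Asc_\mu(X,\alpha^{(j)},T)-h_\mu(X,\alpha^{(j)},T)\longrightarrow 2h_\mu(X,T)-h_\mu(X,T)=h_\mu(X,T),
\en
so that $\sup_\alpha\Int_\mu(X,\alpha,T)\ge h_\mu(X,T)$, which completes the argument.

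Essentially all of the work is already contained in Theorem~\ref{mtheoreticasctoent}, so once that is granted there is no genuine obstacle; the only point demanding any care is the squeezing step that upgrades the convergence $\Asc_\mu(X,\alpha^{(j)},T)\to h_\mu(X,T)$ to $h_\mu(X,\alpha^{(j)},T)\to h_\mu(X,T)$. If a route more closely paralleling the cover proof is preferred, one can instead fix $\alpha$ and work with the refinements $\alpha_{k^*}=\bigvee_{i=0}^{k-1}T^{-i}\alpha$, using that $h_\mu(X,\alpha_{k^*},T)=h_\mu(X,\alpha,T)$ for all $k$ together with the fact (from the proof of Theorem~\ref{mtheoreticasctoent}) that $\Asc_\mu(X,\alpha_{k^*},T)\to h_\mu(X,\alpha,T)$; then~(\ref{intmueq}) gives $\Int_\mu(X,\alpha_{k^*},T)\to h_\mu(X,\alpha,T)$, and taking the supremum over $\alpha$ finishes the proof.
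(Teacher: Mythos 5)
Your proposal is correct and follows essentially the same route as the paper, which likewise deduces the corollary from Theorem~\ref{mtheoreticasctoent} and the identity~(\ref{intmueq}) by taking a sequence of partitions $\alpha^{(j)}$ with $\Asc_\mu(X,\alpha^{(j)},T)\to h_\mu(X,T)$. Your squeeze step, upgrading $\Asc_\mu(X,\alpha^{(j)},T)\to h_\mu(X,T)$ to $h_\mu(X,\alpha^{(j)},T)\to h_\mu(X,T)$ via $\Asc_\mu(X,\alpha,T)\le h_\mu(X,\alpha,T)\le h_\mu(X,T)$, simply makes explicit the detail the paper leaves implicit in its one-line proof.
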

\begin{proof}
This follows from Theorem~\ref{mtheoreticasctoent} and Equation~\ref{intmueq}
{by considering an increasing sequence of partitions $\alpha^{(n)}$ with $\Asc_\mu(X, \alpha^{(n)},T) \nearrow h_\mu(X,T)$.}
\end{proof}

%
%

The next results give a relationship between measure-theoretic average sample complexity of a finite measurable partition $\alpha$, 
the first-return map on a cross product, and a series summed over $i$ involving the conditional entropies $H_\mu(\alpha\mid\alpha_i)$. We will take advantage of this in the next section to compute $\Asc_\mu$ and $\Int_\mu$ for $1$-step Markov shifts. One purpose of accurately computing $\Asc_\mu$ and $\Int_\mu$ is to look for measures $\mu$ that maximize these quantities. 

When the weights are $c_S^n=2^{-n}$, by considering subsets $S$ as being formed by random choices of elements of $n^*$ we obtain Theorem~\ref{Asccompenteqthm}, which relates $\Asc_\mu(X,\alpha,T)$ to half the entropy of the first return map $T_{X\times A}$ on a cross product $X\times A$ of $X$ with the cylinder $A=[1]$ in the full $2$-shift with respect to the finite measurable partition $\alpha\times A$. 

In the one-sided full $2$-shift, $\Sigma_2^+$, we define $A=[1]=\{\xi\in\Sigma_2^+:\xi_0=1\}$. Then, subsets $S\subset n^*$ correspond to occurrences of $1$ in the first $n$ elements of sequences $\xi\in A$. Denote by $\xi_0^{n-1}$ both the string $\xi_0\xi_1\dots\xi_{n-1}$ and the cylinder set $\{z\in\Sigma_2^+:z_i=\xi_i\text{ for all }i=0,1,\dots,n-1\}$. We denote the subsets $S\subset n^*$ or $\mathbb N$ corresponding to $\xi$ by $S(\xi_0^{n-1})=\{i\in n^*:\xi_i=1\}$ and $S(\xi)=\{i\in\mathbb{N}:\xi_i=1\}$. Since averaging $H_\mu(\alpha_S)$ over all $S$ with weights $2^{-n}$ amounts to picking random $S$ and taking the expectation of $H_\mu(\alpha_S)$, we make calculations by doing the latter. 

We introduce some notation and give some facts necessary for the statements and proofs of Theorem~\ref{Asccompenteqthm} and Proposition \ref{infocor}; see \cite{petersen1989ergodic} for more background information and details. Let $P$ denote the Bernoulli measure $\mathscr{B}(1/2,1/2)$ on $\Sigma_2^+$. Let $A$ be the subset of $\Sigma_2^+$ defined above and denote by $n_A:A\rightarrow \mathbb{N}$ the minimum return time of a sequence $\xi\in A$ to $A$ under the shift $\sigma$, i.e.,
\be
n_A(\xi)=\inf\{n\ge 1:\sigma^n\xi\in A\}=\inf\{n\ge 1:\xi_n=1\}.
\en
Define $A_n = \{ \xi \in A: n_A(\xi)=n\}, n\geq 1$.
Since $(\Sigma_2^+,\sigma,P)$ is ergodic, the expected recurrence time of a point $\xi\in A$ to $A$ is $1/P(A)$. Let $\sigma_A\xi=\sigma^{n_A(\xi)}\xi$. Given a positive integer $n$ and sequence $\xi\in A$, define $m_\xi(n)$ by
\be
m_\xi(n)=\sum_{i=0}^{n-1}n_A(\sigma_A^{i}\xi)=n_A(\xi)+n_A(\sigma_A\xi)+\cdots+n_A(\sigma_A^{n-1}\xi),
\en
the sum of the first $n$ return times of $\xi$ to $A$. Since the expected return time of $\xi$ to $A$ is $1/P(A)$, we have that
\begin{equation}\label{mP2eq}
\lim_{n\rightarrow\infty}\frac{m_\xi(n)}{n}= \frac{1}{P(A)}=2\text{ for } P_A\text{-a.e. } \xi\in A.
\end{equation}
Since $n_A\in L^1$, the Ergodic Theorem implies $m_\xi/n\rightarrow 2$ in $L^1$ as well.

For a measure-preserving system $(X,\mathscr{B},\mu,T)$ denote by $T_{X\times A}$ the first-return map on $X\times A$, so that $T_{X\times A}(x,\xi)=(T^{n_A(\xi)}x,\sigma_A\xi)$.
In the proof we also use the fact that for two countable measurable partitions $\alpha$ and $\gamma$ of $X$
\begin{equation}\label{halphabetapart}
H_\mu(\alpha\vee\gamma)=H_\mu(\alpha)+H_\mu(\gamma|\alpha).
\end{equation}

\begin{theorem}\label{Asccompenteqthm}
Let $(X,\mathscr{B},\mu,T)$ be an ergodic measure-preserving system and $\alpha$ a finite measurable partition of $X$. Let $A=[1]=\{\xi\in\Sigma_2^+:\xi_0=1\}$ and let $\beta=\alpha\times A$ be the related finite partition of $X\times A$. Denote by $T_{X\times A}$ the first-return map on $X\times A$ and let $P_A=P/P[1]$ denote the measure $P$ restricted to $A$ and normalized. Let $c_S^n=2^{-n}$ for all $S\subset n^*$. Then
\begin{equation}\label{Asccompenteq}
\Asc_\mu(X,\alpha,T)=
\frac{1}{2}\lim_{n \to \infty} \frac{1}{n} \int_A H_\mu (\alpha_{S(\xi_0^{m_\xi(n)-1})}) \, dP_A(\xi)
\leq \frac{1}{2}h_{\mu\times P_A}(X\times A,\beta,T_{X\times A}).
\end{equation}
\end{theorem}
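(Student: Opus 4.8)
The plan is to establish the two assertions in \eqref{Asccompenteq} in turn: first the equality between $\Asc_\mu(X,\alpha,T)$ and the ``stopped'' average over $A$, and then the inequality bounding the latter by the fiber entropy of the first-return map.

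\emph{Reduction to a Bernoulli integral.} Since $c_S^n=2^{-n}$ and $P=\mathscr B(1/2,1/2)$, each $S\subset n^*$ corresponds to the cylinder $\xi_0^{n-1}$ with $\xi_i=1\iff i\in S$, and $P(\xi_0^{n-1})=2^{-n}$; hence $2^{-n}\sum_{S\subset n^*}H_\mu(\alpha_S)=\int_{\Sigma_2^+}H_\mu(\alpha_{S(\xi_0^{n-1})})\,dP(\xi)$. By Theorem~\ref{ascmuinf} the resulting limit exists, so
\[
\Asc_\mu(X,\alpha,T)=\lim_{N\to\infty}\frac{1}{N}\int_{\Sigma_2^+}H_\mu(\alpha_{S(\xi_0^{N-1})})\,dP(\xi),
\]
which I abbreviate as $\lim_N f(N)/N$. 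Write also $g(n)=\int_A H_\mu(\alpha_{S(\xi_0^{m_\xi(n)-1})})\,dP_A$, and note that $S(\xi_0^{m_\xi(n)-1})$ consists of exactly the first $n$ occurrences of $1$ in $\xi\in A$.

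\emph{Change of clock (the equality).} The goal is $g(n)=f(2n)+o(n)$, which yields $\lim_n g(n)/n=2\lim_N f(N)/N=2\Asc_\mu$, i.e.\ the middle term of \eqref{Asccompenteq}. I would proceed by two comparisons. First, by \eqref{mP2eq} one has $m_\xi(n)/n\to2$ in $L^1$, so $A_{n,\delta}=\{\,|m_\xi(n)-2n|\le\delta n\,\}$ has $P_A(A_{n,\delta})\to1$; on $A_{n,\delta}$ the nested subsets $S(\xi_0^{m_\xi(n)-1})$ and $S(\xi_0^{2n-1})$ differ by at most $\delta n$ coordinates, so by the monotonicity $H_\mu(\alpha_S)\le H_\mu(\alpha_{S'})$ for $S\subseteq S'$ together with the subadditive bound $H_\mu(\alpha_{S'})\le H_\mu(\alpha_S)+|S'\setminus S|\,H_\mu(\alpha)$ (Lemma~\ref{Nsetsmeasurelemma}) their $H_\mu$-values differ by $\le\delta n\,H_\mu(\alpha)$, while $A\setminus A_{n,\delta}$ contributes $o(n)$ because both integrands are $O(n)$ there and $P_A\to0$. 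Thus $g(n)=\int_A H_\mu(\alpha_{S(\xi_0^{2n-1})})\,dP_A+o(n)$ after letting $\delta\to0$. Second, setting $S'=S(\xi_0^{2n-1})\cap\{1,\dots,2n-1\}$ gives $|H_\mu(\alpha_{S(\xi_0^{2n-1})})-H_\mu(\alpha_{S'})|\le H_\mu(\alpha)$, and since $H_\mu(\alpha_{S'})$ depends only on $\xi_1,\dots,\xi_{2n-1}$, which are independent of $\xi_0$ under $P$, one has $\int_A H_\mu(\alpha_{S'})\,dP_A=\int_{\Sigma_2^+}H_\mu(\alpha_{S'})\,dP$. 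Combining, $\int_A H_\mu(\alpha_{S(\xi_0^{2n-1})})\,dP_A=f(2n)+O(1)$, so $g(n)=f(2n)+o(n)$ and the equality follows.

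\emph{The inequality.} Since $T_{X\times A}^{\,j}(x,\xi)=(T^{m_\xi(j)}x,\sigma_A^j\xi)$ and $\beta=\alpha\times A$ sees only the $X$-coordinate, the atom of $(x,\xi)$ in $T_{X\times A}^{-j}\beta$ is determined by the $\alpha$-label of $T^{m_\xi(j)}x$. Because $\{m_\xi(0),\dots,m_\xi(n-1)\}$ is precisely the set $S(\xi_0^{m_\xi(n)-1})$ of the first $n$ occurrences of $1$, the partition $\bigvee_{j=0}^{n-1}T_{X\times A}^{-j}\beta$, restricted to the fiber over $\xi$, equals $\alpha_{S(\xi_0^{m_\xi(n)-1})}$ on $X$. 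Conditioning on the $\xi$-coordinate $\sigma$-algebra $\mathcal A$ and using that $\mu\times P_A$ is a product (so the fiber measure is $\mu$) gives $H_{\mu\times P_A}\!\big(\bigvee_{j=0}^{n-1}T_{X\times A}^{-j}\beta\mid\mathcal A\big)=\int_A H_\mu(\alpha_{S(\xi_0^{m_\xi(n)-1})})\,dP_A=g(n)$. Since conditioning cannot increase entropy, $H_{\mu\times P_A}\!\big(\bigvee_{j=0}^{n-1}T_{X\times A}^{-j}\beta\big)\ge g(n)$; dividing by $n$ and letting $n\to\infty$ yields $h_{\mu\times P_A}(X\times A,\beta,T_{X\times A})\ge\lim_n g(n)/n=2\Asc_\mu$, i.e.\ $\Asc_\mu\le\tfrac12 h_{\mu\times P_A}$. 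The main obstacle is the change of clock: passing from the fixed-window average $f(N)$ to the fixed-count average $g(n)$ demands uniform control of how $H_\mu(\alpha_S)$ responds both to small perturbations of $S$ (handled by the renewal convergence \eqref{mP2eq} and the subadditive bounds of Lemma~\ref{Nsetsmeasurelemma}) and to the conditioning $\xi_0=1$ (handled by independence of coordinates); by contrast, the inequality is a one-line application of ``conditioning reduces entropy'' once the fiber partition is identified.
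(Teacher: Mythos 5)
Your proof is correct and takes essentially the same route as the paper's: you convert the $2^{-n}$-weighted average into a Bernoulli$(1/2)$ expectation, use $m_\xi(n)/n\to 2$ with the same perturbation bound $\le |2n-m_\xi(n)|\,H_\mu(\alpha)$ to trade the fixed window $\xi_0^{2n-1}$ for the stopped window $\xi_0^{m_\xi(n)-1}$, and get the inequality by comparing $\beta_{0,n-1}^*$ fiberwise with $\alpha_{S(\xi_0^{m_\xi(n)-1})}$. Your minor variants are all sound---handling the conditioning $\xi_0=1$ directly by independence rather than via the identity $\Acc_\mu=\tfrac12\Asc_\mu$ that the paper invokes, controlling the bad set with the uniform bounds $H_\mu(\alpha_{S(\xi_0^{m_\xi(n)-1})})\le nH_\mu(\alpha)$ instead of $L^1$ convergence of $m_\xi/n$, and phrasing the final step as ``conditioning on the second-coordinate $\sigma$-algebra cannot increase entropy'' instead of the paper's information-function/atom-inclusion argument, which in fact anticipates the conditional-entropy identity the paper establishes separately in Theorem~\ref{thm:FirstReturns}.
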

\begin{proof}
Given $\varepsilon>0$, define the set $U_\varepsilon(n)\subset A$ by
\be
U_\varepsilon(n)=\left\{\xi\in A:\left\lvert \frac{m_\xi(n)}{n}-2\right\rvert>\varepsilon\right\}.
\en
By Equation~\ref{mP2eq}, $\lim_{n\rightarrow\infty}P(U_\varepsilon(n))= 0$. For $\xi\in A\setminus U_\varepsilon$, from Equation~\ref{halphabetapart}, if $m_\xi(n) \leq 2n-1$ we have
\be\label{epsprimeeq}
\begin{aligned}
&\frac{1}{n}\left\lvert H_\mu\left(\alpha_{S\left(\xi_0^{2n-1}\right)}\right)-H_\mu\left(\alpha_{S\left(\xi_0^{m_\xi(n)-1}\right)}\right)\right\rvert 
=\frac{1}{n}\left\lvert H_\mu\left(\alpha_{S\left(\xi_{m_\xi(n)}^{2n-1}\right)}\middle|\alpha_{S\left(\xi_0^{m_\xi(n)-1}\right)}\right)\right\rvert\\ 
&\le \frac{1}{n}\left\lvert 2n-m_{\xi(n)}\right\rvert H_\mu(\alpha)
\leq \varepsilon H_\mu(\alpha) \text{  for large } n,
\end{aligned} 
\en
since $\alpha_{S(\xi_{m_\xi(n)}^{2n-1})}$ is the join of at most $2n-m_{\xi(n)}$ terms, each of entropy no more than $H_\mu(\alpha)$. The same estimate applies in case $m_\xi(n) > 2n-1$.

 Since $P_A\{\xi \in A:\xi_0^{2n-1}=S\} = 1/2^{2n-1}$ for each $S= \{0,s_1,s_2,\dots\} \subset (2n)^*$,
\be\label{eq:AscForm}
\Asc_\mu(X,\alpha,T) = 2 \lim_{n \to \infty} \frac{1}{2n}
\sum_{\substack{S\subset n^*\\ 0\in S }} \frac{1}{2^{2n}}H_\mu(\alpha_S) 
= \lim_{n \to \infty} \int_A \frac{1}{2n}H_\mu(\alpha_{S(\xi_0^{2n-1})})\, dP_A(\xi).
	\en

Since $m_\xi/n\rightarrow 2$ in $L^1$ and $P_A(U_\varepsilon(n))\rightarrow0$ we have
\begin{equation}
\frac{m_\xi(n)}{n}\chi_{U_{\varepsilon}(n)}(\xi)\rightarrow 0\text{ in }L^1.
\end{equation}
By the definition of $H_\mu$ we know $(1/m_\xi) H_\mu\left(\alpha_{S\left(\xi_0^{m_\xi-1}\right)}\right)$
is bounded, so
\begin{equation}\label{uepseq2}
\lim_{n\rightarrow\infty}\int_{U_\varepsilon}\frac{m_\xi}{n}\frac{1}{m_\xi}H_\mu\left(\alpha_{S\left(\xi_0^{m_\xi-1}\right)}\right)dP_A(\xi)=0.
\end{equation}
Similarly
\begin{equation}\label{uepsset2n0}
\lim_{n\rightarrow\infty}\int_{U_\varepsilon}\frac{1}{2n}H_\mu\left(\alpha_{S(\xi_0^{2n-1})}\right)dP_A(\xi)=0.
\end{equation}

Thus for large enough $n$,
\be
\begin{aligned}
	&\lvert\Asc_\mu(X,\alpha,T) -\int_A \frac{1}{2n}H_\mu(\alpha_{S(\xi_0^{m_\xi(n)-1})})\, dP_A(\xi)\rvert\\
	&\leq  \lvert\int_A \frac{1}{2n}H_\mu(\alpha_{S(\xi_0^{2n-1})})\, dP_A(\xi) -
	\int_A \frac{1}{2n}H_\mu(\alpha_{S(\xi_0^{m_\xi(n)-1})})\, dP_A(\xi)\rvert + \varepsilon\\
	&\leq \int_{A \setminus U_\varepsilon} \frac{1}{2n}\lvert H_\mu(\alpha_{S(\xi_0^{2n-1})}) - H_\mu(\alpha_{S(\xi_0^{m_\xi(n)-1})})\rvert\, dP_A +\\
		&\int_{U_\varepsilon(n)} \lvert\frac{1}{2n}H_\mu(\alpha_{S(\xi_0^{2n-1})}) + \frac{1}{2} \frac{m_\xi}{n} \frac{1}{m_\xi}H_\mu(\alpha_{S(\xi_0^{m_\xi(n)-1})})\rvert\, dP_A(\xi) + \varepsilon.
\end{aligned}
\en
For large $n$ the first term is bounded by $2 \varepsilon H_\mu(\alpha)$, and the second tends to $0$ as $n \to \infty$. Therefore
\be
\Asc_\mu(X,\alpha,T)=
\frac{1}{2}\lim_{n \to \infty} \frac{1}{n} \int_A H_\mu (\alpha_{S(\xi_0^{m_\xi (n)-1})}) dP_A(\xi).
\en

For each $(x,\xi)$ in $X\times A$, $\beta_{n^*}(x,\xi)$ denotes the element of $\beta_{n^*}=\bigvee_{i=0}^{n-1}T^{-i}_{X\times A}(\alpha\times A)$ to which $(x,\xi)$ belongs. 
Some caution is necessary here: although $\beta$ does not partition the second coordinate in $X \times A$, when $\beta$ is moved by $T_{X \times A}$ and the resulting partitions are joined, some partitioning of the second coordinate does take place, due to the return-times partition of $A$ with respect to $T_A$. Thus $\beta_{n^*}$ is a proper refinement of $\alpha_{S(\xi_0^{m_\xi (n)})} \times A$.

Turning to the remaining inequality in the statement of the theorem, 
by definition of the information function $I$ we have
\be\label{ascmucrossPAeq}
\begin{aligned}
	h_{\mu\times P_A}(X\times A,\beta,T_{X\times A})&=\lim_{n\rightarrow\infty}\frac{1}{n}H_{\mu\times P_A}(\beta_{n^*})\\
	&=\lim_{n\rightarrow\infty}\frac{1}{n}\int_{X\times A}I_{\beta_{n^*}}(x,\xi) \, d\mu(x)dP_A(\xi)\\
	&=-\lim_{n\rightarrow\infty}\frac{1}{n}\int_A\int_X\log\left[(\mu\times P_A)(\beta_{n^*}(x,\xi))\right] d\mu(x)dP_A(\xi).
\end{aligned}
\en
For each $x \in X$ and $\xi\in A$, $\beta_{n^*}(x, \xi) \subset \alpha_{S(\xi_0^{m_\xi (n)-1})} \times A$, so
\be
(\mu\times P_A)(\beta_{n^*}(x,\xi)) \leq (\mu \times P_A) (\alpha_{S(\xi_0^{m_\xi(n)-1})}(x) \times A),
\en
and hence (\ref{ascmucrossPAeq}) implies
\be
\begin{aligned}
	h_{\mu \times P_A}(X \times A, \beta, T_{X \times A})
	&\geq -\lim_{n\rightarrow\infty}\frac{1}{n}\int_A\int_X\log (\mu\times P_A) (\alpha_{S(\xi_0^{m_\xi(n)-1})}(x)\times A) \, d\mu(x) dP_A(\xi)\\
	&= \lim_{n\rightarrow\infty}\frac{1}{n}\int_A H_\mu (\alpha_{S(\xi_0^{m_\xi(n)-1})}) \, dP_A(\xi).\label{fullinteq}
\end{aligned}
\en
\end{proof}

\begin{proposition}\label{infocor}
Let $(X,\mathscr{B},\mu,T)$ be a 1-step Markov shift and $\alpha$ the finite time-0 generating partition of $X$. Let $c_S^n=2^{-n}$ for all $S\subset n^*$. Then
\begin{equation}\label{Ascseriesequal}
\Asc_\mu(X,\alpha,T)=\frac{1}{2}\sum_{i=1}^\infty\frac{1}{2^i}H_\mu\left(\alpha\mid\alpha_i\right).
\end{equation}
\end{proposition}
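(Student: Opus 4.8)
The plan is to start from the identity established in Theorem~\ref{Asccompenteqthm}, namely
\[
\Asc_\mu(X,\alpha,T)=\frac{1}{2}\lim_{n\to\infty}\frac{1}{n}\int_A H_\mu\bigl(\alpha_{S(\xi_0^{m_\xi(n)-1})}\bigr)\,dP_A(\xi),
\]
and to evaluate the inner integral explicitly using the $1$-step Markov structure. First I would unwind what the index set $S(\xi_0^{m_\xi(n)-1})$ is: for $\xi\in A$ it records the positions of the $1$'s among $\xi_0,\dots,\xi_{m_\xi(n)-1}$, which are exactly the first $n$ return positions $0=p_0<p_1<\dots<p_{n-1}$ of $\xi$ to $A$, with consecutive gaps $g_j=p_{j+1}-p_j=n_A(\sigma_A^{\,j}\xi)$. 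Thus $\alpha_{S(\xi_0^{m_\xi(n)-1})}=\bigvee_{j=0}^{n-1}T^{-p_j}\alpha$, a join of $n$ time-translates of $\alpha$ whose exponents are determined by the return times of $\xi$.

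Next I would exploit that $\alpha$ is the time-$0$ generating partition of a $1$-step Markov shift. The key structural fact is that a Markov chain sampled along an increasing sequence of times is again Markov, so observing the symbols at $p_0<p_1<\dots<p_{n-1}$ produces a (time-inhomogeneous) Markov chain. Applying the chain rule for conditional entropy together with this Markov property, each newly observed symbol is conditionally independent of all but the immediately preceding sampled symbol, giving
\[
H_\mu\bigl(\alpha_{S(\xi_0^{m_\xi(n)-1})}\bigr)=H_\mu(\alpha)+\sum_{j=0}^{n-2}H_\mu\bigl(T^{-p_{j+1}}\alpha\mid T^{-p_j}\alpha\bigr).
\]
By stationarity the $j$-th summand depends only on the gap $g_j$, and the symmetry coming from $H_\mu(\alpha)=H_\mu(\alpha_i)$ and $H_\mu(\alpha\vee\alpha_i)=H_\mu(\alpha)+H_\mu(\alpha_i\mid\alpha)=H_\mu(\alpha_i)+H_\mu(\alpha\mid\alpha_i)$ lets me rewrite it as $H_\mu(\alpha\mid\alpha_{g_j})$. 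Hence the integrand collapses to $H_\mu(\alpha)+\sum_{j=0}^{n-2}H_\mu(\alpha\mid\alpha_{g_j})$.

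Then I would integrate over $\xi$ against $P_A$. Under $P_A$ the return times $g_j=n_A(\sigma_A^{\,j}\xi)$ are independent and identically distributed with $P_A\{g_j=i\}=2^{-i}$ for $i\ge 1$, which is just the renewal structure of the occurrences of $1$ in a Bernoulli $(1/2,1/2)$ sequence conditioned on $A$. Consequently every gap contributes the same expectation
\[
\int_A H_\mu(\alpha\mid\alpha_{g_j})\,dP_A(\xi)=\sum_{i=1}^\infty\frac{1}{2^i}H_\mu(\alpha\mid\alpha_i),
\]
independently of $j$, so the integral equals $H_\mu(\alpha)+(n-1)\sum_{i\ge1}2^{-i}H_\mu(\alpha\mid\alpha_i)$. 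Dividing by $n$ and letting $n\to\infty$ annihilates the fixed term $H_\mu(\alpha)$ and returns $\sum_{i\ge1}2^{-i}H_\mu(\alpha\mid\alpha_i)$; the factor $1/2$ inherited from Theorem~\ref{Asccompenteqthm} then yields precisely \eqref{Ascseriesequal}.

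The main obstacle is making the two structural reductions genuinely rigorous rather than formal. The delicate point is the collapse of the chain rule: one must verify carefully that the subsampled process is Markov, so that conditioning on the entire sampled past reduces to conditioning on the single immediately preceding symbol, and this is exactly where the $1$-step hypothesis and the generating property of $\alpha$ are used. The remaining work is bookkeeping: justifying the interchange of the $P_A$-integral with the infinite sum over gap lengths, which follows from the uniform bound $0\le H_\mu(\alpha\mid\alpha_i)\le H_\mu(\alpha)\le\log\card(\alpha)$ and dominated convergence, and confirming that the i.i.d. geometric law of the $g_j$ under $P_A$ is indeed the renewal structure already encoded by $n_A$ and by Equation~\ref{mP2eq}.
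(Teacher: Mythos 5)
Your proposal is correct and follows essentially the same route as the paper's own proof: both start from the identity of Theorem~\ref{Asccompenteqthm}, telescope $H_\mu\bigl(\alpha_{S(\xi_0^{m_\xi(n)-1})}\bigr)$ via the chain rule together with the $1$-step Markov property, and then average the $n-1$ gap terms using the geometric law $P_A(A_i)=2^{-i}$ before dividing by $n$ and passing to the limit. The only cosmetic differences are that you condition each sampled symbol forward on its predecessor and then invoke the symmetry $H_\mu(\alpha_i\mid\alpha)=H_\mu(\alpha\mid\alpha_i)$, whereas the paper conditions the earlier sample on the later ones directly, and that you invoke the full i.i.d.\ structure of the gaps where the paper only needs that $\sigma_A$ preserves $P_A$ so each term $H_\mu(\alpha_{s_j}\mid\alpha_{s_{j+1}})$ has the same integral.
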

\begin{proof}
Using the above notation, let $S(\xi)=\{s_0(\xi),s_1(\xi).\dots\}$, abbreviated $\{s_0,s_1,\dots\}$.  For $\xi \in A$ we have $s_0=0$ and on $A_i$ we have $s_1(\xi)=n_A(\xi)=i$. 
For $\xi \in A$, there are $n$ hits of $A$ among $\xi, \sigma \xi, \dots, \sigma^{m_\xi(n)-1}\xi$. 
By the conditional entropy formula,
\be
H_\mu(\alpha_{s_0} \vee \dots \vee \alpha_{s_n})=H_\mu(\alpha_{s_0}|\alpha_{s_1} \vee \dots \vee \alpha_{s_n}) + H_\mu(\alpha_{s_1} \vee \dots \vee \alpha_{s_n})
\en
etc., and so by the Markov property
\be
H_\mu(\alpha_{s_0} \vee \dots \vee \alpha_{s_n}) - H_\mu(\alpha_{s_n}) = H_\mu(\alpha_{s_0}|\alpha_{s_1}) + H_\mu(\alpha_{s_1}|\alpha_{s_2}) + \dots + H_\mu(\alpha_{s_{n-1}}|\alpha_{s_n}).
\en
Since $s_1(\xi)= s_0(\sigma_A\xi), s_2(\xi)=s_1(\sigma_A\xi), 
\dots , s_n(\xi)=s_{n-1}(\sigma_A(\xi)=n_A(\sigma_A^{n-1}\xi)$
and $\sigma_A$ is measure-preserving on $A$, each of these latter terms has the same integral over $A$. 
Combining this with (\ref{Asccompenteq}) and (\ref{eq:AscForm}),
\be
\begin{aligned}
\Asc_\mu(X,\alpha,T) &= \frac{1}{2}\lim_{n \to \infty} \int_A\frac{1}{n}H_\mu(\alpha_{S(\xi_0^{m_\xi(n)})})\, dP_A\\
&= \frac{1}{2} \int_A H_\mu(\alpha|\alpha_{s_1(\xi)}\, dP_A(\xi)=\frac{1}{2} \sum_{i=1}^\infty \frac{1}{2^i}H_\mu(\alpha|\alpha_i). 
\end{aligned}
\en
\end{proof}

We are grateful to Jean-Paul Thouvenot for helping to analyze more precisely the relationship, described in the following theorem, between $\Asc_\mu(X,\alpha,T)$ and the entropy of the partition $\beta = \alpha \times A$ under the first-return map $T_{X \times A}$ expressed in Theorem \ref{Asccompenteqthm}. 
Continue with the notation and hypotheses of Theorem \ref{Asccompenteqthm}. The first-return system consisting of $T_{X \times A}$ on $X \times A$ may also be regarded as a skew product. The base system is $(A, \sigma_A, P_A)$, and the map is given by $T_{X \times A}(x,\xi)=(T^{n_A(\xi)}x,\sigma_A\xi)$.  (Here the base is written in the second coordinate.) We may write $T^{n_A(\xi)}=T_\xi x$. 
The base system $(A, \sigma_A, P_A)$ is isomorphic to the countable-state Bernoulli system 
with states $A_i = \{ \xi \in A : n_A(\xi)=i\}, i\geq 1$, and probabilities $P_A(A_i)=1/2^i, i=1,2,\dots$. 
Let $\mathcal A= \{X \times A_n: n \geq 1\}$ denote the $\sigma$-algebra generated by the sets $A_I, i \geq 1$.

Since in our setting a partition or algebra may be moved either by the original transformation or by the first-return (or skew product) transformation, we adopt special notation for the latter:
\be
\begin{gathered}
\beta_{0,n-1}^*=\bigvee_{k=0}^{n-1}T_{X \times A}^{-k}\beta, \quad \mathcal A_{0,n-1}^*=\bigvee_{k=0}^{n-1}T_{X \times A}^{-k}\mathcal A,\\ \alpha_{1,n-1}^*(\xi)=T^{-n_A(\xi)}\alpha \vee \dots \vee T^{-n_A(\xi)-n_A(\sigma_A\xi) - \dots -n_A(\sigma_A^{n-1}\xi)}\alpha,
\end{gathered}
		\en
		etc., also for the $\sigma$-algebras generated as $n \to \infty$.
		
According to the formula for the entropy of a skew product,
\be
h_{P_A \times \mu}(\sigma_A \times \{T_\xi, \beta \})=h(A,\sigma_A,P_A) + h_{\sigma_A}(X,T,\mu,\alpha),
\en
where
\be
 h_{\sigma_A}(X,T,\mu, \alpha)= \int_A H_\mu(\alpha|\alpha_{1,\infty}^*)(\xi)\, dP_A(\xi)
 \en
 is the fiber entropy of the skew product system with respect to the fixed partition $\alpha$. 
 
 The following theorem identifies $\Asc_\mu(X,\alpha,T)$ as one-half of the conditional entropy of the partition $\beta$ of $X \times A$, moved by the first-return map $T_{X \times A}$, given the return-times algebra $\mathcal A_{-\infty,\infty}^*$ of the base. 
 The process $(\beta,T_{X \times A},\mu \times P_A)$ reads only the first coordinate (the cell of the partition $\alpha$ of $X$), not knowing the times at which the readings are being made; it must be given extra information about the return times to arrive at $\Asc_\mu(X,\alpha,T)$. 
 This is the reason for the inequality in (\ref{Asccompenteq}).
 
 \begin{theorem}\label{thm:FirstReturns}
 	With the notation and hypotheses of Theorem \ref{Asccompenteqthm},
 	\be
 	\begin{aligned}
 	\Asc_\mu(X,\alpha,T)&=\frac{1}{2} \lim_{n \to \infty} \frac{1}{n} H_{\mu \times P_A}\left(\bigvee_{k=0}^{n-1}T_{X \times A}^{-k}\beta \Big\vert \bigvee_{k=0}^{n-1}T_{X \times A}^{-k}\mathcal A\right) \\
 &	= \frac{1}{2} H_{\mu \times P_A}(\beta|\beta_{1,\infty}^* \vee \mathcal A_{-\infty, \infty}^*)
 	= \frac{1}{2} h_{\mu \times P_A}((\beta,T_{X \times A}, \mu \times P_A)|\mathcal A_{-\infty, \infty}^*)\\
 	&= \frac{1}{2}h_{\sigma_A}(X,T,\mu, \alpha).
 	\end{aligned}
 	\en
 	 \end{theorem}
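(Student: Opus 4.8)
The plan is to derive the four equalities from a single exact bridging identity that links the integral appearing in Theorem~\ref{Asccompenteqthm} to a conditional entropy for the first-return map $R := T_{X\times A}$, and then to invoke the relative (conditional) Kolmogorov--Sinai theorem together with the skew-product description of fiber entropy recorded just above the statement. Throughout I would use that $R^k(x,\xi)=(T^{m_\xi(k)}x,\sigma_A^k\xi)$ with $m_\xi(k)=s_k(\xi)$, and that, because $\mathcal A$ is a generator for the base $(A,\sigma_A,P_A)$, the algebra $\mathcal A_{-\infty,\infty}^*$ is exactly the base-factor $\sigma$-algebra $\{X\}\otimes\mathscr B(A)$, which satisfies $R^{-1}\mathcal A_{-\infty,\infty}^*\subseteq\mathcal A_{-\infty,\infty}^*$ so that the relative theorem applies.

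The heart of the argument is the identity, valid for every $n$,
\be
H_{\mu\times P_A}(\beta_{0,n-1}^*\mid \mathcal A_{0,n-1}^*)=\int_A H_\mu\!\left(\alpha_{S(\xi_0^{m_\xi(n)-1})}\right)dP_A(\xi).
\en
To establish it I would unwind both partitions on $X\times A$. A cell of $R^{-k}\beta=R^{-k}(\alpha\times A)$ records the $\alpha$-name of $x$ at position $s_k(\xi)$. On one atom of $\mathcal A_{0,n-1}^*$ the return times $n_A(\sigma_A^j\xi)$, $j=0,\dots,n-1$, are constant, so the sampling set $S=\{s_0,\dots,s_{n-1}\}$ is fixed; by the product structure $\mu\times P_A$ the partition $\beta_{0,n-1}^*$ then reduces, on that atom, to reading $\alpha_S$ on the $X$-coordinate with conditional measure $\mu$. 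This is exactly the phenomenon flagged before Theorem~\ref{Asccompenteqthm}: conditioning on $\mathcal A_{0,n-1}^*$ strips off precisely the return-time partitioning of the second coordinate that $\beta_{0,n-1}^*$ otherwise induces. Integrating $H_\mu(\alpha_S)$ against $P_A$ gives the display, and combining it with Theorem~\ref{Asccompenteqthm} yields the first equality.

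For the middle equalities I would observe that the same atom-by-atom computation gives $H_{\mu\times P_A}(\beta_{0,n-1}^*\mid\mathcal A_{0,n-1}^*)=H_{\mu\times P_A}(\beta_{0,n-1}^*\mid\mathcal A_{-\infty,\infty}^*)$ for every $n$: enlarging the conditioning from the window to all return times fixes the same positions $s_0,\dots,s_{n-1}$, and by product independence the extra information about $\xi$ outside the window does not change the $\mu$-conditional law of the $\alpha$-name. Since $\mathcal A_{-\infty,\infty}^*$ is a factor algebra, the relative Kolmogorov--Sinai theorem shows that $\lim_n (1/n)H_{\mu\times P_A}(\beta_{0,n-1}^*\mid\mathcal A_{-\infty,\infty}^*)$ equals both the relative dynamical entropy $h_{\mu\times P_A}((\beta,R,\mu\times P_A)\mid\mathcal A_{-\infty,\infty}^*)$ and the one-step conditional entropy $H_{\mu\times P_A}(\beta\mid\beta_{1,\infty}^*\vee\mathcal A_{-\infty,\infty}^*)$, giving the second and third equalities at once. (See \cite{petersen1989ergodic} for the relative theorem.)

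The final equality identifies this relative entropy with the fiber entropy. Conditioning on $\mathcal A_{-\infty,\infty}^*$ fixes $\xi$; on the fiber $X\times\{\xi\}$ the partition $\beta$ reads $\alpha$ at position $s_0=0$ while $\beta_{1,\infty}^*$ reads $\alpha$ at positions $s_1(\xi),s_2(\xi),\dots$, so the conditional entropy at $\xi$ is $H_\mu(\alpha\mid\alpha_{1,\infty}^*)(\xi)$, and integrating over $A$ recovers $h_{\sigma_A}(X,T,\mu,\alpha)$ as defined above. I expect the bridging identity to be the main obstacle: proving rigorously that $\beta_{0,n-1}^*$ carries no second-coordinate information beyond $\mathcal A_{0,n-1}^*$, so that the conditional entropy collapses exactly to the $\alpha$-name entropy, requires a careful description of how joining translates of $\alpha\times A$ reconstitutes the return-time partition of $A$. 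Once that identity is in place, the remaining steps are routine applications of relative entropy theory and the skew-product entropy formula.
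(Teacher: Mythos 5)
Your proposal is correct, and its backbone coincides with the paper's proof: the exact bridging identity $H_{\mu\times P_A}(\beta_{0,n-1}^*\mid\mathcal A_{0,n-1}^*)=\int_A H_\mu(\alpha_{S(\xi_0^{m_\xi(n)-1})})\,dP_A(\xi)$, established cell-by-cell using that on an atom of $\mathcal A_{0,n-1}^*$ the return times (hence the sampling set $S$) are constant and the $\xi$-dependence of $\beta_{0,n-1}^*$ enters only through the $\mathcal A_{0,n-1}^*$-measurable quantities $m_\xi(k)$, is exactly the paper's first step, and your product-independence argument for replacing $\mathcal A_{0,n-1}^*$ by $\mathcal A_{-\infty,\infty}^*$ is the same as the paper's (which uses that $\mathcal A_{n,\infty}^*\vee\mathcal A_{-\infty,-1}^*$ is independent of $\mathcal A_{0,n-1}^*$ and $\beta_{0,n-1}^*$ under the Bernoulli base). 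You diverge from the paper in two places, both legitimately. First, for the equalities $\lim_n(1/n)H_{\mu\times P_A}(\beta_{0,n-1}^*\mid\mathcal A_{-\infty,\infty}^*)=H_{\mu\times P_A}(\beta\mid\beta_{1,\infty}^*\vee\mathcal A_{-\infty,\infty}^*)=h_{\mu\times P_A}((\beta,T_{X\times A},\mu\times P_A)\mid\mathcal A_{-\infty,\infty}^*)$ you cite the relative Kolmogorov--Sinai theorem (noting the needed invariance of the base algebra), whereas the paper reproves this instance by hand with the telescoping sum $H(\beta\mid\beta_{1,j}^*\vee\mathcal A_{-\infty,\infty}^*)=H(\beta_{0,j}^*\mid\mathcal A_{-\infty,\infty}^*)-H(\beta_{0,j-1}^*\mid\mathcal A_{-\infty,\infty}^*)$; these are the same argument, yours by citation, the paper's self-contained. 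Second, and more substantively, for the final equality with $h_{\sigma_A}(X,T,\mu,\alpha)$ you disintegrate the conditional entropy over the base: since $\mathcal A_{-\infty,\infty}^*$ is (mod $0$) the full base factor (the return-time partition generates under $\sigma_A$) and the fiber measures are $\mu$, the one-step conditional entropy equals $\int_A H_\mu(\alpha\mid\alpha_{1,\infty}^*)(\xi)\,dP_A(\xi)$ directly. The paper instead proves the last equality by the separate computation (\ref{eq:FibEnt}), which approximates $H_\mu(\alpha\mid\alpha_{1,\infty}^*)$ by finite conditionings, uses the measure-preservation of $\sigma_A$ to shift indices, and telescopes back to $2\Asc_\mu(X,\alpha,T)$. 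Your route is shorter and cleanly reduces everything to standard relative entropy theory plus one measurability check (that $\xi\mapsto\alpha_{1,\infty}^*(\xi)$ conditional expectations are measurable, which holds since the $s_k(\xi)$ are measurable); the paper's route buys an explicit, self-contained verification that doubles as an independent derivation of the formula $h_{\sigma_A}=2\Asc_\mu$ without invoking the relative theory. Both are sound.
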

 	 \begin{proof}
 	 	Each cell $C$ of $\mathcal A_{0,n-1}^*$ corresponds to a choice of $S \subset m_\xi(n)^*$, so 
 	 \be
 	  H_{\mu \times P_A}(\beta_{0,n-1}^*|\mathcal A_{0,n-1}^*) = 
 	  \sum_{C \in \mathcal A_{0,n-1}^*} P_A(C)H_\mu(\beta_{0,n-1}^*|C)= 
 	   \int_A H_\mu (\alpha_{S(\xi_0^{m_\xi (n)-1})}) \, dP_A(\xi).
 	   \en
 	 From Theorem \ref{Asccompenteqthm},
 	 	\be
 	 	\begin{aligned}
 	 	\Asc_\mu(X,\alpha,T)&= \frac{1}{2}\lim_{n \to \infty} \frac{1}{n} \int_A H_\mu (\alpha_{S(\xi_0^{m_\xi (n)-1})}) \, dP_A(\xi)
 	 	 	 = \frac{1}{2}\lim_{n \to \infty} \frac{1}{n} H_{\mu \times P_A}(\beta_{0,n-1}^*|\mathcal A_{0,n-1}^*)\\
 	 	 	 &=\frac{1}{2}\lim_{n \to \infty} \frac{1}{n} H_{\mu \times P_A}(\beta_{0,n-1}^*|\mathcal A_{-\infty,\infty}^*) ,
 	 	\end{aligned}
 	 	\en
 	 	since $\mathcal A_{n,\infty}^* \vee \mathcal A_{-\infty,-1}^*$ is independent of $\mathcal A_{0,n-1}^*$ and $\beta_{0,n-1}^*$, so conditioning on $\mathcal A_{0,n-1}^*$ is the same as on $\mathcal A_{-\infty,\infty}^*$. 
 	 	
 	 	We follow the standard argument from entropy theory which uses the measure-preserving property to form a telescoping sum in order to show that $\lim (1/n) H_\mu(\alpha_0^{n-1})=H_\mu(\alpha|\alpha_1^\infty)$, just adding conditioning on $\mathcal A_{-\infty,\infty}^*$.
 	 For each $j=1,2,\dots$, 
 	 \be
 	 \begin{aligned}
 	 H_{\mu \times P_A}(\beta|\beta_{1,j}^* \vee \mathcal A_{-\infty,\infty}^*)
 	&=H_{\mu \times P_A}(\beta_{0,j}^*|\mathcal A_{-\infty,\infty}^*)-H_{\mu \times P_A}(\beta_{1,j}^*|\mathcal A_{-\infty,\infty}^*)\\
 &= H_{\mu \times P_A}(\beta_{0,j}^*|\mathcal A_{-\infty,\infty}^*)-H_{\mu \times P_A}(\beta_{0,j-1}^*|\mathcal A_{-\infty,\infty}^*).
 \end{aligned}
\en
Sum on $j=1,\dots,n$, divide by $n$, and take the limit to obtain 
\be
H_{\mu \times P_A}(\beta|\beta_{1,\infty}^* \vee \mathcal A_{-\infty,\infty}^*)= \lim_{n \to \infty} \frac{1}{n}H_{\mu \times P_A}(\beta_{0,n-1}^*|\mathcal A_{-\infty,\infty}^*).
\en

  Similarly, continuing with the above notation, remembering that $|S(\xi_0^{m_\xi(n)-1})|=n$, 
   again using the measure-preserving property of $\sigma_A$, and applying Theorem \ref{Asccompenteqthm},
  \be\label{eq:FibEnt}
  \begin{aligned}
   h_{\sigma_A}(X,T,\mu, \alpha)&= \int_A H_\mu(\alpha|\alpha_{1,\infty}^*)(\xi)\, dP_A(\xi)
   =\lim_{n \to \infty} \frac{1}{n} \sum_{k=1}^{n-1} \int_A H_\mu (\alpha | \alpha_{s_1(\xi)} \vee \dots \vee \alpha_{s_k(\xi)}) \, dP_A\\
   &= \lim_{n \to \infty} \frac{1}{n} \sum_{k=1}^{n-1} \int_A [H_\mu (\alpha_{s_0(\xi)} \vee \alpha_{s_1(\xi)} \vee \dots \vee \alpha_{s_k(\xi)}) - H_\mu(\alpha_{s_1(\xi)} \vee \dots \vee \alpha_{s_{k}(\xi)})] \, dP_A\\
   &= \lim_{n \to \infty} \frac{1}{n} \sum_{k=1}^{n-1} \int_A  [H_\mu (\alpha_{s_0(\xi)} \vee \alpha_{s_1(\xi)} \vee \dots \vee \alpha_{s_k(\xi)}) - H_\mu(\alpha_{s_0(\xi)} \vee \dots \vee \alpha_{s_{k-1}(\xi)})] \, dP_A\\
   &= \lim_{n \to \infty} \int_A \frac{1}{n} H_\mu(\alpha_{s_0(\xi)} \vee \dots \vee \alpha_{s_{n-1}(\xi)}) \, dP_A\\
   	&= \lim_{n \to \infty} \int_A \frac{1}{n} H_\mu (\alpha_{S(\xi_0^{m_\xi(n)-1})}\, dP_A = 2 \Asc_\mu(X,\alpha,T). 
   \end{aligned}
\en
 \end{proof}
 
\begin{rem}
	The preceding formulas also yield formula (\ref{Ascseriesequal}) when $(X,\mu,T,\alpha)$ is a 1-step Markov process:
	\be
	\begin{aligned}
	 h_{\sigma_A}(X,T,\mu)&= \int_A H_\mu(\alpha|\alpha_{1,\infty}^*(\xi))\, dP_A(\xi)
	 = \int_A H_\mu(\alpha_{s_1(\xi)})\, dP_A(\xi)\\
	 &= \sum_{i=1}^\infty \frac{1}{2^n}H_\mu(\alpha|\alpha_i),
	  	\end{aligned}
 	\en
 	so 
 	\be
 	\Asc_\mu(X,\alpha,T)=\frac{1}{2}\sum_{i=1}^\infty \frac{1}{2^i}H_\mu(\alpha|\alpha_i).
 	\en
 	\end{rem}


%
%

\section{Analysis of Markov shifts}\label{markovsec}
A (1-step) Markov shift, $(\mathcal{A}^\mathbb{Z},\mathscr{B},\mu_{P,p},\sigma)$  
consists of a finite alphabet which we take to be $\mathcal {A}=\{0,1,\dots, r-1\}$, the $\sigma$-algebra $\mathscr{B}$ generated by cylinder sets, a shift-invariant measure $\mu_{P,p}$ determined by an $r\times r$ stochastic matrix $P$ and a probability vector $p$ fixed by $P$, and the shift transformation $\sigma$. 
The measure of a cylinder set determined by consecutive indices is 
\be
\mu_{P,p}\{x:x_i=j_0,x_{i+1}=j_1,\dots, x_{i+k}=j_k\}=p_{j_0}P_{j_0j_1}P_{j_1j_2}\cdots P_{j_{k-1}j_k}.
\en
Thus $P_{ij}=\mu_{P,p}(x_1=j|x_0=i)$.
 A $k$-step Markov measure $P$ is a 1-step Markov measure on the recoding of the shift space  by $k$-blocks. Then the transition matrix is $r^k\times r^k$, and the states are the $k$-blocks. In some cases, whole rows or columns of $P$ will be $0$ and will be left out.

To apply Corollary~\ref{infocor} to Markov shifts, with $\alpha$ the partition into rank zero cylinder sets $A_i=\{x\in\mathcal{A}^\mathbb{Z}:x_0=i\}$,
we let $T=\sigma^{-1}$. Then
\begin{equation}
\mu_{P,p}(x\in\ A_j\mid x\in T^{-i}A_{k_i}\cap T^{-i-1}A_{k_{i+1}}\cap\cdots)=\mu_{P,p}(x\in\ A_j\mid x\in T^{-i}A_{k_i})= p_{k_i}(P^i)_{k_ij}.              
\end{equation}
For Markov shifts,  the probability that $x_0=k$ if we know $x_{-i}=j$ does not depend on the entries $x_{-l}$ for $l>i$. Thus in this case
\be
H_{\mu_{P,p}}(\alpha\mid\alpha_i^\infty)=
H_\mu(\alpha\mid\alpha_i)
=-\sum_{j,k=0}^{r-1}p_j(P^i)_{jk}\log(P^i)_{jk},
\en 
so
\begin{equation}\label{1stepmarkasc}
\Asc_{\mu_{P,p}}(\mathcal{A}^\mathbb{Z},\alpha,\sigma)=-\frac{1}{2}\sum_{i=1}^\infty\frac{1}{2^i}\sum_{j,k=0}^{r-1}p_j(P^i)_{jk}\log(P^i)_{jk}.
\end{equation}

Corollary~\ref{infocor} applies to Markov shifts with memories larger than $1$ by first representing them as equivalent $1$-step Markov shifts via a higher block coding.  If $P$ is the stochastic matrix of the $1$-step Markov shift equivalent to a given higher step Markov shift, then $H_{\mu_{P,p}}(\alpha\mid\alpha_i^\infty)$ becomes more difficult to write in terms of entries of $P$ than for the case of $1$-step Markov shifts. This is because the entries of $P$ are probabilities of going from $2$-block states to $2$-block states, but, since $\alpha$ is the partition by rank zero cylinder sets, to find $H_{\mu_{P,p}}(\alpha\mid\alpha_i^\infty)$ we are required to find the probability of going from $2$-block states to $1$-block states. Denote by $P_{j``yz"}$ the entry of $P$ representing the probability of going from $2$-block state $j$ to $2$-block state $``yz"$ where $y,z\in\mathcal{A}$ are the two symbols that make up the terminal $2$-block. In this case
\be
H_{\mu_{P,p}}(\alpha\mid\alpha_i^\infty)=-\sum_{j\in\mathcal{A}^2}\sum_{z\in\mathcal{A}}p_j\left(\sum_{y\in\mathcal{A}}(P^i)_{j``yz"}\log\sum_{y\in\mathcal{A}}(P^i)_{j``yz"}\right),
\en 
so
\begin{equation}\label{2stepmarkasc}
\Asc_{\mu_{P,p}}(\mathcal{A}^\mathbb{Z},\alpha,\sigma)=-\frac{1}{2}\sum_{i=1}^\infty\frac{1}{2^i}\sum_{j\in\mathcal{A}^2}\sum_{z\in\mathcal{A}}p_j\left(\sum_{y\in\mathcal{A}}(P^i)_{j``yz"}\log\sum_{y\in\mathcal{A}}(P^i)_{j``yz"}\right).
\end{equation}
In the following sections we use Equations~\ref{1stepmarkasc} and \ref{2stepmarkasc} to compute the measure-theoretic average sample complexity for some examples of Markov shifts. In each example the matrix $P$ depends on at most two parameters, enabling us to plot in either $[0,1]\times\mathbb{R}$  or $[0,1]\times[0,1]\times \mathbb{R}$  these independent parameters versus measure-theoretic average sample complexity. Similarly, we can make plots of measure-theoretic entropy and measure-theoretic intricacy. 

We used \emph{Mathematica} \cite{mathematica} to make graphs and compute values. The calculations for measure-theoretic average sample complexity and measure-theoretic intricacy are found by taking the sum of the first $20$ terms of either (\ref{1stepmarkasc}) or (\ref{2stepmarkasc}), depending on the case. The measures in the tables give maximum values for either measure-theoretic entropy, measure-theoretic intricacy, or measure-theoretic average sample complexity. The bolded numbers in tables are the maxima for the given category. Tables show computations correct to $3$ decimal places. To simplify notation we denote $\mu_{P,p}$ by $\mu$ in this section.

%
%

\subsection{$1$-step Markov measures on the full $2$-shift}
In this example we consider $1$-step Markov measures on the full $2$-shift. $P$ is dependent on two variables, $P_{00}$ and $P_{11}$. $P$ and $p$ are given by
\be
P=\left(\begin{array}{cc}
P_{00}&1-P_{00}\\
1-P_{11}&P_{11}\end{array}\right)\quad\text{and}\quad
 p=\left(
 \frac{1-P_{11}}{2-P_{00}-P_{11}}, \frac{1-P_{00}}{2-P_{00}-P_{11}}
\right).
\en
Table~\ref{f2markov2step} contains calculations for $1$-step Markov measures on the full $2$-shift. There are two measures that maximize $\Int_\mu$, both of which lie on a boundary plane. We know entropy has a maximum value of $\log 2$ when the measure is Bernoulli. This is also the measure that maximizes $\Asc_\mu$ with a value of $(\log 2)/2$. 

\begin{table}[h]
\begin{center}
\begin{tabular}{@{}lllll@{} }
\toprule
$P_{00}$&$P_{11}$&$h_\mu$&$\Asc_\mu$&$\Int_\mu$\\
\midrule
0.5&0.5&{\bf 0.693}&{\bf0.347}&0\\
0.216&0&0.292&0.208&{\bf0.124}\\
0&0.216&0.292&0.208&{\bf 0.124}\\
0.905&0.905&0.315&0.209&0.104\\
\bottomrule\\
\end{tabular}

\caption{$1$-step Markov measures on the full $2$-shift\label{f2markov2step}}
\end{center}
\end{table}

The left graph in Figure~\ref{1stepf2in} shows $\Asc_\mu$ for $1$-step Markov measures on the full $2$-shift. We observe that this plot is strictly convex and therefore has a unique measure of maximal average sample complexity occurring when $P_{00}=P_{11}=0.5$. This is the same as the measure of maximal entropy. The measure-theoretic average sample complexity for this measure on the full $2$-shift is $(\log 2)/2$, which is equal to the topological average sample complexity of the full $2$-shift with respect to the cover by rank $0$ cylinder sets.
\begin{figure}
\begin{center}
\includegraphics[width=.4\textwidth,keepaspectratio]{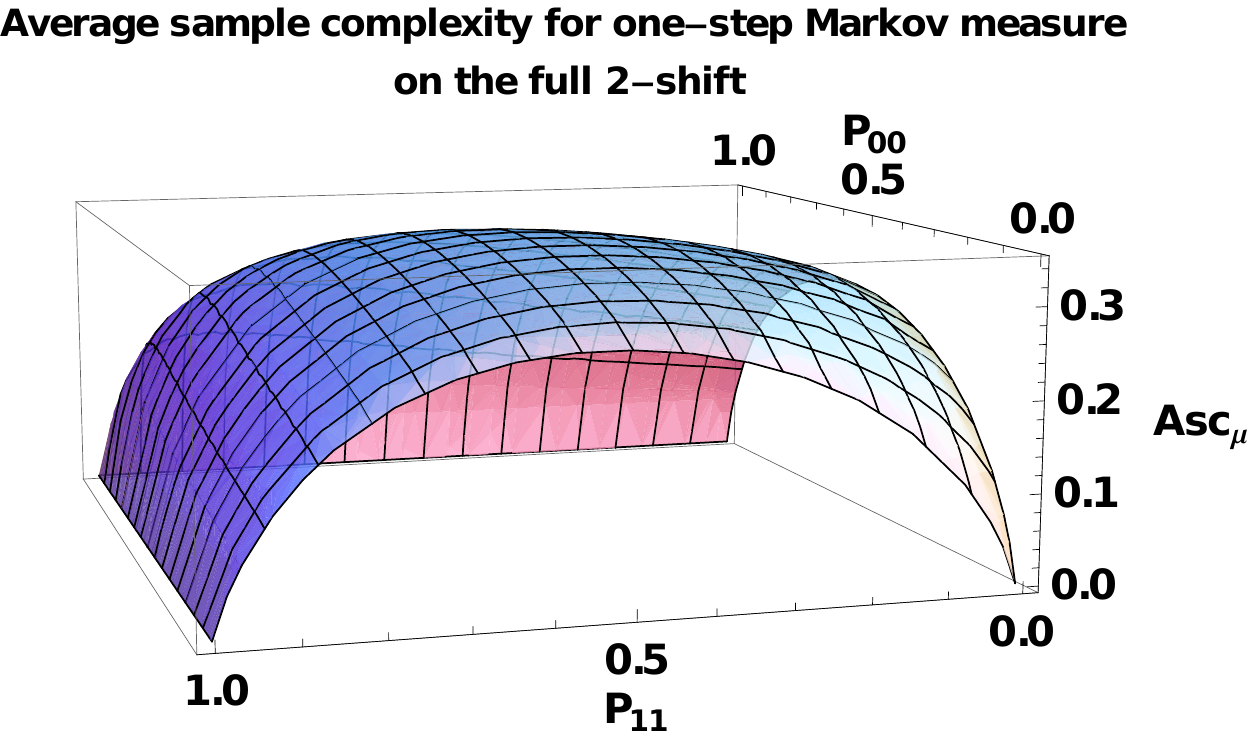}\quad\qquad
\includegraphics[width=.4\textwidth,keepaspectratio]{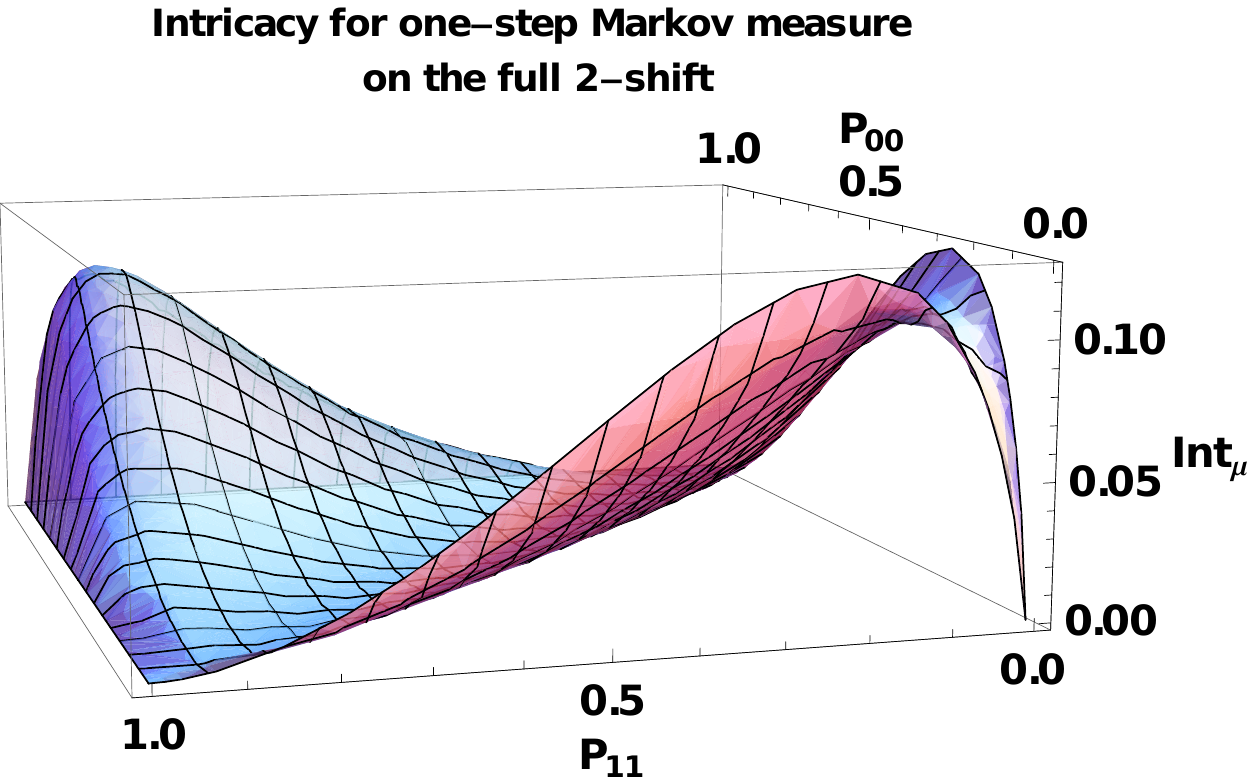}
\end{center}
\caption{$\Asc_\mu$ and $\Int_\mu$ for $1$-step Markov measures on the full $2$-shift\label{1stepf2in} }
\end{figure}
The fourth measure shown in Table~\ref{f2markov2step} is interesting because it is a fully supported local maximum  for $\Int_\mu$. This can be seen in the right graph of Figure~\ref{1stepf2in}, which shows $\Int_\mu$ for $1$-step Markov measures on the full $2$-shift. The absolute maxima of $\Int_\mu$ occur in the planes $P_{00}=0$ and $P_{11}=0$. The full $2$-shift restricted to these planes represents proper subshifts of the full  $2$-shift isomorphic to the golden mean shift, which we discuss in the next example. Figure~\ref{1stepf2intcaseplots} shows the boundary plane $P_{11}=0$  for the intricacy in order better to view the maximum. 
It appears that there is an {\em interior local maximum} for $\Int_\mu$ among $1$-step Markov measures.

We also observe that measure-theoretic intricacy is $0$ when $P_{00}=1-P_{11}$. We prove this using Equation~\ref{1stepmarkasc} with the simplified matrix and fixed vector
\be
P=\left(\begin{array}{cc}
P_{00}&1-P_{00}\\
P_{00}&1-P_{00}
\end{array}\right)\quad\text{and}\quad p=(P_{00},1-P_{00}).
\en
We show $2\Asc_\mu=h_\mu$ and thus $\Int_\mu=0$. Since $P^i=P$ for all $i=1,2,\dots$, and 
\be
\sum_{j,k=0}^{1}p_j(P^i)_{jk}\log(P^i)_{jk}=P_{00}\log P_{00}+(1-P_{00})\log(1-P_{00})=-h_{\mu_{P,p}}(\sigma),
\en 
we have
\be
\Asc_{\mu_{P,p}}(\mathcal{A}^\mathbb{Z},\alpha,\sigma)=-\frac{1}{2}\sum_{i=1}^\infty\frac{1}{2^i}\sum_{j,k=0}^{1}p_j(P^i)_{jk}\log(P^i)_{jk}=\frac{1}{2}\sum_{i=1}^\infty\frac{1}{2^i}h_{\mu_{P,p}}(\sigma)=\frac{1}{2}h_{\mu_{P,p}}(\sigma).
\en

Figure~\ref{entcombof21step} shows the graph of $h_\mu$ on the left and a combined plot on the right, which, in order from top to bottom, shows $h_\mu$, $\Asc_\mu$, and $\Int_\mu$. Each graph is symmetric about the plane $P_{00}=P_{11}$.

\begin{figure}
\begin{center}
\includegraphics[width=.4\textwidth,keepaspectratio]{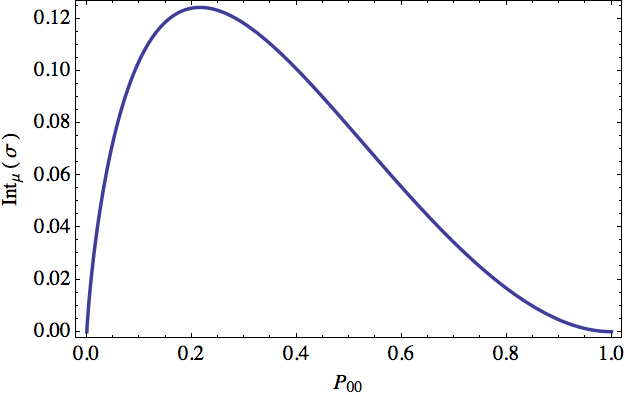}
\caption{$\Int_\mu$ for $1$-step Markov measures on the full $2$-shift with $P_{11}=0$\label{1stepf2intcaseplots}}
\end{center}
\end{figure}

\begin{figure}\label{entcomboplotsfulltwoshift}
\includegraphics[width=.4\textwidth,keepaspectratio]{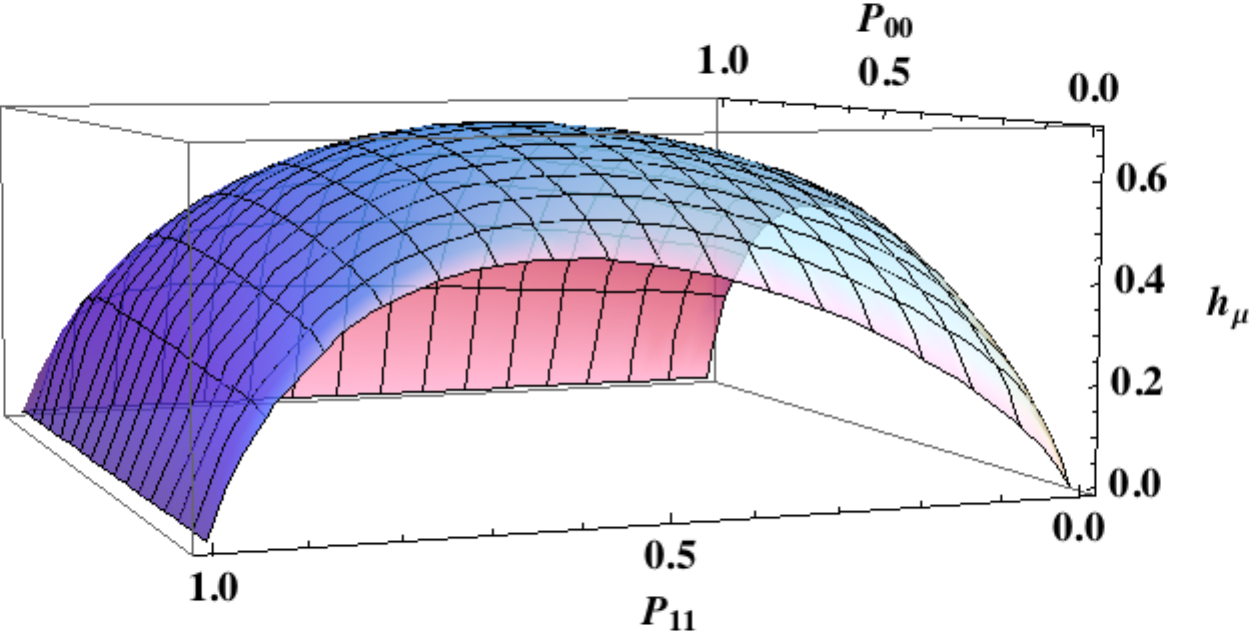}\quad\qquad
\includegraphics[width=.4\textwidth,keepaspectratio]{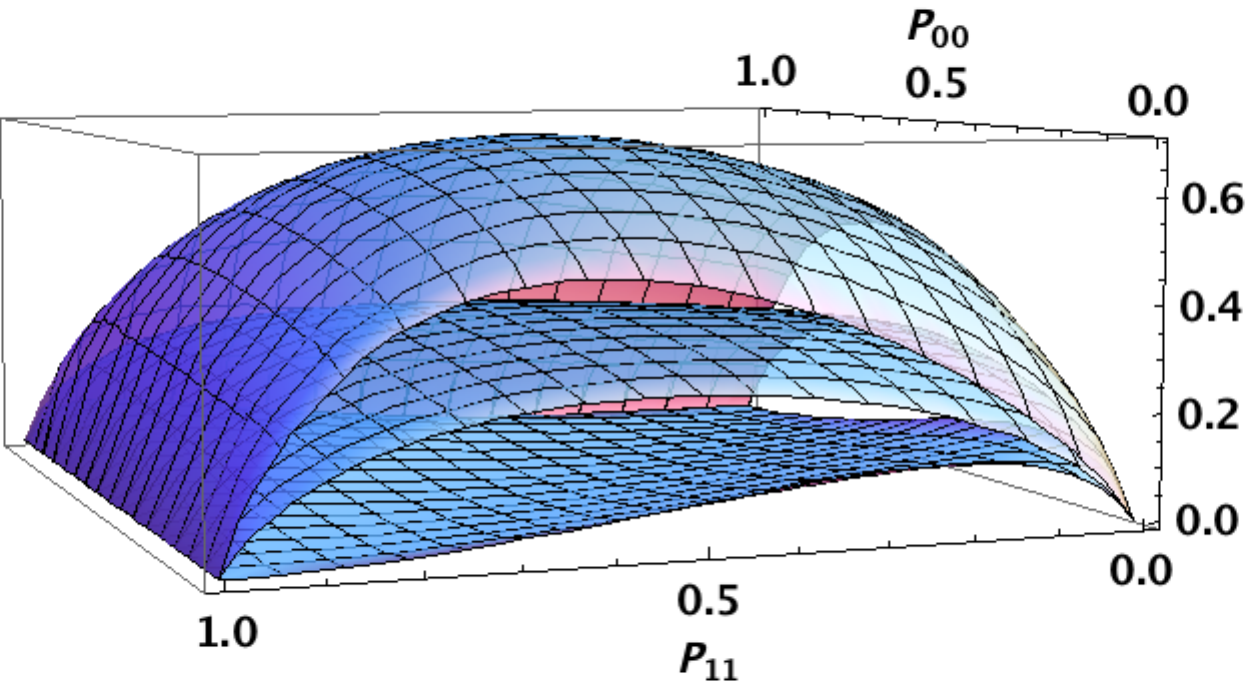}
\caption{$h_\mu$ for $1$-step Markov measures on the full $2$-shift\label{entcombof21step}}
\end{figure}

The unique measure of maximal entropy occurs when $P_{00}=P_{11}=0.5$ and has entropy $\log 2$. Analysis of the graphs of $\Asc_\mu$ and $\Int_\mu$ for $1$-step Markov measures on the full $2$-shift leads to the following conjectures.

\begin{conjecture}
For each $k\ge1$, there is a unique $k$-step Markov measure $\mu_k$ on the full $2$-shift that maximizes $\Asc_{\mu}$ among all $k$-step Markov measures.
\end{conjecture}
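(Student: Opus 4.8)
The plan is to prove the slightly stronger statement that the uniform Bernoulli measure $\mu_*=\mathscr{B}(1/2,1/2)$ is the \emph{unique} invariant measure, and hence a fortiori the unique $k$-step Markov measure for every $k$, that maximizes $\Asc_\mu(X,\alpha,\sigma)$ with $\alpha$ the time-$0$ symbol partition. The starting point is the fiber-entropy identity of Theorem~\ref{thm:FirstReturns}: for any ergodic $\mu$,
\[
\Asc_\mu(X,\alpha,\sigma)=\tfrac12\, h_{\sigma_A}(X,T,\mu,\alpha)=\tfrac12\int_A H_\mu(\alpha\mid\alpha_{1,\infty}^*(\xi))\,dP_A(\xi).
\]
Since $H_\mu(\alpha\mid\alpha_{1,\infty}^*(\xi))\le H_\mu(\alpha)\le\log 2$ for every $\xi$ and $P_A(A)=1$, this yields the global bound $\Asc_\mu(X,\alpha,\sigma)\le\tfrac12\log 2$. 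The measure $\mu_*$ makes $x_0$ independent of all other coordinates and has uniform marginal, so every integrand equals $\log 2$ and the bound is attained; as $\mu_*$ is $k$-step Markov (indeed $0$-step) for each $k$, it is a maximizer in every class. Thus the conjecture reduces to showing that $\mu_*$ is the only maximizer.

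For uniqueness I would analyze the equality case. If $\Asc_\mu(X,\alpha,\sigma)=\tfrac12\log 2$, then, because the integrand is bounded above by $\log 2$, we must have $H_\mu(\alpha\mid\alpha_{1,\infty}^*(\xi))=\log 2$ for $P_A$-a.e.\ $\xi$. The partition $\alpha_{1,\infty}^*(\xi)$ is generated by $\alpha$ at the return positions $\{i\ge 1:\xi_i=1\}$. Fix $m\in\mathbb{N}$; the set $\{\xi\in A:\xi_1=\dots=\xi_m=1\}$ has positive $P_A$-measure, so on it the equality holds for a.e.\ $\xi$, and there $\alpha_{1,\infty}^*(\xi)$ refines $\alpha_1\vee\dots\vee\alpha_m$. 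Monotonicity of conditional entropy then gives $\log 2=H_\mu(\alpha\mid\alpha_{1,\infty}^*(\xi))\le H_\mu(\alpha\mid\alpha_1\vee\dots\vee\alpha_m)\le H_\mu(\alpha)\le\log 2$, forcing equality throughout: $H_\mu(\alpha)=\log 2$ (uniform marginal) and $\alpha$ independent of $\alpha_1\vee\dots\vee\alpha_m$, i.e.\ $x_0$ independent of $(x_{-1},\dots,x_{-m})$. This is a statement about the fixed positions $1,\dots,m$, so it holds for every $m$.

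Letting $m\to\infty$ shows $x_0$ is independent of its entire past, and with the uniform marginal this forces $\mu$ to be i.i.d.\ uniform, i.e.\ $\mu=\mu_*$; equivalently $H_\mu(\alpha\mid\bigvee_{i\ge1}\alpha_i)=h_\mu(X,\sigma)=\log 2$, so $\mu$ is the unique measure of maximal entropy of the full $2$-shift. For non-ergodic $\mu$ one reduces to the ergodic case using that $\Asc_\mu$ is affine in $\mu$ (Proposition~\ref{ascmaxmeascor}), whose maximum over the compact set of invariant measures is attained at an ergodic measure. I expect the main technical care to lie in the second step: correctly exploiting the return-time/fiber structure so that the conditioning is on \emph{all} the return positions $\{i\ge1:\xi_i=1\}$ — the full join $\alpha_{1,\infty}^*$ of Theorem~\ref{thm:FirstReturns}, not a single lag as in the $1$-step reduction of Proposition~\ref{infocor} — and in transferring the $P_A$-almost-everywhere equality in $\xi$ to the fixed-coordinate independence $x_0\perp(x_{-1},\dots,x_{-m})$, which relies on arbitrarily long initial blocks of return positions occurring with positive probability. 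Once this is in place, the argument establishes uniqueness simultaneously for all $k$ and identifies the maximizer explicitly as $\mu_*$.
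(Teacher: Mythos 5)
First, a structural point: the paper contains \emph{no proof} of this statement. It is stated as a conjecture, supported only by the numerically observed convexity of the plotted $\Asc_\mu$ surface over the two-parameter family of $1$-step Markov measures on the full $2$-shift. So there is nothing to compare your argument against; if your proposal is correct, it settles the conjecture, and in a stronger form (a single explicit maximizer $\mu_*=\mathscr{B}(1/2,1/2)$, unique among \emph{all} invariant measures, simultaneously for every $k$).

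Having checked it, I believe your argument is essentially sound for ergodic $\mu$: the fiber-entropy identity of Theorem~\ref{thm:FirstReturns} gives $\Asc_\mu=\frac12\int_A H_\mu(\alpha\mid\alpha_{1,\infty}^*(\xi))\,dP_A \le \frac12 H_\mu(\alpha)\le\frac12\log 2$, attained by $\mu_*$; in the equality case the integrand equals $\log 2$ a.e., the cylinder $\{\xi_1=\dots=\xi_m=1\}$ has $P_A$-measure $2^{-m}>0$ and on it $\alpha_{1,\infty}^*(\xi)$ refines $\alpha_1\vee\dots\vee\alpha_m$, so monotonicity of conditional entropy forces $H_\mu(\alpha\mid\alpha_1\vee\dots\vee\alpha_m)=H_\mu(\alpha)=\log 2$ for every $m$, whence $\mu$ is i.i.d.\ uniform. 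Two repairs are needed, though. First, Theorem~\ref{thm:FirstReturns} carries the ergodicity hypothesis of Theorem~\ref{Asccompenteqthm}, and your non-ergodic reduction is thinner than stated: affinity plus upper semicontinuity (Proposition~\ref{ascmaxmeascor}) yields \emph{existence} of an ergodic maximizer, but uniqueness among non-ergodic measures needs every ergodic component of a maximizer to be a maximizer, i.e.\ the integral (ergodic-decomposition) form of affinity, which the paper asserts only for convex combinations. For the Markov classes this is easily patched: a stationary $k$-step Markov measure decomposes as a \emph{finite} convex combination of ergodic Markov measures supported on the irreducible pieces of its support, so finite affinity suffices.

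Second, and worth knowing: the fiber-entropy machinery is avoidable entirely, using only tools already in the paper. By Theorem~\ref{ascmuinf}, $\Asc_\mu=\inf_n\Asc_\mu(n)$, and subadditivity gives $H_\mu(\alpha_S)\le |S|\,H_\mu(\alpha)\le |S|\log 2$; since $\sum_{S\subset n^*}|S|=n2^{n-1}$,
\begin{equation*}
\Asc_\mu(n)=\frac{1}{n2^n}\sum_{S\subset n^*}H_\mu(\alpha_S)\le\frac{1}{2}H_\mu(\alpha)\le\frac{\log 2}{2}
\end{equation*}
for every $n$ and every invariant $\mu$, ergodic or not. If $\Asc_\mu=(\log 2)/2$, then $\Asc_\mu(n)\ge(\log 2)/2$ for each $n$, which combined with the termwise bound forces $H_\mu(\alpha_S)=|S|\log 2$ for all $S\subset n^*$ and all $n$; taking $S=n^*$ gives $H_\mu(\alpha_{n^*})=n\log 2$, so $\mu$ is uniform on all cylinders and $\mu=\mathscr{B}(1/2,1/2)$. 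This dispenses with ergodicity, return times, and the passage through $\alpha_{1,\infty}^*$ in one stroke. Finally, note what your result does and does not say: it works because on the full $2$-shift the measure-theoretic maximum saturates the topological value $\Asc(\Sigma_2,\mathscr{U}_0,\sigma)=(\log 2)/2$ and the extremal case is rigid; the genuinely hard analogues are the golden-mean-shift conjectures later in that section, where the conjectured maximizer is not Markov of any order and no such saturation argument is available.
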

We base this conjecture on the observation of convexity in the graph of $\Asc_\mu$ for $1$-step Markov measures on the full $2$-shift.

\begin{conjecture}
For each $k\ge1$, there are two $k$-step Markov measures on the full $2$-shift that maximize $\Int_{\mu}$ among all $k$-step Markov measures. They are not fully supported.
\end{conjecture}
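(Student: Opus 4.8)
The plan is to regard $\Int_\mu$ as a continuous function on the compact parameter space of $k$-step Markov measures on $\Sigma_2$ and to locate its global maxima by combining the symbol-interchange symmetry with a reformulation of $\Int_\mu$ as a nonnegative ``memory-sampling gap.'' First I would fix coordinates: recoding by $k$-blocks turns a $k$-step Markov measure into a $1$-step measure on the vertex shift whose $2^k$ states are the allowed $k$-blocks, and from each state exactly two edges (append $0$ or append $1$) are available, so the measure is determined by branching probabilities $q_w\in[0,1]$, one per state. The parameter space is thus the cube $[0,1]^{2^k}$. Both $h_\mu$ and $\Asc_\mu$ (through Equations~\ref{1stepmarkasc} and \ref{2stepmarkasc}) are uniformly convergent series of real-analytic functions of the $q_w$, so $\Int_\mu=2\Asc_\mu-h_\mu$ is continuous on the cube and analytic on its interior; continuity on a compact set yields existence of a global maximizer.

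Second, I would rewrite $\Int_\mu$ in a form that exhibits its sign and extremal behavior. Combining $\Int_\mu=2\Asc_\mu-h_\mu$ with Theorem~\ref{thm:FirstReturns}, which identifies $2\Asc_\mu$ with the fiber entropy $h_{\sigma_A}(X,T,\mu,\alpha)=\int_A H_\mu(\alpha\mid\alpha_{1,\infty}^*(\xi))\,dP_A(\xi)$, together with $h_\mu=H_\mu(\alpha\mid\alpha_1^\infty)$, gives
\be
\Int_\mu(X,\alpha,T)=\int_A \left[ H_\mu(\alpha\mid\alpha_{1,\infty}^*(\xi))-H_\mu(\alpha\mid\alpha_1^\infty)\right] dP_A(\xi)\ge 0,
\en
the expected excess uncertainty about the present when the past is read only at the random return times rather than in full; for a $1$-step measure this collapses, via Proposition~\ref{infocor}, to $\sum_{i\ge1}2^{-i}[H_\mu(\alpha\mid\alpha_i)-H_\mu(\alpha\mid\alpha_1)]$. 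This makes transparent that $\Int_\mu$ is large exactly when short-range conditioning is very informative (small $h_\mu$) while sampled long-range conditioning is not, the behavior forced at the boundary of the cube. The symbol-swap $0\leftrightarrow1$ induces an involution $\tau$ on $[0,1]^{2^k}$ (the $k=1$ symmetry $P_{00}\leftrightarrow P_{11}$ of Table~\ref{f2markov2step} and Figure~\ref{entcombof21step}) that preserves $\Int_\mu$, so the maximizing set $\mathcal M$ is $\tau$-invariant. Since for $k=1$ the $\tau$-fixed locus $P_{00}=1-P_{11}$ consists of Bernoulli measures with $\Int_\mu=0$, no symmetric measure maximizes, and a maximizer together with its $\tau$-image furnishes two distinct elements of $\mathcal M$ (for general $k$ one must still rule out symmetric maximizers).

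Third, to obtain the remaining assertions---that $\mathcal M$ lies on $\partial[0,1]^{2^k}$ (``not fully supported,'' the measures sitting on proper subshifts such as the golden mean shift for $k=1$) and that $|\mathcal M|=2$---I would argue by variation. At a fully supported critical point I would compute $\partial\Int_\mu/\partial q_w$ from the series representation and show the Hessian is never negative semidefinite, so interior critical points are saddles or merely local maxima (the fully supported local maximum $P_{00}=P_{11}=0.905$ of Table~\ref{f2markov2step}) and cannot be global; restricting to each boundary face, where one $q_w$ is pinned to $0$ or $1$ and the measure degenerates to a lower subshift, I would establish strict concavity of $\Int_\mu$ in the remaining free directions to pin down a unique maximizer per $\tau$-orbit of faces, so that pairing faces under $\tau$ yields exactly two global maximizers.

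The hard part will be this third step: proving \emph{global} rather than local optimality together with the exact count. Because $\Int_\mu=2\Asc_\mu-h_\mu$ is a difference of two concave-type functions (both $\Asc_\mu$ and $h_\mu$ peak at the Bernoulli$(1/2)$ center), it is genuinely indefinite, and it is precisely this competition that produces a spurious fully supported local maximum; second-derivative information alone will not separate the global maximum from such local ones. I expect the decisive tool to be a monotonicity or majorization comparison, built on the memory-sampling formula above, between each fully supported measure and its projection onto a boundary subshift, showing that driving a transition to the boundary strictly increases the gap $H_\mu(\alpha\mid\alpha_{1,\infty}^*)-h_\mu$. Making such a comparison uniform in $k$, and ruling out additional maximizers on lower-dimensional faces, is the reason this statement is posed as a conjecture rather than proved.
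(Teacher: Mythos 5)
First, a point of calibration: the paper does not prove this statement---it is posed as a conjecture, supported only by numerical evidence (Table~\ref{f2markov2step} and Figures~\ref{1stepf2in} and \ref{1stepf2intcaseplots}, where the apparent global maxima of $\Int_\mu$ lie in the boundary planes $P_{00}=0$ and $P_{11}=0$ and a fully supported \emph{local} maximum sits near $P_{00}=P_{11}=0.905$). So what you have written is, correctly, a research programme rather than a proof, and you yourself concede that the decisive third step (global optimality, the exact count of two, and non-full support) is open; that admission is accurate, and it is precisely why the paper states this as a conjecture. Your second step does contain a genuinely useful observation that the paper only has implicitly: combining $\Int_\mu=2\Asc_\mu-h_\mu$ with Theorem~\ref{thm:FirstReturns} and $h_\mu=H_\mu(\alpha\mid\alpha_1^\infty)$ to write $\Int_\mu$ as the integrated nonnegative gap $\int_A [H_\mu(\alpha\mid\alpha_{1,\infty}^*(\xi))-H_\mu(\alpha\mid\alpha_1^\infty)]\,dP_A(\xi)$ is correct (the sampled-past algebra $\alpha_{1,\infty}^*(\xi)$ is coarser than the full past, so the integrand is nonnegative), and its $1$-step specialization $\sum_{i\ge 1}2^{-i-1}\,2[H_\mu(\alpha\mid\alpha_i)-H_\mu(\alpha\mid\alpha_1)]$ is consistent with Proposition~\ref{infocor}. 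This reformulation is a plausible starting point for the monotonicity comparison you propose.

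However, two concrete defects should be flagged. The more serious one is in your symmetry step: the symbol-swap involution $\tau$ sends $(P_{00},P_{11})$ to $(P_{11},P_{00})$, so for $k=1$ its fixed locus is the diagonal $P_{00}=P_{11}$, \emph{not} the independence line $P_{00}=1-P_{11}$ on which the paper shows $\Int_\mu=0$. The diagonal contains fully supported measures of strictly positive intricacy---indeed the numerically observed interior local maximum at $P_{00}=P_{11}=0.905$ is $\tau$-fixed---so your argument that no $\tau$-symmetric measure can maximize collapses, and with it the derivation that maximizers come in a two-element $\tau$-orbit; that the observed maximizers $(0.216,0)$ and $(0,0.216)$ form such an orbit is evidence, not a consequence of your fixed-locus claim. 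Secondly, your compactness argument via the cube $[0,1]^{2^k}$ is not quite sound at the boundary: where transition probabilities degenerate the chain can become reducible, the stationary vector is non-unique, and the map from parameters to measures (hence $h_\mu$, computed through the stationary vector $p$) is discontinuous---consider $(P_{00},P_{11})=(1,1)$. Existence of maximizers is better obtained as in Proposition~\ref{ascmaxmeascor}: the set of shift-invariant $k$-step Markov measures is weak$^*$-compact (the Markov conditions are closed polynomial identities in cylinder measures), $h_\mu$ restricted to it is a continuous function of finitely many cylinder values, and $\Asc_\mu$ is upper semicontinuous by Theorem~\ref{ascmuinf}, so $\Int_\mu=2\Asc_\mu-h_\mu$ attains its supremum there. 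Even with these repairs, nothing in the proposal rules out the interior critical point being global for some $k$, establishes uniqueness per boundary face, or controls the many faces arising when $k\ge 2$; the conjecture remains open.
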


\begin{conjecture}
There is a $1$-step Markov measure on the full $2$-shift that gives a fully supported local maximum for $\Int_{\mu}$ among all $1$-step Markov measures.
\end{conjecture}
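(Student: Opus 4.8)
The plan is to exploit the two-fold symmetry of the family and reduce the problem to a one-variable analysis along the symmetry axis, then control the transverse direction with a Hessian computation. Parametrize the $1$-step Markov measures by $(P_{00},P_{11})\in[0,1]^2$ as in the table, write $\lambda=P_{00}+P_{11}-1$ for the subdominant eigenvalue of $P$, and use the spectral decomposition $P=\Pi+\lambda Q$, where $\Pi$ is the rank-one projection onto the stationary vector $p$ and $Q=I-\Pi$. This gives the closed form $(P^i)_{jk}=p_k+\lambda^i(\delta_{jk}-p_k)$, so by Equation~\ref{1stepmarkasc} and the identity $\Int_\mu=2\Asc_\mu-h_\mu$ (Equation~\ref{intmueq}) the map $(P_{00},P_{11})\mapsto\Int_\mu$ is real-analytic on the open square $(0,1)^2$: all entries $(P^i)_{jk}$ stay positive and bounded away from $0$ on compact subsets, and the $2^{-i}$ weights give uniform convergence of the series together with its derivatives. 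The family is invariant under the involution $(P_{00},P_{11})\mapsto(P_{11},P_{00})$, whose fixed set is the diagonal $P_{00}=P_{11}$.

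First I would restrict to the diagonal $P_{00}=P_{11}=t$, where $p=(1/2,1/2)$ and $(P^i)_{jk}$ depends only on $\lambda=2t-1$. A short computation with the binary entropy function $H$ gives
\be
\Int_\mu\big|_{\mathrm{diag}}=g(\lambda):=\sum_{i=2}^\infty\frac{1}{2^i}H\!\left(\frac{1+\lambda^i}{2}\right)-\frac12 H\!\left(\frac{1+\lambda}{2}\right),
\en
which is even in $\lambda$. One checks that $g(0)=0$ and $g(\lambda)\to0$ as $\lambda\to1^-$, while the expansion $H((1+x)/2)=\log2-x^2/2+O(x^4)$ yields $g(\lambda)=\lambda^2/4+O(\lambda^4)$. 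Hence $g$ is strictly positive for small $\lambda>0$ and vanishes at both ends of $[0,1)$, so $g$ attains a positive maximum at some interior $\lambda^*\in(0,1)$, corresponding to a fully supported, non-Bernoulli measure with $t^*=(1+\lambda^*)/2\in(1/2,1)$. (The data suggest $t^*\approx0.905$.)

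To upgrade this to a local maximum of $\Int_\mu$ on the full square I would use the symmetry twice. Since $\Int_\mu$ is invariant under the involution and the diagonal is its fixed set, the transverse first derivative vanishes identically on the diagonal; thus $(t^*,t^*)$, a critical point of the restriction $g$, is a genuine critical point of $\Int_\mu$ in $(0,1)^2$. Because $\Int_\mu$ is even in the transverse coordinate $w=P_{00}-P_{11}$, the mixed partial vanishes and the Hessian at $(t^*,t^*)$ is diagonal in the along/transverse basis. The along-diagonal entry is $g''(\lambda^*)$ up to a positive chain-rule constant, negative because $\lambda^*$ is a maximum of $g$, and real-analyticity forces $g$ to have only finitely many critical points, so one may also confirm that $\lambda^*$ is isolated with $g''(\lambda^*)<0$. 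The transverse entry is $\partial_w^2\Int_\mu$ at $w=0$; since $\lambda$ is unchanged as $w$ varies and only $p=(p_0,1-p_0)$ moves, this reduces to the second derivative in $p_0$ at $p_0=1/2$ of the explicit series at fixed $\lambda=\lambda^*$.

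The main obstacle is exactly this transverse second-order condition: proving that $\partial_{p_0}^2\Int_\mu\big|_{p_0=1/2,\,\lambda=\lambda^*}<0$, i.e. that the symmetric measure beats its asymmetric neighbors sharing the same mixing rate $\lambda^*$. I expect to treat it by differentiating the series term by term (legitimate by the $2^{-i}$ weights), writing each summand's $p_0$-Hessian in closed form, and showing the total is negative at $p_0=1/2$; the dominant contribution should come from the low-order terms ($i=2,3$) near $\lambda^*\approx0.8$, so an analytic estimate on the tail combined with a rigorous (e.g. interval-arithmetic) bound on the first few terms at the numerically located $\lambda^*$ should close the argument. Establishing the strict non-degeneracy of $\lambda^*$ for $g$ and pinning down $\lambda^*$ precisely enough to certify both Hessian signs are the delicate points.
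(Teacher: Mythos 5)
First, note what you are comparing against: the paper does \emph{not} prove this statement. It appears as a conjecture, supported only by \emph{Mathematica} computations --- the fourth row of Table~\ref{f2markov2step} ($P_{00}=P_{11}=0.905$, $\Int_\mu\approx 0.104$) and the right-hand plot in Figure~\ref{1stepf2in}. So there is no proof in the paper to match; your proposal attempts strictly more, and I assess it on its own terms.

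The parts you actually carry out are correct. The spectral formula $(P^i)_{jk}=p_k+\lambda^i(\delta_{jk}-p_k)$ with $\lambda=P_{00}+P_{11}-1$ is right, and combined with Equation~\ref{1stepmarkasc} and Equation~\ref{intmueq} it gives, on the diagonal $P_{00}=P_{11}=t$ with $\lambda=2t-1$ and $H$ the binary entropy function,
\be
\Int_\mu=\sum_{i=2}^\infty\frac{1}{2^i}H\!\left(\frac{1+\lambda^i}{2}\right)-\frac{1}{2}H\!\left(\frac{1+\lambda}{2}\right)=:g(\lambda),
\en
exactly as you claim; the expansion $g(\lambda)=\lambda^2/4+O(\lambda^4)$, together with $g(0)=0$ and $g(\lambda)\to 0$ as $\lambda\to 1^-$ (dominated convergence), does force an interior maximizer $\lambda^*\in(0,1)$, hence a fully supported symmetric measure at $t^*=(1+\lambda^*)/2$. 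Your observation that at fixed $\lambda$ the transverse coordinate $w=P_{00}-P_{11}$ moves only the stationary vector (indeed $p_0=\tfrac12+w/(2(1-\lambda))$) is a genuinely useful coordinate change, and the evenness argument correctly yields a critical point of the full two-variable function with Hessian diagonal in the along/transverse basis.

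The genuine gap is that, as written, you produce only a critical point: both second-order sign conditions are deferred, and the transverse one, $\partial_{p_0}^2\Int_\mu<0$ at $(p_0,\lambda)=(1/2,\lambda^*)$, is the irreducible heart of the matter. Nothing in your symmetry setup constrains its sign: $\Int_\mu=2\Asc_\mu-h_\mu$ is a difference of two quantities each of which is plausibly concave in $p_0$ at the symmetric point, so the sign is a real competition that requires the term-by-term estimates plus certified numerics you only sketch --- and since $\lambda^*$ is located only numerically, the inequality must be certified on an interval of $\lambda$ guaranteed to contain $\lambda^*$. Separately, your inference that analyticity gives finitely many critical points ``so one may also confirm that $\lambda^*$ is isolated with $g''(\lambda^*)<0$'' is a non sequitur: an isolated maximum of an analytic function can be degenerate ($-\lambda^4$-type), so isolation proves nothing about $g''$. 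Fortunately that half of the Hessian test can be dispensed with entirely: if $\partial_w^2\Int_\mu<0$ on a neighborhood of $(t^*,t^*)$, then since $\partial_w\Int_\mu(u,0)=0$ by evenness, the Lagrange form of Taylor's theorem in $w$ gives $\Int_\mu(u,w)\le\Int_\mu(u,0)\le\Int_\mu(u^*,0)$ for $(u,w)$ near $(u^*,0)$, using only that $u^*$ is a \emph{local} maximizer of the diagonal restriction, which your $g$ analysis already supplies with no nondegeneracy needed. So the one missing step is the rigorous transverse concavity bound; until it is carried out, your argument leaves the statement where the paper leaves it --- a conjecture --- though your reduction turns it into a finite, certifiable computation, which is real progress over the paper's purely numerical evidence.
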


%
%

\subsection{$1$-step Markov measures on the golden mean shift}
For $1$-step Markov measures on the golden mean shift, $P$ and $p$ depend on the single parameter $P_{00}$: 
\be
P=\left(\begin{array}{cc}
P_{00}&1-P_{00}\\
1&0\end{array}\right)\quad\text{and}\quad
 p=\left(
 \frac{1}{2-P_{00}},1-\frac{1}{2-P_{00}}\right).
\en
The measure of maximal entropy occurs when $P_{00}=1/\phi$, where $\phi$ is the golden mean, and the measure-theoretic entropy for this measure is $h_{\mu_{P,p}}(\sigma)=\log \phi$. 

Table~\ref{gmsmarkov1step} contains calculations for different $1$-step Markov measures on the golden mean shift. 

Figure~\ref{onestepmarkovgmsmulticurves} includes two graphs for $1$-step Markov measures on the golden mean shift with $P_{00}$ as the horizontal axis. The graph on the left includes six curves. Five curves are plots of the measure-theoretic average sample complexity function of $n$ for $n=2,\dots,6$ computed using Definition~\ref{mtnewcompdefs}. The sixth is a plot using Equation~\ref{1stepmarkasc}. This graph shows that the average sample complexity functions quickly approach their limit $\Asc_\mu$. As $P_{00}$ approaches $1$, the functions become better approximations for $\Asc_\mu$.

The graph on the right has plots of $h_\mu$, $\Asc_\mu$ and $\Int_\mu$ found using Equation~\ref{1stepmarkasc}. Circles mark what appear to be the unique maxima of each curve.  The maxima among $1$-step Markov measures of $\Asc_\mu$, $\Int_\mu$, and $h_\mu$ all seem to be achieved by different measures $\mu$.

\begin{table}
\begin{center}
\begin{tabular}{@{}llll@{} }
\toprule
$P_{00}$&$h_\mu$&$\Asc_\mu$&$\Int_\mu$\\
\midrule
0.618&{\bf 0.481}&0.266&0.051\\
0.533&0.471&{\bf 0.271}&0.071\\
0.216&0.292&0.208&{\bf 0.124}\\
\bottomrule\\
\end{tabular}
\caption{$1$-step Markov measures on the golden mean shift \label{gmsmarkov1step}}
\end{center}
\end{table}

\begin{figure}
\begin{center}
\includegraphics[width=.48\textwidth,keepaspectratio]{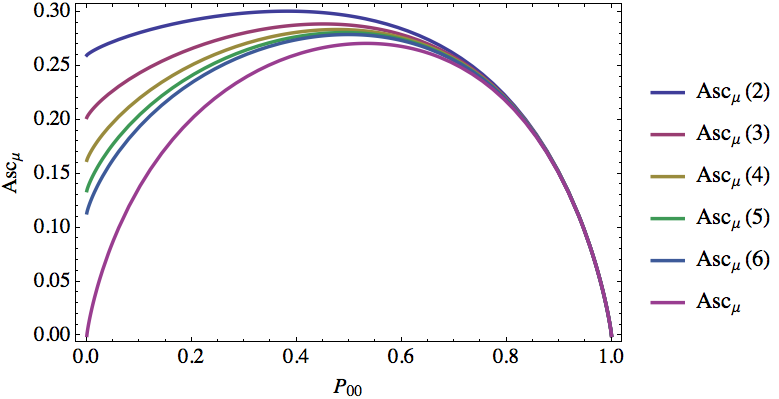}
\includegraphics[width=.45\textwidth,keepaspectratio]{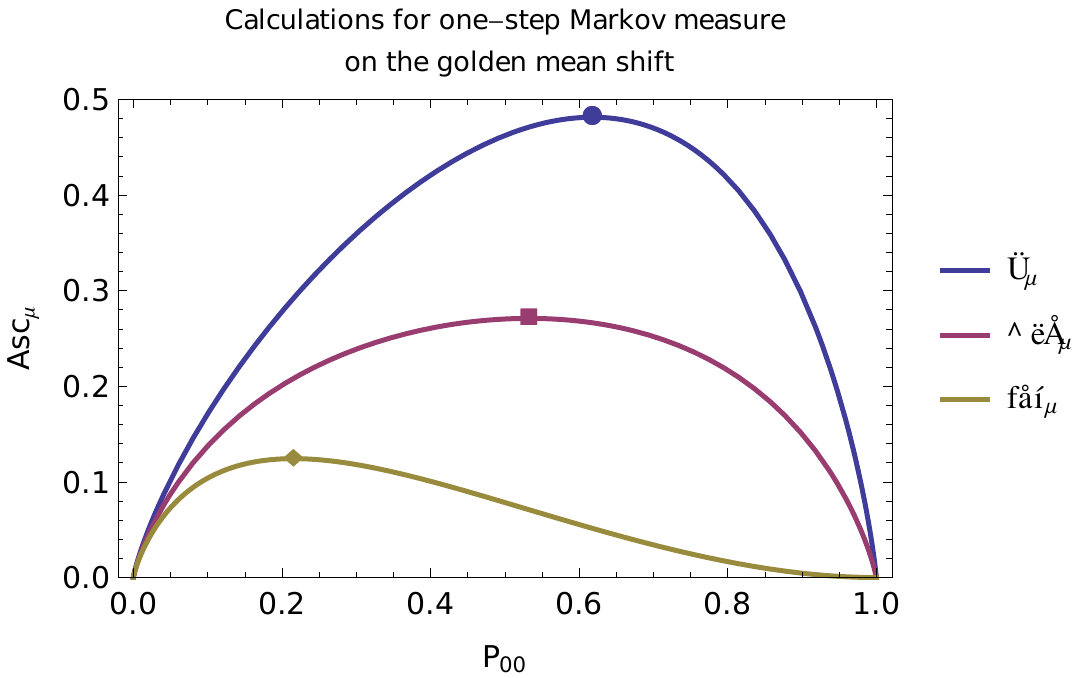}
\caption{$1$-step Markov measures on the golden mean shift \label{onestepmarkovgmsmulticurves}}
\end{center}
\end{figure}

%
%

\newpage
\subsection{$2$-step Markov measures on the golden mean shift}
Here we consider $2$-step Markov measures on the golden mean shift. In this case we have two parameters. We let $P_{000}$ and $P_{100}$ be the probability of going from $00$ to $00$ and from $10$ to $00$ respectively. $P$ and $p$ are given by 
\be
P=\left(\begin{array}{ccc}
P_{000}&1-P_{000}&0\\
0&0&1\\
P_{100}&1-P_{100}&0
\end{array}\right)
\en
and
\be p=\left(
 -\frac{P_{100}}{2 P_{000}-P_{100}-2},  \frac{P_{100}}{2 \left(2 P_{000}-P_{100}-2\right)}+0.5, 
   \frac{P_{100}}{2 \left(2 P_{000}-P_{100}-2\right)}+0.5 \\
\right)
\en
\begin{table}
\begin{center}
\begin{tabular}{@{}lllll@{} }
\toprule
$P_{000}$&$P_{100}$&$h_\mu$&$\Asc_\mu$&$\Int_\mu$\\
\midrule
0.618&0.618&{\bf 0.481}&0.266&0.051\\
0.483&0.569&0.466&{\bf 0.272}&0.078\\
0&0.275&0.344&0.221&{\bf 0.167}\\
\bottomrule\\
\end{tabular}

\caption{$2$-step Markov measures on the golden mean shift \label{gmsmarkov2step}}
\end{center}
\end{table}

Table~\ref{gmsmarkov2step} and the plots in Figures~\ref{ascint2stepgms} and \ref{entcombo2stepgms} are similar to those in the previous examples. As expected, the maximal $h_\mu$ is $\log \phi$ as it was for  $1$-step Markov measures on the golden mean shift. 

\begin{figure}
\begin{center}
\includegraphics[width=.4\textwidth,keepaspectratio]{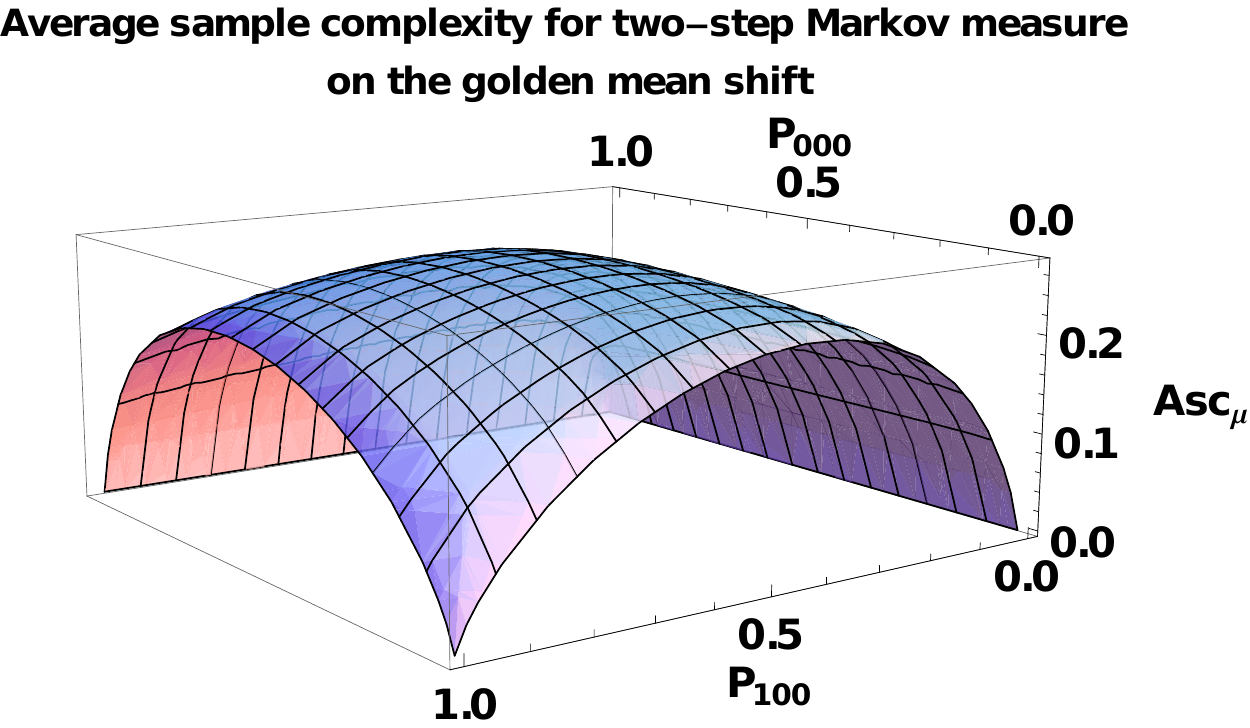}\quad
\includegraphics[width=.4\textwidth,keepaspectratio]{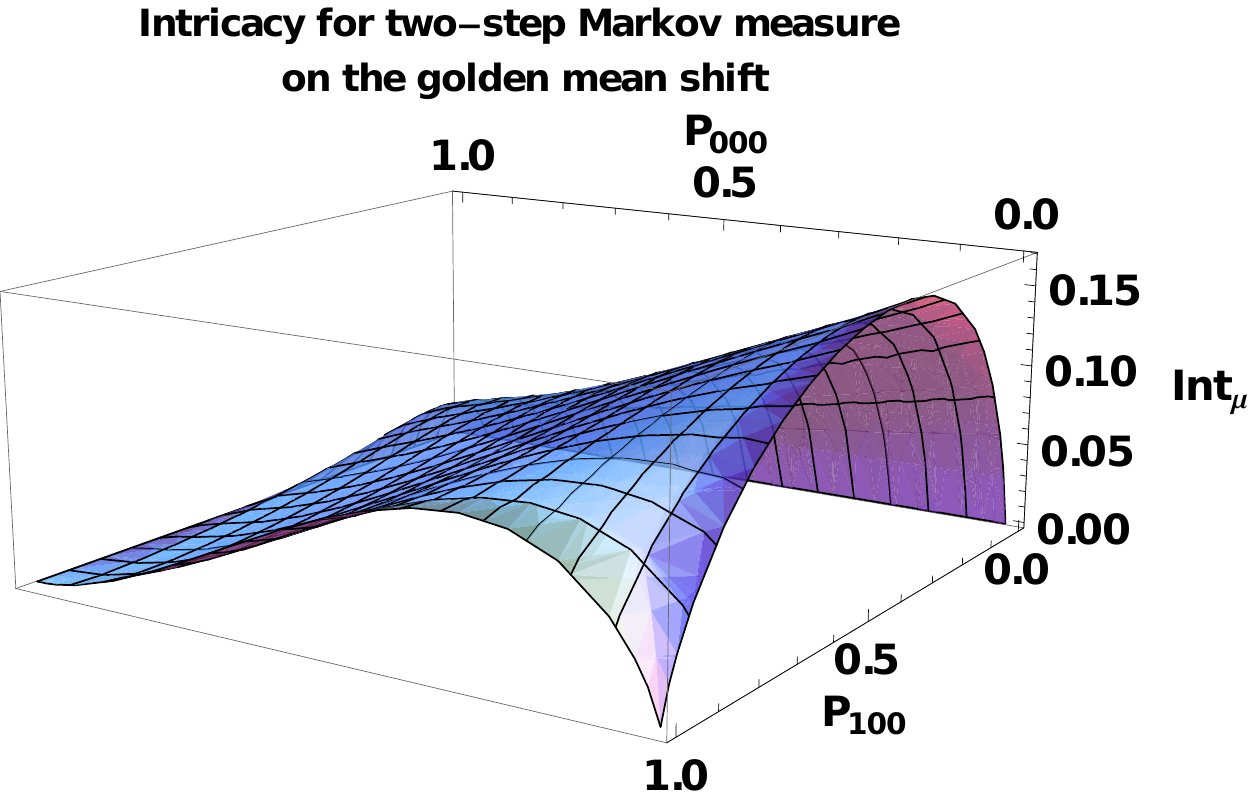}
\end{center}
\caption{$\Asc_\mu$ and $\Int_\mu$ for $2$-step Markov measures on the golden mean shift\label{ascint2stepgms}}
\end{figure}

The graph of $\Asc_\mu$ as a function of the parameters of $2$-step Markov shifts appears strictly convex, as was the case for $1$-step Markov measures on the full $2$-shift; this gives evidence for the existence of a unique maximizing measure. The maximum for $\Int_\mu$ is not fully supported and  occurs on the plane $P_{000}=1$. The maximum values of both $\Asc_\mu$ and $\Int_\mu$ strictly increase as we go from $1$-step Markov measures on the golden mean shift to $2$-step Markov measures on the golden mean shift.  There is no reason to expect that these values will not continue to increase as we move to higher $k$-step Markov measures on the golden mean shift, leading to the following conjectures.
\begin{figure}
\begin{center}
\includegraphics[width=.43\textwidth,keepaspectratio]{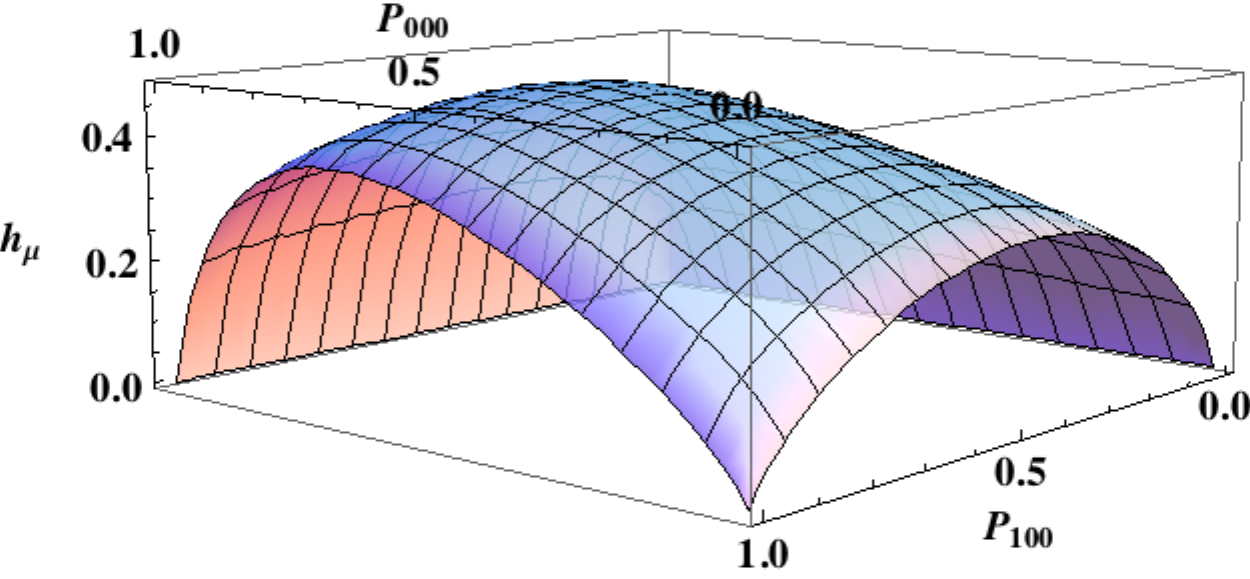}\qquad
\includegraphics[width=.4\textwidth,keepaspectratio]{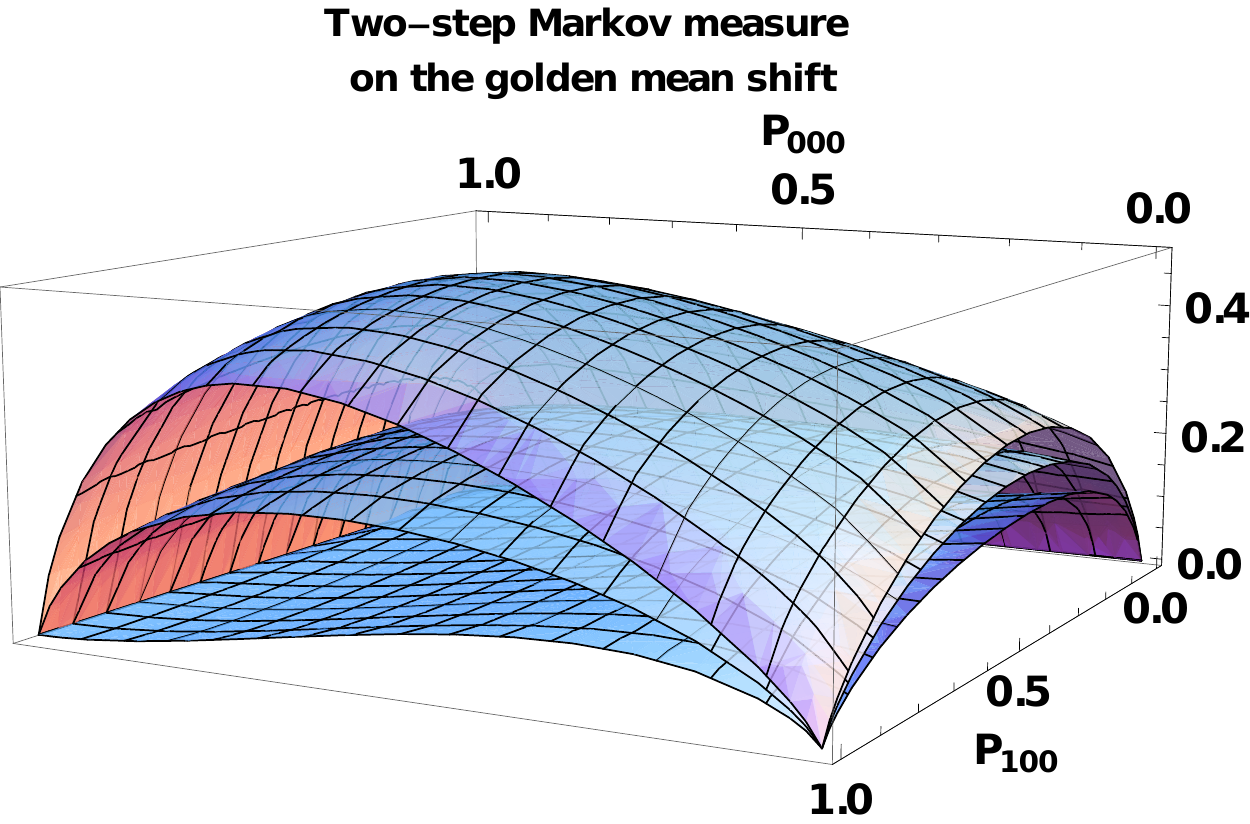}
\end{center}
\caption{$h_\mu$ for $2$-step Markov measures on the golden mean shift\label{entcombo2stepgms}}
\end{figure}
\begin{conjecture}
For each $k\ge1$, there is a unique $k$-step Markov measure $\mu_k$ on the golden mean shift that maximizes $\Asc_{\mu}$ among all $k$-step Markov measures. Furthermore, if $k_1\ne k_2$ then $\Asc_{\mu_{k_1}}\ne\Asc_{\mu_{k_2}}$.
\end{conjecture}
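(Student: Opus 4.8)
The plan is to recast the problem as a finite-dimensional optimization and to attack uniqueness through strict concavity and monotonicity through a first-variation argument. First I would recode the $k$-step Markov measures on the golden mean shift as $1$-step Markov measures on the admissible $k$-blocks. Such a measure corresponds bijectively to its distribution $\nu$ on admissible $(k+1)$-blocks, subject to the linear consistency (shift-invariance) constraints relating overlapping $k$-block marginals and to the golden-mean admissibility rule; the set $\Delta_k$ of such $\nu$ is a compact convex polytope. By Proposition~\ref{infocor} together with Equations (\ref{1stepmarkasc})--(\ref{2stepmarkasc}) (equivalently the fiber-entropy formula $\Asc_\mu=\tfrac12\,h_{\sigma_A}(X,T,\mu,\alpha)$ of Theorem~\ref{thm:FirstReturns}) the assignment $F_k\colon\Delta_k\to\mathbb R$, $F_k(\nu)=\Asc_\mu$, is real-analytic on the interior. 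Compactness and continuity already give existence of a maximizer, so the real content is uniqueness and strict growth with $k$.

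For uniqueness I would prove $F_k$ strictly concave on $\Delta_k$. Write $F_k(\nu)=\tfrac12\sum_{i\ge1}2^{-i}H_\mu(\alpha\mid\alpha_i)$. For $i\le k$ the joint law of $(x_0,x_{-i})$ is a marginal of $\nu$, hence affine in $\nu$, and since conditional entropy is concave in the joint law, each such term is concave in $\nu$; thus $\sum_{i\le k}$ is concave. (Note that by Proposition~\ref{ascmaxmeascor} $\Asc_\mu$ is affine \emph{in $\mu$}, but this affinity is over all invariant measures and is broken by the nonlinear dependence of the Markov measure on $\nu$.) The low-order terms depend on $\nu$ only through pairwise marginals and so cannot pin down $\nu$ completely: the remaining directions are governed by the higher terms $i>k$, whose joint laws involve the powers $P^{\,i-k}$ of the recovered transition matrix and are therefore \emph{not} affine, nor individually concave, in $\nu$. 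The main obstacle is that the low-order part is degenerate precisely along directions fixing all pairwise marginals $(x_0,x_{-i})$, $i\le k$, so that in those directions strict negativity of the Hessian can come only from the tail $\sum_{i>k}2^{-i}H_\mu(\alpha\mid\alpha_i)$. I would approach this by expanding $H_\mu(\alpha\mid\alpha_i)$ through $P^{\,i-k}$, using primitivity of the golden-mean matrix (and the resulting spectral gap) to control the $i$-dependence, and showing that any change of $\nu$ preserving all pairwise marginals while altering a genuine $(k+1)$-block frequency strictly decreases some tail term.

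Strict monotonicity I would reduce to the single inequality $\max_{\Delta_k}F_k>\max_{\Delta_{k-1}}F_{k-1}$ for every $k\ge2$. Since every $(k-1)$-step measure is $k$-step, the inclusion $\Delta_{k-1}\hookrightarrow\Delta_k$ yields ``$\ge$'' for free, and once the inequality is strict for all $k$ the sequence $\Asc_{\mu_k}$ is strictly increasing, hence injective, which is exactly the ``furthermore'' assertion. To obtain strictness I would compute the first variation of $\Asc_\mu$ at the $(k-1)$-step maximizer $\mu_{k-1}$ in a tangent direction to $\Delta_k$ that is transverse to $\Delta_{k-1}$, i.e. one perturbing a genuine $k$-block conditional (the dependence on the symbol $k$ steps back). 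Using the series, this variation is $\tfrac12\sum_{i\ge1}2^{-i}\,\delta H_\mu(\alpha\mid\alpha_i)$; since $\mu_{k-1}$ is already optimal within $\Delta_{k-1}$, all variations internal to $\Delta_{k-1}$ vanish, so it suffices to exhibit one transverse direction with nonzero first variation (or, should the gradient vanish there, a transverse direction along which the Hessian is positive).

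The step I expect to be genuinely hard is excluding the degenerate possibility that $\mu_{k-1}$ also satisfies the full $k$-step first-order optimality conditions. I would try to rule this out by writing the Karush--Kuhn--Tucker equations for a genuine $k$-step critical point and observing that the factorization forced on the $k$-block conditionals by $(k-1)$-step memory is incompatible with those equations, so that relaxing the memory constraint strictly increases $\Asc_\mu$. A single argument uniform in $k$ is the delicate point, since both the concavity estimate and the transversality of the first variation hinge on controlling the same nonlinear higher-order terms $H_\mu(\alpha\mid\alpha_i)$, $i>k$. As a fallback one can establish the base cases $k=1,2$ from the explicit data in Tables~\ref{gmsmarkov1step} and \ref{gmsmarkov2step} and attempt an inductive passage from $k$ to $k+1$.
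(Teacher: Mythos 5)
First, a point of status: the statement you were asked to prove is one of the paper's \emph{conjectures}. The authors offer no proof at all --- only numerical evidence, namely the apparent strict convexity of the plotted $\Asc_\mu$ surfaces and the increase of the maximum from the $1$-step to the $2$-step data in Tables~\ref{gmsmarkov1step} and \ref{gmsmarkov2step}. So there is no paper proof to compare against; the question is whether your argument would settle the conjecture, and it would not. Beyond the two steps you yourself defer, there is a concrete error at the foundation. The series you take as your objective, $F_k(\nu)=\tfrac12\sum_{i\ge1}2^{-i}H_\mu(\alpha\mid\alpha_i)$, is Proposition~\ref{infocor}, which is proved only for $1$-step Markov measures: its proof uses the screening identity $H_\mu(\alpha_{s_0}\mid\alpha_{s_1}\vee\cdots\vee\alpha_{s_n})=H_\mu(\alpha_{s_0}\mid\alpha_{s_1})$, which fails for memory $k\ge2$ when the sampled coordinates are scattered. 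The paper itself signals this: its higher-step formula (Equation~\ref{2stepmarkasc}) replaces the single-coordinate conditionals by conditionals on block states and the full shifted past. The correct general objective, by Theorem~\ref{thm:FirstReturns}, is the fiber entropy $\tfrac12\int_A H_\mu(\alpha\mid\alpha_{1,\infty}^*)\,dP_A$, an average over \emph{random scattered subsets} of the past; for a $k$-step measure the relevant conditional entropies involve joint laws of coordinates spanning ranges longer than $k+1$, and these are not affine in your parameter $\nu$. Consequently your key decomposition --- concave terms coming from affine pairwise marginals for $i\le k$, plus a geometric tail --- analyzes the wrong function for every $k\ge2$, and the concavity bookkeeping does not transfer to the true $\Asc_\mu$.

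Second, even with the correct objective in hand, the two steps you flag as hard are precisely where the conjecture lives, and the proposal contains no mechanism for either: (a) strict concavity of $F_k$ is a hope, not an argument --- the tail terms are, as you note, neither affine nor concave in $\nu$, no estimate is offered showing their Hessian contribution dominates along the marginal-preserving directions, and global strict concavity may simply fail (uniqueness of a maximizer does not require it); (b) excluding that the $(k-1)$-step maximizer satisfies the $k$-step first-order conditions, uniformly in $k$, is stated as a plan with no candidate KKT computation. The fallback of certifying $k=1,2$ from Tables~\ref{gmsmarkov1step} and \ref{gmsmarkov2step} cannot carry rigor either: those entries are truncations of the series to $20$ terms rounded to three decimals, and the competing maxima ($0.271$ versus $0.272$) differ by less than any plausible error budget. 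What is sound in your write-up: the recoding of $k$-step measures as a compact convex polytope of $(k+1)$-block distributions, existence of a maximizer by upper semicontinuity (compare Proposition~\ref{ascmaxmeascor}), the observation that affinity of $\mu\mapsto\Asc_\mu$ is broken by the nonlinear parametrization, and the reduction of the ``furthermore'' clause to strict monotonicity of $k\mapsto\max F_k$. As it stands, though, this is a reasonable research program for an open problem, not a proof.
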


\begin{conjecture}
On the golden mean shift there is a unique measure of maximum $\Asc_\mu$, and it is not Markov of any order.
\end{conjecture}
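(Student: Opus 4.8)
The plan is to treat the two assertions of the conjecture separately, reducing each to a statement about finite-order Markov approximations. Write $\mathcal M$ for the simplex of $\sigma$-invariant Borel probability measures on the golden mean shift $X$, let $\alpha$ be the time-zero partition, and set $M=\max_{\mu\in\mathcal M}\Asc_\mu$, where $\Asc_\mu=\Asc_\mu(X,\alpha,\sigma)$. By the argument in Proposition~\ref{ascmaxmeascor}, $\Asc_\mu$ is an affine, upper semi-continuous function of $\mu$, so the maximizing set $\mathcal M^{\mathrm{max}}=\{\mu:\Asc_\mu=M\}$ is a nonempty compact face of $\mathcal M$ whose extreme points are ergodic; in particular $M$ is attained. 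Two observations organize the approach. First, because $\Asc$ is affine, the maximizer is unique if and only if exactly one ergodic measure attains $M$; this is the content of the uniqueness claim. Second, since every $k$-step Markov measure is also $(k{+}1)$-step Markov, the quantity $a_k:=\max\{\Asc_\mu:\mu\text{ is }k\text{-step Markov}\}$ is non-decreasing, and it is attained because $k$-step Markov measures on $X$ form a compact finite-dimensional parameter set on which $\Asc$ depends smoothly. If I can show $a_k<a_{k+1}$ for all $k$ together with $a_k\to M$, then $a_k<M$ for every $k$; since any finite-order Markov maximizer would force $M=\Asc_{\mu^*}\le a_k<M$, the maximizer cannot be Markov of any order. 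Crucially, this part needs no uniqueness.

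For the non-Markov claim I would first prove the approximation identity $a_k\to M$. Take an ergodic maximizer $\mu^*$ and let $\mu^*_k$ be its canonical $k$-step Markov approximation (the $k$-step Markov measure sharing the $(k{+}1)$-block distribution of $\mu^*$). Then $\mu^*_k\to\mu^*$ weak$^*$, and using the fiber-entropy representation $\Asc_\mu=\tfrac12\int_A H_\mu(\alpha\mid\alpha_{1,\infty}^*)\,dP_A$ of Theorem~\ref{thm:FirstReturns} together with the martingale convergence of conditional entropies (uniformly bounded since the alphabet is finite) one gets $\Asc_{\mu^*_k}\to\Asc_{\mu^*}=M$; combined with $a_k\le M$ this yields $a_k\to M$. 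For strict monotonicity I would compute the first variation of $\Asc$ at the best $k$-step measure $\mu_k$ along a one-parameter family of $(k{+}1)$-step Markov measures that tunes a transition probability genuinely depending on a full $(k{+}1)$-block, hence transverse to the $k$-step parameters. Because the weights $2^{-i}$ in the geometrically weighted conditional entropies never vanish, a range-$(k{+}1)$ parameter enters $\Asc$ with strictly positive coefficient, so the directional derivative should be nonzero, giving $a_{k+1}>a_k$ and completing this half.

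For uniqueness I would seek a variational characterization of $\mathcal M^{\mathrm{max}}$. Since $\Asc$ is affine, maximizing it over $\mathcal M$ amounts to maximizing a fixed linear functional over the simplex, and the corresponding Euler--Lagrange condition states that a maximizer $\mu^*$ assigns, conditionally on each Bernoulli-sampled past, the present-symbol distribution balancing the geometrically weighted conditional-entropy increments. I would translate this into a closed system of equations for the conditional probabilities $\mu^*(x_0\mid x_{-1},x_{-2},\dots)$ on the irreducible golden mean shift and argue that this system has a unique solution, yielding a single ergodic maximizer and hence, by affineness, a unique maximizer in $\mathcal M$.

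The main obstacle is exactly this uniqueness step. Unlike the measure of maximal entropy, the $\Asc$-maximizer is not the equilibrium state of any H\"older or summable-variation potential: $\Asc_\mu$ is an infinite, geometrically weighted average of conditional entropies---equivalently a randomly sampled conditional entropy under Bernoulli$(1/2)$ thinning of the past---so it carries genuinely unbounded-range dependence, and the Ruelle--Perron--Frobenius uniqueness machinery does not apply off the shelf. Establishing uniqueness will likely require a bespoke transfer operator adapted to the sampling weights, or a direct strict-convexity argument for the dual functional. A secondary technical point is confirming that the transverse first variation in the non-Markov half is actually nonzero rather than accidentally vanishing at the optimum, which requires controlling the long-range conditional-entropy terms uniformly in $k$.
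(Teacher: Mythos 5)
You should first be aware that the statement you are proving is labeled a \emph{conjecture} in the paper: the authors offer no proof, only numerical evidence (the apparent strict convexity of the $\Asc_\mu$ surface over $1$- and $2$-step Markov parameters in Section~\ref{markovsec}, and the strict increase of the maximal value in passing from $1$-step to $2$-step Markov measures on the golden mean shift), together with the explicit admission in the Questions section that uniqueness and non-Markovianity of maximizing measures are open. So there is no proof in the paper to compare against, and the honest question is whether your proposal closes the gap. It does not, and the failures occur at exactly the decisive points. First, your approximation step $a_k\to M$ is unsupported: martingale convergence of conditional entropies applies to a \emph{fixed} measure $\mu^*$ along an increasing family of conditioning algebras, whereas you need convergence of $\Asc_{\mu_k^*}$ along a \emph{sequence of measures}. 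The semicontinuity actually available from the paper (Theorem~\ref{ascmuinf}: $\Asc_\mu$ is an infimum of continuous functions of $\mu$, hence upper semi-continuous) gives $\limsup_k \Asc_{\mu_k^*}\le M$, which is the automatic direction; the direction you need, $\liminf_k \Asc_{\mu_k^*}\ge M$, does not follow, because for a gapped sampling set $S$ the marginal of the canonical $k$-step Markov approximation on $\alpha_S$ is not determined by, and admits no obvious entropy comparison with, that of $\mu^*$ (the familiar inequality $H_{\mu_k^*}(\alpha_{n^*})\ge H_{\mu^*}(\alpha_{n^*})$ is for consecutive blocks only).

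Second, your strict monotonicity step rests on the assertion that a transverse $(k{+}1)$-block perturbation has nonzero directional derivative because the geometric weights never vanish; but positivity of the weights in the fiber-entropy representation of Theorem~\ref{thm:FirstReturns} does not preclude cancellation among the infinitely many long-range conditional-entropy increments at the optimum, and note that Proposition~\ref{infocor} with its clean series $\sum_i 2^{-i-1}H_\mu(\alpha\mid\alpha_i)$ is proved only for $1$-step Markov measures, so you do not even have a closed-form first variation to differentiate for general $k$ or for the putative limit measure. Third, and most seriously, your uniqueness argument is a restatement of the problem: reducing maximization of an affine functional to an ``Euler--Lagrange'' system for the conditional probabilities $\mu^*(x_0\mid x_{-1},x_{-2},\dots)$ is straightforward, but you give no mechanism forcing that system to have a unique solution, and you yourself concede that the Ruelle--Perron--Frobenius machinery is unavailable because $\Asc_\mu$ is not the pressure of any summable-variation potential. (There is also a smaller unexamined step: passing from ``exactly one ergodic maximizer'' to ``unique maximizer'' requires that $\Asc_\mu$ be affine over the full ergodic decomposition, i.e.\ over barycenters and not merely finite convex combinations, which needs an argument in the style of the reference to \cite{DGS} in Proposition~\ref{ascmaxmeascor}.) In short, your outline is a reasonable research program consistent with the evidence the paper presents, but each of its three pillars --- approximation, strict monotonicity, and uniqueness --- is currently a gap, which is why the statement remains a conjecture.
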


%
%

\section{Questions}
\subsection{Maximal measures}
On every  irreducible shift of finite type there is a unique measure of maximal entropy, the Shannon-Parry measure. It is a {Markov} measure determined by the transition matrix of the SFT. In the preceding section we saw evidence that perhaps shifts of finite type (and maybe also many other topological dynamical systems) have unique measures of maximal $\Asc_\mu$, these measures might not be Markov of any order, and measures of maximal $\Int_\mu$ may not be unique and may not be fully supported. Moreover, $\Int_\mu$ might have local maxima which are not global maxima. Is $\Asc_\mu$ a convex function of the parameters defining a $k$-step Markov measure, or maybe even of $\mu$ itself? Is there a variational principle, which might say that for a subshift $(X,T)$ with partition $\alpha$ into rank $0$ cylinder sets (and corresponding cover $\mathscr{U}(\alpha)$),
$\sup_\mu\Asc_\mu(X,\alpha,T)=\Asc(X,\mathscr{U}(\alpha),T)$? 
 In the topological case the first-return map $T_{X \times A}$ is not continuous nor expansive nor even defined on all of $X \times A$ in general, so known results about measures of maximal entropy and equilibrium states do not apply.
 To maximize $\Int$, there is the added problem of the minus sign in 
$\Int(X,\mathscr{U},T)=2\Asc(X,\mathscr{U},T)-\htop(X,\mathscr{U},T)$.
If high intricacy, which we have proposed to think of as organized flexibility, really is a desirable property, presumably systems that can evolve through trial and error will seek to maximize it; thus understanding maximizing measures, and their generalizations when a potential function is present, can have important applications.

\subsection{Improved formulas and computational methods}
Equation~\ref{zrasca} gives us a formula for computing the average sample complexity of rank zero cylinder sets for certain shifts of finite type and the uniform system of coefficients. We need formulas to calculate average sample complexity for all shifts of finite type, other subshifts, and indeed other types of dynamical systems. We also need methods aside from brute force to get good approximations, in less computation time, for average sample complexity and intricacy. 

%
%

\subsection{Further analysis of shifts of finite type}
Suppose a shift of finite type, $X$, has square positive adjacency matrix. We know that the intricacy of $X$ with respect to rank $0$ cylinder sets using the uniform system of coefficients depends only on $|\mathscr{L}_{n^*}(X)|$, i.e. its complexity function (see Theorem~\ref{squarethm}), and therefore two shifts of finite type with square positive adjacency matrices and the same complexity functions have the same intricacy (and intricacy functions). 
We have examples of shifts of finite type with the same complexity functions but different intricacy functions. In these examples the smallest power for which the adjacency matrices are positive differ. Under what conditions will two shifts of finite type with the same complexity functions have the same intricacy functions? Maybe two shifts of finite type will have the same intricacy functions (with respect to rank $0$ cylinder sets and the uniform system of coefficients) exactly when they have the same complexity functions and the same smallest power for which their adjacency matrices are positive.

%
%

\subsection{Higher-dimensional shifts and general group actions}
The definitions of average sample complexity and intricacy generalize naturally to higher-dimensional subshifts,  general group (or semigroup) actions, and networks. Systems such as these would have to be considered if one wanted to take into account underlying system geometry (e.g., connections among neurons), but since the computation and understanding of entropy is already problematic in these settings, progress is not likely to be easy. 

\subsection{Relative $\Asc$ and $\Int$}
When there is a factor map $\pi : (X,T) \to (Y,S)$, there are definitions of relative entropy, relatively maximal measures, relative equilibrium states, etc. \cite{LedrappierWalters1977,BoyleTuncel1984,Walters1986,PetersenQuasShin2003,AHJ2014}. 
All of this could be generalized to $\Asc$ and $\Int$.

%
%

\subsection{Maximizing subsets}
For a topological system $(X,T)$ and a cover $\mathscr{U}$ of $X$, for each $n\ge 1$ we would like to find the subset(s) $S\subset n^*$ that maximize $\log N(\mathscr{U}_S)$ and $\log (N(\mathscr{U}_S)N(\mathscr{U}_{\setcomp{S}})/N(\mathscr{U}_{n^*}))$. For shifts of finite type with positive square adjacency matrix and covers by rank zero cylinder sets, it is a consequence of Proposition~\ref{nsprop1} that $\log N(\mathscr{U}_S)$ is maximized for the subset $S\subset n^*$, $S=\{0,2,4,6,\dots,n-1\}$ for $n$ even and $S=\{0,2,\dots,n-2\}$ or $S=\{1,3,\dots, n-1\}$ for $n$ odd. 
Similarly, for a measure-preserving system $(X,\mathscr{B},\mu,T)$ and partition $\alpha$ of $X$, for each $n\ge 1$ we would like to find the subset(s) $S\subset n^*$ that maximize $H_\mu(\alpha_S)$ and $H_\mu(\alpha_S)+H_\mu(\alpha_{\setcomp{S}})-H_\mu(\alpha_{n^*})$. Finding maximizing subsets $S\subset n^*$ could lead to improved computational methods for estimating average sample complexity and intricacy by allowing us to focus  on subsets that have the greatest effect. 
%
%

%
%

\subsection{Entropy is the only finitely observable invariant}\label{finobs}
In~\cite{OrnsWeiss07}, D. Ornstein and B. Weiss show that any finitely observable measure-theoretic isomorphism invariant is necessarily a continuous function of entropy. (A function $J$ with values in some metric space, defined for all finite-valued, stationary, ergodic processes, is said to be \emph{finitely observable} if there is a sequence of functions $S_n(x_1,\dots, x_n)$ that for every process $\mathscr{X}$ converges to $J(\mathscr{X})$ for almost every realization $x_1,x_2,\dots)$ of $\mathscr{X}$.) 
In view of this result it is perhaps not surprising that the invariants $\sup_\alpha\Asc_\mu(X,\alpha,T)$ and $\sup_{\alpha}\Int(X,\alpha,T)$ are both equal to measure-theoretic entropy.
But we do not know whether $\Asc_\mu$ and $\Int_\mu$ are finitely observable---for one thing, we lack an analogue of the Shannon-McMillan-Breiman Theorem for these quantities.
Further, that
the supremum over open covers of $\Asc$ and $\Int$ yields the usual topological entropy suggests that there might be some kind of topological analogue of this Ornstein-Weiss theorem. 
%
%

\subsection[\vspace{-1.5ex}Alternate definition of the average sample complexity function analogous\\ to the complexity function of a sequence]{Alternate definition of the average sample complexity function}\label{altasc}
Recall that for a subshift $N(S)$ gives the number of words seen at the places in the set $S \subset n^*$ among all sequences in $X$, and the average sample complexity function is an average over all $S$ of $\log N(S)$ (Definition \ref{def:ascintfunctions}). In analogy with the complexity function $p_X(n)$ of a subshift, which gives the number of words of length $n$ found among all sequences in the subshift, one could consider an {\em alternate sample complexity function}
\be
\Alt_X(n)= \frac{1}{2^n}\sum_{S \subset n^*} N(S).
\en
This quantity, and its corresponding alternate intricacy function, would provide a different measure of the complexity of a system.

\subsection{Average sample complexity of an infinite or finite word}
The average sample complexity function of a fixed sequence $x$ on a finite alphabet may be defined to be that of its orbit closure, and similarly for the alternate average sample complexity just defined. For a finite sequence $u=u_0 \dots u_{m-1}$, for each $n=1,2,\dots,m$ and $S \subset n^*$ one could define $N_u(S)$ to be the number of different words seen along the places in $S$ in all the subwords $u_i \dots u_{i+n-1}, 0\leq i \leq m-n$ and form averages
\be
\Asc_u(n)=\frac{1}{n}\frac{1}{2^n} \sum_{S\subset n^*} \log N_u(S) \quad\text{ or }\quad 
\Alt(n) = \frac{1}{2^n}\sum_{S \subset n^*} N_u(S).
\en
One could compare finite words according to these and other complexity measures, and for infinite words one might ask, for example, which are the aperiodic sequences with minimum $\Asc(n)$ or $\Alt(n)$. 

\subsection{Partition $n^*$ into $m$ subsets}
Our definition for intricacy in dynamical systems is based on partitioning the set $n^*$ into a subset $S$ and its complement $\setcomp{S}$. We could also consider partitioning $n^*$ into more than two subsets, disjoint $S_1,S_2,\dots, S_m$  whose union is $n^*$. Let $\mathscr{S}(m)\subset n^*$ denote the set of all partitions of $n^*$ into $m$ subsets and denote by $c_{\mathscr{S}(m)}$ a new weighting factor depending on the partition $\mathscr{S}(m)$. For $(X,T)$ a topological dynamical system and $\mathscr{U}$ an open cover of $X$, one may define the \emph{$m$-intricacy of $X$ with respect to $\mathscr{U}$} to be 
\begin{equation}
m\text{-}\Int(X,\mathscr{U},T)=\lim_{n\rightarrow\infty}\frac{1}{n}\sum_{\mathscr{S}(m)\subset n^*} c_{\mathscr{S}(m)} \log\left(\frac{\prod_{S_i\in\mathscr{S}(m)}N(\mathscr{U}_{S_i})}{N(\mathscr{U}_{n^*})}\right).
\end{equation}
The analogous generalization to intricacy functionals was already proposed in \cite{tononi1994measure, BZ12}.
One could also average over $m$.

%
%
\subsection{Definition based on Rokhlin entropy}
In \cite{seward2014krieger}, Rokhlin entropy is defined for probability preserving group actions as the infimum of the measure-theoretic entropies over countable generating partitions; Rokhlin \cite{Rokhlin1967} had shown that for free ergodic $\mathbb Z$ actions it coincides with ordinary measure-theoretic entropy. 
We may define the \emph{measure-theoretic Rokhlin average sample complexity} based on Rokhlin entropy to be
\be
\Asc_\mu^{\text{Rok}}(X,T)=\inf_\alpha\left\{\Asc_\mu(X,\alpha,T):\alpha \text{ is a countable generating partition}\right\},
\en 
and the topological version,
\be
\Asc^{\text{Rok}}(X,T)=\inf\left\{\Asc(X,\mathscr{U},T):\mathscr{U} \text{ is a topological generator}\right\}.
\en
Since these are now invariants of isomorphism and conjugacy, what are they? Always the ordinary entropy? Always $0$?


%
%
\subsection{Application of topological average sample pressure to coding sequence density}
In \cite{koslicki2015coding}, Koslicki and Thompson give a new approach to coding sequence density estimation in genomic analysis based on topological pressure. They use topological pressure as a computational tool for predicting the distribution of coding sequences and identifying gene-rich regions.
In their study, they consider finite sequences on the alphabet $\{A,C,G,T\}$, weight each word of length $3$, and compute the topological pressure as one would for a $3$-block coding of the full $4$-shift. The weighting function (potential function) is found by training parameters so that the topological pressure fits the observed coding sequence density on the human genome.
If one were to make similar computations but replace topological pressure with topological average sample pressure, which takes into account mutual influences among sets of sites, these finer measurements might better detect coding regions or otherwise help to understand the structure of genomes.

%
%

\begin{ack*}
	This paper is based on the UNC-Chapel Hill Ph.D. dissertation of the second author \cite{Wilson2015}, written under the direction of the first author.
	We thank Mike Boyle, J\'{e}r\^{o}me Buzzi, Tomasz Downarowicz, Kevin McGoff, Jean-Paul Thouvenot, and Lorenzo Zambotti for illuminating conversations. 
\end{ack*}


------------------------------------------------------------------------------------------------
\appendix
\section{Average Sample Pressure}\label{asp}
  Given a potential function $f$ we define topological average sample pressure by 
  restricting observations to selected subsets of $n^*$ and averaging, thus
  generalizing topological average sample complexity. We use notation based on the notation for topological pressure in ~\cite[Chapter 9]{waltersergodic}, which should be consulted for background and any unexplained terminology or notations, such as $r(n,\varepsilon), s(n,\varepsilon), p_n, q_n, P(T,f)$, etc., and we follow the scheme of the arguments found there. Recall that a set $E\subset X$ is $(S,\varepsilon)$ \emph{separated} if for each pair of distinct points $x,y\in E$, $d(T^{s_i}x,T^{s_i}y)>\varepsilon$ for some $i=0,\dots, |S|-1$, and $s(S,\varepsilon)$ is the maximum cardinality of a set $E\subset X$ such that $E$ is $(S,\varepsilon)$ separated. Also, a set $E\subset X$ is $(S,\varepsilon)$ \emph{spanning} if for each $x\in X$ there is $y\in E$ with $d(T^{s_i}x,T^{s_i}y)\le\varepsilon$ for all $i=0,\dots, |S|-1$, and $r(S,\varepsilon)$ is the minimum cardinality of an $(S,\varepsilon)$ spanning set of $X$. 
  \begin{definition}
  	Let $T:X\rightarrow X$ be a continuous transformation on a compact metric space. Let $f\in C(X,\mathbb{R})$ and $S\subset n^*$. Define
  	\begin{equation}\label{bigQs}
  	Q_S(T,f,\varepsilon)=\inf\left\{\sum_{x\in F}\exp\left(\sum_{i\in S}f(T^ix)\right):F\text{ is an }(S,\varepsilon)\text{ spanning set for }X\right\}.
  	\end{equation}
  	Then, for a fixed system of coefficients $c_S^n$, the \emph{average sample pressure of $T$ given $f$ and $\varepsilon$}, $\Asp_\varepsilon(T,f)$, is
  	\begin{equation}
  	\Asp_\varepsilon(T,f)=\limsup_{n\rightarrow\infty}\frac{1}{n}\sum_{S\subset n^*}c_S^n\log Q_S(T,f,\varepsilon).
  	\end{equation}
  \end{definition}
  
  \begin{definition}
  	We also define average sample pressure in terms of $(S,\varepsilon)$ separated sets. Let
  	\begin{equation}\label{bigPs}
  	P_S(T,f,\varepsilon)=\sup\left\{\sum_{x\in E}\exp\left(\sum_{i\in S}f(T^ix)\right):E\text{ is an }(S,\varepsilon).\text{ separated set for }X\right\},
  	\end{equation}
  	Then, for a fixed system of coefficients $c_S^n$, the \emph{average sample pressure of $T$ given $f$ and $\varepsilon$}, $\Asp'_\varepsilon(T,f)$, is
  	\begin{equation}
  	\Asp'_\varepsilon(T,f)=\limsup_{n\rightarrow\infty}\frac{1}{n}\sum_{S\subset n^*}c_S^n\log P_S(T,f,\varepsilon).
  	\end{equation}
  	$P_S(T,f,\varepsilon)$ and $\Asp'_\varepsilon(T,f)$ are nondecreasing as $\varepsilon$ decreases.
  	We use $\Asp$ to denote the definition which uses $(S,\varepsilon)$ spanning sets and $\Asp'$ to denote the definition which uses $(S,\varepsilon)$ separated sets since, in general, for a given $\varepsilon$ these may not be equal. 
  \end{definition}
  \begin{proposition} \label{QPprop}
  	Let $T:X\rightarrow X$ be a continuous transformation on a compact metric space. Let $f\in C(X,\mathbb{R})$ and $S\subset n^*$. Given $\varepsilon>0$,
  	\begin{equation}
  	Q_S(T,f,\varepsilon)\le P_S(T,f,\varepsilon).
  	\end{equation}
  \end{proposition}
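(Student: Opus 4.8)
The plan is to mimic the classical comparison between spanning and separated sets used for topological pressure (see \cite[Chapter 9]{waltersergodic}), adapted so that the sampling set $S$ plays the role previously played by the full window $n^*$. The key point is that a single set $E$ can be arranged to be simultaneously $(S,\varepsilon)$ separated of maximal cardinality and $(S,\varepsilon)$ spanning; once such an $E$ is produced, feeding it into the infimum defining $Q_S$ and the supremum defining $P_S$ yields the inequality immediately.

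First I would fix $\varepsilon>0$ and $S\subset n^*$ and choose a set $E\subset X$ that is $(S,\varepsilon)$ separated and of maximal cardinality $s(S,\varepsilon)$. Such a set exists and is finite: since $X$ is compact it is totally bounded, so the number of points that can be pairwise separated (even in a single coordinate $T^{s_i}$) is bounded, and a set of largest cardinality can be selected. Because $f\in C(X,\mathbb{R})$ and $X$ is compact, $f$ is bounded, so all the weighted sums $\sum_{x\in E}\exp(\sum_{i\in S}f(T^ix))$ under consideration are finite.

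Next I would show that maximality forces $E$ to be $(S,\varepsilon)$ spanning. Suppose not; then there is a point $x\in X$ such that for every $y\in E$ there is some index $i\in\{0,\dots,|S|-1\}$ with $d(T^{s_i}x,T^{s_i}y)>\varepsilon$. But this says precisely that $x$ is $(S,\varepsilon)$ separated from every point of $E$, so $E\cup\{x\}$ is again $(S,\varepsilon)$ separated and has strictly larger cardinality, contradicting the maximality of $E$. Hence $E$ is $(S,\varepsilon)$ spanning. Since $E$ is then one of the spanning sets over which the infimum in (\ref{bigQs}) is taken, and simultaneously one of the separated sets over which the supremum in (\ref{bigPs}) is taken, we obtain
\be
Q_S(T,f,\varepsilon)\le\sum_{x\in E}\exp\left(\sum_{i\in S}f(T^ix)\right)\le P_S(T,f,\varepsilon),
\en
which is the claimed inequality.

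I do not expect any genuine obstacle here: the entire argument rests on the single structural observation that a separated set of maximal size is automatically spanning, together with compactness to guarantee that such a maximal set exists and that the exponential sums are finite. The presence of the potential $f$ causes no difficulty precisely because the \emph{same} set $E$ is used on both sides, so the two weighted sums being compared are identical before one passes to the infimum and supremum.
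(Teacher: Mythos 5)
Your proposal is correct and follows essentially the same route as the paper: the single key observation in both is that a maximal $(S,\varepsilon)$ separated set is automatically $(S,\varepsilon)$ spanning, after which the same set $E$ witnesses both the infimum defining $Q_S$ and the supremum defining $P_S$ (the paper phrases maximality as inclusion-maximality rather than maximal cardinality, an immaterial difference). One small imprecision: your parenthetical justification of finiteness (``pairwise separated even in a single coordinate'') is not quite right, since distinct pairs may be separated in different coordinates $s_i$; the correct argument covers $X$ by finitely many $\varepsilon/2$-balls in the metric $d_S(x,y)=\max_{0\le i<|S|} d(T^{s_i}x,T^{s_i}y)$, each of which contains at most one point of an $(S,\varepsilon)$ separated set.
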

  \begin{proof}
  	We first notice that in Equation~\ref{bigPs} we can take the supremum over $(S,\varepsilon)$ separated sets, $E$, that are maximal, i.e. if we were to add another point to $E$ then it would no longer be an $(S,\varepsilon)$ separated set. This is because $\exp\left(\sum_{i\in S}f(T^ix)\right)>0$ for all $x\in X$, so the more points $x$ we sum this value over, the larger it gets. 
  	
  	Now, if $E$ is a maximal $(S,\varepsilon)$ separated set then it must be an $(S,\varepsilon)$ spanning set for $X$. This is because given $x\in X\setminus E$ and $y\in E$, if $d(T^{s_i}x,T^{s_i}y)>\varepsilon$, for all $i=0,\dots, |S|-1$, then we could add $x$ to $E$ and $E$ would still be $(S,\varepsilon)$ separated, contradicting that $E$ is a maximal $(S,\varepsilon)$ separated set.
  \end{proof}
  \begin{corollary}\label{asppressupboundcor}
  	\begin{equation}
  	\Asp_\varepsilon(T,f)\le\Asp'_\varepsilon(T,f)\le P(T,f).
  	\end{equation}
  \end{corollary}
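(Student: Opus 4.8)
The first inequality is immediate from Proposition~\ref{QPprop}. Since $Q_S(T,f,\varepsilon)\le P_S(T,f,\varepsilon)$ for every $S\subset n^*$ and the coefficients $c_S^n$ are nonnegative, applying the (increasing) logarithm, multiplying by $c_S^n$, summing over $S$, dividing by $n$, and taking $\limsup_{n\to\infty}$ all preserve the inequality, so $\Asp_\varepsilon(T,f)\le\Asp'_\varepsilon(T,f)$. For the second inequality the plan is to mirror the proof of Proposition~\ref{htopboundprop}, where the pointwise bound $N(\mathscr U_S)\le N(\mathscr U_{n^*})$ yielded $\Asc\le\htop$.

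The pressure analogue I would establish is the monotonicity
\be
P_S(T,f,\varepsilon)\le P_{n^*}(T,f,\varepsilon)\quad\text{for every }S\subset n^*,
\en
which rests on two observations. First, because $S\subset n^*$, any $(S,\varepsilon)$ separated set $E$ is automatically $(n^*,\varepsilon)$ separated, so the supremum defining $P_S$ ranges over a subfamily of the sets admissible for $P_{n^*}$. Second, the weight attached to each $x\in E$ satisfies $\sum_{i\in S}f(T^ix)\le\sum_{i\in n^*}f(T^ix)$, so for any fixed $E$ the weighted sum computed with the $S$-potential is dominated by the one computed with the full $n^*$-potential; combining the two gives the display. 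Taking logarithms and using $\sum_{S\subset n^*}c_S^n=1$,
\be
\frac1n\sum_{S\subset n^*}c_S^n\log P_S(T,f,\varepsilon)\le\frac1n\sum_{S\subset n^*}c_S^n\log P_{n^*}(T,f,\varepsilon)=\frac1n\log P_{n^*}(T,f,\varepsilon).
\en
Letting $n\to\infty$ gives $\Asp'_\varepsilon(T,f)\le\limsup_{n}\tfrac1n\log P_{n^*}(T,f,\varepsilon)=P(T,f,\varepsilon)$, and since (as recalled from \cite[Chapter 9]{waltersergodic}) $P(T,f,\varepsilon)$ is nondecreasing as $\varepsilon$ decreases with limit $P(T,f)$, we have $P(T,f,\varepsilon)\le P(T,f)$, completing the chain.

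The step requiring the most care—and the main obstacle—is the potential comparison $\sum_{i\in S}f(T^ix)\le\sum_{i\in n^*}f(T^ix)$, i.e.\ controlling the contribution $\sum_{i\in \setcomp{S}}f(T^ix)$ of the complementary coordinates. This term is nonnegative, hence harmless, precisely when $f\ge 0$; when $f=0$ the entire argument collapses to the proof of Proposition~\ref{htopboundprop}. For a general potential the sign of this contribution need not cooperate, and the only automatic estimate is the crude bound $\|f\|\,(n-|S|)$, which after averaging contributes $\tfrac12\|f\|$ (using $\sum_{S\subset n^*}c_S^n|S|=n/2$, a consequence of the symmetry $c_S^n=c_{\setcomp{S}}^n$). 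Thus the delicate point is to argue that this complementary contribution does not spoil the clean comparison with $P(T,f)$, and it is here that the sign or normalization of $f$ must be addressed.
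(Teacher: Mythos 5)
Your reconstruction is exactly the paper's argument: the paper handles the first inequality by citing Proposition~\ref{QPprop} (with the same averaging remark you spell out), and justifies the second by the single unproved assertion that $P_S(T,f,\varepsilon)\le P_{n^*}(T,f,\varepsilon)$ for all $S\subset n^*$, after which one uses that $\limsup_n\frac1n\log P_{n^*}(T,f,\varepsilon)$ is nondecreasing as $\varepsilon$ decreases, with limit $P(T,f)$. So the monotonicity you set out to prove is precisely the paper's key step, and your two observations --- that an $(S,\varepsilon)$ separated set is automatically $(n^*,\varepsilon)$ separated, and that the potential sums compare pointwise when $f\ge 0$ --- are the correct decomposition of it.

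The obstacle you flag at the end is not a defect of your write-up: it is a genuine gap in the paper's own proof, and in fact the asserted monotonicity is false for signed $f$. Take $S=\emptyset$ and $f$ strictly negative: an $(\emptyset,\varepsilon)$ separated set has at most one point, so $P_\emptyset(T,f,\varepsilon)=1$, while $P_{n^*}(T,f,\varepsilon)$ can be as small as $e^{-n\|f\|_\infty}$. Worse, the corollary itself fails as stated: for the one-point system with $f\equiv-1$ one has $\log P_S(T,f,\varepsilon)=-|S|$ for every $S$ and every $\varepsilon$, and since any system of coefficients satisfies $\sum_{S\subset n^*}c_S^n|S|=n/2$ (pair $S$ with $\setcomp{S}$, as you note), this gives $\Asp'_\varepsilon(T,f)=-\tfrac12>-1=P(T,f)$. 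The root cause is exactly the normalization mismatch you identify: the average sample has size $n/2$, so the potential enters $\Asp'_\varepsilon$ with weight $\tfrac12$ relative to pressure; replacing $f$ by $f+c$ shifts $\Asp'_\varepsilon(T,f)$ by $c/2$ but $P(T,f)$ by $c$, so no inequality between them can be invariant under adding constants, and the paper's own full-shift computation $\Asp_1(\sigma,f)=\tfrac12 P(\sigma,f)$ in Appendix~\ref{asp} exhibits the same factor (exceeding $P(\sigma,f)$ whenever $P(\sigma,f)<0$). The statement and the paper's two-line proof are correct under the additional hypothesis $f\ge0$, where your pointwise comparison closes the argument; that hypothesis (or an explicit renormalization of the potential term) must be added --- there is no way to argue away the complementary contribution for general $f$.
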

  \begin{proof}
  	The first inequality follows directly from Proposition~\ref{QPprop}. The second inequality is true because 
  	\be
  	P_S(T,f,\varepsilon)\le P_n(T,f,\varepsilon)
  	\en
  	for all $S\subset n^*$.
  \end{proof}
  
  Next we give the definition of average sample pressure in terms of open covers.
  \begin{definition}
  	Let $T:X\rightarrow X$ be a continuous transformation on a compact metric space. Let $f\in C(X,\mathbb{R})$ and $S\subset n^*$. If $\mathscr{U}$ is an open cover of $X$, then we define
  	\begin{equation}
  	p_S(T,f,\mathscr{U})=\inf\left\{\sum_{V\in\mathscr{V}}\sup_{x\in V}\exp\left(\sum_{i\in S}f\left(T^ix\right)\right):\mathscr{V} \text{ is a finite subcover of } \mathscr{U}_S\right\}
  	\end{equation}
  	and
  	\begin{equation}
  	q_S(T,f,\mathscr{U})=\inf\left\{\sum_{V\in\mathscr{V}}\inf_{x\in V}\exp\left(\sum_{i\in S}f\left(T^ix\right)\right):\mathscr{V} \text{ is a finite subcover of } \mathscr{U}_S\right\}.
  	\end{equation}

  	Define the the \emph{average sample pressure} of $(X,T)$ and the open cover $\mathscr{U}$ of $X$, given $f$ and a system of coefficients $c_S^n$, by
  	\begin{equation}\label{limaspeq}
  	\Asp(T,f,\mathscr{U})=\limsup_{n\rightarrow\infty}\frac{1}{n}\sum_{S\subset n^*}c_S^n\log q_S(T,f,\mathscr{U}).
  	\end{equation}
  	Similarly, we define another average sample pressure by
  	\begin{equation}\label{eq:limprime}
  	\Asp'(T,f,\mathscr{U})=\limsup_{n\rightarrow\infty}\frac{1}{n}\sum_{S\subset n^*}c_S^n\log p_S(T,f,\mathscr{U}).
  	\end{equation}
  \end{definition}
  
  \begin{proposition}
  	Let $T:X\rightarrow X$ be a continuous transformation on a compact metric space. Let $f\in C(X,\mathbb{R})$ and fix a system of coefficients $c_S^n$. If $\mathscr{U}$ is an open cover of $X$ then
  	\begin{equation}
  	\Asp(T,f,\mathscr{U})\le \Asp'(T,f,\mathscr{U})\le  \lim_{n\rightarrow\infty}\frac{1}{n}\log p_n(T,f,\mathscr{U}).
  	\end{equation}
  \end{proposition}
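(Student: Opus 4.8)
The plan is to prove both inequalities by comparing the three quantities $q_S$, $p_S$, and $p_{n^*}$ for each individual subset $S\subset n^*$, in exact analogy with Proposition~\ref{htopboundprop} and Corollary~\ref{asppressupboundcor}, and only then to average against the weights $c_S^n$ and pass to the limit. The first inequality is essentially formal; the real content is the second, which rests on a pointwise comparison of cover-pressures that is the pressure analogue of $N(\mathscr{U}_S)\le N(\mathscr{U}_{n^*})$.

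For the first inequality I would fix any finite subcover $\mathscr{V}$ of $\mathscr{U}_S$ and note that, termwise, $\inf_{x\in V}\exp(\sum_{i\in S}f(T^ix))\le\sup_{x\in V}\exp(\sum_{i\in S}f(T^ix))$, so the defining sum for $q_S$ is dominated by that for $p_S$ over the same $\mathscr{V}$; taking the infimum over $\mathscr{V}$ gives $q_S(T,f,\mathscr{U})\le p_S(T,f,\mathscr{U})$, the cover analogue of Proposition~\ref{QPprop}. Since $\log$ is increasing, $c_S^n\ge 0$, and $\limsup$ preserves inequalities, I multiply by $c_S^n$, sum over $S\subset n^*$, divide by $n$, and take $\limsup_n$ to obtain $\Asp(T,f,\mathscr{U})\le\Asp'(T,f,\mathscr{U})$.

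For the second inequality I would establish $p_S(T,f,\mathscr{U})\le p_{n^*}(T,f,\mathscr{U})$ for every $S\subset n^*$. Starting from a near-optimal finite subcover $\mathscr{V}'$ of $\mathscr{U}_{n^*}$, each element $V'=\bigcap_{i\in n^*}T^{-i}U_i^{V'}$ projects to the element $W(V')=\bigcap_{i\in S}T^{-i}U_i^{V'}$ of $\mathscr{U}_S$, and since $V'\subseteq W(V')$ the distinct projections $\mathscr{W}=\{W(V')\}$ form a subcover of $\mathscr{U}_S$. The key observation is that every $x\in W$ already lies in some $V'\in\mathscr{V}'$ with $W(V')=W$, because the coordinates of $x$ at the places in $S$ are those prescribed by $W$; hence $W=\bigcup_{V'\mapsto W}V'$ and $\sup_{x\in W}\exp(\sum_{i\in S}f(T^ix))=\max_{V'\mapsto W}\sup_{x\in V'}\exp(\sum_{i\in S}f(T^ix))$. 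Comparing the partial Birkhoff sum over $S$ with the full one over $n^*$ inside each supremum and then summing over $W$ collapses the total to the $p_{n^*}$-sum over $\mathscr{V}'$; taking the infimum gives $p_S\le p_{n^*}$. Using $\sum_{S\subset n^*}c_S^n=1$, this yields $\tfrac{1}{n}\sum_{S}c_S^n\log p_S\le\tfrac{1}{n}\log p_{n^*}$, and letting $n\to\infty$ — where the limit on the right exists by subadditivity of $\log p_n(T,f,\mathscr{U})$, as in Walters~\cite{waltersergodic} — delivers $\Asp'(T,f,\mathscr{U})\le\lim_{n}\tfrac{1}{n}\log p_n(T,f,\mathscr{U})$.

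The main obstacle is precisely the step comparing $\sum_{i\in S}f(T^ix)$ with $\sum_{i\in n^*}f(T^ix)$ inside the suprema: unlike the purely combinatorial bound $N(\mathscr{U}_S)\le N(\mathscr{U}_{n^*})$, here the two Birkhoff-type sums run over different index sets, so the omitted terms $\sum_{i\in\setcomp{S}}f(T^ix)$ must be controlled. When $f\ge 0$ these terms are nonnegative and the collapse above is immediate; for signed $f$ one must either reduce to the nonnegative case or otherwise absorb this discrepancy, and this is where care (and, if necessary, a normalization of the potential) is required.
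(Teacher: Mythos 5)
Your strategy coincides with the paper's: both proofs rest on the termwise comparisons $q_S(T,f,\mathscr{U})\le p_S(T,f,\mathscr{U})\le p_{n^*}(T,f,\mathscr{U})$ followed by averaging against $c_S^n$ (using $\sum_{S\subset n^*}c_S^n=1$), and your treatment of the first inequality is exactly the paper's and is correct. The difference is that the paper asserts $p_S\le p_{n^*}$ with no argument at all, while you attempt to justify it --- and your justification contains a false step. The identity $W=\bigcup_{V'\mapsto W}V'$ fails: an open cover is not a partition, so a point $x\in W$ has no well-defined ``coordinates''; the element $V''\in\mathscr{V}'$ that covers $x$ may prescribe, at the places of $S$, members of $\mathscr{U}$ different from those defining $W$, so $x$ need not lie in any $V'$ projecting to $W$. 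Consequently $\sup_{x\in W}\exp\left(\sum_{i\in S}f(T^ix)\right)$ can strictly exceed $\max_{V'\mapsto W}\sup_{x\in V'}\exp\left(\sum_{i\in S}f(T^ix)\right)$, and the collapse of the $\mathscr{W}$-sum onto the $p_{n^*}$-sum over $\mathscr{V}'$ does not go through. Note that for the separated-set quantities the analogous monotonicity is immediate when $f\ge 0$ (every $(S,\varepsilon)$-separated set is $(n^*,\varepsilon)$-separated and the summand only grows), but for the cover quantities even the case $f\ge 0$ needs a different mechanism, since the supremum over the enlarged set $W(V')\supseteq V'$ cannot be attributed to any single covering element as you do.

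The obstacle you flag at the end is genuine, and it cannot be absorbed by normalizing the potential: for signed $f$ the comparison $p_S\le p_{n^*}$, and in fact the proposition's second inequality itself, fail. Take $X$ a single fixed point and $f\equiv -1$, so that $\log p_S(T,f,\mathscr{U})=-|S|$. Every system of coefficients satisfies $c_{\setcomp{S}}^n=c_S^n$ and $\sum_{S\subset n^*}c_S^n=1$, whence
\be
\sum_{S\subset n^*}c_S^n|S|=\frac{1}{2}\sum_{S\subset n^*}c_S^n\left(|S|+|\setcomp{S}|\right)=\frac{n}{2},
\en
so $\Asp'(T,f,\mathscr{U})=-1/2$ while $\lim_{n\to\infty}(1/n)\log p_n(T,f,\mathscr{U})=-1$, violating the claimed inequality. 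Replacing $f$ by $f+c$ multiplies $p_S$ by $e^{c|S|}$ and $p_{n^*}$ by $e^{cn}$, so after averaging the left side shifts by $c/2$ and the right side by $c$: the normalization moves the two sides unequally and cannot restore the inequality. So your caution is well placed --- the step is not merely delicate for signed $f$, it is false there, and the same gap sits inside the paper's one-line proof; the statement should carry the hypothesis $f\ge 0$ (or a corrected right-hand side), and even under that hypothesis the cover comparison $p_S\le p_{n^*}$ still requires an argument that neither your proposal nor the paper supplies.
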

  \begin{proof}
  	Since $q_S(T,f,\mathscr{U})\le p_S(T,f,\mathscr{U})$, we have $\Asp(T,f,\mathscr{U})\le \Asp'(T,f,\mathscr{U})$. Since $p_S(T,f,\mathscr{U})\le p_n(T,f,\mathscr{U})$ for every $S\subset n^*$, we get 
  	\be
  	\Asp'(T,f,\mathscr{U})\le \limsup_{n\rightarrow\infty}\frac{1}{n}\log p_n(T,f,\mathscr{U}).
  	\en
  \end{proof}
  \begin{lemma}\label{presdissub}
  	Let $T:X\rightarrow X$ be a continuous transformation on a compact metric space, $f\in C(X,\mathbb{R})$, and $S_1,S_2\subset n^*$ disjoint. Let $S=S_1\cup S_2$. If $\mathscr{U}$ is an open cover of $X$, then 
  	\begin{equation}
  	\log p_S(T,f,\mathscr{U})\le \log p_{S_1}(T,f,\mathscr{U})+\log p_{S_2}(T,f,\mathscr{U}).
  	\end{equation}
  \end{lemma}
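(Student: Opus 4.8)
The plan is to exploit the multiplicative structure that the disjointness of $S_1$ and $S_2$ forces on both the cover and the exponential weight. Since $S=S_1\cup S_2$ with $S_1\cap S_2=\emptyset$, we have $\mathscr{U}_S=\bigvee_{i\in S}T^{-i}\mathscr{U}=\mathscr{U}_{S_1}\vee\mathscr{U}_{S_2}$, and for every $x\in X$,
\[
\exp\Bigl(\sum_{i\in S}f(T^ix)\Bigr)=\exp\Bigl(\sum_{i\in S_1}f(T^ix)\Bigr)\exp\Bigl(\sum_{i\in S_2}f(T^ix)\Bigr).
\]
This reduces the lemma to the standard fact that the pressure-type functional $p_S$ is submultiplicative under joins of covers; I would prove $p_S\le p_{S_1}\,p_{S_2}$ and then take logarithms.

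First I would fix arbitrary finite subcovers $\mathscr{V}_1$ of $\mathscr{U}_{S_1}$ and $\mathscr{V}_2$ of $\mathscr{U}_{S_2}$ and form the common refinement $\mathscr{V}_1\vee\mathscr{V}_2=\{V_1\cap V_2:V_1\in\mathscr{V}_1,\ V_2\in\mathscr{V}_2\}$, discarding empty cells. Because $\mathscr{V}_1$ and $\mathscr{V}_2$ each cover $X$ and each nonempty $V_1\cap V_2$ lies in $\mathscr{U}_{S_1}\vee\mathscr{U}_{S_2}=\mathscr{U}_S$, this refinement is a finite subcover of $\mathscr{U}_S$. Hence $p_S(T,f,\mathscr{U})$ is bounded above by the weight sum taken over $\mathscr{V}_1\vee\mathscr{V}_2$.

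Next I would estimate that weight sum cell by cell. On a cell $W=V_1\cap V_2$ the supremum of the product weight splits: for each $x\in W$ the first factor is at most $\sup_{x'\in V_1}\exp(\sum_{i\in S_1}f(T^ix'))$ and the second at most $\sup_{x''\in V_2}\exp(\sum_{i\in S_2}f(T^ix''))$, so the supremum of the product over $W$ is at most the product of these two suprema. Summing over all cells and factoring the resulting double sum gives
\[
\sum_{W\in\mathscr{V}_1\vee\mathscr{V}_2}\sup_{x\in W}\exp\Bigl(\sum_{i\in S}f(T^ix)\Bigr)\le\Bigl(\sum_{V_1\in\mathscr{V}_1}\sup_{x\in V_1}\exp\bigl(\sum_{i\in S_1}f(T^ix)\bigr)\Bigr)\Bigl(\sum_{V_2\in\mathscr{V}_2}\sup_{x\in V_2}\exp\bigl(\sum_{i\in S_2}f(T^ix)\bigr)\Bigr).
\]
Taking the infimum over $\mathscr{V}_1$ and then over $\mathscr{V}_2$ on the right yields $p_S(T,f,\mathscr{U})\le p_{S_1}(T,f,\mathscr{U})\,p_{S_2}(T,f,\mathscr{U})$, and taking logarithms gives the stated inequality.

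The only step demanding care — and the nearest thing to an obstacle — is the cell-by-cell passage from the supremum of a product to the product of suprema, together with confirming that $\mathscr{V}_1\vee\mathscr{V}_2$ is genuinely a subcover of $\mathscr{U}_S$ and that discarding empty intersections only decreases the nonnegative weight sum. None of these is serious, since all weight terms are strictly positive. Notably, I do not need the converse statement that every subcover of $\mathscr{U}_S$ arises as such a join: because the infimum defining $p_S$ ranges over all subcovers of $\mathscr{U}_S$, it is automatically controlled by this one convenient family.
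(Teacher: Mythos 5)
Your proof is correct and follows essentially the same route as the paper's: form the join $\mathscr{V}_1\vee\mathscr{V}_2$ of arbitrary finite subcovers of $\mathscr{U}_{S_1}$ and $\mathscr{U}_{S_2}$, note it is a finite subcover of $\mathscr{U}_S$, bound the supremum of the product weight on each cell by the product of the suprema, and factor the resulting sum to get $p_S(T,f,\mathscr{U})\le p_{S_1}(T,f,\mathscr{U})\,p_{S_2}(T,f,\mathscr{U})$ before taking logarithms. Your additional remarks (discarding empty cells, positivity of the weights, not needing every subcover of $\mathscr{U}_S$ to arise as a join) only make explicit what the paper leaves implicit.
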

  \begin{proof}
  	First we show 
  	\be
  	p_S(T,f,\mathscr{U})\le p_{S_1}(T,f,\mathscr{U})\cdot p_{S_2}(T,f,\mathscr{U}).
  	\en
  	For each finite open subcover $\mathscr{V}_1$ of $\mathscr{U}_{S_1}$ and $\mathscr{V}_2$ of $\mathscr{U}_{S_2}$, $\mathscr{V}_1\vee \mathscr{V}_2$ is a finite subcover of $\mathscr{U}_S$ and
  	\be
  	\sum_{A\in\mathscr{V}_1\vee\mathscr{V}_2}\sup_{x\in A}\exp\left(\sum_{i\in S}f(T^ix)\right)\le \sum_{B\in\mathscr{V}_1}\sup_{x\in B}\exp\left(\sum_{i\in S_1}f(T^ix)\right)\cdot\sum_{C\in\mathscr{V}_2}\sup_{x\in C}\exp\left(\sum_{i\in S_2}f(T^ix)\right).
  	\en
  	This shows $p_S(T,f,\mathscr{U})\le p_{S_1}(T,f,\mathscr{U})\cdot p_{S_2}(T,f,\mathscr{U})$ which implies $\log p_S(T,f,\mathscr{U})\le \log p_{S_1}(T,f,\mathscr{U})+\log p_{S_2}(T,f,\mathscr{U})$.
  \end{proof}
  \begin{proposition}
  	Let $\lambda_c$ be a symmetric probability measure on $[0,1]$. Then the limit in Equation~\ref{eq:limprime} exists for $c_S^n=\int_{[0,1]}x^{|S|}(1-x)^{n-|S|}\lambda_c(dx)$ and
  	\be
  	\lim_{n\rightarrow\infty}\frac{1}{n}\sum_{S\subset n^*}c_S^n\log p_S(T,f,\mathscr{U})=\inf_n\frac{1}{n}\sum_{S\subset n^*}c_S^n\log p_S(T,f,\mathscr{U}).
  	\en
  \end{proposition}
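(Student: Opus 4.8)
The plan is to imitate the proof of Theorem~\ref{syscoeffsubadd} line for line, replacing $\log N(\mathscr U_S)$ everywhere by $\log p_S(T,f,\mathscr U)$. First I would set
\[
b_n := \sum_{S\subset n^*} c_S^n \log p_S(T,f,\mathscr U).
\]
Once $(b_n)$ is shown to be subadditive, $b_{n+m}\le b_n+b_m$ for all $n,m\in\mathbb N$, Fekete's Lemma \cite{fekete1923verteilung} delivers at once both the existence of $\lim_{n\to\infty} b_n/n$ and its equality with $\inf_n b_n/n$, which is exactly the claim. The infimum is finite: since $X$ is compact and $f\in C(X,\mathbb R)$, every factor $\exp(\sum_{i\in S} f(T^i x))$ lies in $[\exp(|S|\min f),\exp(|S|\max f)]$, so $\log p_S(T,f,\mathscr U)$ is squeezed between $|S|\min f$ and $|S|(\max f+\log|\mathscr U|)$; since the weights satisfy $\sum_{S\subset n^*}c_S^n|S|=n/2$ by symmetry of $\lambda_c$, the ratios $b_n/n$ stay within a fixed bounded band.

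For the subadditivity, fix $n,m$ and, exactly as in Theorem~\ref{syscoeffsubadd}, decompose each $S\subset(n+m)^*$ into $U(S)=S\cap n^*$ and $V(S)=S\cap[(n+m)^*\setminus n^*]$, two disjoint sets with union $S$. Lemma~\ref{presdissub} applies to these disjoint subsets and gives
\[
\log p_S(T,f,\mathscr U)\le \log p_{U(S)}(T,f,\mathscr U)+\log p_{V(S)}(T,f,\mathscr U),
\]
so $b_{n+m}$ is bounded by the two corresponding sums. Inserting the integral representation $c_S^{n+m}=\int_{[0,1]} x^{|S|}(1-x)^{n+m-|S|}\,\lambda_c(dx)$ from Theorem~\ref{coeffprop}, interchanging sum and integral, and collecting the first sum by its $U$-part, the binomial identity $\sum_{V'\subset[(n+m)^*\setminus n^*]} x^{|V'|}(1-x)^{m-|V'|}=1$ collapses the $V'$-sum and recovers the weight $x^{|U|}(1-x)^{n-|U|}$, so that
\[
\sum_{S\subset(n+m)^*} c_S^{n+m}\log p_{U(S)}(T,f,\mathscr U)=\sum_{U\subset n^*} c_U^n\log p_U(T,f,\mathscr U)=b_n.
\]
The analogous grouping of the second sum by $V(S)$ produces $\sum_{V} c_V^m\log p_V(T,f,\mathscr U)$ summed over $V\subset[(n+m)^*\setminus n^*]$, which after the reindexing $V=W+n$, $W\subset m^*$, equals $b_m$; together these give $b_{n+m}\le b_n+b_m$.

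The one step deserving care, and the main obstacle, is this last reindexing, which needs the translation invariance
\[
p_{W+n}(T,f,\mathscr U)=p_W(T,f,\mathscr U)\qquad(W\subset m^*),
\]
the pressure analogue of the identity $N(\mathscr U_{W+n})=N(\mathscr U_W)$ used in Theorem~\ref{syscoeffsubadd}. Here the potential $f$ could a priori interfere, since $\sum_{i\in W+n} f(T^i x)=\sum_{j\in W} f(T^j(T^n x))$ is read along the shifted orbit. The resolution is that $\mathscr U_{W+n}=T^{-n}\mathscr U_W$, and, provided $T$ is surjective, the substitution $y=T^n x$ induces a weight- and cardinality-preserving bijection between finite subcovers of $\mathscr U_{W+n}$ and those of $\mathscr U_W$ under which $\sup_{x\in T^{-n}V'}\exp(\sum_{j\in W} f(T^j(T^n x)))=\sup_{y\in V'}\exp(\sum_{j\in W} f(T^j y))$; passing to the infimum over subcovers then yields $p_{W+n}=p_W$. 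This surjectivity holds in all the examples treated above, in particular for subshifts. With the identity in hand the two sums collapse exactly to $b_n$ and $b_m$, subadditivity follows, and Fekete's Lemma completes the argument.
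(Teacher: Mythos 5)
Your proposal is correct and is essentially the argument the paper intends: the paper in fact states this proposition \emph{without proof}, placing it immediately after Lemma~\ref{presdissub}, and the expected justification is exactly your transplantation of the Theorem~\ref{syscoeffsubadd} computation — decompose $S\subset(n+m)^*$ into $U(S)$ and $V(S)$, bound $\log p_S$ by Lemma~\ref{presdissub}, insert the integral representation of $c_S^{n+m}$, collapse the complementary sum by the binomial identity, and finish with Fekete's Lemma; your boundedness remark via $\sum_{S\subset n^*}c_S^n|S|=n/2$ is a correct (if optional) supplement.

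One clarification on the step you flag as the main obstacle: the surjectivity hypothesis is unnecessary, because subadditivity only requires the inequality $\log p_{W+n}(T,f,\mathscr{U})\le \log p_W(T,f,\mathscr{U})$, not equality. For any continuous $T$, if $\mathscr{V}$ is a finite subcover of $\mathscr{U}_W$, then $\{T^{-n}V':V'\in\mathscr{V}\}$ is a finite subcover of $\mathscr{U}_{W+n}=T^{-n}\mathscr{U}_W$ (pullbacks of covers are covers since $T^{-n}X=X$), and since $T^n(T^{-n}V')=V'\cap T^nX\subset V'$ one has $\sup_{x\in T^{-n}V'}\exp\left(\sum_{j\in W}f(T^j(T^nx))\right)\le \sup_{y\in V'}\exp\left(\sum_{j\in W}f(T^jy)\right)$; taking infima over subcovers gives the needed inequality. (The same observation applies to the step $N(\mathscr{U}_{W+n})=N(\mathscr{U}_W)$ in Theorem~\ref{syscoeffsubadd}, where again only ``$\le$'' is used.) With this weakening, your proof covers the full generality of the statement — an arbitrary continuous map on a compact metric space — rather than only the surjective case.
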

  
  The following theorem is needed to show, in Theorem \ref{asppresssupthm}, Corollary \ref{asppresssupcor}, Theorem \ref{aspepslimthm}, and Corollary \ref{ascepslimcor}, following the plan in \cite[pp. 209--212]{waltersergodic}, that taking the limit on $\varepsilon$ yields the same result as taking the supremum over open covers, namely the ordinary topological pressure $P(T,f)$ or ordinary topological entropy $\htop$. The proof follows the proof in \cite[Theorem 9.2, p. 210]{waltersergodic}. We denote the diameter of a cover by $\diam(\mathscr{U})=\sup_{U\in\mathscr{U}}\diam(U)$.
  
  \begin{theorem}\label{qQPpthm}
  	Let $T:X\rightarrow X$ be a continuous transformation on a compact metric space. Let $f\in C(X,\mathbb{R})$ and $S\subset n^*$.
  	\begin{enumerate}[{\normalfont (i)}]
  		\item If $\mathscr{U}$ is an open cover of $X$ with Lebesgue number $\delta$, then $q_S(T,f,\mathscr{U})\le Q_S(T,f,\delta/2)\le P_S(T,f,\delta/2)$.\\
  		\item If $\varepsilon>0$ and $\mathscr{U}$ is an open cover of $X$ such that $\diam(\mathscr{U})\le \varepsilon$, then $Q_S(T,f,\varepsilon)\le P_S(T,f,\varepsilon)\le p_S(T,f,\mathscr{U})$.
  		
  	\end{enumerate}
  \end{theorem}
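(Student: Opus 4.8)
The plan is to prove only the two \emph{outer} inequalities in each part, since the middle inequalities $Q_S(T,f,\delta/2)\le P_S(T,f,\delta/2)$ in (i) and $Q_S(T,f,\varepsilon)\le P_S(T,f,\varepsilon)$ in (ii) are precisely Proposition~\ref{QPprop}. Throughout I will use that a typical member of $\mathscr{U}_S=\bigvee_{i\in S}T^{-i}\mathscr{U}$ has the form $\bigcap_{i\in S}T^{-i}U_i$ with each $U_i\in\mathscr{U}$, and that the weight attached to a cell $V$ is $\inf_{x\in V}$ (for $q_S$) or $\sup_{x\in V}$ (for $p_S$) of the positive quantity $\exp(\sum_{i\in S}f(T^ix))$. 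This follows the scheme of \cite[Theorem~9.2, p.~210]{waltersergodic}.

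For part (i), I would start from an arbitrary $(S,\delta/2)$ spanning set $F$ and manufacture a finite subcover of $\mathscr{U}_S$ indexed by $F$. For each $y\in F$ and each $i\in S$, the closed ball $\bar{B}(T^iy,\delta/2)$ has diameter at most $\delta$, so by the defining property of the Lebesgue number $\delta$ it lies in some $U_i^y\in\mathscr{U}$; set $V_y=\bigcap_{i\in S}T^{-i}U_i^y\in\mathscr{U}_S$. The spanning property says every $x\in X$ lies within $\delta/2$ of some $y\in F$ at all the times in $S$, so $T^ix\in\bar{B}(T^iy,\delta/2)\subseteq U_i^y$ and hence $x\in V_y$; thus $\mathscr{V}=\{V_y:y\in F\}$ covers $X$. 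Because $y\in V_y$ as well (each $T^iy$ lies in $\bar{B}(T^iy,\delta/2)\subseteq U_i^y$), I get $\inf_{x\in V_y}\exp(\sum_{i\in S}f(T^ix))\le\exp(\sum_{i\in S}f(T^iy))$. Summing over the subcover (coincidences among the $V_y$ only shrink the sum) and then taking the infimum over all spanning sets $F$ yields $q_S(T,f,\mathscr{U})\le Q_S(T,f,\delta/2)$.

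For part (ii) I would argue dually. Fix an $(S,\varepsilon)$ separated set $E$ and any finite subcover $\mathscr{V}$ of $\mathscr{U}_S$. The key observation is that each $V=\bigcap_{i\in S}T^{-i}U_i\in\mathscr{V}$ meets $E$ in at most one point: if $x,y\in E\cap V$, then $T^ix,T^iy\in U_i$ for every $i\in S$, so $d(T^ix,T^iy)\le\diam(U_i)\le\diam(\mathscr{U})\le\varepsilon$ for all $i\in S$, contradicting separation unless $x=y$. This produces an injection $E\hookrightarrow\mathscr{V}$ sending each point to a cell containing it, and since $\exp(\sum_{i\in S}f(T^ix))\le\sup_{z\in V}\exp(\sum_{i\in S}f(T^iz))$ whenever $x\in V$ and all terms are positive, I obtain $\sum_{x\in E}\exp(\sum_{i\in S}f(T^ix))\le\sum_{V\in\mathscr{V}}\sup_{z\in V}\exp(\sum_{i\in S}f(T^iz))$. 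Taking the supremum over $E$ and the infimum over $\mathscr{V}$ gives $P_S(T,f,\varepsilon)\le p_S(T,f,\mathscr{U})$.

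The covering and separation bookkeeping is routine; the one place needing genuine care is matching the $\inf/\sup$ weights to the correct endpoints. In (i) I must exploit that the center $y$ actually belongs to its own cell $V_y$, so the infimum defining $q_S$ is controlled by the value at $y$; in (ii) I must pair the supremum defining $p_S$ with the at-most-one-point property, so that the separated-set sum is dominated term by term. These two pairings are exactly what make the outer inequalities close, and I expect the (mild) technicality of whether the Lebesgue number property applies to sets of diameter $\le\delta$ versus $<\delta$ to be the only delicate point, handled as in \cite{waltersergodic}.
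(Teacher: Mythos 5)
Your proposal is correct and takes essentially the same approach as the paper, whose ``proof'' of this theorem is simply the citation to \cite[Theorem 9.2, p.~210]{waltersergodic}: your two outer arguments (building a subcover of $\mathscr{U}_S$ from an $(S,\delta/2)$ spanning set via the Lebesgue number, with the weight matched at the center $y\in V_y$, and the at-most-one-point intersection of an $(S,\varepsilon)$ separated set with each cell of a subcover of diameter $\le\varepsilon$) are exactly Walters' argument adapted from $n^*$ to $S$. Deferring the middle inequalities to Proposition~\ref{QPprop} is also precisely how the paper structures things, and your flagged $\le\delta$ versus $<\delta$ point is handled there the same way.
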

  \begin{lemma}\label{asplem}
  	Let $T: X\rightarrow X$ be a  continuous transformation on the compact metric space $(X,d)$. Let $f\in C(X,\mathbb{R})$. The following are equal to $\lim_{\varepsilon\rightarrow0^+}\Asp'_\varepsilon(T,f)$:
  	\begin{enumerate}[{\normalfont (i)}]
  		\item $\displaystyle{\lim_{\delta\rightarrow 0^+}\sup_{\mathscr U}\left\{\Asp'(T,f,\mathscr{U}):\diam(\mathscr{U})\le \delta\right\}}$.\\
  		\item $\displaystyle{\lim_{\delta \to 0^+}\sup_{\mathscr U}\{\liminf_{n\rightarrow\infty}\frac{1}{n}\sum_{S\subset n^*} c_S^n\log q_S(T,f,\mathscr{U}): \diam(\mathscr U) \leq \delta\}}$.\\
  		\item $\displaystyle{\lim_{\delta \to 0^+}\sup_{\mathscr U}\{\limsup_{n\rightarrow\infty}\frac{1}{n}\sum_{S\subset n^*} c_S^n\log q_S(T,f,\mathscr{U}): \diam(\mathscr U) \leq \delta\}}=
  		\lim_{\delta \to 0^+} \sup_{\mathscr U}\{\Asp(T,f,\mathscr U): \diam(\mathscr U) \leq \delta\}$.\label{limsupaspprop}\\
  		\item $\sup_{\mathscr{U}}\Asp (T,f,\mathscr{U})$.
  	\end{enumerate}
  \end{lemma}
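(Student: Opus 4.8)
The plan is to follow the scheme of \cite[Theorem~9.4, pp.~210--212]{waltersergodic}, running a cycle of inequalities that compare all four quantities to the reference value $L:=\lim_{\varepsilon\to0^+}\Asp'_\varepsilon(T,f)$. This limit exists because $\Asp'_\varepsilon$ is nondecreasing as $\varepsilon\downarrow0$ and is bounded above by $P(T,f)$ (Corollary~\ref{asppressupboundcor}). For a fixed cover $\mathscr U$ I would abbreviate by $\alpha(\mathscr U)$, $\beta(\mathscr U)=\Asp(T,f,\mathscr U)$, and $\gamma(\mathscr U)=\Asp'(T,f,\mathscr U)$ the $\liminf_n$ and $\limsup_n$ of $\tfrac1n\sum_{S\subset n^*}c_S^n\log q_S$ and the genuine $\lim_n$ of $\tfrac1n\sum_{S\subset n^*}c_S^n\log p_S$, respectively; that $\gamma(\mathscr U)$ is a true limit is guaranteed by the Proposition asserting that the limit in Equation~\ref{eq:limprime} exists. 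Since $q_S\le p_S$ for every $S$, one has $\alpha(\mathscr U)\le\beta(\mathscr U)\le\gamma(\mathscr U)$ pointwise in $\mathscr U$; taking suprema over $\{\diam\mathscr U\le\delta\}$ and letting $\delta\to0$ yields $\mathrm{(ii)}\le\mathrm{(iii)}\le\mathrm{(i)}$, while $\sup_{\diam\le\delta}\beta\le\sup_{\mathscr U}\beta$ yields $\mathrm{(iii)}\le\mathrm{(iv)}$.

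Next I would extract two one-sided bounds from Theorem~\ref{qQPpthm}. Its part~(i), applied with a Lebesgue number $\delta$ for $\mathscr U$, gives $q_S(T,f,\mathscr U)\le P_S(T,f,\delta/2)$ for all $S$, hence $\beta(\mathscr U)\le\Asp'_{\delta/2}(T,f)\le L$ for every cover $\mathscr U$; taking the supremum over $\mathscr U$ gives $\mathrm{(iv)}\le L$, and therefore $\mathrm{(ii)},\mathrm{(iii)}\le L$ as well. Its part~(ii), applied to a cover with $\diam\mathscr U\le\varepsilon$, gives $p_S(T,f,\mathscr U)\ge P_S(T,f,\varepsilon)$, hence $\gamma(\mathscr U)\ge\Asp'_\varepsilon(T,f)$; taking the supremum over such covers and then $\varepsilon\to0$ gives $\mathrm{(i)}\ge L$.

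The decisive step bridges the $p_S$ (supremum over atoms) and $q_S$ (infimum over atoms) definitions for covers of small diameter, and this is where the real work lies. Given $\eta>0$, uniform continuity of $f$ on the compact space $X$ provides $\delta_\eta>0$ such that $d(a,b)<\delta_\eta$ forces $|f(a)-f(b)|<\eta$. If $\diam\mathscr U<\delta_\eta$, then any two points in a common atom of $\mathscr U_S$ have their images under $T^{s_i}$, $i\in S$, lying in common members of $\mathscr U$, so the sums $\sum_{i\in S}f(T^{s_i}\,\cdot\,)$ oscillate by at most $|S|\eta$ on that atom, giving $p_S(T,f,\mathscr U)\le e^{|S|\eta}q_S(T,f,\mathscr U)$. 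Dividing by $n$, averaging, and using $\tfrac1n\sum_{S\subset n^*}c_S^n|S|=\int_{[0,1]}x\,\lambda_c(dx)=\tfrac12$ (by symmetry of $\lambda_c$) then yields $\gamma(\mathscr U)\le\alpha(\mathscr U)+\eta$ whenever $\diam\mathscr U<\delta_\eta$. Consequently, on $\{\diam\le\delta\}$ with $\delta<\delta_\eta$ the three functionals obey $\alpha\le\beta\le\gamma\le\alpha+\eta$, so their suprema agree to within $\eta$; letting $\delta\to0$ and then $\eta\to0$ gives $\mathrm{(i)}=\mathrm{(ii)}=\mathrm{(iii)}$.

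Finally I would collapse the chain: the common value of (i), (ii), (iii) is $\ge L$ (from $\mathrm{(i)}\ge L$) and $\le L$ (from $\mathrm{(iii)}\le L$), hence equals $L$; then $\mathrm{(iii)}\le\mathrm{(iv)}\le L=\mathrm{(iii)}$ forces $\mathrm{(iv)}=L$ as well, so all four quantities equal $\lim_{\varepsilon\to0^+}\Asp'_\varepsilon(T,f)$. I expect the main obstacle to be exactly the uniform-continuity bridge of the third paragraph, together with the observation that the $|S|$-weighted coefficient average stays bounded (indeed equals $1/2$): this is what lets the supremum-type quantity (i) and the infimum-type quantities (iii), (iv) coalesce, and it is the analogue of controlling the oscillation of $f$ over Bowen balls in the classical topological-pressure argument.
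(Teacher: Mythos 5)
Your proposal is correct and follows essentially the same route as the paper's proof: both rest on Theorem~\ref{qQPpthm} (Lebesgue-number and small-diameter comparisons of $q_S$, $Q_S$, $P_S$, $p_S$), the oscillation bound $p_S\le e^{|S|\uptau_{\mathscr U}}q_S$ (your uniform-continuity bridge is exactly the paper's $\uptau_{\mathscr U}$ estimate), and the fact that $\sum_{S\subset n^*}c_S^n|S|=n/2$. Your reorganization into a single cycle of inequalities through $L$ is only cosmetically different from the paper's item-by-item argument, and your appeal to the existence of the limit defining $\Asp'(T,f,\mathscr U)$ is the same (necessary) use of the subadditivity proposition that the paper makes implicitly.
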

  \begin{proof}
  	(i) Given $\delta>0$, for any open cover $\mathscr{U}$ of $X$ with $\diam(\mathscr{U})\le\delta$, $P_S(T,f,\delta)\le p_S(T,f,\mathscr{U})$ by Theorem~\ref{qQPpthm} (ii). 
  	Thus $\Asp'_\delta (T,f)\le\sup_{\mathscr{U}}\{\Asp'(T,f,\mathscr{U}):\diam(\mathscr{U})\le \delta\}$. 
  	
  	Conversely, let $\mathscr{U}$ be an open cover with Lebesgue number $\delta$. 
  	Fix $n$ and $S \subset n^*$.
  	Then by Theorem~\ref{qQPpthm} (i), $q_S(T,f,\mathscr{U})\le P_S(T,f,\delta/2)$. If
  	\be
  	\uptau_\mathscr{U}=\sup\{|f(x)-f(y)|:d(x,y)\le \diam(\mathscr{U})\}, \text{ then }
  	\en
  	\be
  	p_S(T,f,\mathscr{U})\le e^{|S|\uptau_\mathscr{U}}q_S(T,f,\mathscr{U}).
  	\en
  	Hence,
  	\be
  	p_S(T,f,\mathscr{U})\le e^{|S|\uptau_\mathscr{U}}P_S(T,f,\delta/2).
  	\en
  	This implies 
  	\be
  	\log p_S(T,f,\mathscr{U})\le |S|\uptau_\mathscr{U}+\log P_S(T,f,\delta/2) \leq |S| \uptau_\mathscr{U} + \lim_{\delta \to 0^+} \log P_S (T,f, \delta/2) ,
  	\en
  	and therefore, since the average $S$ (using general coefficienmts $c_S^n$) has size $n/2$,
  	\be
  	\Asp'(T,f,\mathscr{U})\le \frac{\uptau_\mathscr{U}}{2}+\lim_{\varepsilon\rightarrow 0^+}\Asp'_\varepsilon(T,f).
  	\en
  	Since 
  	$\sup_\mathscr{U}\uptau_\mathscr{U} \to 0$ as $\diam(\mathscr U) \to 0$, 
  	\be
  	\lim_{\delta\rightarrow 0^+}\sup_{\mathscr{U}}\{\Asp'(T,f,\mathscr{U}):\diam(\mathscr{U})\le \delta\}\le\lim_{\varepsilon\rightarrow 0^+}\Asp'_\varepsilon(T,f).
  	\en

  	(ii) and (iii)  We know $q_S(T,f,\mathscr{U})\le p_S(T,f,\mathscr{U})\le e^{|S|\uptau_\mathscr{U}}q_S(T,f,\mathscr{U})$ for all open covers $\mathscr{U}$ of $X$, so
  	\be
  	e^{-|S|\uptau_\mathscr{U}}p_S(T,f,\mathscr{U})\le q_S(T,f,\mathscr{U})\le p_S(T,f,\mathscr{U}).
  	\en
  	Therefore,
  	\be\label{aspsuplem}\
  	\begin{aligned}
  		-\frac{\uptau_\mathscr{U}}{2}+\Asp'(T,f,\mathscr{U})&\le \liminf_{n\rightarrow\infty}\frac{1}{n}\sum_{S\subset n^*} c_S^n\log q_S(T,f,\mathscr{U})\\
  		&\le \limsup_{n\rightarrow\infty}\frac{1}{n}\sum_{S\subset n^*} c_S^n\log q_S(T,f,\mathscr{U})\\
  		&\le \Asp'(T,f,\mathscr{U}).
  	\end{aligned}
  	\en
  	Thus, using (i) above,
  	\be
  	\begin{aligned}
  		\lim_{\varepsilon\rightarrow 0^+}\Asp'_\varepsilon (T,f)&=\lim_{\delta \to 0^+} \sup_{\mathscr U}\{\Asp'(T,f,\mathscr U): \diam(\mathscr U) \leq \delta\}\\	 
  		&=\lim_{\delta \to 0^+}\sup_{\mathscr U}\{\liminf_{n\rightarrow\infty}\frac{1}{n}\sum_{S\subset n^*} c_S^n\log q_S(T,f,\mathscr{U}): \diam(\mathscr U) \leq \delta\}\\
  		&=\lim_{\delta \to 0^+}\sup_{\mathscr U}\{\limsup_{n\rightarrow\infty}\frac{1}{n}\sum_{S\subset n^*} c_S^n\log q_S(T,f,\mathscr{U}): \diam(\mathscr U) \leq \delta\}.
  	\end{aligned}
  	\en

  	(iv) Let $\mathscr{U}$ be an open cover of $X$ with Lebesgue number $2\varepsilon$. Then $q_S(T,f,\mathscr{U})\le Q_S(T,f,\varepsilon)$ by Theorem~\ref{qQPpthm} (i), so 
  	\begin{align*}
  	\Asp(T,f,\mathscr{U})&\le\Asp_\varepsilon(T,f)\le \Asp'_\varepsilon(T,f)\\
  	&\le\lim_{\varepsilon\rightarrow0^+}\Asp'_\varepsilon(T,f)\\
  	&=\lim_{\delta\rightarrow 0^+}\sup_{\mathscr U}\left\{\Asp(T,f,\mathscr{U}):\diam(\mathscr{U})\le \delta\right\} \quad\text{(by (iii) above)}\\
  	&\le \sup_{\mathscr{U}}\Asp(T,f,\mathscr{U}).
  	\end{align*}
  	
  \end{proof}
  
  The next theorem gives a relationship between average sample pressure and topological pressure when we fix $c_S^n=2^{-n}$, similar to Theorem~\ref{asctoentthm}, which gives a relationship between average sample complexity and topological entropy.

  \begin{theorem}\label{asppresssupthm}
  	Let $T: X\rightarrow X$ be a  continuous transformation on the compact metric space $X$. Let $f\in C(X,\mathbb{R})$ and $S\subset n^*$. For the fixed system of coefficients $c_S^n=2^{-n}$ for all $n\in\mathbb{N}$ and $S\subset n^*$,
  	\begin{equation}
  	\lim_{\varepsilon\rightarrow0^+}\Asp_\varepsilon'(T,f)=P(T,f).
  	\end{equation}
  \end{theorem}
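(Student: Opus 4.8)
The plan is to follow the scheme of the proof of Theorem~\ref{asctoentthm}, with the counting function $N(\cdot)$ replaced by the cover pressure functionals $p_S$ and $q_S$, and to feed the outcome into the reduction already carried out in Lemma~\ref{asplem}. The inequality $\lim_{\varepsilon\to0^+}\Asp'_\varepsilon(T,f)\le P(T,f)$ is immediate from Corollary~\ref{asppressupboundcor}, which gives $\Asp'_\varepsilon(T,f)\le P(T,f)$ for every $\varepsilon>0$. For the reverse inequality I would first apply Lemma~\ref{asplem}(iv), which identifies $\lim_{\varepsilon\to0^+}\Asp'_\varepsilon(T,f)$ with $\sup_{\mathscr U}\Asp(T,f,\mathscr U)$; thus it is enough to show that for every $\eta>0$ there is an open cover $\mathscr W$ with $\Asp(T,f,\mathscr W)\ge P(T,f)-\eta$.

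To build $\mathscr W$, fix an open cover $\mathscr U$ for which $\lim_{n\to\infty}\frac1n\log p_{n^*}(T,f,\mathscr U)\ge P(T,f)-\eta/2$, which exists because this limit tends to the pressure as $\diam(\mathscr U)\to0$, and set $\mathscr W=\mathscr U_{k^*}$ for $k$ to be chosen large. The cover identity $(\mathscr U_{k^*})_S=\mathscr U_{S+k^*}$, the pressure analogue of Lemma~\ref{Nsetstoplemma}(1), lets me work with the windows $S+k^*$. I would then run the good-sets dichotomy of Lemma~\ref{goodsetlemma}: for a fixed $0<\varepsilon<1$ and $k$ even with $k>2/\varepsilon$, all but a $2^{-n}$-negligible family $\mathcal B$ of subsets $S\subset n^*$ satisfy $\card(S+k^*)\ge n(1-\varepsilon)$, so that $E=n^*\setminus(S+k^*)$ has at most $n\varepsilon$ elements. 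Using the subadditivity of $\log p_S$ in $S$ (Lemma~\ref{presdissub}) I would bound $\log p_{n^*}(T,f,\mathscr U)\le \log p_{S+k^*}(T,f,\mathscr U)+\log p_E(T,f,\mathscr U)$ with $\log p_E(T,f,\mathscr U)\le |E|\,(\log|\mathscr U|+\|f\|_\infty)\le n\varepsilon(\log|\mathscr U|+\|f\|_\infty)$, so that over the good sets $\log p_{S+k^*}$ stays within $O(n\varepsilon)$ of $\log p_{n^*}$. Averaging over $S\in\setcomp{\mathcal B}$ with the weights $2^{-n}$, letting $n\to\infty$ and then $\varepsilon\to0$ and $k\to\infty$, and discarding the negligible contribution of $\mathcal B$, should recover $\lim_n\frac1n\log p_{n^*}(T,f,\mathscr U)$ in the lower bound, giving $\Asp(T,f,\mathscr W)\ge P(T,f)-\eta$ and hence, with Corollary~\ref{asppressupboundcor}, the asserted equality.

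The step I expect to be the main obstacle is the bookkeeping of the potential. In $q_S(T,f,\mathscr U_{k^*})$ and $p_S(T,f,\mathscr U_{k^*})$ the Birkhoff sum $\sum_{i\in S}f(T^ix)$ runs only over the indices of $S$, whereas the quantity $p_{S+k^*}(T,f,\mathscr U)$ produced by the cover identity carries the Birkhoff sum over all of $S+k^*$; the two differ by the contribution of the indices in $(S+k^*)\setminus S$, a set whose size is typically of order $n$ rather than $n\varepsilon$. Controlling this discrepancy is the heart of the matter. I would attempt to absorb it by refining $\mathscr U$ first, so that the modulus of continuity $\uptau_{\mathscr U}=\sup\{|f(x)-f(y)|:d(x,y)\le\diam(\mathscr U)\}$ introduced in the proof of Lemma~\ref{asplem} is small, and by using the relations between $p_S$, $q_S$, $P_S$, and $Q_S$ of Theorem~\ref{qQPpthm} to keep the cover and spanning-set formulations aligned as $\diam(\mathscr U)\to0$; the delicate point is that $\uptau_{\mathscr U}$ bounds the per-cell oscillation of $f$ but not the number of unobserved indices, so the argument must be organized so that the potential summed over $S$ alone, after averaging against the exchangeable weights $2^{-n}$, reconstitutes the full pressure rather than a proper fraction of it. This matching of the potential contribution to the averaging is exactly where I expect the proof to require the most care.
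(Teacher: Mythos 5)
Your skeleton coincides with the paper's own proof: the upper bound is exactly Corollary~\ref{asppressupboundcor}; the lower bound runs through the bad-set dichotomy of Lemma~\ref{goodsetlemma}, the subadditivity of $\log p_S$ (Lemma~\ref{presdissub}) applied to $n^*=(S+k^*)\cup E$, the estimate $\log p_E(T,f,\mathscr{U})\le|E|\,(\|f\|_\infty+\log|\mathscr{U}|)\le n\varepsilon(\|f\|_\infty+\log|\mathscr{U}|)$, averaging over $S\notin\mathcal{B}$, and then a reduction via Lemma~\ref{asplem} (the paper uses part (i) together with $\diam(\mathscr{U}_{k^*})\le\diam(\mathscr{U})$ and Theorem 9.4 of Walters rather than part (iv), but this difference is cosmetic). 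One small omission in your averaging step: since $\log p_S$ need not be nonnegative when $f$ takes negative values, "discarding the negligible contribution of $\mathcal{B}$" requires the explicit bound
\be
\Bigl|\frac{1}{2^n}\sum_{S\in\mathcal{B}}\frac{\log p_S(T,f,\mathscr{U}_{k^*})}{n}\Bigr|\le(\|f\|_\infty+k\log|\mathscr{U}|)\frac{|\mathcal{B}|}{2^n}\rightarrow 0,
\en
which the paper records as Equation~\ref{eq:asp2}; this is routine but not automatic.

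The genuine divergence is the obstacle you flag at the end, and your diagnosis of it is sharper than your proposed remedy. The paper closes that step by writing, with no correction term,
\be
\frac{1}{n}\log p_S(T,f,\mathscr{U}_{k^*})=\frac{1}{n}\log p_{S+k^*}(T,f,\mathscr{U}),
\en
that is, it identifies the pressure of the refined cover sampled along $S$ with the pressure of $\mathscr{U}$ sampled along the full window $S+k^*$, potential included: the cover identity $(\mathscr{U}_{k^*})_S=\mathscr{U}_{S+k^*}$ of Lemma~\ref{Nsetstoplemma} is invoked as if it carried the Birkhoff sum along with it. As you correctly observe, under the literal definitions the exponent on the left is $\sum_{i\in S}f(T^ix)$ while on the right it is $\sum_{i\in S+k^*}f(T^ix)$, and for typical $S$ (with $|S|\approx n/2$ and $|S+k^*|\ge n(1-\varepsilon)$) the discrepancy is of order $n\|f\|_\infty$, not $n\varepsilon$. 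Your proposed fix---shrinking $\diam(\mathscr{U})$ so that $\uptau_{\mathscr{U}}$ is small---cannot repair this, for the reason you yourself give: $\uptau_{\mathscr{U}}$ controls the oscillation of $f$ on the cells of a cover, not the roughly $n/2$ unobserved indices, and no refinement of the cover changes which indices the potential is summed over; indeed, averaging $\sum_{i\in S}f(T^ix)$ against the weights $2^{-n}$ produces only about half of the full Birkhoff sum, so no soft estimate can reconstitute $P(T,f)$ from the $S$-restricted potential alone. So your write-up stops short of a proof at exactly the point where the paper's argument rests on the displayed identification: either one adopts the convention that the potential attached to the refined cover $\mathscr{U}_{k^*}$ is the windowed Birkhoff sum, making that equality definitional (evidently the paper's reading), or the order-$n$ discrepancy must be bounded by a genuinely new argument, which neither your proposal nor the modulus-of-continuity device supplies. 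Your instinct that this bookkeeping is where the proof requires the most care is exactly right; it is the one step your proposal leaves open, and the one step the paper's proof settles by assertion rather than by estimate.
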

  
  \begin{proof}
  	By Corollary~\ref{asppressupboundcor} we know $\Asp'_\varepsilon(T,f)\le P(T,f)$, so it suffices to show the opposite inequality. 
  	
  	We will proceed in a similar manner as we did in the proof of Theorem~\ref{asctoentthm}. Most of the calculations in this proof can be found in the proof of the theorem either directly or by replacing $N(\mathscr{U}_S)$ in that proof by $p_S(T,f,\mathscr{U})$. For that reason, many of the details have been left out. Let $\mathscr{U}$ be an open cover of $X$. Recall that
  	\be
  	p_n(T,f,\mathscr{U})=\inf\left\{\sum_{V\in\mathscr{V}}\sup_{x\in V}\exp\left(\sum_{i=0}^{n-1}f(T^ix)\right):\mathcal{V} \text{ is a finite subcover of }\bigvee_{i=0}^{n-1} T^{-i}\mathscr{U}\right\}.
  	\en
  	Denote $P(T,f,\mathscr{U})=\lim_{n\rightarrow\infty}(1/n)\log p_n(T,f,\mathscr{U})$. Then, given $\varepsilon>0$, choose $k_0\in\mathbb{N}$ large enough that for every $k>k_0$
  	\begin{equation}
  	0 \leq \frac{1}{k}\log p_k(T,f,\mathscr{U})-P(T,f,\mathscr{U})<\varepsilon.
  	\end{equation}
  	Let $k>\max\{2k_0,2/\varepsilon\}$ and assume $k$ is even. Choose $n>k$ such that $k/2$ divides $n$ and form the family of bad sets $\mathcal{B}(n,k,\varepsilon)$ as in the statement of Lemma~\ref{goodsetlemma}. Let $S \notin \mathcal B$ and $E= n^* \setminus (S+k^*)$. Note that $|E| \leq n \varepsilon$ and
  	\be	
  	p_E(T,f,\mathscr{U}) \leq |\mathscr{U}_E|
  	e^{|E| \cdot ||f||_\infty} 
  	\leq (e^{||f||_\infty} |\mathscr{U}|)^{|E|}. 
  	\en
  	Thus if $S \notin \mathcal B$, by Lemma~\ref{presdissub} 
  	\begin{equation}\label{aspdissetseq}
  	\begin{aligned}
  	\log p_{S+k^*}(T,f,\mathscr{U})&\ge  \log p_{n^*}(T,f,\mathscr{U}) - \log p_E(T,f,\mathscr{U})\\
  	&\geq \log p_{n^*}(T,f,\mathscr{U}) - n \varepsilon( ||f||_\infty + \log |\mathscr{U}|),
  	\end{aligned}
  	\end{equation}
  	so that
  	\be
  	\frac{1}{n} \log p_S(T,f,\mathscr{U}_{k^*}) = \frac{1}{n} \log p_{S+k^*}(T,f,\mathscr{U}) \geq \frac{1}{n} \log p_{n^*}(T,f,\mathscr{U}) - \varepsilon( ||f||_\infty +\log |\mathscr{U}|).
  	\en
  	This implies (remembering that for large $n$ we have  $|\mathcal B^c|/2^n \geq 1 - \varepsilon$)
  	\be\label{eq:asp1}
  	\begin{aligned}
  		\Asp'(T,f,\mathscr{U}_{k^*})&=\limsup_{n\rightarrow\infty}\frac{1}{2^n}\sum_{S\subset n^*}\frac{\log p_S(T,f,\mathscr{U}_{k^*})}{n}\\
  		&\geq \limsup_{n \to \infty} \left[\frac{1}{2^n}\sum_{S \notin \mathcal B}[P(T,f,\mathscr{U})- \varepsilon (||f||_\infty + \log |\mathscr{U}|)] + \frac{1}{2^n} \sum_{S \in \mathcal B}  \frac{\log p_S(T,f,\mathscr{U}_{k^*})}{n}\right] \\
  		&\geq (1-\varepsilon)[P(T,f,\mathscr{U}) - \varepsilon( ||f||_\infty + \log |\mathscr{U}|)],
  	\end{aligned}
  	\en
  	since 
  	\be\label{eq:asp2}
  	\Big|\frac{1}{2^n} \sum_{S \in \mathcal B}  \frac{\log p_S(T,f,\mathscr{U}_{k^*})}{n}\Big|
  	\leq  (||f||_\infty + k \log |\mathscr{U}|) \frac{1}{2^n} \sum_{S \in \mathcal B} \frac{|S|}{n}
  	\leq (||f||_\infty + k \log |\mathscr{U}|) \frac{|\mathcal B|}{2^n} \to 0.
  	\en
  	Thus 
  	\be
  	\begin{gathered}
  	\lim_{k \to \infty} \Asp'(T,f,\mathscr{U}_{k^*}) \geq  (1-\varepsilon)[P(T,f,\mathscr{U}) - \varepsilon( ||f||_\infty + \log |\mathscr{U}|)], \quad\text{and hence}\\ 
  	\lim_{k \to \infty} \Asp'(T,f,\mathscr{U}_{k^*}) \geq P(T,f,\mathscr{U}).
  	\end{gathered}
  	\en
  	
  	Since $\diam(\mathscr{U}_{k^*}) \leq \diam(\mathscr{U})$, it follows that
  	\be\label{supaspsuppineq}
  	\sup_{\mathscr{U}}\left\{\Asp'(T,f,\mathscr{U}):\diam(\mathscr{U})\le \delta\right\}\ge
  	\sup_{\mathscr{U}}\left\{P(T,f,\mathscr{U}):\diam(\mathscr{U})\le \delta\right\}.
  	\en
  	Combining Equation \ref{supaspsuppineq}, Lemma~\ref{asplem} (i), and Theorem 9.4 in \cite{waltersergodic} we have
  	\be
  	\begin{aligned}
  		P(T,f)&\ge\lim_{\varepsilon\rightarrow0^+}\Asp'_\varepsilon(T,f)=\lim_{\delta\rightarrow0^+}\sup_{\mathscr U}\left\{\Asp'(T,f,\mathscr{U}):\diam(\mathscr{U})\le \delta\right\}\\
  		&\ge \lim_{\delta\rightarrow0^+}\sup_{\mathscr U}\left\{P(T,f,\mathscr{U}):\diam(\mathscr{U})\le \delta\right\}=P(T,f),
  	\end{aligned}
  	\en
  	so
  	\be
  	\lim_{\varepsilon\rightarrow0^+}\Asp'_\varepsilon(T,f)=P(T,f).
  	\en

  \end{proof}
  \begin{corollary}\label{asppresssupcor}
  	$\displaystyle{
  		\sup_{\mathscr{U}}\Asp(T,f,\mathscr{U})=P(T,f).}$
  \end{corollary}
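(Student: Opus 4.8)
The plan is to obtain this as an immediate consequence of the two results just proved, with no new estimates required. First I would invoke Lemma~\ref{asplem}\,(iv), which asserts that $\sup_{\mathscr U}\Asp(T,f,\mathscr U)$ is among the several quantities all equal to $\lim_{\varepsilon\to 0^+}\Asp'_\varepsilon(T,f)$. This identifies the cover-supremum we are after with the $\varepsilon\to 0$ limit of the separated-set average sample pressures.

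Next I would apply Theorem~\ref{asppresssupthm}, which for the uniform system of coefficients $c_S^n=2^{-n}$ evaluates that limit as the ordinary topological pressure, namely $\lim_{\varepsilon\to 0^+}\Asp'_\varepsilon(T,f)=P(T,f)$. Chaining the two equalities yields $\sup_{\mathscr U}\Asp(T,f,\mathscr U)=P(T,f)$, which is exactly the assertion.

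The conceptual point worth isolating is that the corollary concerns the spanning/cover version $\Asp$, built from the $q_S$ (infima of the potential over subcover elements), whereas the substantive bound in Theorem~\ref{asppresssupthm} is phrased for the separated version $\Asp'$, built from the $p_S$ (suprema). For a single fixed cover $\mathscr U$, or a single fixed $\varepsilon$, these two need not coincide, so the equality cannot simply be read off cover-by-cover. What reconciles them is precisely the chain of equalities in Lemma~\ref{asplem}: as $\diam(\mathscr U)\to 0$ (equivalently as $\varepsilon\to 0$) the oscillation bound $\uptau_{\mathscr U}\to 0$, forcing $q_S$ and $p_S$ together in the limit and collapsing the $\Asp$- and $\Asp'$-suprema to a common value.

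Consequently I do not expect any genuine obstacle to remain at this stage: the difficult work—the bad-set counting from Lemma~\ref{goodsetlemma} that underlies Theorem~\ref{asppresssupthm}, and the diameter-shrinking comparisons of Lemma~\ref{asplem}—has already been carried out, and the corollary is their direct combination. The only care needed is bookkeeping: confirming that the hypotheses ($X$ compact metric, $f\in C(X,\mathbb R)$, and $c_S^n=2^{-n}$) under which Theorem~\ref{asppresssupthm} was established are in force, so that the two cited equalities may legitimately be concatenated.
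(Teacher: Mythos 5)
Your proposal is correct and is exactly the paper's intended derivation: the corollary is stated without proof precisely because it follows by chaining Lemma~\ref{asplem}\,(iv), which identifies $\sup_{\mathscr U}\Asp(T,f,\mathscr U)$ with $\lim_{\varepsilon\to 0^+}\Asp'_\varepsilon(T,f)$, with Theorem~\ref{asppresssupthm}, which evaluates that limit as $P(T,f)$ for $c_S^n=2^{-n}$. Your remarks on the spanning/separated ($q_S$ versus $p_S$) discrepancy being resolved only in the $\diam(\mathscr U)\to 0$ limit via $\uptau_{\mathscr U}\to 0$ correctly identify why the lemma, rather than a cover-by-cover comparison, is the right tool.
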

  We note that $\sup_\mathscr{U}\Asp'(T,f,\mathscr{U})$ might be strictly larger than $P(T,f)$ (see \cite[Remark (18), p. 212]{waltersergodic}).
  \begin{theorem}\label{aspepslimthm}
  	Let $T: X\rightarrow X$ be a  continuous transformation on a compact metric space and $f\in C(X,\mathbb{R})$. For the system of coefficients $c_S^n=2^{-n}$ for all $n\in\mathbb{N}$ and $S\subset n^*$,
  	\begin{equation}
  	\lim_{\varepsilon\rightarrow0^+}\Asp_\varepsilon(T,f)= \lim_{\varepsilon\rightarrow0^+}\Asp'_\varepsilon(T,f)=P(T,f).
  	\end{equation}
  \end{theorem}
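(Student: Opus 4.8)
The plan is to exploit the fact that the companion statement $\lim_{\varepsilon\rightarrow0^+}\Asp'_\varepsilon(T,f)=P(T,f)$ has already been established in Theorem~\ref{asppresssupthm}, so that only the spanning-set quantity $\Asp_\varepsilon$ needs new attention, and even there most of the work is packaged in the comparison estimates of Theorem~\ref{qQPpthm} and the supremum identity of Corollary~\ref{asppresssupcor}. First I would dispose of the upper bound: Corollary~\ref{asppressupboundcor} gives $\Asp_\varepsilon(T,f)\le\Asp'_\varepsilon(T,f)$ for every $\varepsilon>0$, so letting $\varepsilon\rightarrow0^+$ and invoking Theorem~\ref{asppresssupthm} yields $\lim_{\varepsilon\rightarrow0^+}\Asp_\varepsilon(T,f)\le P(T,f)$, once the limit is known to exist.

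To see that the limit exists, I would note that $Q_S(T,f,\varepsilon)$ is nondecreasing as $\varepsilon$ decreases: a set that is $(S,\varepsilon)$ spanning is also $(S,\varepsilon')$ spanning for $\varepsilon'>\varepsilon$, so the infimum defining $Q_S$ is taken over a smaller family as $\varepsilon$ shrinks. Hence $\Asp_\varepsilon(T,f)$ is nondecreasing as $\varepsilon\rightarrow0^+$, and the limit is simply $\sup_{\varepsilon>0}\Asp_\varepsilon(T,f)$.

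For the matching lower bound I would pass through open covers. Fix any open cover $\mathscr{U}$ with Lebesgue number $\delta$. Theorem~\ref{qQPpthm}(i) gives $q_S(T,f,\mathscr{U})\le Q_S(T,f,\delta/2)$ for every $S\subset n^*$, and by the monotonicity just noted $Q_S(T,f,\delta/2)\le Q_S(T,f,\varepsilon)$ whenever $\varepsilon\le\delta/2$. Taking logarithms, averaging with the weights $c_S^n$, and passing to the $\limsup$ in $n$ then gives $\Asp(T,f,\mathscr{U})\le\Asp_\varepsilon(T,f)$ for all $\varepsilon\le\delta/2$; letting $\varepsilon\rightarrow0^+$ yields $\Asp(T,f,\mathscr{U})\le\lim_{\varepsilon\rightarrow0^+}\Asp_\varepsilon(T,f)$. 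Since $\mathscr{U}$ was arbitrary, taking the supremum over all open covers and applying Corollary~\ref{asppresssupcor} produces
\[
\lim_{\varepsilon\rightarrow0^+}\Asp_\varepsilon(T,f)\ge\sup_{\mathscr{U}}\Asp(T,f,\mathscr{U})=P(T,f).
\]
Combining the two inequalities gives $\lim_{\varepsilon\rightarrow0^+}\Asp_\varepsilon(T,f)=P(T,f)$, which together with Theorem~\ref{asppresssupthm} is the full assertion.

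I expect no serious obstacle: the theorem is essentially a corollary of the already-proven $\Asp'$ statement together with the sandwich estimates in Theorem~\ref{qQPpthm} and the supremum identity in Corollary~\ref{asppresssupcor}. The one point requiring genuine care is the direction of monotonicity of $Q_S$ in $\varepsilon$ and the resulting existence of $\lim_{\varepsilon\rightarrow0^+}\Asp_\varepsilon$, since without it one cannot legitimately trade the limit in $\varepsilon$ against the supremum over covers; this is precisely the step that lets the cover-level inequality $\Asp(T,f,\mathscr{U})\le\Asp_\varepsilon(T,f)$ survive the passage $\varepsilon\rightarrow0^+$ and be promoted to a supremum over $\mathscr{U}$.
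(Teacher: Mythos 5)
Your proposal is correct and follows essentially the same route as the paper: both rest on the sandwich $q_S(T,f,\mathscr{U})\le Q_S(T,f,\varepsilon)\le P_S(T,f,\varepsilon)$ from Theorem~\ref{qQPpthm}, the already-established limit $\lim_{\varepsilon\to 0^+}\Asp'_\varepsilon(T,f)=P(T,f)$ of Theorem~\ref{asppresssupthm}, and the supremum identity $\sup_{\mathscr{U}}\Asp(T,f,\mathscr{U})=P(T,f)$ of Corollary~\ref{asppresssupcor}. The only difference is presentational: where the paper closes the argument by citing Lemma~\ref{asplem}, you instead make explicit the monotonicity of $Q_S(T,f,\varepsilon)$ as $\varepsilon$ decreases (and hence of $\Asp_\varepsilon$), which is a legitimate and slightly more self-contained way to justify trading the limit in $\varepsilon$ against the supremum over covers.
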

  \begin{proof}
  	Given $\varepsilon>0$, let $\mathscr{U}_\varepsilon$ be the open cover of $X$ by balls of radius $2\varepsilon$, and let $\mathscr{V}_\varepsilon$ be the open cover of $X$ by balls of radius $\varepsilon/2$. $\mathscr{U}_\varepsilon$ has $2\varepsilon$ for a Lebesgue number, so by Theorem~\ref{qQPpthm}, for each $S\subset n^*$
  	\begin{equation}\label{qQPpcompeq}
  	q_S(T,f,\mathscr{U}_\varepsilon)\le Q_S(T,f,\varepsilon)\le P_S(T,f,\varepsilon).
  	\end{equation}
  	Combining Equation~\ref{qQPpcompeq} with Theorem~\ref{asppresssupthm} and  Lemma \ref {asplem} gives the result.
  \end{proof}
  \begin{corollary}\label{ascepslimcor}
  	If $X$ is a compact metric space and $T:X\rightarrow X$ is a continuous map, for the system of coefficients $c_S^n=2^{-n}$ for all $n\in\mathbb{N}$ and $S\subset n^*$,
  	\begin{equation}
  	\lim_{\varepsilon\rightarrow0^+}\Asc_\varepsilon(X,T)=\lim_{\varepsilon\rightarrow0^+}\Asc'_\varepsilon(X,T)=\htop(X,T).
  	\end{equation}
  \end{corollary}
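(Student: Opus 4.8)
The plan is to recognize that $\Asc_\varepsilon(X,T)$ and $\Asc'_\varepsilon(X,T)$ are precisely the average sample pressure quantities $\Asp_\varepsilon(T,f)$ and $\Asp'_\varepsilon(T,f)$ evaluated at the zero potential $f\equiv 0$, and then to invoke Theorem~\ref{aspepslimthm}. Under this identification the corollary is immediate, so the only real task is to check the specialization $f\equiv 0$ carefully; all of the genuine analytic work has already been done in the pressure setting.

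First I would compute the relevant quantities at $f\equiv 0$. In the definitions of $Q_S$ and $P_S$ (Equations~\ref{bigQs} and \ref{bigPs}) the summand $\exp\left(\sum_{i\in S}f(T^ix)\right)$ collapses to $\exp(0)=1$, so that each sum simply counts the cardinality of the spanning (resp.\ separated) set. Hence $Q_S(T,0,\varepsilon)=r(S,\varepsilon)$ and $P_S(T,0,\varepsilon)=s(S,\varepsilon)$. Substituting these equalities into the definitions of $\Asp_\varepsilon$ and $\Asp'_\varepsilon$ gives $\Asp_\varepsilon(T,0)=\Asc_\varepsilon(X,T)$ and $\Asp'_\varepsilon(T,0)=\Asc'_\varepsilon(X,T)$.

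Next I would recall the standard fact that the topological pressure of the zero potential equals the topological entropy, $P(T,0)=\htop(X,T)$ (see \cite{waltersergodic}). Applying Theorem~\ref{aspepslimthm} with $f\equiv 0$ then yields
\be
\lim_{\varepsilon\rightarrow0^+}\Asc_\varepsilon(X,T)=\lim_{\varepsilon\rightarrow0^+}\Asp_\varepsilon(T,0)=P(T,0)=\htop(X,T),
\en
and the identical chain of equalities with $\Asc'_\varepsilon$ and $\Asp'_\varepsilon$ in place of $\Asc_\varepsilon$ and $\Asp_\varepsilon$. In particular the coincidence of the spanning-set and separated-set limits is inherited directly from the pressure theorem, completing the argument.

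I expect no substantive obstacle here: the entire content reduces to the bookkeeping verification that $f\equiv 0$ turns the weighted exponential sums into cardinality counts. The hard part---the bad-set counting of Lemma~\ref{goodsetlemma} adapted to $(S,\varepsilon)$ spanning and separated sets, together with the passage from suprema over covers to the limit in $\varepsilon$ via Lemma~\ref{asplem}---is already encapsulated in Theorem~\ref{aspepslimthm}, so this corollary is purely a specialization.
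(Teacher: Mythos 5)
Your proposal is correct and is precisely the paper's (implicit) argument: the corollary is stated immediately after Theorem~\ref{aspepslimthm} with no separate proof because, as you verify, $Q_S(T,0,\varepsilon)=r(S,\varepsilon)$ and $P_S(T,0,\varepsilon)=s(S,\varepsilon)$, so $\Asc_\varepsilon=\Asp_\varepsilon(T,0)$, $\Asc'_\varepsilon=\Asp'_\varepsilon(T,0)$, and $P(T,0)=\htop(X,T)$ gives the result. Your explicit bookkeeping at $f\equiv 0$ is exactly the intended specialization, with no gaps.
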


  In practice, we will fix an open cover $\mathscr{U}$ or an $\varepsilon>0$ when doing calculations to find values of $\Asp(T,f,\mathscr{U})$, $\Asp'(T,f,\mathscr{U})$, $\Asp_\varepsilon(T,f)$, and $\Asp'_\varepsilon(T,f)$. As with average sample complexity and intricacy, we define the \emph{average sample pressure function} using $(S,\varepsilon)$ spanning sets by
  \begin{equation}
  \Asp_\varepsilon(T,f,n)=\frac{1}{n}\sum_{S\subset n^*}c_S^n\log Q_S(T,f,\varepsilon),
  \end{equation}
  or with $(S,\varepsilon)$ separated sets by
  \begin{equation}
  \Asp'_\varepsilon(T,f,n)=\frac{1}{n}\sum_{S\subset n^*}c_S^n\log P_S(T,f,\varepsilon).
  \end{equation}
  We also define these functions using open covers:
  \begin{equation}
  \Asp(T,f,\mathscr{U},n)=\frac{1}{n}\sum_{S\subset n^*}c_S^n\log q_S(T,f,\mathscr{U}).
  \end{equation}
  and
  \begin{equation}
  \Asp'(T,f,\mathscr{U},n)=\frac{1}{n}\sum_{S\subset n^*}c_S^n\log p_S(T,f,\mathscr{U}),
  \end{equation}
  
  Now we compute average sample pressure for some shifts of finite type and potential functions.
  Given a shift of finite type $(X,\sigma)$ and a subset $S\subset n^*$, recall that $\mathscr{L}_S(X)$ denotes the set of words seen at the places in $S$ for all legal words in $X$. Recall also that the metric, $d$, we put on subshifts is defined by $d(x,y)=1/(m+1)$, where $m=\inf\{|k|:x_k\ne y_k\}$. We consider the case when the potential function $f\in C(X,\mathbb{R})$ is a function of a single coordinate, i.e. $f(x)=f(x_0)$. Letting $\varepsilon=1$ and $c_S^n$ be a system of coefficients, the average sample pressure of a shift of finite type $X$ and potential function $f$  is given by
  \begin{equation}
  \Asp_1(\sigma,f)=\limsup_{n\rightarrow\infty}\frac{1}{n}\sum_{S\subset n^*}c_S^n\log\sum_{w\in\mathscr{L}_S(X)}\exp\left(\sum_{i=1}^{|S|}f(w_i)\right).
  \end{equation}
  Notice if $f(x)\equiv0$, then we get
  \be
  \Asp_1(\sigma,0)=\limsup_{n\rightarrow\infty}\frac{1}{n}\sum_{S\subset n^*}c_S^n\log |\mathscr{L}_S(X)|=\Asc(X,\mathscr{U}_0,\sigma).
  \en

  \begin{example}
  	Consider  the full $r$-shift $\Sigma_r$ with a function $f$ that depends on a single coordinate. We have
  	\be
  	\sum_{S\subset n^*}\log\sum_{w\in\mathscr{L}_S(X)}\exp\left(\sum_{i=1}^{|S|}f(w_i)\right)=\sum_{k=0}^n\binom{n}{k}\log\left(\sum_{i=0}^re^{f(i)}\right)^k.
  	\en
  	Thus, if we fix $c_S^n=2^{-n}$, then for the full $r$-shift
  	\be
  	\begin{aligned}
  		\Asp_1(\sigma,f)&=\lim_{n\rightarrow\infty}\frac{1}{n}\frac{1}{2^n}\sum_{S\subset n^*}\log\sum_{w\in\mathscr{L}_S(X)}\exp\left(\sum_{i=1}^{|S|}f(w_i)\right)\\
  		&=\lim_{n\rightarrow\infty}\frac{1}{n}\frac{1}{2^n}\sum_{k=0}^n\binom{n}{k}\log\left(\sum_{i=0}^{r-1}e^{f(i)}\right)^k=\frac{1}{2}\log\left(\sum_{i=0}^{r-1}e^{f(i)}\right) = \frac{1}{2} P(\sigma,f).
  	\end{aligned}
  	\en
  	
  \end{example}

  \begin{center}
  	\begin{table}
  		\begin{tabular}{@{}m{.25\textwidth}m{.05\textwidth}m{.22\textwidth}m{.08\textwidth}m{.12\textwidth}m{.12\textwidth}@{}}
  			\toprule
  			\hspace{.4in}$M$ & $\rho(M)$
  			&\ \hspace{.25in} \text{Graph} & \text{Entropy} & $\Asc(X,\sigma)$ & $\Int(X,\sigma)$
  			\\
  			\midrule
  			$\left(
  			\begin{array}{ccc}
  			0 & 1 & 1 \\
  			1 & 0 & 1 \\
  			1 & 1 & 0 \\
  			\end{array}
  			\right)$ & 2 &\includegraphics[width=1in,keepaspectratio]{AdjA5}  & 0.693 & 0.448 & 0.203  \\
  			$\left(
  			\begin{array}{ccc}
  			1 & 1 & 0 \\
  			0 & 0 & 1 \\
  			1 & 1 & 1 \\
  			\end{array}
  			\right)$ & 2 &\includegraphics[width=1in,keepaspectratio]{AdjA14}  & 0.693 & 0.448 & 0.203 \\
  			\bottomrule
  		\end{tabular}
  		\caption{Two shifts that have the same entropy, $\Asc$, and $\Int$.\label{a10a11table}}
  	\end{table}
  \end{center}
  \begin{example}
  	Consider the shifts of finite type in Table~\ref{a10a11table}. We see that they are very similar and indistinguishable by the measures of complexity we have considered previously: entropy, average sample complexity, and intricacy. Suppose $f_1,f_2$ are two functions of a single coordinate on each shift of finite type defined by
  	\be
  	f_1(x)=\left\{\begin{array}{rr}
  		0,&x_0=0\\
  		0,&x_0=1\\
  		1,&x_0=2
  	\end{array}\right.\quad{\text{and}}\quad
  	f_2(x)=\left\{\begin{array}{rr}
  		0,&x_0=0\\
  		1,&x_0=1\\
  		0,&x_0=2
  	\end{array}\right.
  	\en
  	
  	\begin{table}
  		\begin{center}
  			\begin{tabular}{@{}m{.25\textwidth}m{.25\textwidth}m{.2\textwidth}m{.2\textwidth}@{}}
  				\toprule
  				\hspace{.4in}$M$ 
  				&\ \hspace{.25in} \text{Graph} & $\Asp_1(\sigma,f_1,10)$ & $\Asp_1(\sigma,f_2,10)$
  				\\
  				\midrule
  				$\left(
  				\begin{array}{ccc}
  				0 & 1 & 1 \\
  				1 & 0 & 1 \\
  				1 & 1 & 0 \\
  				\end{array}
  				\right)$  &\includegraphics[width=1in,keepaspectratio]{AdjA5}  & 0.660 & 0.660  \\
  				$\left(
  				\begin{array}{ccc}
  				1 & 1 & 0 \\
  				0 & 0 & 1 \\
  				1 & 1 & 1 \\
  				\end{array}
  				\right)$  &\includegraphics[width=1in,keepaspectratio]{AdjA14} & 0.722 & 0.633 \\
  				\bottomrule
  			\end{tabular}
  			\caption[Calculations of average sample pressure]{Calculations of $\Asp$ for two shifts that have the same entropy, $\Asc$, and $\Int$.\label{a10a11table2}}
  		\end{center}
  	\end{table}

  	Table~\ref{a10a11table2} shows the calculations of $\Asp_1(\sigma,f_1,10)$ and $\Asp_1(\sigma,f_2,10)$ for these two subshifts. Notice that $f_1$ places more weight on the symbol $2$, whereas $f_2$ places more weight on the symbol $1$. It is not surprising that the second SFT has a larger value for $\Asp_1(\sigma,f_1,10)$ than it does for $\Asp_1(\sigma,f_2,10)$, since every time a $1$ appears in a sequence of of this shift, a $2$ must follow it. 
  \end{example}

\section{Extensions to General Intricacy Weights|}\label{sec:appendix2}
  Using the characterization in Theorem \ref{coeffprop} we show that the main theorems (\ref{asctoentthm}, \ref{asppresssupthm}, \ref{mtheoreticasctoent}, \ref{Asccompenteqthm}, \ref{infocor}) extend to arbitrary intricacy weights $c_S^n$. 
 
 We preserve the setup and notation of of Sections \ref{sec:supequalsent} and \ref{asp}. Given $0 < \varepsilon <1, k$, and $n>k$, form the sets $K_i$ and the family $\mathcal B(n,k,\varepsilon)$ as in Section \ref{sec:supequalsent}. Let $\lambda_c$ be a symmetric probability measure on $[0,1]$ and form the weights $c_S^n$ according to Formula \ref{eq:intricacyweights}. Noting that $c_S^n$ depends only on $|S|$, and of necessity small and large $|S|$ must be treated separately, we fix $\delta >0$ and estimate separately the sums of $c_S^n$ over the sets $S$ with $|S| \geq n \delta$ and $|S| < n \delta$. 
 
 \begin{rem}
 	Any possible point mass of $\lambda_c$ at $0$ (hence also at $1$) contributes nothing to any $c_S^n$, so we assume that $\lambda_c\{0,1\}=0$.
 \end{rem}
 
 The following lemma will substitute in the general situation for Lemma \ref{goodsetlemma}. 
 \begin{lemma}
 	\label{lem:cestimate}
 	Fix $0 < \varepsilon < 1, k \in \mathbb N$, and $n>k$ and form the sets $K_i$ and the family $\mathcal B= \mathcal B(n,k,\varepsilon)$ as before. For each $j=0,\dots,n$ let $\mathcal B_j=\mathcal B_j (n,k,\varepsilon)=\{S \in \mathcal B: |S|=j\}$. Let $0< \delta < 1/2$. Then there is a constant $C$ such that 
 	\be
 	{\sum_{j\geq n \delta} \sum _{S \in \mathcal B_j} c_S^n \leq  C\left[ e^{2H(\varepsilon)/k}(1-\delta)^\varepsilon\right] ^n, \quad\text{and}}
 	\en
 	\be
 	\sum_{|S| < n \delta} c_S^n \leq \lambda_c[0,3\delta/2] + t_n, \quad\text{where } t_n \to 0 \text{ as } n \to \infty.
 	\en
 \end{lemma}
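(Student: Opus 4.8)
The plan is to exploit that each weight $c_S^n$ depends only on $|S|$: writing $c_j^n=\int_{[0,1]}x^j(1-x)^{n-j}\lambda_c(dx)$ for $|S|=j$, both sums become integrals of binomial tail probabilities against $\lambda_c$, which I can then control by large-deviation estimates. I would prove the easier (second) inequality first.

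For the small-$|S|$ estimate, interchange the finite sum and the integral to get
\[
\sum_{|S|<n\delta}c_S^n=\int_{[0,1]}\sum_{j<n\delta}\binom{n}{j}x^j(1-x)^{n-j}\,\lambda_c(dx)=\int_{[0,1]}\mathbb{P}(\mathrm{Bin}(n,x)<n\delta)\,\lambda_c(dx).
\]
Split $[0,1]=[0,3\delta/2]\cup(3\delta/2,1]$. On the first piece bound the probability by $1$, contributing $\lambda_c[0,3\delta/2]$. On the second piece $x>3\delta/2$ forces $\{\mathrm{Bin}(n,x)<n\delta\}$ into the lower tail at distance at least $n\delta/2$ from the mean $nx$, so Hoeffding's inequality gives $\mathbb{P}(\mathrm{Bin}(n,x)\le n\delta)\le e^{-2n(x-\delta)^2}\le e^{-n\delta^2/2}$ uniformly in $x\ge 3\delta/2$. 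Hence the second piece is at most $t_n:=e^{-n\delta^2/2}\to 0$, giving the claim.

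For the large-$|S|$ estimate I would use the counting bound from the proof of Lemma \ref{goodsetlemma}, namely $|\mathcal B_j|\le\binom{\lceil 2n/k\rceil}{\lceil 2n\varepsilon/k\rceil}\binom{M}{j}$ with $M=\lfloor n(1-\varepsilon)\rfloor$, so that
\[
\sum_{j\ge n\delta}\sum_{S\in\mathcal B_j}c_S^n\le\binom{\lceil 2n/k\rceil}{\lceil 2n\varepsilon/k\rceil}\int_{[0,1]}\sum_{n\delta\le j\le M}\binom{M}{j}x^j(1-x)^{n-j}\,\lambda_c(dx).
\]
The prefactor is at most $Ce^{(2n/k)H(\varepsilon)}$ by the Stirling estimate already used in Lemma \ref{goodsetlemma}. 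Since $n-j=(n-M)+(M-j)\ge n\varepsilon+(M-j)$ and $0\le 1-x\le 1$, I factor $(1-x)^{n-j}\le(1-x)^{n\varepsilon}(1-x)^{M-j}$ and recognize the remaining sum as a binomial tail, so that $\sum_{n\delta\le j\le M}\binom{M}{j}x^j(1-x)^{n-j}\le(1-x)^{n\varepsilon}\,\mathbb{P}(\mathrm{Bin}(M,x)\ge n\delta)$. It then suffices to prove the pointwise bound $(1-x)^{n\varepsilon}\,\mathbb{P}(\mathrm{Bin}(M,x)\ge n\delta)\le(1-\delta)^{n\varepsilon}$ for every $x$; integrating against the probability measure $\lambda_c$ yields $(1-\delta)^{n\varepsilon}$, and absorbing the polynomial factor into $C$ gives the stated bound $C[e^{2H(\varepsilon)/k}(1-\delta)^{\varepsilon}]^n$.

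The main obstacle is precisely this pointwise bound, because for small $x$ the factor $(1-x)^{n\varepsilon}$ is close to $1$ and the full rate $\varepsilon|\ln(1-\delta)|$ must be extracted from the binomial tail alone. For $x\ge\delta$ it is immediate, since $(1-x)^{n\varepsilon}\le(1-\delta)^{n\varepsilon}$ and the probability is at most $1$. For $x<\delta$ I would apply the Chernoff bound $\mathbb{P}(\mathrm{Bin}(M,x)\ge n\delta)\le e^{-MD(q\,\|\,x)}$ with $q=n\delta/M$ and $D(q\,\|\,x)=q\ln(q/x)+(1-q)\ln((1-q)/(1-x))$; after dividing by $n$ the claim reduces to $\Phi(x):=\varepsilon\ln(1-x)-(1-\varepsilon)D(q\,\|\,x)\le\varepsilon\ln(1-\delta)$. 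Using $(1-\varepsilon)q=\delta$ one finds $\Phi'(x)=\delta/x-(1-\delta)/(1-x)$, whose unique zero on $(0,q)$ is $x=\delta$, a maximum, and since $\Phi(\delta)=\varepsilon\ln(1-\delta)-(1-\varepsilon)D(q\,\|\,\delta)\le\varepsilon\ln(1-\delta)$ the bound follows. This derivative computation, which collapses the supremum onto $x=\delta$, is exactly what makes the target factor $(1-\delta)^{\varepsilon}$ appear with the correct rate; the floor and ceiling adjustments in $M$, $q$, and $n\delta$ affect only the constant $C$.
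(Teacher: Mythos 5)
Your argument is correct, and its skeleton coincides with the paper's: both proofs exploit that $c_S^n$ depends only on $|S|$ to rewrite each sum as an integral of binomial quantities against $\lambda_c$, both use the count $|\mathcal B_j|\le\binom{\lceil 2n/k\rceil}{\lceil 2n\varepsilon/k\rceil}\binom{\lceil n(1-\varepsilon)\rceil}{j}$ with Stirling on the first factor, and both split the small-$|S|$ integral at $3\delta/2$. The differences lie in how the two tails are finished, and there they are genuine. For the small-$|S|$ bound the paper simply defines $t_n$ to be the integral over $[3\delta/2,1]$ of the lower binomial tail and invokes the Law of Large Numbers plus bounded convergence to conclude $t_n\to0$; your Hoeffding estimate instead yields the explicit uniform rate $t_n\le e^{-n\delta^2/2}$, which is stronger (exponential, and uniform over all symmetric $\lambda_c$) at the cost of quoting a concentration inequality. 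For the large-$|S|$ bound the paper extracts the factor $(1-\delta)^{n\varepsilon}$ combinatorially, via the ratio estimate $\binom{\lceil n(1-\varepsilon)\rceil}{j}\le\binom{n}{j}\bigl((n-j)/n\bigr)^{n\varepsilon}\le\binom{n}{j}(1-\delta)^{n\varepsilon}$ for $j\ge n\delta$, after which the full Bernoulli sum $\sum_j\binom{n}{j}x^j(1-x)^{n-j}=1$ disposes of the integral (as printed, the paper's intermediate display omits the factor $\binom{n}{j}$, which reappears in the following line --- a typo your route sidesteps). You instead keep $\binom{M}{j}$, peel off $(1-x)^{n\varepsilon}$, and prove the pointwise bound $(1-x)^{n\varepsilon}\,\mathbb{P}(\mathrm{Bin}(M,x)\ge n\delta)\le(1-\delta)^{n\varepsilon}$ by Chernoff plus the derivative computation locating the worst case at $x=\delta$; I checked that computation ($\Phi'(x)=\delta/x-(1-\delta)/(1-x)$, maximum at $x=\delta$, and $\Phi(\delta)\le\varepsilon\ln(1-\delta)$ since $D\ge0$) and it is right, including the trivial regimes $x\ge\delta$ and $n\delta>M$ (empty tail). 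Your closing remark that floors and ceilings only perturb $C$ is also sound; in fact, running the optimization with exact parameters, $g(x)=(n-M)\ln(1-x)-MD(q\,\Vert\,x)$ with $q=\lceil n\delta\rceil/M$, puts the critical point exactly at $x^*=\lceil n\delta\rceil/n\ge\delta$, so the pointwise bound holds with no error term at all. Net comparison: the paper's combinatorial ratio bound is shorter and self-contained, while your large-deviation route is heavier machinery but makes transparent why $(1-\delta)^{\varepsilon}$ is precisely the achievable rate.
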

 \begin{proof}
 	Write $m=\ceil{2n/k}$. Using Stirling's approximation, writing out the factorials, and estimating gives
 	\be\label{eq:bjest}
 	|\mathcal B_j| \leq \binom{m}{\ceil{m \varepsilon}} \binom{\ceil{n(1-\varepsilon)}}{j} \leq C e^{mH(\varepsilon)}\left(\frac{n-j}{n}\right)^{n \varepsilon}
 	\en
 	for some constant $C$. Thus 
 	\be
 	\begin{aligned}
 		\sum_{j \geq n\delta}\sum_{S \in \mathcal B_j} c_S^n &\leq 
 		C e^{mH(\varepsilon)}(1-\delta)^{n \varepsilon} \int_0^1\sum_{j \geq n \delta} \binom{n}{j} x^j(1-x)^{n-j}\lambda_c(dx)\\
 		&\leq 	C e^{mH(\varepsilon)}(1-\delta)^{n \varepsilon} \int_0^1\sum_{j=0}^n \binom{n}{j} x^j(1-x)^{n-j}\lambda_c(dx)\\
 		&= 	C e^{mH(\varepsilon)}(1-\delta)^{n \varepsilon} .
 	\end{aligned}
 	\en
 	
 	For the second formula, recall that by the Law of Large Numbers, if $1\geq x \geq 3 \delta/2$ then 
 	\be
 	\sum_{j < n \delta} \binom{n}{j} x^j (1-x)^{n-j}   \to 0	\quad\text{as $n \to \infty$ (and is of course bounded)}.
 	\en
 	Therefore also
 	\be
 	t_n=\int_{3\delta	/2}^1\sum_{j < n \delta} \binom{n}{j} x^j (1-x)^{n-j} \lambda_c(dx) \to 0 \quad\text{as } n \to \infty\en
 	and
 	\be
 	\begin{aligned}
 		\sum_{|S| <n\delta}c_S^n &= \int_0^1 \sum_{j<n \delta} \binom{n}{j} x^j(1-x)^{n-j}\lambda_c(dx)\\
 		&=\int_0^{3\delta/2}\sum_{j<n \delta} \binom{n}{j} x^j(1-x)^{n-j}\lambda_c(dx) +\int_{3\delta/2}^1\sum_{j<n \delta} \binom{n}{j} x^j(1-x)^{n-j}\lambda_c(dx)\\
 		&\leq \lambda_c[0,3\delta/2] + t_n.
 	\end{aligned}\en
 	
 \end{proof}
 
 \begin{lemma}
 	\label{lem:csumest}Given $\varepsilon >0$, there is $k(\varepsilon)$ such that for fixed $k>k(\varepsilon)$ and all large enough $n>k$
 	\be\label{eq:csumest}
 	\sum_{S \in \mathcal B(n,k,\varepsilon)} c_S^n < \varepsilon.
 	\en
 \end{lemma}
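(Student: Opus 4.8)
The plan is to split the sum over $\mathcal B = \mathcal B(n,k,\varepsilon)$ according to the cardinality $|S|$ and to apply the two estimates just established in Lemma \ref{lem:cestimate}. Since $c_S^n$ depends only on $|S|$, and the two regimes $|S|\geq n\delta$ and $|S|<n\delta$ are governed by genuinely different mechanisms, I would fix an auxiliary threshold $\delta$ and write
\[
\sum_{S \in \mathcal B} c_S^n
= \sum_{\substack{S \in \mathcal B\\ |S| \geq n\delta}} c_S^n + \sum_{\substack{S \in \mathcal B\\ |S| < n\delta}} c_S^n
\leq \sum_{j \geq n\delta}\sum_{S \in \mathcal B_j} c_S^n + \sum_{|S| < n\delta} c_S^n,
\]
where in the second sum I have discarded the constraint $S \in \mathcal B$ and summed over all $S$ of the given size. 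These two summands are exactly the quantities bounded in Lemma \ref{lem:cestimate}.

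For the large-cardinality part, Lemma \ref{lem:cestimate} gives $\sum_{j \geq n\delta}\sum_{S \in \mathcal B_j} c_S^n \leq C\left[e^{2H(\varepsilon)/k}(1-\delta)^\varepsilon\right]^n$. The idea is to choose $k$ so large that the base of this exponential is strictly less than $1$. Taking logarithms, this amounts to requiring $2H(\varepsilon)/k < \varepsilon\log(1/(1-\delta))$, which holds as soon as $k > 2H(\varepsilon)/[\varepsilon\log(1/(1-\delta))]$; since $\delta>0$ the right-hand side is finite. For such $k$ the entire large-cardinality contribution decays geometrically in $n$ and is therefore below $\varepsilon/4$ once $n$ is large.

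For the small-cardinality part, Lemma \ref{lem:cestimate} gives the bound $\lambda_c[0,3\delta/2] + t_n$ with $t_n \to 0$. The term $t_n$ is dealt with by taking $n$ large, but $\lambda_c[0,3\delta/2]$ does not vanish with $n$ and must instead be controlled by the prior choice of $\delta$. This is precisely where the standing assumption $\lambda_c\{0,1\}=0$ from the Remark is used: by continuity of measure from above, $\lambda_c[0,3\delta/2]\to\lambda_c\{0\}=0$ as $\delta\to 0^+$. Hence I would first fix $\delta$ small enough (depending only on $\varepsilon$) that $\lambda_c[0,3\delta/2]<\varepsilon/4$, then set $k(\varepsilon)$ as above, and finally take $n$ large enough that both the geometric bound and $t_n$ fall below $\varepsilon/4$. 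Summing the three pieces yields a total below $\varepsilon$.

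The ordering of these choices is the crux, and the small-cardinality regime is the main obstacle: there one cannot extract any geometric decay, so the argument hinges essentially on the absence of atoms of $\lambda_c$ at the endpoints to force the residual $\lambda_c$-mass near $0$ to be negligible. (An atom of $\lambda_c$ at $0$ would make the low-cardinality sets carry a fixed proportion of the total weight regardless of $n$, and the conclusion would fail.) By contrast, the large-cardinality regime is routine once Lemma \ref{lem:cestimate} is available: it is simply the combinatorial decay already exploited in Lemma \ref{goodsetlemma}, now reweighted by the intricacy coefficients $c_S^n$.
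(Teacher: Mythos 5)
Your proposal is correct and follows essentially the same route as the paper's proof: split according to $|S|\geq n\delta$ versus $|S|<n\delta$, apply the two bounds of Lemma \ref{lem:cestimate}, choose $\delta=\delta(\varepsilon)$ first so that $\lambda_c[0,3\delta/2]<\varepsilon$ (using $\lambda_c\{0\}=0$ and continuity of measure, exactly as in the paper's standing Remark), and then take $k(\varepsilon)=2H(\varepsilon)/[-\varepsilon\log(1-\delta(\varepsilon))]$ so that the base $e^{2H(\varepsilon)/k}(1-\delta)^{\varepsilon}$ is strictly below $1$, letting $n\to\infty$ kill both the geometric term and $t_n$. Your $\varepsilon/4$ bookkeeping and your observation that an atom of $\lambda_c$ at $0$ would make the conclusion fail are harmless refinements of the same argument, not a different method.
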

 \begin{proof}
 	Since $\lambda_c$ does not have a point mass at $0$, given $\varepsilon>0$ there is $\delta(\varepsilon)$ such that $\lambda_c[0,3\delta(\varepsilon)/2]<\varepsilon$. Let 
 	\be
 	k(\varepsilon) = \frac{2H(\varepsilon)}{-\varepsilon\log(1-\delta(\varepsilon))}.
 	\en
 	If $k>k(\varepsilon)$, then
 	\be
 	e^{2H(\varepsilon)/k}(1-\delta(\varepsilon))^\varepsilon) < 1,
 	\en
 	so by Lemma \ref{lem:cestimate}
 	\be
 	\lim_{n \to \infty} \sum_{S \in \mathcal B(n,k,\varepsilon)} c_S^n \leq \lim_{n \to \infty} \left[ C \left( e^{2H(\varepsilon)/k}(1- \delta(\varepsilon)^\varepsilon\right) ^n +t_n \right] + \lambda_c[0,3\delta(\varepsilon)/2] = \lambda_c[0,3\delta(\varepsilon)/2] < \varepsilon. 
 	\en
 \end{proof}
 
 Now we extend Theorem \ref{asppresssupthm} from the special weights $c_S^n=1/2^n$ to general systems of coefficients $c_S^n$.
 \begin{theorem}\label{thm:asppresssup}
 	Let $T: X\rightarrow X$ be a  continuous transformation on the compact metric space $X$. Let $f\in C(X,\mathbb{R})$ and $S\subset n^*$. For any system of coefficients $c_S^n$ (see the definition at the beginning of Section \ref{Sec:IntAsc}), for all $n\in\mathbb{N}$ and $S\subset n^*$,
 	\begin{equation}
 	\lim_{\varepsilon\rightarrow0^+}\Asp_\varepsilon'(T,f)=P(T,f).
 	\end{equation}
 \end{theorem}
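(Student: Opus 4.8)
The plan is to run the proof of Theorem~\ref{asppresssupthm} essentially verbatim, changing only the one place where the uniform weights were used. In that proof the contribution of the ``bad'' sets was killed because $|\mathcal B|/2^n\to 0$; for general coefficients this cardinality bound is no longer available, and I would replace it throughout by the weighted estimate $\sum_{S\in\mathcal B(n,k,\varepsilon)}c_S^n<\varepsilon$ supplied by Lemma~\ref{lem:csumest}. Since Corollary~\ref{asppressupboundcor} gives $\Asp'_\varepsilon(T,f)\le P(T,f)$ for \emph{every} system of coefficients, only the reverse inequality needs work, and as before it is enough to prove
\be
\liminf_{k\to\infty}\Asp'(T,f,\mathscr{U}_{k^*})\ge P(T,f,\mathscr{U})
\en
for each fixed open cover $\mathscr{U}$, and then to pass to the supremum over covers and the limit $\varepsilon\to0$ via Lemma~\ref{asplem} (which is already stated and proved for arbitrary $c_S^n$) together with Theorem~9.4 of \cite{waltersergodic}.

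First I would fix $\mathscr{U}$ and $\varepsilon>0$, write $P(T,f,\mathscr{U})=\inf_n(1/n)\log p_n(T,f,\mathscr{U})$, and choose $k$ large enough to exceed both the threshold $k_0$ giving $0\le \tfrac1k\log p_k(T,f,\mathscr{U})-P(T,f,\mathscr{U})<\varepsilon$ and the threshold $k(\varepsilon)$ of Lemma~\ref{lem:csumest}, so that $\sum_{S\in\mathcal B(n,k,\varepsilon)}c_S^n<\varepsilon$ for all large $n$. The good-set estimate is purely set-theoretic and so carries over with no change: for $S\notin\mathcal B$ one has $\card\big(n^*\setminus(S+k^*)\big)\le n\varepsilon$, and Lemma~\ref{presdissub} together with $\tfrac1n\log p_{n^*}\ge P(T,f,\mathscr{U})$ yields
\be
\frac1n\log p_S(T,f,\mathscr{U}_{k^*})=\frac1n\log p_{S+k^*}(T,f,\mathscr{U})\ge P(T,f,\mathscr{U})-\varepsilon(\|f\|_\infty+\log|\mathscr{U}|).
\en

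Next I would split $\Asp'(T,f,\mathscr{U}_{k^*},n)=\sum_{S\subset n^*}c_S^n\,\tfrac1n\log p_S(T,f,\mathscr{U}_{k^*})$ into the sums over $S\notin\mathcal B$ and over $S\in\mathcal B$. The first is bounded below by $\big(\sum_{S\notin\mathcal B}c_S^n\big)\big(P(T,f,\mathscr{U})-\varepsilon(\|f\|_\infty+\log|\mathscr{U}|)\big)$, and since $\sum_{S\notin\mathcal B}c_S^n\ge 1-\varepsilon$ for large $n$ this differs from $P(T,f,\mathscr{U})$ by an $O(\varepsilon)$ error. For the bad part the key observation is that $S+k^*\subset(n+k-1)^*$, so $\card(S+k^*)\le 2n$ whenever $n\ge k$; consequently $\tfrac1n\big|\log p_S(T,f,\mathscr{U}_{k^*})\big|\le 2\log|\mathscr{U}|+\|f\|_\infty=:M$, a bound \emph{independent of $k$}, and the bad part is dominated in absolute value by $M\sum_{S\in\mathcal B}c_S^n<M\varepsilon$. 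Letting $n\to\infty$, then $k\to\infty$, and finally $\varepsilon\to0$ gives the displayed liminf inequality and hence the theorem.

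The hard part is exactly the bad-set term, which in the uniform case was negligible for free but here is only guaranteed small. The essential new input is Lemma~\ref{lem:csumest}, whose proof via Lemma~\ref{lem:cestimate} must treat $c_S^n$ separately for large $|S|$ (where the Stirling bound on $|\mathcal B_j|$ is beaten by the exponential decay of $e^{2H(\varepsilon)/k}(1-\delta)^\varepsilon<1$ once $k$ is large) and for small $|S|$ (where the weight is controlled by $\lambda_c[0,3\delta/2]$, small for small $\delta$ precisely because of the standing assumption $\lambda_c\{0,1\}=0$). The other point requiring care is keeping the per-set bound $M$ free of $k$: using the crude estimate $\card(S+k^*)\le k|S|$ would introduce a factor $k$ that could defeat the smallness $\sum_{S\in\mathcal B}c_S^n<\varepsilon$ as $k\to\infty$, whereas $\card(S+k^*)\le 2n$ lets the bad contribution be absorbed into the $O(\varepsilon)$ error uniformly in $k$.
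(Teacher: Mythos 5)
Your proposal is correct and structurally identical to the paper's own proof of Theorem~\ref{thm:asppresssup}: the paper likewise reruns the proof of Theorem~\ref{asppresssupthm}, keeps the good-set estimate verbatim, substitutes the weighted bound $\sum_{S\in\mathcal B(n,k,\varepsilon)}c_S^n<\varepsilon$ of Lemma~\ref{lem:csumest} (proved via Lemma~\ref{lem:cestimate}, with the large-$|S|$/small-$|S|$ split you describe) for the cardinality count $|\mathcal B|/2^n\to 0$, and concludes through Lemma~\ref{asplem} and Theorem~9.4 of \cite{waltersergodic} exactly as you outline. The one place you genuinely diverge is the bad-set term, and there your version is sharper than the paper's. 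The paper bounds $\frac1n\lvert\log p_S(T,f,\mathscr{U}_{k^*})\rvert$ by $(\|f\|_\infty+k\log|\mathscr{U}|)\frac{|S|}{n}$, in effect using $\card(S+k^*)\le k|S|$, so its bad-set contribution is only controlled by $\varepsilon(\|f\|_\infty+k\log|\mathscr{U}|)$, a bound that grows with $k$; it then states that ``the rest of the argument proceeds as before,'' which is delicate, since for fixed $\varepsilon$ this lower bound degenerates as $k\to\infty$, and taking $k$ just above the threshold $k(\varepsilon)$ leaves a residual error of order $\varepsilon k(\varepsilon)\approx 2H(\varepsilon)/\delta(\varepsilon)$, which need not vanish as $\varepsilon\to 0$ for every admissible $\lambda_c$ (e.g.\ when $\lambda_c$ concentrates mass near $0$ so slowly that $\delta(\varepsilon)$ must decay extremely fast). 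Your observation that $S+k^*\subset(n+k-1)^*$, hence $\card(S+k^*)\le 2n$ once $n\ge k$, gives the $k$-free constant $M=\|f\|_\infty+2\log|\mathscr{U}|$ (correctly computed: the potential in $p_S(T,f,\mathscr{U}_{k^*})$ is summed only over $S$ with $|S|\le n$, while the covering count runs over $S+k^*$), so the bad part is at most $M\varepsilon$ uniformly in $k$ and the passage $n\to\infty$, then $k\to\infty$, then $\varepsilon\to 0$ is unconditional. In short, your write-up follows the paper's route but makes rigorous, for all symmetric $\lambda_c$, precisely the step the paper compresses into ``proceeds as before''; the pitfall you flag about the crude estimate $\card(S+k^*)\le k|S|$ is exactly the estimate the paper uses.
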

 \begin{proof}
 	The proof is largely the same until just before Equation \ref{eq:asp1}. Replace the lines starting at the line preceding that equation and continuing through Equation \ref{eq:asp2} by the following:
 	
 	Let $\delta(\varepsilon)$ and 	$k(\varepsilon) = {2H(\varepsilon)}/[-\varepsilon\log(1-\delta(\varepsilon))]$ be as in Lemma \ref{lem:csumest}, so that $\sum_{S \in \mathcal B} c_S^n < \varepsilon$. Then
 	\be
 	\begin{aligned}
 		\Asp'(T,f,\mathscr{U}_{k^*})&=\limsup_{n\rightarrow\infty}\sum_{S\subset n^*}c_S^n \frac{\log p_S(T,f,\mathscr{U}_{k^*})}{n}\\
 		&\geq \limsup_{n \to \infty} \left[\sum_{S \notin \mathcal B}c_S^n[P(T,f,\mathscr{U})- \varepsilon (||f||_\infty + \log |\mathscr{U}|)] +  \sum_{S \in \mathcal B} c_S^n \frac{\log p_S(T,f,\mathscr{U}_{k^*})}{n}\right] \\
 		&\geq (1-\varepsilon)[P(T,f,\mathscr{U})-\varepsilon(||f||_\infty + \log|\mathscr{U})] - \varepsilon( ||f||_\infty + k \log |\mathscr{U}|),
 	\end{aligned}
 	\en
 	since 
 	\be
 	\begin{aligned}
 		\Big| \sum_{S \in \mathcal B} c_S^n \frac{\log p_S(T,f,\mathscr{U}_{k^*})}{n}\Big|
 		&\leq  (||f||_\infty + k \log |\mathscr{U}|)  \sum_{S \in \mathcal B} c_S^n \frac{|S|}{n}\\
 		&\leq (||f||_\infty + k \log |\mathscr{U}|) \sum_{S \in \mathcal B} c_S^n < (||f||_\infty + k \log |\mathscr{U}|) \varepsilon.
 	\end{aligned}
 	\en
 	Then the rest of the argument ($\lim_{k \to \infty} \Asp'(T,f,\mathscr{U}_{k^*}) \geq \dots$) proceeds as before.
 \end{proof}
 
 Then the extensions to arbitrary systems of coefficients of Theorem \ref{asctoentthm}, Corollary \ref{asppresssupcor}, Theorem \ref{aspepslimthm}, and Corollary \ref{ascepslimcor} are direct consequences of Theorem \ref{thm:asppresssup}.
 
 We turn now to the extension to arbitrary systems of coefficients of Theorems  \ref{mtheoreticasctoent} and \ref{Asccompenteqthm} on measure-theoretic average sample complexity. 
 
 \begin{theorem}\label{thm:mtheoreticasctoent}
 	Let $(X,\mathscr{B},\mu,T)$ be a measure-preserving system and fix a system of coefficients.  Then
 	\be
 	\sup_{\alpha}\Asc_{\mu}(X,\alpha,T)=h_\mu(X,T).
 	\en
 \end{theorem}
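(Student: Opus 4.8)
The plan is to mirror the proof of Theorem \ref{mtheoreticasctoent} (the $c_S^n = 2^{-n}$ case), which in turn follows the structure of Theorem \ref{asctoentthm}, but to replace the cardinality estimate on bad sets furnished by Lemma \ref{goodsetlemma} with the weighted estimate of Lemma \ref{lem:csumest}. As in those proofs, the crux is to establish, for each finite measurable partition $\alpha$, the limiting relation
\[
\lim_{k\to\infty}\Asc_\mu(X,\alpha_{k^*},T) = h_\mu(X,\alpha,T);
\]
taking the supremum over $\alpha$ on both sides then yields the theorem, since the collection $\{\alpha_{k^*}\}$ ranges over partitions and $h_\mu(X,T) = \sup_\alpha h_\mu(X,\alpha,T)$.

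First I would record the easy upper bound. For any $S\subset n^*$ we have $H_\mu(\alpha_S)\le H_\mu(\alpha_{n^*})$, so $\Asc_\mu(X,\alpha,T)\le h_\mu(X,\alpha,T)\le h_\mu(X,T)$ for every $\alpha$, giving $\sup_\alpha\Asc_\mu(X,\alpha,T)\le h_\mu(X,T)$. It therefore suffices to prove the matching lower bound via the displayed limit. For the lower bound, fix $\alpha$ and $\varepsilon>0$. By Fekete's Lemma $h_\mu(X,\alpha,T)=\inf_k H_\mu(\alpha_{k^*})/k$, so I would choose $k_0$ with $0\le H_\mu(\alpha_{k^*})/k - h_\mu(X,\alpha,T)<\varepsilon$ for all $k>k_0$. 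Take an even $k>\max\{2k_0,\, k(\varepsilon)\}$, where $k(\varepsilon)$ is the threshold from Lemma \ref{lem:csumest}, and form the bad family $\mathcal B = \mathcal B(n,k,\varepsilon)$ using the intervals $K_i$ of Lemma \ref{goodsetlemma}. For $S\notin\mathcal B$ one has $\card(S+k^*)\ge n(1-\varepsilon)$, and writing $E = n^*\setminus(S+k^*)$ with $|E|\le n\varepsilon$, the subadditivity in Lemma \ref{Nsetsmeasurelemma}(2) gives
\[
H_\mu(\alpha_{S+k^*}) \ge H_\mu(\alpha_{n^*}) - H_\mu(\alpha_E) \ge H_\mu(\alpha_{n^*}) - n\varepsilon H_\mu(\alpha),
\]
so that $H_\mu(\alpha_{S+k^*})/n \ge h_\mu(X,\alpha,T) - \varepsilon H_\mu(\alpha)$ for large $n$.

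Using Lemma \ref{Nsetsmeasurelemma}(1) to identify $H_\mu((\alpha_{k^*})_S) = H_\mu(\alpha_{S+k^*})$, I would then split the defining average $\tfrac1n\sum_{S\subset n^*}c_S^n H_\mu((\alpha_{k^*})_S)$ into the sums over good and bad sets. The good part is bounded below by $(1-\varepsilon)\bigl(h_\mu(X,\alpha,T)-\varepsilon H_\mu(\alpha)\bigr)$, and, using $H_\mu(\alpha_S)/n\le |S|H_\mu(\alpha)/n\le H_\mu(\alpha)$ together with $\sum_{S\in\mathcal B}c_S^n<\varepsilon$, the bad part is controlled in absolute value by $\varepsilon H_\mu(\alpha)$. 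Letting $n\to\infty$ and then $\varepsilon\to0$ (forcing $k\to\infty$) yields $\lim_{k\to\infty}\Asc_\mu(X,\alpha_{k^*},T)\ge h_\mu(X,\alpha,T)$, while the reverse inequality is the upper bound recorded above.

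The main obstacle is precisely the passage from uniform to general weights. In the $c_S^n=2^{-n}$ case the bad sets were negligible because of the counting bound $|\mathcal B|/2^n\to 0$, whereas for a general intricacy system the mass $c_S^n$ may concentrate on sets $S$ of some fixed proportional size, so negligibility of $\mathcal B$ must instead be read off from the weighted estimate $\sum_{S\in\mathcal B}c_S^n<\varepsilon$ of Lemma \ref{lem:csumest}. That lemma rests on the assumption (the Remark preceding Lemma \ref{lem:cestimate}) that $\lambda_c$ has no atom at $0$ or $1$, and its proof separates $\mathcal B$ according to $|S|\ge n\delta$ versus $|S|<n\delta$; once this weighted bound is in hand, the remainder is the routine entropy bookkeeping sketched above, with the measure-theoretic inequalities of Lemma \ref{Nsetsmeasurelemma} playing the role that Lemma \ref{Nsetstoplemma} played topologically.
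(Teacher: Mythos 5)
Your proposal is correct and takes essentially the same approach as the paper: its proof of this theorem is precisely the instruction to rerun the proof of Theorem~\ref{mtheoreticasctoent} (hence the template of Theorem~\ref{asctoentthm}) with $2^{-n}$ replaced by $c_S^n$, $\log N(S+k^*)$ by $H_\mu(\alpha_{S+k^*})$, the bound $N(E)\le|\mathscr{U}|^{|E|}$ by $H_\mu(\alpha_E)\le|E|H_\mu(\alpha)$, and the counting estimate on $\mathcal{B}$ by the weighted estimate $\sum_{S\in\mathcal{B}}c_S^n<\varepsilon$ of Lemma~\ref{lem:csumest}. You have merely written out in full the substitutions the paper leaves implicit, and your bookkeeping (good/bad split, $\lim_{k\to\infty}\Asc_\mu(X,\alpha_{k^*},T)=h_\mu(X,\alpha,T)$, then the supremum over $\alpha$) matches the intended argument.
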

 \begin{proof}
 	Follow the proof of Theorem  \ref{mtheoreticasctoent}. Use $H_\mu(\alpha_E) \leq |E| H_\mu(\alpha)$,  replace $1/2^n$ by $c_S^n$ and move it inside the sums, replace $\log N(S+k^*)$ by $H_\mu(\alpha_{S+k^*})$ and similarly for other refinements along subsets of $n^*$, and replace $\htop(X,\mathscr{U},T)$ by $h_\mu(X,\alpha,T)$.
 \end{proof}
 
 The extensions to arbitrary weights of Theorem \ref{Asccompenteqthm} and Proposition \ref{infocor} are not so straightforward, since averaging with general weights does not translate directly to sampling along a set of first-return times in a measure-preserving system. 
 Indeed, in the general situation the analogue of (\ref{eq:AscForm}) is
 \be
 \begin{aligned}
 	\Asc_\mu^\lambda (X,T,\alpha)
 	&= \int_0^1 \lim_{n \to \infty} \frac{1}{n} \sum_{\substack{S\subset n^*\\ 0\in S }} H_\mu(\alpha_S) P_A^{(p)}\{\xi \in A: S(\xi_0^{n-1})=S\}\, d\lambda(p) \\
 	&= \int_0^1 \lim_{n \to \infty} \frac{1}{n} \sum_{\substack{S\subset n^*\\ 0\in S }} H_\mu(\alpha_S) p^{|S|-1}(1-p)^{n-|S|} \,d\lambda(p) = \lim_{n \to \infty} \frac{1}{n} \sum_{\substack{S\subset n^*\\ 0\in S }} b_S^nH_\mu(\alpha_S),
 \end{aligned}
 \en
 where again $P_A^{(p)}$ is the normalized restriction to $A=[1]$ of Bernoulli $(p,1-p)$ measure, $\lambda$ is a symmetric probability measure on $[0,1]$, and the weights 
 \be
 b_S^n= \int_0^1p^{|S|-1}(1-p)^{n-|S|} d\lambda(p)
 \en
 sum to $1$ as $S$ runs through all subsets of $n^*$ which contain $0$. 
 The limits exist because the last one does, by subadditivity. 
 Note that in this situation, since we are conditioning on $A$,
 when $\lambda$ is the point mass at $1/2$, $\Asc_\mu^\lambda(X,T,\alpha)=\Asc_\mu(X,\alpha,T)$.
 
 \begin{theorem}\label{thm:Asccompenteqthm}
 	Let $(X,\mathscr{B},\mu,T)$ be an ergodic measure-preserving system and $\alpha$ a finite measurable partition of $X$. 
 	Let $A=[1]=\{\xi\in\Sigma_2^+:\xi_0=1\}$ and let $\beta=\alpha\times A$ be the related finite partition of $X\times A$. 
 	Denote by $T_{X\times A}$ the first-return map on $X\times A$, and for each $0<p<1$ let $P^{(p)}_A=P^{(p)}/P^{(p)}[1]$ denote the measure $P^{(p)}=\mathscr B(p,1-p)$ restricted to $A$ and normalized. Let $\lambda$ be a symmetric probability measure on $[0,1]$.
 	Then
 	\begin{equation}\label{eq:Asccompenteq}
 	\Asc_\mu^\lambda (X,\alpha,T)=\lim_{n \to \infty} \frac{1}{n} \int_0^1p  \int_A H_\mu (\alpha_{S(\xi_0^{m_\xi(n)-1})}) \,dP_A^{(p)}(\xi) \, \lambda(dp)
 	\leq \int_0^1p \,  h_{\mu\times P^{(p)}_A}(X\times A,\beta,T_{X\times A}) \lambda(dp).
 	\end{equation}
 \end{theorem}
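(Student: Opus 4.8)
The plan is to reduce the statement, for $\lambda$-almost every parameter $p$, to Theorem~\ref{Asccompenteqthm} applied with the Bernoulli measure $P^{(p)}=\mathscr B(p,1-p)$ in place of the symmetric measure $\mathscr B(1/2,1/2)$, and then to integrate over $p$ against $\lambda$, exchanging the limit in $n$ with the integral by dominated convergence. First I would fix $0<p<1$ and set $\phi_n(p)=\frac1n\int_A H_\mu(\alpha_{S(\xi_0^{n-1})})\,dP_A^{(p)}(\xi)$. Since $P_A^{(p)}\{\xi:S(\xi_0^{n-1})=S\}=p^{|S|-1}(1-p)^{n-|S|}$ for each $S\ni 0$, expanding $b_S^n=\int_0^1 p^{|S|-1}(1-p)^{n-|S|}\,d\lambda(p)$ and using Tonelli on the finite sum gives $\Asc_\mu^\lambda(X,\alpha,T)=\lim_n\int_0^1\phi_n(p)\,\lambda(dp)$. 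For each fixed $p$ the limit $\Asc_\mu^{(p)}:=\lim_n\phi_n(p)$ exists by subadditivity exactly as in Theorem~\ref{ascmuinf}, whose proof uses only the product form $x^{|S|}(1-x)^{n-|S|}$ of the weights and not the symmetry of $\lambda_c$ (the restriction $0\in S$ contributes only the constant factor coming from the $\Acc=\tfrac12\Asc$ reduction of Definition~\ref{mtnewcompdefs}).

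Next I would run the argument of Theorem~\ref{Asccompenteqthm} verbatim with $1/2$ replaced by $p$ and with $2=1/P(A)$ replaced by $1/p=1/P^{(p)}(A)$. Ergodicity of $(\Sigma_2^+,\sigma,P^{(p)})$ and $n_A\in L^1(P_A^{(p)})$ give $m_\xi(n)/n\to 1/p$ in $L^1(P_A^{(p)})$, the bad sets $U_\varepsilon^{(p)}(n)=\{\xi\in A:|m_\xi(n)/n-1/p|>\varepsilon\}$ satisfy $P_A^{(p)}(U_\varepsilon^{(p)}(n))\to 0$, and reindexing the window to length $\lceil n/p\rceil$ (so that it contains $\approx n$ ones) together with the conditional-entropy estimate (\ref{epsprimeeq}) yields
\[
\Asc_\mu^{(p)}=\lim_{n\to\infty}\frac pn\int_A H_\mu\big(\alpha_{S(\xi_0^{m_\xi(n)-1})}\big)\,dP_A^{(p)}(\xi)\le p\,h_{\mu\times P_A^{(p)}}(X\times A,\beta,T_{X\times A}),
\]
the inequality coming from the information-function computation (\ref{ascmucrossPAeq})--(\ref{fullinteq}), using again that $\beta_{n^*}(x,\xi)\subset\alpha_{S(\xi_0^{m_\xi(n)-1})}\times A$.

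Finally I would integrate these fixed-$p$ identities against $\lambda$. Since $|S(\xi_0^{m_\xi(n)-1})|=n$ forces $\tfrac1n H_\mu(\alpha_{S(\xi_0^{m_\xi(n)-1})})\le H_\mu(\alpha)$, the integrands $p\cdot\tfrac1n\int_A H_\mu(\cdots)\,dP_A^{(p)}$ are bounded by $H_\mu(\alpha)$ uniformly in $n$ and $p$, so dominated convergence lets me pull the limit in $n$ through $\int_0^1\cdots\,\lambda(dp)$. Combining with the previous paragraph this gives $\Asc_\mu^\lambda=\int_0^1\Asc_\mu^{(p)}\,\lambda(dp)=\lim_n\tfrac1n\int_0^1 p\int_A H_\mu(\alpha_{S(\xi_0^{m_\xi(n)-1})})\,dP_A^{(p)}\,\lambda(dp)$, and $\Asc_\mu^\lambda=\int_0^1\Asc_\mu^{(p)}\,\lambda(dp)\le\int_0^1 p\,h_{\mu\times P_A^{(p)}}(X\times A,\beta,T_{X\times A})\,\lambda(dp)$, which is the asserted inequality.

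The main obstacle is this final interchange, together with the fact that the fixed-$p$ estimates degenerate as $p\to 0$ or $p\to 1$: the expected return time $1/p$ blows up, so the convergence rate in $P_A^{(p)}(U_\varepsilon^{(p)}(n))\to 0$ is not uniform in $p$. What rescues the argument is precisely the standing assumption $\lambda\{0,1\}=0$ from the Remark preceding Lemma~\ref{lem:cestimate}: the fixed-$p$ convergence holds for every $p\in(0,1)$, hence for $\lambda$-a.e.\ $p$, and the uniform bound $H_\mu(\alpha)$ supplies the dominating function, so no uniformity in $p$ near the endpoints is needed. I expect the verification that conditioning on $A$ and the reindexing $n\mapsto\lceil n/p\rceil$ perturb $\phi_n(p)$ only by terms of order $1/n$ to be the one place demanding care, but these are exactly the boundary estimates already carried out in the $p=1/2$ proof.
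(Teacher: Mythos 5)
Your proposal is correct and follows essentially the same route as the paper's proof: fix $p\in(0,1)$, rerun the argument of Theorem~\ref{Asccompenteqthm} with $m_\xi(n)/n\to 1/p$ and a window of length about $n/p$ (the paper uses $j=\floor{n/p}$ where you use $\ceil{n/p}$, an immaterial difference) to obtain the fixed-$p$ identity and inequality, and then integrate against $\lambda$ using the uniform bound $\frac{1}{n}H_\mu(\alpha_{S(\xi_0^{m_\xi(n)-1})})\le H_\mu(\alpha)$ (the paper invokes the Bounded Convergence Theorem, you dominated convergence with the same constant dominating function). Your closing observation that the fixed-$p$ estimates degenerate as $p\to 0,1$ but are rescued by $\lambda\{0,1\}=0$ together with $\lambda$-a.e.\ pointwise convergence is exactly the role of the standing assumption in the Remark preceding Lemma~\ref{lem:cestimate}.
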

 \begin{proof}
 	Consider first a fixed $p\in(0,1)$. 
 	Now $m_\xi(n)/n \to 1/p$ as $n \to \infty$ for $P_A^{(p)}$-a.e. $\xi \in A$. 
 	Repeat the proof of Theorem \ref{Asccompenteqthm} with $U_\varepsilon (n)=\{ \xi \in A:|m_\xi/n - 1/p|> \varepsilon\}$ and $j=\floor{n/p}$. Again $P_A^{(p)}(U_\varepsilon) \to 0$ as $n \to \infty$. As in (\ref{epsprimeeq}), for $\xi \in A \setminus U_\varepsilon$,
 	\be
 	\frac{1}{n}\left\lvert H_\mu\left(\alpha_{S\left(\xi_0^{j}\right)}\right)-H_\mu\left(\alpha_{S\left(\xi_0^{m_\xi(n)-1}\right)}\right)\right\rvert 
 	\leq 2\varepsilon H_\mu(\alpha)\text{  for large } n.
 	\en
 	Thus for fixed $p$,
 	\be
 	\lim_{n \to \infty} \frac{1}{n} \int_A H_\mu(\alpha_{S(\xi_0^{m_\xi(n)-1})}) \, dP_A^{(p)}
 	= \lim_{n \to \infty} \frac{1}{n} \int_A H_\mu(\alpha_{S(\xi_0^{\floor{n/p}})}) \, dP_A^{(p)}
 	= \lim_{n \to \infty} \frac{1}{pj} \int_A H_\mu(\alpha_{S(\xi_0^{j-1})}) \, dP_A^{(p)}.
 	\en
 	As in the proof of Theorem \ref{Asccompenteqthm} it follows that
 	\be
 	\lim_{n \to \infty} \frac{1}{n} p \int_A H_\mu(\alpha_{S(\xi_0^{m_\xi(n)})}) \, dP_A^{(p)}
 	\leq  p \, h_{\mu \times P^{(p)}_A}(X \times A, \beta, T_{X \times A}). 
 	\en
 	Then integrate with respect to $\lambda$ and apply the Bounded Convergence Theorem.
 \end{proof}

 \begin{proposition}\label{cor:infocor}
 	Let $(X,\mathscr{B},\mu,T)$ be a 1-step Markov shift, $\alpha$ the finite time-0 generating partition of $X$, and $\lambda$ a symmetric probability measure on $[0,1]$. Then
 	\begin{equation}
 	\Asc_\mu^\lambda (X,\alpha,T)=  \sum_{i=1}^\infty \int_0^1 p^2 (1-p)^{i-1} \lambda (dp)   H_\mu\left(\alpha\mid\alpha_{i}\right).
 	\end{equation}
 \end{proposition}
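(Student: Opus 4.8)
The plan is to take as the starting point the representation furnished by Theorem~\ref{thm:Asccompenteqthm},
\[
\Asc_\mu^\lambda (X,\alpha,T)=\lim_{n \to \infty} \frac{1}{n} \int_0^1 p \int_A H_\mu (\alpha_{S(\xi_0^{m_\xi(n)-1})}) \,dP_A^{(p)}(\xi)\, \lambda(dp),
\]
and to evaluate the inner limit separately for each fixed Bernoulli parameter $p$ by repeating the telescoping computation of Proposition~\ref{infocor}, the only difference being that the conditioned Bernoulli weight is now $p$ instead of $1/2$. Once the inner limit is identified as a series in $H_\mu(\alpha\mid\alpha_i)$, integrating against $\lambda$ and interchanging sum and integral will produce the stated formula.

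First I would move the limit inside the integral over $p$. Because $|S(\xi_0^{m_\xi(n)-1})|=n$, subadditivity of entropy (Lemma~\ref{Nsetsmeasurelemma}) gives $\tfrac{1}{n} H_\mu(\alpha_{S(\xi_0^{m_\xi(n)-1})})\le H_\mu(\alpha)$ uniformly in $n$, $\xi$, and $p$, so the Bounded Convergence Theorem applies and
\[
\Asc_\mu^\lambda (X,\alpha,T)=\int_0^1 p\left[\lim_{n \to \infty} \frac{1}{n}\int_A H_\mu (\alpha_{S(\xi_0^{m_\xi(n)-1})})\,dP_A^{(p)}(\xi)\right]\lambda(dp).
\]

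Next, fixing $p\in(0,1)$, I would compute the bracketed limit exactly as in Proposition~\ref{infocor}. Writing $S(\xi)=\{s_0,s_1,\dots\}$ with $s_0=0$, the chain rule for conditional entropy together with the Markov property yields the telescoping identity $H_\mu(\alpha_{s_0}\vee\dots\vee\alpha_{s_{n-1}})= H_\mu(\alpha_{s_{n-1}})+\sum_{k=0}^{n-2} H_\mu(\alpha_{s_k}\mid\alpha_{s_{k+1}})$. By $T$-invariance of $\mu$ each term equals $H_\mu(\alpha\mid\alpha_{n_A(\sigma_A^{k}\xi)})$, and since $\sigma_A$ preserves $P_A^{(p)}$ all $n-1$ of them share the common integral $\int_A H_\mu(\alpha\mid\alpha_{n_A(\xi)})\,dP_A^{(p)}$, while the single leading term contributes $O(1/n)$. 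Thus the bracketed limit is $\int_A H_\mu(\alpha\mid\alpha_{n_A(\xi)})\,dP_A^{(p)}$. The one genuinely new ingredient is the return-time law: since symbol $1$ carries Bernoulli weight $p$, conditioning on $A=[1]$ gives the geometric distribution $P_A^{(p)}(A_i)=p(1-p)^{i-1}$, so the bracketed limit equals $\sum_{i=1}^\infty p(1-p)^{i-1}H_\mu(\alpha\mid\alpha_i)$.

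Finally I would reassemble: substituting back gives $\Asc_\mu^\lambda(X,\alpha,T)=\int_0^1\sum_{i=1}^\infty p^2(1-p)^{i-1}H_\mu(\alpha\mid\alpha_i)\,\lambda(dp)$, and because $\sum_{i\ge1}p^2(1-p)^{i-1}=p\le 1$ while $H_\mu(\alpha\mid\alpha_i)\le H_\mu(\alpha)$, Tonelli's theorem lets me interchange the summation and the $\lambda$-integral to obtain the asserted identity. I expect the main obstacle to be bookkeeping rather than any hard estimate: one must track how the two layers of averaging---the outer $\lambda$-average over $p$ and the inner expectation over return times inside $A$---compose, since the exponent $p^2(1-p)^{i-1}$ records exactly one factor of $p$ from the prefactor in Theorem~\ref{thm:Asccompenteqthm} and the factor $p(1-p)^{i-1}$ from $P_A^{(p)}(A_i)$. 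Justifying the two interchanges (limit with the $\lambda$-integral, and sum with the $\lambda$-integral) and verifying the geometric return-time law under $P_A^{(p)}$ are the points needing the most care.
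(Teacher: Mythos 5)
Your proposal is correct and follows essentially the same route as the paper's proof: starting from the representation in Theorem~\ref{thm:Asccompenteqthm}, fixing $p$ and rerunning the Markov telescoping of Proposition~\ref{infocor} to identify the inner limit as $\int_A H_\mu(\alpha\mid\alpha_{n_A(\xi)})\,dP_A^{(p)}=\sum_{i\ge 1}p(1-p)^{i-1}H_\mu(\alpha\mid\alpha_i)$, and then integrating against $\lambda$ and swapping sum and integral. The only difference is presentational: you spell out the Bounded Convergence and Tonelli justifications that the paper leaves implicit (the former being invoked inside the proof of Theorem~\ref{thm:Asccompenteqthm}), which is a welcome addition rather than a deviation.
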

 \begin{proof}
 	Using the proof of Proposition \ref{infocor},
 	\be\begin{aligned}
 		\Asc_\mu^\lambda (X,\alpha,T)&=\int_0^1  \lim_{n \to \infty} \frac{1}{n} \int_A H_\mu (\alpha_{S(\xi_0^{n-1})}) \,dP_A^{(p)}(\xi) \, d\lambda(p)\\ 
 		&=\int_0^1  \lim_{n \to \infty} \frac{1}{n}p \int_A H_\mu (\alpha_{S(\xi_0^{m_\xi(n)-1})}) \,dP_A^{(p)}(\xi) \, d\lambda(p)\\
 		&= \int_0^1  p\int_AH_\mu(\alpha|\alpha_{S_1(\xi)}) \, dP_A^{(p)} \, d\lambda(p)
 		=\int_0^1  \sum_{i=1}^\infty p\, P_A^{(p)}(A_i)H_\mu(\alpha|\alpha_i)\,  d\lambda(p) \\
 		&=  \sum_{i=1}^\infty \int_0^1 p^2 (1-p)^{i-1} \, d\lambda(p)H_\mu(\alpha|\alpha_i).
 	\end{aligned}
 	\en
 \end{proof}
 
 \begin{theorem}
 	With the notation and hypotheses of Theorems \ref {thm:Asccompenteqthm} and \ref{thm:FirstReturns},
 	\be
 	\begin{aligned}
 		\Asc_\mu^\lambda (X,\alpha,T)&=
 		\lim_{n \to \infty} \frac{1}{n} \int_0^1 p  H_{\mu \times P_A^{(p)}} (\beta_{0,n-1}^*|\mathcal A_{-\infty,\infty}^*)\, d\lambda(p)\\
 		&= \int_0^1 p  H_{\mu \times P_A^{(p)}}(\beta|\beta_{1,\infty}^* \vee \mathcal A_{-\infty,\infty}^*) \, d\lambda(p)\\
 		&= \int_0^1 p  h_{\sigma_A}^{(p)}(X,T,\mu)\, d\lambda(p).
 	\end{aligned}\en
 \end{theorem}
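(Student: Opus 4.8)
The plan is to establish the chain of equalities pointwise in $p$---carrying over each step of the proof of Theorem~\ref{thm:FirstReturns} with $P_A$ replaced by $P_A^{(p)}$---and then to integrate against $\lambda$, using the Bounded Convergence Theorem to move the limit in $n$ across the $p$-integral. The starting point is Theorem~\ref{thm:Asccompenteqthm}, which already gives
\be
\Asc_\mu^\lambda (X,\alpha,T)=\lim_{n \to \infty} \int_0^1 p \, L_n(p) \, \lambda(dp), \quad\text{where}\quad L_n(p):=\frac{1}{n}\int_A H_\mu (\alpha_{S(\xi_0^{m_\xi(n)-1})}) \,dP_A^{(p)}(\xi).
\en
It therefore suffices to identify $L_n(p)$ with $\frac1n H_{\mu \times P_A^{(p)}}(\beta_{0,n-1}^*|\mathcal A_{-\infty,\infty}^*)$ for each $n$ and $p$ (this yields the first displayed equality directly, since $\lim_n$ already sits outside the integral), and then to show that for $\lambda$-a.e.\ $p$ the sequence $L_n(p)$ converges to the common value $H_{\mu \times P_A^{(p)}}(\beta|\beta_{1,\infty}^* \vee \mathcal A_{-\infty,\infty}^*)=h_{\sigma_A}^{(p)}(X,T,\mu)$.

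For the fixed-$p$ identities I would reproduce, verbatim, the four steps of the proof of Theorem~\ref{thm:FirstReturns}. First, each cell of $\mathcal A_{0,n-1}^*$ still corresponds to a return-time pattern, i.e.\ a set $S\subset m_\xi(n)^*$ with $0\in S$; only the cell weights change from $P_A(C)$ to $P_A^{(p)}(C)$, giving $\int_A H_\mu (\alpha_{S(\xi_0^{m_\xi(n)-1})}) \,dP_A^{(p)} = H_{\mu \times P_A^{(p)}}(\beta_{0,n-1}^*|\mathcal A_{0,n-1}^*)$. Second, under $P_A^{(p)}$ the base $(A,\sigma_A,P_A^{(p)})$ is still the i.i.d.\ return-time process, now with geometric law $P_A^{(p)}(A_i)=(1-p)^{i-1}p$, so $\mathcal A_{0,n-1}^*$ remains independent of $\mathcal A_{n,\infty}^*\vee\mathcal A_{-\infty,-1}^*$ and conditioning on $\mathcal A_{0,n-1}^*$ may be upgraded to conditioning on $\mathcal A_{-\infty,\infty}^*$. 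Third, the telescoping argument from entropy theory, with the extra conditioning on $\mathcal A_{-\infty,\infty}^*$ and using that $\sigma_A$ preserves $P_A^{(p)}$, gives $H_{\mu \times P_A^{(p)}}(\beta|\beta_{1,\infty}^* \vee \mathcal A_{-\infty,\infty}^*)=\lim_n \frac1n H_{\mu \times P_A^{(p)}}(\beta_{0,n-1}^*|\mathcal A_{-\infty,\infty}^*)$. Fourth, the fiber-entropy computation of~(\ref{eq:FibEnt}), again with $P_A$ replaced by $P_A^{(p)}$, identifies this same limit with $h_{\sigma_A}^{(p)}(X,T,\mu)$.

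The remaining and only substantive step is to pass from the form with $\lim_{n}$ outside the $\lambda$-integral (the first displayed equality) to the forms with the limit taken pointwise in $p$ (the second and third equalities), i.e.\ to justify $\lim_n \int_0^1 p\,L_n(p)\,\lambda(dp)=\int_0^1 p\,\lim_n L_n(p)\,\lambda(dp)$. Here I would invoke the Bounded Convergence Theorem: because $\alpha_{S(\xi_0^{m_\xi(n)-1})}$ is a join of exactly $n$ copies of $\alpha$, one has the uniform bound $L_n(p)\le H_\mu(\alpha)$ for all $n$ and all $p$, and $p\le 1$, so the integrands $p\,L_n(p)$ are dominated by the constant $H_\mu(\alpha)$; moreover the hypothesis $\lambda\{0,1\}=0$ ensures that the pointwise limit exists for $\lambda$-a.e.\ $p\in(0,1)$ by the previous paragraph. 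I expect this interchange to be the main obstacle, since one must confirm that the uniform bound and the a.e.-$p$ convergence genuinely hold independently of $p$---in particular that the exceptional set on which $m_\xi(n)/n\to 1/p$ fails is $\lambda$-null rather than growing as $p$ approaches an endpoint; the uniform bound $L_n(p)\le H_\mu(\alpha)$ is exactly what controls the behavior near $p=0$ and $p=1$.
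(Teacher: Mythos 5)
Your proposal is correct and follows essentially the same route as the paper: use Theorem~\ref{thm:Asccompenteqthm} as the starting point, carry over each step of the proof of Theorem~\ref{thm:FirstReturns} with $P_A$ replaced by $P_A^{(p)}$ (cell-by-cell identification of $\int_A H_\mu(\alpha_{S(\xi_0^{m_\xi(n)-1})})\,dP_A^{(p)}$ with $H_{\mu\times P_A^{(p)}}(\beta_{0,n-1}^*\mid\mathcal A_{0,n-1}^*)$, independence upgrade to $\mathcal A_{-\infty,\infty}^*$, telescoping, and the fiber-entropy computation), and then integrate against $\lambda$. Your explicit bounded-convergence justification for moving $\lim_n$ inside the $\lambda$-integral---via the uniform bound $L_n(p)\le H_\mu(\alpha)$, which holds because $|S(\xi_0^{m_\xi(n)-1})|=n$, together with pointwise convergence for every $p\in(0,1)$ and $\lambda\{0,1\}=0$---is a valid filling-in of an interchange the paper performs without comment.
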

 
 \begin{proof}
 	We have
 	\be
 	\begin{aligned}
 		\Asc_\mu^\lambda (X,T,\alpha)&=\int_0^1 p \lim_{n \to \infty} \frac{1}{n}\int_A H_\mu(\alpha_{S(\xi_0^{m_{\xi(n)}-1})})\, dP_A^{(p)} \, d\lambda(p)\\
 		&=\lim_{n \to \infty} \frac{1}{n} \int_0^1 p H_{\mu \times P_A^{(p)}}(\beta_{0,n-1}^*|\mathcal A_{-\infty,\infty}^*) \, d\lambda(p)\\
 		&=\int_0^1 p H_{\mu \times P_A^{(p)}}(\beta|\beta_{1,\infty}^* \vee \mathcal A_{-\infty,\infty}^*) \, d\lambda(p).
 	\end{aligned}
 	\en
 	Also, as in (\ref{eq:FibEnt}) and using (\ref{eq:Asccompenteq}),
 	\be
 	\begin{gathered}
 	\int_0^1 p	h_{\sigma_A}^{(p)}(X,T,\mu)\, d\lambda(p)= \int_0^1\int_A p H_\mu(\alpha|\alpha_{1,\infty}^*) \, dP_A^{(p)}(\xi) \, d\lambda (p)\\
 	=\lim_{n \to \infty} \int_0^1 \int_A \frac{p}{n} 
 	H_\mu (\alpha_{S(\xi_0^{m_\xi (n)-1})}) \, dP_A^{(p)}(\xi) \,d\lambda (p)
 	=\Asc_\mu^\lambda(X,T,\alpha).
 	\end{gathered}
 	\en
 \end{proof}

\bibliographystyle{amsplain}
\bibliography{PW}
\end{document}